\theoremstyle{plain}
\newtheorem{theorem}[equation]{Theorem}
\newtheorem{lemma}[equation]{Lemma}
\newtheorem{corollary}[equation]{Corollary}
\newtheorem{proposition}[equation]{Proposition}
\theoremstyle{definition}
\newtheorem{definition}[equation]{Definition}
\theoremstyle{remark}
\newtheorem{remark}[equation]{Remark}
\numberwithin{equation}{section} 
\def\diam{\operatorname{diam}}
\def\div{\operatorname{div}}
\def\dist{\operatorname{dist}}
\def\supp{\operatorname{supp}}
\def\Cap{\operatorname{Cap}}
\def\BMO{\operatorname{BMO}}
\def\VMO{\operatorname{VMO}}
\def\Lip{\operatorname{Lip}}
\def\loc{{\operatorname{loc}}}
\def\osc{\operatorname{osc}}
\def\E{\mathscr{E}}
\def\H{\mathcal{H}}
\def\Z{\mathbb{Z}}
\def\R{\mathbb{R}}
\def\re{\mathbb{R}}
\def\pom{{\partial\Omega}}
\def\Rn{\mathbb{R}^n}
\def\W{\mathcal{W}}
\def\w{\omega}
\newcommand{\RR}{\mathbb R}
\newcommand{\NN}{\mathbb N}
\newcommand{\ZZ}{\mathbb Z}
\newcommand{\abs}[1]{\left\lvert #1 \right\rvert}
\newcommand{\norm}[1]{\left\lVert #1 \right\rVert}
\newcommand{\ree}{{\RR^{n+1}}}
\newcommand{\dc}{\dot{\mathscr{C}}}
\newcommand{\dcv}{\dot{\mathscr{C}}^{\varphi}}
\newcommand{\dcstar}{\dot{\mathscr{C}}^{*,\varphi}}
\newcommand{\Ephip}{{\mathscr{E}^{\varphi, p}}}
\renewcommand{\emptyset}{\mbox{\textup{\O}}}
\newcommand\restr[2]{\ensuremath{\left.#1\right|_{#2}}}
\def\Xint#1{\mathchoice
	{\XXint\displaystyle\textstyle{#1}}%
	{\XXint\textstyle\scriptstyle{#1}}%
	{\XXint\scriptstyle\scriptscriptstyle{#1}}%
	{\XXint\scriptscriptstyle\scriptscriptstyle{#1}}%
	\!\int}
\def\XXint#1#2#3{{\setbox0=\hbox{$#1{#2#3}{\int}$ }
		\vcenter{\hbox{$#2#3$ }}\kern-.585\wd0}}
\def\barint{\Xint-}
\newcommand{\bariint}{\barint\mkern-11.5mu\barint}
\begin{document}

	\author[M. Cao]{Mingming Cao}
	\address{Mingming Cao\\
		Instituto de Ciencias Matem\'aticas CSIC-UAM-UC3M-UCM\\
		Con\-se\-jo Superior de Investigaciones Cient{\'\i}ficas\\
		C/ Nicol\'as Cabrera, 13-15\\
		E-28049 Ma\-drid, Spain} \email{mingming.cao@icmat.es}

	\author[P. Hidalgo-Palencia]{Pablo Hidalgo-Palencia}
	\address{Pablo Hidalgo-Palencia \\
		Instituto de Ciencias Matem\'aticas CSIC-UAM-UC3M-UCM\\
		Consejo Superior de Investigaciones Cient{\'\i}ficas\\
		C/ Nicol\'as Cabrera, 13-15\\
		E-28049 Ma\-drid, Spain;
		and 
		Departamento de Análisis Matemático y Matemática Aplicada \\
		Universidad Complutense de Madrid \\
		Plaza de Ciencias 3 \\
		E-28040 Madrid, Spain} \email{pablo.hidalgo@icmat.es}

	\author[J.M. Martell]{Jos\'e Mar{\'\i}a Martell}
	\address{Jos\'e Mar{\'\i}a Martell\\
		Instituto de Ciencias Matem\'aticas CSIC-UAM-UC3M-UCM\\
		Consejo Superior de Investigaciones Cient{\'\i}ficas\\
		C/ Nicol\'as Cabrera, 13-15\\
		E-28049 Ma\-drid, Spain} \email{chema.martell@icmat.es}

	\author[C. Prisuelos-Arribas]{Cruz Prisuelos-Arribas}
	\address{Cruz Prisuelos-Arribas
		\\
		Universidad de Alcalá de Henares
		\\
		Departamento de Física y Matemáticas
		\\
		Campus universitario 
		\\E-28805 Alcalá de Henares (Madrid), Spain
	} \email{cruz.prisuelos@uah.es}

	\author[Z. Zhao]{Zihui Zhao}
	\address{Zihui Zhao
		\\
		Department of Mathematics
		\\
		Johns Hopkins University
		\\
		Baltimore, MD 21218, USA} 
	\email{zhaozh@jhu.edu}

	\thanks{The first and second authors are respectively supported by grants RYC2021-032600-I and CEX2019-000904-S-20-3, both funded by  MCIN/AEI/ 10.13039/501100011033. The first, second, and third authors acknowledge financial support from MCIN/AEI/ 10.13039/501100011033 grants CEX2019-000904-S and PID2019-107914GB-I00. The third author also acknowledges that the research leading to these results has received funding from the European Research Council under the European Union's Seventh Framework Programme (FP7/2007-2013)/ ERC agreement no. 615112 HAPDEGMT. The fourth author was partially supported by the Spanish Ministry of Science and Innovation grant MTM PID2019-107914GB-I00. The last author was partially supported by NSF grants DMS-1361823, DMS-1500098, DMS-1664867, DMS-1902756 and by the Institute for Advanced Study.
	 Part of this work was carried out while the last author was visiting the  Instituto de Ciencias Matem\'aticas (ICMAT), and all authors participated in a conference organized by the Universidad del Pa\'is Vasco (UPV/EHU).  The authors express their gratitude to these institutions. The last author is also grateful to Tatiana Toro for inspiring conversations about this work.	
} 
	\date{\today}

	\makeatletter
	\@namedef{subjclassname@2020}{\textup{2020} Mathematics Subject Classification}
	\makeatother
	\subjclass[2020]{35J25, 26A16, 35B65, 31B05, 31B25, 42B37, 42B35}


	\keywords{
		Well-posedness, 
		Dirichlet boundary value problem, 
		Uniform elliptic operators, 
		Elliptic measure, 
		Capacity density condition,  
		H\"{o}lder spaces, 
		Carleson measure estimates, 
		1-sided chord-arc domains}

	\begin{abstract}
		In this paper we study the Dirichlet problem for real-valued second order divergence form elliptic operators with boundary data in  H\"{o}lder spaces. Our context is that of open sets $\Omega \subset \mathbb{R}^{n+1}$, $n \ge 2$, satisfying the capacity density condition, without any further topological assumptions. Our main result states that if $\Omega$ is either bounded, or unbounded with unbounded boundary, then the corresponding Dirichlet boundary value problem is well-posed; when $\Omega$ is unbounded with bounded boundary, we establish that solutions exist, but they fail to be unique in general. These results are optimal in the sense that solvability of the Dirichlet problem in H\"{o}lder spaces is shown to imply the capacity density condition. 
		
		As a consequence of the main result, we present a characterization of the H\"{o}lder spaces in terms of the boundary traces of solutions, and obtain well-posedness of several related Dirichlet boundary value problems.
		
		All the results above are new even for 1-sided chord-arc domains, and can be extended to generalized H\"{o}lder spaces associated with a natural class of growth functions. 
	\end{abstract}

	\title[Elliptic operators in rough sets]
	{Elliptic operators in rough sets, and the Dirichlet problem with boundary data in H\"{o}lder spaces}

	\maketitle
	\tableofcontents

	\section{Introduction}\label{sec:intro} 
	The past 50 years have witnessed remarkable progress in the study of Dirichlet boundary value problems for elliptic operators. As already noted, the nature of these problems strongly depends on both the geometry of the domains and the choice of function spaces from which the boundary datum is selected. The geometrical features of the domains determine the construction of the solutions and their behavior near the boundary, while the choice of function spaces measures the size of the solutions and the way their boundary traces can be understood, and also determines if the problem is well-posed or not. 
	
	In the (classical) Dirichlet problem for the Laplacian in the upper half-space with a continuous boundary datum $f$, one seeks a harmonic function $u \in \mathscr{C}^{\infty}(\R^{n+1}_+) \cap \mathscr{C}(\overline{\R^{n+1}_+})$ satisfying $\restr{u}{\Rn}=f$. As noted in \cite[p.~42 and p.~158]{Helms}, the solution to this classical Dirichlet problem is not unique even if the boundary datum $f$ is a bounded continuous function in $\Rn$. Typically, the uniqueness can be ensured by specifying the behavior of $u(x, t)$ as $t \to \infty$. Indeed, the uniqueness was  established in \cite{ST} for harmonic functions $u \in \mathscr{C}^{\infty}(\R^{n+1}_+) \cap \mathscr{C}(\overline{\R^{n+1}_+})$ satisfying $u(x) = o(|x| \sec^{\gamma} \theta)$ as $|x| \to \infty$ (where $\theta:=\arccos(x_n/|x|)$ and $\gamma \in \R$ is arbitrary), by proving a Phragm\'{e}n-Lindel\"{o}f principle under the latter growth condition. For more related work in this regard see also \cite{Sie, Yos, YM}.

	Much attention has also been paid to the case of the $L^p$-Dirichlet problem in the upper half-space $\R^{n+1}_+$ for variable coefficient elliptic operators in divergence form $L=-\div(A\nabla)$ with $t$-independent coefficients $A(x, t)=A(x)$ for $(x, t) \in \R^{n+1}_+$. For a complex bounded coefficient matrix $A$,  the method of layer potentials was used to establish the solvability of the Dirichlet problem with $L^2(\Rn)$ data, for a small complex $L^{\infty}$ type perturbation of a real symmetric matrix in \cite{AAAHK}, which was extended to the case with $L^p(\Rn)$ data in \cite{HMiM}, and then to the case with $L^p(\Rn)$, $\BMO(\Rn)$, and $\dot{\mathscr{C}}^{\alpha}(\Rn)$ data, for a small complex Carleson type perturbation of a real symmetric matrix in \cite{HMaM}. Here, we mention that this kind of work crucially relies on the De Giorgi-Nash-Moser theory. On the other hand, the harmonic/elliptic measure is a powerful tool to deal with elliptic operators with real bounded coefficient matrices. Indeed, Hofmann, Kenig, Mayboroda,  and Pipher \cite{HKMP} proved that for each $t$-independent coefficients matrix $A$ in $\R^{n+1}_+$, there exists some $p \in (1, \infty)$ such that the Dirichlet problem with $L^p(\Rn)$-boundary data is solvable, which complemented the previous result by Kenig, Koch, Pipher, and Toro \cite{KKPT} in dimension $n=1$. Conversely, counterexamples in \cite{KKPT} showed that for each $p \in (1, \infty)$, there exists a uniformly elliptic coefficient matrix $A$ for which the Dirichlet problem is not solvable for $L^p(\Rn)$-boundary data. A significant portion of that theory in \cite{HKMP} was recently extended to the degenerate elliptic case by Hofmann, Le, and Morris \cite{HLM}, compared with the result due to Auscher, Ros\'{e}n, and Rule \cite{ARR} for $L^2(\Rn)$ data.

	As regards the $L^p$-Dirichlet problem in rough domains,  its solvability is closely linked with the absolute continuity properties of elliptic measures. Indeed, the importance of the quantitative absolute continuity of the elliptic measure with respect to the surface measure comes from the fact that $\w_L \in RH_p(\sigma)$ (the reverse H\"older class with respect to the surface measure $\sigma$) is equivalent to the $L^{p'}(\sigma)$-solvability of the Dirichlet problem (see, e.g. \cite[Theorem~1.5]{FKP} and \cite{HL}). In this direction, Dahlberg \cite{D77} first established its solvability for the Laplacian with $L^p$ data and $p \in [2, \infty)$ in Lipschitz domains. This result was extended to $\BMO_1$ domains by Jerison and Kenig \cite{JK}, to chord-arc domains by David and Jerison \cite{DJ}, and further studied in $1$-sided chord-arc domains by Hofmann et al. \cite{AHMNT, HM, HMU}. Under some optimal background hypothesis (an open set satisfying the corkscrew condition with an $n$-dimensional Ahlfors-David regular boundary), it was culminated in the recent results of \cite{AHMMT}, which gives a geometric characterization of the solvability of the $L^p$-Dirichlet problem for the Laplacian and for some finite $p$ via the rectifiability of the boundary plus a  local accessibility by non-tangential paths to some pieces of the boundary. It is worth pointing out that all the aforementioned results are restricted to the $n$-dimensional boundaries of domains in $\R^{n+1}$. Some analogues have been obtained in \cite{DFM, DM, MZ} on lower-dimensional sets. 
	
	Other types of boundary data have also been studied in the literature. For example, Fabes et al. \cite{FJN} dealt with some boundary value problems in the upper-half space with data in BMO and Morrey-Campanato spaces, which were extended to more general domains and operators in the works \cite{DK, FN, Zhao}. Shen \cite{Shen} also established the well-posedness for elliptic systems in Lipschitz domains with boundary data in Morrey spaces, when $p=2$. Recently, Martell et al. \cite{MMMM19} have taken advantage of the properties of the Poisson kernel associated with constant complex elliptic systems \cite{ADN59, ADN64} to obtain well-posedness of the Dirichlet boundary value problem in the upper-half space with data in BMO, VMO and Morrey-Campanato spaces. Some extensions are even possible to more general Banach spaces \cite{MMMM16} that include variable exponent Lebesgue spaces, Lorentz spaces, Zygmund spaces, and weighted versions of them in \cite{CMM, M+5}.

	The main goal of this paper is to study the well-posedness of the $\dot{\mathscr{C}}^{\alpha}$-Dirichlet boundary value problem
	\begin{equation}\label{eq:problem}
	\begin{cases}
	u \in W^{1, 2}_{\loc} (\Omega), \\
	Lu = 0 \text{ in the weak sense in } \Omega, \\
	u \in \dot{\mathscr{C}}^{\alpha}(\Omega), \\
	\restr{u}{\pom} = f \in \dot{\mathscr{C}}^{\alpha}(\pom). 
	\end{cases}
	\end{equation}
	Hereafter, given an arbitrary set $E \subset \ree$ and $\alpha \in (0,1)$, the homogeneous H\"{o}lder space of order $\alpha$ on $E$ is defined as 
	\begin{align*}
	\dot{\mathscr{C}}^{\alpha}(E)
	:= \big\{u: E \longrightarrow \re: \|u\|_{\dot{\mathscr{C}}^{\alpha}(E)}<\infty \big\} , 
	\end{align*}
	where $\|\cdot\|_{\dot{\mathscr{C}}^{\alpha}(E)}$ stands for the semi-norm 
	\begin{align*}
	\|u\|_{\dot{\mathscr{C}}^{\alpha}(E)} 
	:= \sup_{\substack{X,Y \in E \\ X \neq Y}} \frac{|u(X)-u(Y)|}{|X-Y|^{\alpha}}. 
	\end{align*}
	The operator $L$ is a second order uniformly elliptic operator in divergence form defined as $Lu=-\div(A\nabla u)$, with $A$ being a bounded real (non-necessarily symmetric) uniformly elliptic matrix: 
	\begin{align*}
	\Lambda^{-1} |\xi|^{2} \leq A(X) \xi \cdot \xi 
	\quad\text{ and } \quad
	|A(X) \xi \cdot\eta|\leq \Lambda |\xi|\,|\eta|,
	\end{align*}
	for all $\xi, \eta \in\mathbb{R}^{n+1}$, for almost every $X \in \Omega$, and for some $\Lambda \geq 1$.
	
	A particular case of \eqref{eq:problem} is $\Omega=\R^{n+1}_+$, which was treated in \cite[Theorem~1.4]{MMM} and \cite[Theorem~1.21]{MMMM19} for homogeneous second-order strongly elliptic systems of differential operators with constant complex coefficients. In the current paper, we would like to investigate what geometrical features of the domain will guarantee the well-posedness of \eqref{eq:problem}. The method used in \cite{MMM, MMMM19} is invalid in rough domains since one cannot ensure the existence and good properties of the associated Poisson kernel. Recently, the area has witnessed the availability and powerfulness of the method of harmonic/elliptic measure in studying the Dirichlet boundary value problem in rough domains (cf. \cite{AHMMT, AHMNT, CHMT, HLMN, HM, HMMTZ, HMU}).  Motivated by these, we are interested in finding solutions, given by the associated harmonic/elliptic measure, which also satisfy the natural bounds. We also consider the problem of uniqueness and investigate under what circumstances such elliptic measure solutions are indeed unique. 
	

	Our main results establish that provided $\alpha$ is sufficiently small (depending only on the allowable parameters) we can always find a solution to problem \eqref{eq:problem} expressed in terms of the boundary data and the associated elliptic measure. On the other hand, the uniqueness of such solution depends on the topological features of the domain and its boundary, more specifically, on their boundedness. We have two possibilities. In the first one, $\Omega$ is either bounded or it is unbounded with unbounded boundary, and the $\dot{\mathscr{C}}^{\alpha}$-Dirichlet problem is well-posed, that is, the associated elliptic measure solution is the unique solution. In the second scenario, $\Omega$ unbounded with bounded boundary, solutions cease to be unique in general and one can find different solutions to the same boundary value problem, written in terms of the elliptic measure. Our proof is very robust so that we can work in open sets without connectivity assumptions. More precisely, our first result establishes the following:

	\begin{theorem}\label{thm:Ca-well}
		Let $\Omega \subset \R^{n+1}$, $n \geq 2$, be an open set satisfying the capacity density condition (cf. Definition $\ref{def:CDC}$) so that either {\bf $\Omega$ is bounded or $\Omega$ is unbounded with $\pom$ being unbounded}. Let $L = -\div (A\nabla)$ be a real (non-necessarily symmetric) elliptic operator, and let $\omega_L$ be the associated elliptic measure. Then there exists $\alpha_0 \in (0, 1)$\footnote[2]{
			\label{note1}One can take $\alpha_0=\min\{\alpha_1,\alpha_2\}$, where $\alpha_1\in (0,1)$ is the exponent in the De Giorgi/Nash estimates (cf.~Lemma~\ref{lem:DGN}) and $\alpha_2\in (0,1)$ is the exponent in the H\"{o}lder continuity at the boundary (cf.~Lemma~\ref{lem:Holder}).
		} 
		(depending only on $n$, the CDC constant, and the ellipticity constant of $L$) such that for every $\alpha \in (0, \alpha_0)$, the $\dot{\mathscr{C}}^{\alpha}$-Dirichlet problem \eqref{eq:problem} is well-posed. More specifically, there is a unique solution which is given by 
		\begin{equation}\label{eq:well-solution}
		u(X) = \int_\pom f(y) \, d\omega_L^X(y), \qquad X \in \Omega.
		\end{equation}
		Moreover, $u \in W^{1, 2}_{\loc}(\Omega) \cap \dot{\mathscr{C}}^{\alpha}(\overline{\Omega})$ and satisfies $\restr{u}{\pom} = f$, and there is a constant $C_{\alpha}$ (depending only on $n$, the CDC constant, the ellipticity constant of $L$, and $\alpha$) such that 
		\begin{equation*}
		\|f\|_{\dot{\mathscr{C}}^{\alpha}(\pom)}
		\leq \|u\|_{\dot{\mathscr{C}}^{\alpha}(\Omega)}
		\leq C_{\alpha} \|f\|_{\dot{\mathscr{C}}^{\alpha}(\pom)}.
		\end{equation*}
	\end{theorem}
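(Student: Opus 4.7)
The plan is to take the elliptic-measure integral \eqref{eq:well-solution} as the candidate solution, verify its regularity and boundary behavior, and then handle uniqueness by splitting along the topological hypothesis on $\Omega$.

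For existence, since $\omega_L^X$ is a probability measure, constants integrate out, so I may fix $x_0\in\pom$ and reduce to the case $f(x_0)=0$, in which $|f(y)|\le\|f\|_{\dot{\mathscr{C}}^\alpha(\pom)}|y-x_0|^\alpha$. A Bourgain-type decay estimate for elliptic measure under CDC then ensures the integral converges for every $X\in\Omega$ and defines a function in $W^{1,2}_{\loc}(\Omega)$ with $Lu=0$ in the weak sense, by the usual representation formula. The boundary trace $\restr{u}{\pom}=f$ follows from the H\"older continuity of solutions at CDC boundary points (Lemma~\ref{lem:Holder}) together with the Bourgain decay applied at each boundary point.

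For the H\"older upper bound, given $X,Y\in\Omega$ with $r=|X-Y|$, I pick $x_0\in\pom$ with $|X-x_0|\lesssim\dist(X,\pom)$ and write
\[
u(X)-u(Y)=\int_{\pom}\bigl(f(y)-f(x_0)\bigr)\,d(\omega_L^X-\omega_L^Y)(y).
\]
Splitting the integration at scale $Mr$ for a large $M$ to be chosen, the near piece over $\Delta(x_0,Mr)$ is controlled by $(Mr)^\alpha\|f\|_{\dot{\mathscr{C}}^\alpha(\pom)}$ times a trivial total-variation bound. The far piece is handled by viewing
\[
g(Z):=\int_{\pom\setminus\Delta(x_0,Mr)}\bigl(f(y)-f(x_0)\bigr)\,d\omega_L^Z(y)
\]
as an $L$-harmonic function whose boundary data vanishes on $\Delta(x_0,Mr)$; the boundary H\"older estimate (Lemma~\ref{lem:Holder}) provides a gain of $(r/Mr)^{\alpha_2}$ multiplied by the supremum $\sup|g|\lesssim(Mr)^\alpha\|f\|_{\dot{\mathscr{C}}^\alpha(\pom)}$, which itself follows from moment estimates for $\omega_L$. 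Choosing $\alpha<\alpha_2$ and adding the two contributions yields $|u(X)-u(Y)|\lesssim r^\alpha\|f\|_{\dot{\mathscr{C}}^\alpha(\pom)}$. In the complementary regime where $X,Y$ are deeply interior ($r\ll\dist(X,\pom)$), one instead invokes the interior De~Giorgi/Nash estimate (Lemma~\ref{lem:DGN}) with exponent $\alpha_1$ on a small ball, which is why the threshold must be $\alpha_0=\min\{\alpha_1,\alpha_2\}$. The matching lower bound $\|f\|_{\dot{\mathscr{C}}^\alpha(\pom)}\le\|u\|_{\dot{\mathscr{C}}^\alpha(\Omega)}$ is immediate once $u$ is known to extend continuously to $\overline{\Omega}$ with trace $f$.

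For uniqueness, I set $w:=u-v$ where $u,v$ are two solutions; then $w\in\dot{\mathscr{C}}^\alpha(\overline{\Omega})$, $Lw=0$ in $\Omega$, and $\restr{w}{\pom}=0$. If $\Omega$ is bounded, $w$ is continuous on the compact set $\overline{\Omega}$ and vanishes on its boundary, so the weak maximum principle forces $w\equiv 0$. If $\Omega$ is unbounded with $\pom$ unbounded, I fix $X\in\Omega$, choose $x_0\in\pom$ with $|X-x_0|\lesssim\dist(X,\pom)$, and for arbitrarily large $R$ apply Lemma~\ref{lem:Holder} to $w$ in $B(x_0,2R)\cap\Omega$; this is legitimate precisely because $\pom$ extends to every scale, so $w$ vanishes on all of $\Delta(x_0,2R)$. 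Combined with the trivial bound $|w(Z)|\le\|w\|_{\dot{\mathscr{C}}^\alpha(\Omega)}(3R)^\alpha$ on $B(x_0,2R)\cap\Omega$ (obtained by subtracting the value of $w$ at the nearest boundary point, which is zero), this yields
\[
|w(X)|\le C\,|X-x_0|^{\alpha_2}R^{\alpha-\alpha_2}\|w\|_{\dot{\mathscr{C}}^\alpha(\Omega)},
\]
which tends to zero as $R\to\infty$ once $\alpha<\alpha_2$. The hard part of the argument is exactly this last Phragm\'en--Lindel\"of-style step: it relies essentially on $\pom$ containing boundary points at every scale around $X$, which is what breaks down when $\pom$ is bounded and is fully consistent with the expected failure of uniqueness in that regime.
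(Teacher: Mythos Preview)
Your proposal is correct and follows the same overall architecture as the paper: represent the solution by the elliptic-measure integral, derive the interior H\"older bound from De~Giorgi/Nash combined with a moment estimate for $\omega_L$, and obtain uniqueness via the maximum principle (bounded case) or a Phragm\'en--Lindel\"of argument using boundary H\"older continuity at large scales (unbounded case with unbounded boundary).

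The organization differs in two places worth noting. For the H\"older upper bound in the ``near-boundary'' regime, you split the integral into a near piece (trivially bounded) and a far piece (handled by Lemma~\ref{lem:Holder} applied to the auxiliary solution $g$). The paper instead packages this into a single moment estimate, namely $\int_{\pom}|y-\widehat{x}|^\alpha\,d\omega_L^X(y)\lesssim\delta(X)^\alpha$ (their Lemma~\ref{lem:8r4r}), and then in the boundary case simply bounds $|u(X)-f(\widehat{x})|$ and $|u(Y)-f(\widehat{y})|$ separately by that moment, using the triangle inequality. Your near/far split is essentially how Lemma~\ref{lem:8r4r} is proved, so the two routes coincide after unpacking; the paper's packaging is cleaner and avoids having to justify that $g$ is continuous up to the boundary on the relevant ball (a point you should smooth the cutoff for). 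For uniqueness in the unbounded case, your argument is actually \emph{simpler} than the paper's: you use directly that $w\in\dot{\mathscr{C}}^\alpha(\overline{\Omega})$ with $w|_{\pom}=0$ to get $|w(Z)|\le\|w\|_{\dot{\mathscr{C}}^\alpha}\,\delta(Z)^\alpha$ and then feed this into Lemma~\ref{lem:Holder} at scale $R$. The paper instead proves $|u(X)|\lesssim\varphi(\delta(X))$ via a telescoping chain of ball averages, which is more work in the $t^\alpha$ case but is designed to generalize to growth functions $\varphi$ satisfying only an integral Dini-type condition (see their Remark following the uniqueness proof).
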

	\begin{remark}
		Compared to previous work on bounded domains (see, for example, \cite[Theorem 6.44]{HKM}), we remark that the constant $C_{\alpha}$ above (as well as in the next result) depends on the CDC constant of $\Omega$, but not on $\diam(\pom)$.
	\end{remark}
	
	Our second result concerns the case $\Omega$ being unbounded with $\pom$ bounded (e.g. $\Omega = \R^{n+1} \setminus B$, where $B$ is a ball in $\R^{n+1}$). In this scenario, solutions of \eqref{eq:problem} fail to be unique in general. 
	
	\begin{theorem}\label{thm:Ca-ill} 
		Let $\Omega \subset \R^{n+1}$, $n \geq 2$, be an open set satisfying the capacity density condition so that {\bf $\Omega$ is unbounded with $\pom$ being bounded}. Let $L = -\div (A\nabla)$ be a real (non-necessarily symmetric) elliptic operator, and let $\omega_L$ be the associated elliptic measure. Then there exists $\alpha_0 \in (0, 1)$\textsuperscript{\ref{note1}} (depending only on $n$, the  CDC constant, and the ellipticity constant of $L$) such that for every $\alpha \in (0, \alpha_0)$, for every $f \in \dot{\mathscr{C}}^{\alpha}(\pom)$, and for every $y_0 \in \pom$, if we define 
		\begin{equation} \label{eq:ill-solution}
		u(X) = f(y_0) + \int_{\pom} (f(y)-f(y_0)) \, d\w_L^X(y), \qquad X \in \Omega, 
		\end{equation}
		then $u \in W^{1,2}_{\loc}(\Omega) \cap \dot{\mathscr{C}}^{\alpha}(\overline{\Omega})$, satisfies $u|_{\pom}=f$ and solves the $\dot{\mathscr{C}}^{\alpha}$-Dirichlet problem for $L$ in $\Omega$ as formulated in \eqref{eq:problem}. Moreover, there exists $C_{\alpha}$ (depending on $n$, the CDC constant, the ellipticity constant of $L$, and $\alpha$) such that 
		\begin{equation}\label{eq:ill-fuf}
		\|f\|_{\dot{\mathscr{C}}^{\alpha}(\pom)}
		\leq \|u\|_{\dot{\mathscr{C}}^{\alpha}(\Omega)}
		\leq C_{\alpha} \|f\|_{\dot{\mathscr{C}}^{\alpha}(\pom)}.
		\end{equation}
		Nonetheless, the $\dot{\mathscr{C}}^{\alpha}$-Dirichlet problem for $L$ in $\Omega$ is ill-posed since \eqref{eq:problem} always has more than one solution if $f$ is non-constant. 
	\end{theorem}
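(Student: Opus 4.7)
The approach is to write $u = f(y_0) + v$, where $v(X) := \int_{\pom} g(y)\, d\w_L^X(y)$ with the shifted datum $g := f - f(y_0)$, which lies in $\dot{\mathscr{C}}^{\alpha}(\pom)$, satisfies $g(y_0) = 0$, and has the same semi-norm as $f$. Since $\pom$ is bounded we have $\|g\|_{L^{\infty}(\pom)} \leq \|f\|_{\dot{\mathscr{C}}^{\alpha}(\pom)}\diam(\pom)^{\alpha}$, so $g$ is a bounded continuous datum and $v$ is the standard elliptic measure extension. The shift by $f(y_0)$ is essential because in this regime $\w_L^X(\pom) \not\equiv 1$, so the naive candidate $\int f\, d\w_L^X$ fails to recover constants (and will ultimately violate the H\"{o}lder bound in \eqref{eq:ill-fuf}). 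Invoking the Perron-Wiener-Brelot construction, the CDC hypothesis (which forces every boundary point to be Wiener regular), and the boundary H\"{o}lder estimate (Lemma~\ref{lem:Holder}), I would conclude that $v \in W^{1,2}_{\loc}(\Omega)$, $Lv = 0$ weakly, and $v$ extends continuously to $\overline{\Omega}$ with $v|_{\pom} = g$; hence $u \in W^{1,2}_{\loc}(\Omega)$ solves $Lu = 0$ with $u|_{\pom} = f$.

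The main technical task is the H\"{o}lder estimate \eqref{eq:ill-fuf}. The lower bound is immediate from $u|_{\pom} = f$. For the upper bound I would proceed by a case analysis: apply the interior De Giorgi-Nash estimate (Lemma~\ref{lem:DGN}) on balls of radius comparable to $\dist(X, \pom)$ when both $X$ and $Y$ are deep in $\Omega$, and apply the boundary H\"{o}lder estimate (Lemma~\ref{lem:Holder}) combined with the modulus of continuity of $g$ when at least one of the points lies near $\pom$. This largely parallels the corresponding argument in the proof of Theorem~\ref{thm:Ca-well}. The main obstacle is to produce a constant $C_{\alpha}$ \emph{independent} of $\diam(\pom)$, in accordance with the statement of the theorem; the naive bound $\|v\|_{L^{\infty}(\Omega)} \leq \|f\|_{\dot{\mathscr{C}}^{\alpha}(\pom)}\diam(\pom)^{\alpha}$ is too crude once $X, Y$ lie far from $\pom$. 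The resolution should use the sharper pointwise estimate $|v(X)| \leq \|g\|_{L^{\infty}(\pom)}\w_L^X(\pom)$ together with the decay of $\w_L^X(\pom)$ as $|X| \to \infty$, which is available because $n + 1 \geq 3$ via Green's function bounds (roughly, $\w_L^X(\pom) \lesssim (\diam(\pom)/|X|)^{n-1}$ for $|X|$ large); this absorbs the unwanted factor $\diam(\pom)^{\alpha}$ and yields a $\diam(\pom)$-free constant in the far-field regime.

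For the ill-posedness, let $f$ be non-constant and choose $y_0, y_0' \in \pom$ with $f(y_0) \neq f(y_0')$. A short computation from \eqref{eq:ill-solution} gives
\[
u_{y_0}(X) - u_{y_0'}(X) = \bigl(f(y_0) - f(y_0')\bigr)\bigl(1 - \w_L^X(\pom)\bigr), \qquad X \in \Omega,
\]
so it suffices to verify $\w_L^X(\pom) \not\equiv 1$. Equivalently, $w(X) := 1 - \w_L^X(\pom)$ is a nonnegative $L$-harmonic function in $\Omega$ with continuous zero boundary values on $\pom$ (by CDC regularity) that does not vanish identically. Since $\pom$ is bounded and $\Omega$ is unbounded with $n+1 \geq 3$, the unbounded component of $\Omega$ contains $\R^{n+1} \setminus B(0, R)$ for some large $R$, and a standard Green's function/barrier comparison (using the transience of $L$ in this ambient dimension) shows that $\w_L^X(\pom) \to 0$ as $|X| \to \infty$, so $w \not\equiv 0$. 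Consequently $u_{y_0} \neq u_{y_0'}$, establishing that the $\dot{\mathscr{C}}^{\alpha}$-Dirichlet problem for $L$ in $\Omega$ admits more than one solution whenever $f$ is non-constant.
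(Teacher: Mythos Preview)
Your overall architecture (shift by $f(y_0)$, case analysis via De Giorgi/Nash and boundary H\"older, non-uniqueness via $(f(y_0)-f(y_0'))(1-\omega_L^X(\pom))$) matches the paper's, and your ill-posedness computation is exactly the paper's Step~4. But you have misdiagnosed where the new difficulty lies.

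The far-field case is \emph{not} the obstacle. When $\diam(\pom)<\delta(Y)$ and $|X-Y|<\delta(Y)/4$, the ``naive'' bound $|v(Z)|\le \|f\|_{\dot{\mathscr{C}}^\alpha}\diam(\pom)^\alpha$ is already enough: De Giorgi/Nash gives
\[
|u(X)-u(Y)|\lesssim \Big(\tfrac{|X-Y|}{\delta(Y)}\Big)^{\alpha_1}\|f\|_{\dot{\mathscr{C}}^\alpha}\diam(\pom)^\alpha
\le \Big(\tfrac{|X-Y|}{\delta(Y)}\Big)^{\alpha_1}\|f\|_{\dot{\mathscr{C}}^\alpha}\delta(Y)^\alpha
\le \|f\|_{\dot{\mathscr{C}}^\alpha}|X-Y|^\alpha,
\]
since $\alpha<\alpha_1$ and $|X-Y|\le\delta(Y)$. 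No Green's function decay is needed; the paper does exactly this (its Case~1).

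The genuine new issue, absent from the well-posed proof you are paralleling, occurs in the \emph{near/intermediate} regime ($\delta(Y)\le\diam(\pom)$ or $|X-Y|\le\diam(\pom)$). There the identity $u(Z)-f(\widehat y)=\int(f-f(\widehat y))\,d\omega_L^Z$ used throughout Section~\ref{sect:existence} fails: since $\omega_L^Z(\pom)<1$ you pick up the extra term
\[
(f(\widehat y)-f(y_0))\bigl(\omega_L^Z(\pom)-1\bigr),
\]
and bounding this by $\|f\|_{\dot{\mathscr{C}}^\alpha}\,|X-Y|^\alpha$ requires the estimates
\[
1-\omega_L^Z(\pom)\lesssim\Big(\tfrac{\delta(Z)}{\diam(\pom)}\Big)^{\alpha_2},
\qquad
|\omega_L^X(\pom)-\omega_L^Y(\pom)|\lesssim \Big(\tfrac{|X-Y|}{\delta(Y)}\Big)^{\alpha_1}\Big(\tfrac{\delta(Y)}{\diam(\pom)}\Big)^{\alpha_2}.
\]
These come from applying Lemma~\ref{lem:Holder} (and then Lemma~\ref{lem:DGN} plus Harnack) to the nonnegative solution $1-\omega_L^{\,\cdot}(\pom)$, which vanishes on all of $\pom$. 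This is the heart of the paper's Cases~2 and~3 and is the step your proposal does not name; ``boundary H\"older combined with the modulus of continuity of $g$'' will not produce it, because $g=f-f(y_0)$ does not vanish on any surface ball.
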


	\begin{remark}
		We would like to call the reader's attention to the (apparently different) expressions \eqref{eq:well-solution} and \eqref{eq:ill-solution} for solutions to the $\dot{\mathscr{C}}^{\alpha}$-Dirichlet problem. We first observe that in the scenario of Theorem $\ref{thm:Ca-well}$ (once we know that $u$ as set in \eqref{eq:well-solution} is well-defined via an absolutely convergent integral, which is indeed part of the proof), one can use the fact that $\omega_L^X(\pom)=1$ for every $X\in\Omega$, to see that  \eqref{eq:well-solution} and  \eqref{eq:ill-solution} give rise to the same function $u$. On the other hand, in the scenario of Theorem~\ref{thm:Ca-ill} we have $0<\omega_L^X(\pom)<1$ for every $X\in\Omega$, hence \eqref{eq:well-solution} and \eqref{eq:ill-solution} are the same solution if and only if $f(y_0)=0$, so they are different in general. Moreover, if $u$ were chosen as in \eqref{eq:well-solution}, then the second estimate in \eqref{eq:ill-fuf} would fail. To see this, take $f\equiv 1$ on $\pom$ so that $f\in \dot{\mathscr{C}}^{\alpha}(\pom)$ with $\|f\|_{\dot{\mathscr{C}}^{\alpha}(\Omega)}=0$. If we set $u_1$ as in  \eqref{eq:well-solution}, then $u_1(X)=\omega_L^X(\pom)$ for every $X\in\Omega$. If the second estimate in \eqref{eq:ill-fuf} were true for $u_1$, then we would have $\|u_1\|_{\dot{\mathscr{C}}^{\alpha}(\Omega)}=0$, hence $u_1$ must be constant. But we know that $u_1\in\mathscr{C}(\overline{\Omega})$ with $u_1|_{\pom}=f\equiv 1$, hence $u_1(X)=\omega_L^X(\pom)=1$ for every $X\in \Omega$, and this  contradicts the fact that $\Omega$ is unbounded with $\pom$ bounded. Note on the other hand that if we define $u_2$ as in \eqref{eq:ill-solution}, then $u_2\equiv 1$ as desired and in that case $u_2$ clearly verifies \eqref{eq:ill-fuf}. 
	\end{remark}

	As an immediate consequence of Theorems \ref{thm:Ca-well} and \ref{thm:Ca-ill}, we have the following. 
	\begin{corollary}
	Let $\Omega \subset \ree$, $n\ge 2$, be an open set satisfying the capacity density condition and $L=-\div(A\nabla)$ be a real (non-necessarily symmetric) uniformly elliptic operator. Then, the $\dot{\mathscr{C}}^{\alpha}$-Dirichlet problem  for $L$ in $\Omega$ formulated in \eqref{eq:problem} is well-posed if and only if either \textbf{$\Omega$ is bounded} or \textbf{$\Omega$ is unbounded with $\pom$ being unbounded}.
	\end{corollary}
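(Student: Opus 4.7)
The corollary is essentially a case analysis combining Theorems \ref{thm:Ca-well} and \ref{thm:Ca-ill}, together with the observation that any open set $\Omega \subset \ree$ falls into exactly one of three categories: (i) $\Omega$ bounded (in which case $\pom$ is automatically bounded); (ii) $\Omega$ unbounded with $\pom$ unbounded; (iii) $\Omega$ unbounded with $\pom$ bounded. My plan is to address the ``if'' and ``only if'' directions separately, invoking the corresponding theorem for each configuration; the range of admissible $\alpha$ is the same $\alpha \in (0,\alpha_0)$ in both theorems, so the choice of $\alpha$ causes no friction.

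For the ``if'' direction, suppose $\Omega$ is bounded, or $\Omega$ is unbounded with $\pom$ unbounded. I would apply Theorem~\ref{thm:Ca-well} directly: it produces, for every boundary datum $f\in \dot{\mathscr{C}}^{\alpha}(\pom)$, a unique solution to \eqref{eq:problem} given by \eqref{eq:well-solution}, together with the two-sided semi-norm estimate. This is precisely the well-posedness of the $\dot{\mathscr{C}}^{\alpha}$-Dirichlet problem.

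For the ``only if'' direction, I would argue by contrapositive: assume $\Omega$ is unbounded with $\pom$ bounded, and show \eqref{eq:problem} is not well-posed. Here Theorem~\ref{thm:Ca-ill} applies and gives, for each fixed $y_0 \in \pom$ and each non-constant $f \in \dot{\mathscr{C}}^{\alpha}(\pom)$, a solution $u$ defined by \eqref{eq:ill-solution} satisfying the required estimates. The final sentence of Theorem~\ref{thm:Ca-ill} asserts that the problem admits more than one solution whenever $f$ is non-constant (as already illustrated in the remark comparing \eqref{eq:well-solution} and \eqref{eq:ill-solution} via $f\equiv 1$, where two distinct $\dot{\mathscr{C}}^\alpha$-solutions arise from the two formulas), so uniqueness fails and \eqref{eq:problem} is ill-posed.

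Since all non-trivial content (existence of the elliptic measure solutions, the H\"older bounds, and the failure of uniqueness in the bounded-boundary case) is already encapsulated in Theorems \ref{thm:Ca-well} and \ref{thm:Ca-ill}, there is no real obstacle to overcome; the corollary is a bookkeeping statement. The only point requiring minor care is making sure the trichotomy above is genuinely exhaustive and that the ``$\Omega$ bounded'' clause of Theorem~\ref{thm:Ca-well} covers the (sub)case where $\pom$ happens to be bounded but $\Omega$ is bounded, which it does by the statement itself.
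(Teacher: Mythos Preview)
Your proposal is correct and matches the paper's own treatment: the corollary is stated there as an immediate consequence of Theorems~\ref{thm:Ca-well} and~\ref{thm:Ca-ill} with no separate proof, and your trichotomy-plus-contrapositive argument is exactly the intended reading. One small inaccuracy: the remark you cite about $f\equiv 1$ is not an illustration of non-uniqueness (indeed $f\equiv 1$ is constant, the case excluded in Theorem~\ref{thm:Ca-ill}'s non-uniqueness clause) but rather an explanation of why the estimate \eqref{eq:ill-fuf} would fail for the formula \eqref{eq:well-solution}; your main argument, however, correctly rests on the final sentence of Theorem~\ref{thm:Ca-ill} and does not depend on that aside.
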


	It is important to emphasize that the results above are optimal in the sense that the capacity density condition is the minimal assumption needed to obtain well-posedness of the problem \eqref{eq:problem}. In \cite{A}, Aikawa proved that for the Laplacian in a bounded regular domain $\Omega$, solvability of the Dirichlet problem in H\"older spaces (along with the bound \eqref{eq:bounds_CDC}) already implies that $\Omega$ satisfies the capacity density condition.
	Our next result, which extends \cite{A} in several ways, is stated as follows: 
	
	\begin{theorem} \label{th:necessity_CDC}
		Let $\Omega \subset \ree$, $n \ge 2$, be an open set, and let $L = -\div (A\nabla)$ be a real (non-necessarily symmetric) elliptic operator. Let $\alpha \in (0, 1)$. Assume that for every $f \in \dot{\mathscr{C}}^{\alpha}(\pom)$, the function $u$ defined by \eqref{eq:well-solution} in the case $\Omega$ is bounded or $\Omega$ is unbounded with $\pom$ being unbounded, or defined by \eqref{eq:ill-solution} in the case $\Omega$ is unbounded with $\pom$ bounded, is a solution\footnote[3]{ Here and elsewhere, whenever we assume that \eqref{eq:well-solution} (or \eqref{eq:ill-solution}) is a solution to \eqref{eq:problem}, we also assume implicitly that the integral in \eqref{eq:well-solution} (or \eqref{eq:ill-solution}) is absolutely convergent.} 
		to \eqref{eq:problem} satisfying 
		\begin{equation} \label{eq:bounds_CDC}
		\|u\|_{\dot{\mathscr{C}}^{\alpha}(\Omega)}
		\leq 
		C_{\alpha} \|f\|_{\dot{\mathscr{C}}^{\alpha}(\pom)}. 
		\end{equation}
		Then $\Omega$ satisfies the capacity density condition, with constants depending only on $n$, the ellipticity of $L$, $\alpha$, and $C_\alpha$.
	\end{theorem}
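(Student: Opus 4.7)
The plan is to adapt Aikawa's strategy from \cite{A} to the setting of general uniformly elliptic $L$ and possibly unbounded open sets. The idea is to test the assumed Dirichlet solvability against an explicit H\"older boundary datum concentrated at a fixed point $x_0\in\pom$ and saturated at a chosen scale $r>0$, exploit the norm bound \eqref{eq:bounds_CDC} to extract quantitative decay of the elliptic measure $\omega_L^X$ away from $x_0$, and finally translate this decay into the required capacity lower bound via a potential-theoretic comparison with the $L$-capacitary potential of the complement.

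Concretely, fix $x_0\in\pom$ and $r\in(0,\diam(\pom)]$ (CDC at larger scales being immediate) and set $f(y):=\min\{|y-x_0|^{\alpha},(2r)^{\alpha}\}$ for $y\in\pom$. Then $f\in\dot{\mathscr{C}}^{\alpha}(\pom)$ with $\|f\|_{\dot{\mathscr{C}}^{\alpha}(\pom)}\leq 1$ and $f(x_0)=0$. Let $u$ be the hypothesized candidate solution, given by \eqref{eq:well-solution} (or by \eqref{eq:ill-solution} with the choice $y_0=x_0$ in the remaining case); in both situations $u(X)=\int_{\pom}f\,d\omega_L^X$. The bound \eqref{eq:bounds_CDC}, combined with $u|_{\pom}=f$ and $f(x_0)=0$, gives $|u(X)|\leq C_\alpha|X-x_0|^\alpha$, and since $f\geq(2r)^\alpha\mathbf{1}_{\pom\setminus B(x_0,2r)}$, one deduces the key decay estimate
\[
\omega_L^X\bigl(\pom\setminus B(x_0,2r)\bigr)\leq C_\alpha'\,\bigl(|X-x_0|/r\bigr)^{\alpha},\qquad X\in\Omega.
\]
Choose $\eta\in(0,1)$, depending only on $\alpha$ and $C_\alpha$, so that $C_\alpha'\eta^{\alpha}\leq 1/4$. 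If $\Omega\cap B(x_0,\eta r)=\emptyset$, then $B(x_0,\eta r)\subset\overline{\Omega^c}$ and the required capacity bound is trivial from the capacity of a ball; otherwise, any $X_\eta\in\Omega\cap B(x_0,\eta r)$ satisfies $\omega_L^{X_\eta}(\pom\setminus B(x_0,2r))\leq 1/4$, and combined with $\omega_L^{X_\eta}(\pom)\to 1$ as $X_\eta\to x_0$ (a consequence of the Wiener regularity at $x_0$ implied by the continuity of $u$ up to $\pom$ forced by the hypothesis, tested on suitable near-constant data), one obtains $\omega_L^{X_\eta}(\pom\cap\overline{B(x_0,2r)})\geq c_0>0$ for some universal $c_0$ after possibly refining $\eta$.

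The principal obstacle is the final transfer of this elliptic-measure lower bound into a capacity lower bound for $K:=\Omega^c\cap\overline{B(x_0,2r)}$ inside $B(x_0,4r)$. Let $p_L(X):=\omega_{L,U}^X(K)$ denote the $L$-capacitary potential of $K$ in $B(x_0,4r)$, where $U:=B(x_0,4r)\setminus K\supset\Omega\cap B(x_0,4r)$; a suitable maximum-principle comparison in $\Omega\cap B(x_0,4r)$, after truncating to match boundary values on $\Omega\cap\partial B(x_0,4r)$, yields $p_L(X_\eta)\geq c_0/2$. Representing $p_L$ via its Green-function expansion $p_L(X)=\int_K G_{L,B(x_0,4r)}(X,Y)\,d\nu_L(Y)$ with $\nu_L(K)=\Cap_L(K,B(x_0,4r))$, and using the standard pointwise bound $G_{L,B(x_0,4r)}(X,Y)\lesssim|X-Y|^{1-n}$ at a point $X^{\ast}$ with $\dist(X^{\ast},K)\gtrsim r$ (reached from $X_\eta$ via a Harnack chain, or alternatively by the direct Dirichlet-energy/competitor argument of \cite{A}), one obtains $\nu_L(K)\gtrsim r^{n-1}$; since by uniform ellipticity $\Cap_L$ is comparable to the standard Wiener capacity, this establishes the CDC with constants depending only on $n$, ellipticity, $\alpha$, and $C_\alpha$. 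The hardest technical point is precisely this last quantitative propagation of the lower bound on $p_L$ from $X_\eta$ (possibly close to $K$) to a well-separated $X^{\ast}$, without the luxury of assuming CDC a priori; this is the central non-triviality of the argument.
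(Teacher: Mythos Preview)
Your Step~1 (the test function $f(y)=\min\{|y-x_0|^\alpha,(2r)^\alpha\}$ yielding the decay $\omega_L^X(\pom\setminus B(x_0,2r))\lesssim (|X-x_0|/r)^\alpha$) is exactly the paper's Step~1, and the observation that the hypothesis forces Wiener regularity is correct (this is the paper's Lemma~\ref{lem:regular}).

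The gap is in the final transfer to capacity. Your Green's function representation $p_L(X)=\int_K G_{L,B(x_0,4r)}(X,Y)\,d\nu_L(Y)$ evaluated at $X_\eta$ gives nothing, because $X_\eta\in B(x_0,\eta r)$ is at distance $\lesssim\eta r$ from $K$ (indeed $x_0\in K$), so $G(X_\eta,\cdot)$ blows up on $K$ and the integral can be large even if $\nu_L(K)$ is tiny. You therefore need $X^*$ with $\dist(X^*,K)\gtrsim r$, but the Harnack chain from $X_\eta$ to $X^*$ must pass through $U\cap B(x_0,2r)=\Omega\cap B(x_0,2r)$, and no connectivity is assumed. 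This is not a technicality you can ``refine'': the whole point of the theorem is to work in arbitrary open sets.

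The paper's resolution is structurally different. Rather than seeking a far-away point, it first proves a \emph{geometric dichotomy} (Step~2): using GEMD and a comparison with the fundamental solution $\Gamma_L$, one shows that for some $c=c(n,\alpha,C_\alpha,\Lambda)$ the annulus $S(a;cr,r)$ either meets $\pom$ or lies entirely in $\ree\setminus\overline\Omega$ (the latter giving CDC trivially). In the former case, the existence of a boundary point $x^*\in S(a;cr,r)$ lets one run a ``chain of touching balls'' weak-Harnack argument for supersolutions along $\partial B(a,\rho)$ (Step~4, this is where \cite[Theorem~4.9]{HanLin} enters) to upgrade GEMD to the \emph{local} decay $\omega_{L,\Omega\cap B(a,r)}^X(\overline\Omega\cap\partial B(a,r))\lesssim(|X-a|/r)^\alpha$. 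This LEMD is what you are missing: by the maximum principle one has $\omega_{L,\Omega\cap B(a,r)}^X(\overline\Omega\cap\partial B(a,r))\geq\omega_L^X(\pom\setminus B(a,r))$, so GEMD alone does not bound the left side. With LEMD in hand, Lemma~\ref{lem:CP} gives that the capacitary potential $v$ of $F=\overline{B(a,r)}\setminus\Omega$ satisfies $v\geq\eta>0$ on the \emph{entire ball} $B(a,r/N)$, and then $\eta^{-1}v$ is a legitimate competitor for $\Cap(B(a,r/N),B(a,2r))\approx r^{n-1}$, giving the CDC via a Dirichlet-energy bound rather than the Riesz-measure route. The case $\Omega$ unbounded with $\pom$ bounded needs an additional uniform lower bound $\omega_L^X(\pom)\geq c^*$ near $\pom$ (the paper's \eqref{eq:claim_non_degeneracy}), which again uses the hypothesis in a quantitative way; your appeal to ``$\omega_L^{X_\eta}(\pom)\to1$'' is only qualitative.
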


	This result (along with the previous statements) is a quantitative analogue of the main results in \cite{LSW}. Indeed, we show that
	solving the Dirichlet problem with data in Hölder spaces relies on the CDC property which is of a geometrical nature (this should be compared with Wiener regularity for the continuous Dirichlet problem). Besides, we obtain that solving the Dirichlet problem with data in Hölder spaces for some given elliptic operator implies such solvability for any operator in this class (with a possibly different exponent depending on ellipticity).	
	{
	Moreover, we can prove that for any regular open set $\Omega$ (cf. Section~\ref{sec:optimal})  and for any elliptic operator $L$, the following holds: solutions defined by \eqref{eq:well-solution} or \eqref{eq:ill-solution} satisfy the estimate \eqref{eq:bounds_CDC}, if and only if, non-negative solutions with vanishing boundary data enjoy boundary H\"older continuity (see the precise statement in Lemma \ref{lem:Holder}), with \emph{almost} the same exponent $\alpha$ (see Remark~\ref{remark:equivalences}).
	}
	
	On the other hand, Theorem~\ref{thm:Ca-well} allows us to provide a characterization of the H\"{o}lder spaces in terms of the boundary traces of solutions. To discuss this, we need to introduce some notation. We first observe that $\|\cdot\|_{\dot{\mathscr{C}}^{\alpha}(\pom)}$ is only a semi-norm, and as a matter of fact, $\|f\|_{\dot{\mathscr{C}}^{\alpha}(\pom)}=0$ if and only if $f$ is constant everywhere in $\pom$. In view of this, given two functions $f,g$ defined on $\pom$, we say that $f\sim g$ provided $f-g$ is constant in $\pom$. This induces an equivalence relation and for every function $f: \pom \to \R$ we introduce the equivalence class of $f$ as $[f] :=\{g: \pom \to \R: g \thicksim f\}$, which ultimately defines the quotient space $\dot{\mathscr{C}}^{\alpha}(\pom)/\re := \{[f]: f \in \dot{\mathscr{C}}^{\alpha}(\pom)\}$. And defining
	$
	\big\|[f] \big\|_{\dot{\mathscr{C}}^{\alpha}(\pom)/\re}
	:=
	\|f\|_{\dot{\mathscr{C}}^{\alpha}(\pom)}$
	for $f\in\dot{\mathscr{C}}^{\alpha}(\pom)$, we get a genuine norm on this quotient space. It is not hard to show that $\dot{\mathscr{C}}^{\alpha}(\pom)/\re$ equipped with the norm $\big\|[\cdot]\big\|_{\dot{\mathscr{C}}^{\alpha}(\pom)}$ is complete, hence a Banach space. 
	
	Let us also introduce the space
	\[
	\dot{\mathscr{C}}_{L}^{\alpha}(\Omega)
	:=
	\big\{u\in W^{1,2}_{\loc}(\Omega)\cap \dot{\mathscr{C}}^{\alpha}(\Omega): Lu=0 \text{ in the weak sense in $\Omega$}
	\big\}.
	\]
	This is clearly a linear space on which $\|\cdot\|_{\dot{\mathscr{C}}^{\alpha}(\Omega)}$ is also a semi-norm with null-space $\re$. Much as before we can introduce
	$
	\big\|[u] \big\|_{\dot{\mathscr{C}}_L^{\alpha}(\Omega)/\re}
	:=
	\|u\|_{\dot{\mathscr{C}}^{\alpha}(\Omega)},
	$
	which defines a genuine norm on the quotient space $\dot{\mathscr{C}}^{\alpha}_L(\Omega)/\re$.
	
	\begin{corollary}\label{cor:fatou}
		Let $\Omega \subset \ree$, $n\ge 2$, be an open set satisfying the capacity density condition so that either \textbf{$\Omega$ is bounded} or \textbf{$\Omega$ is unbounded with $\pom$ being unbounded}. Let $L=-\div(A\nabla)$ be  a real (non-necessarily symmetric) elliptic operator. There exists $\alpha_0 \in (0, 1)$\textsuperscript{\ref{note1}} (depending only on $n$,  the CDC constant, and the ellipticity constants of $L$) such that for every $\alpha \in (0, \alpha_0)$, there exists $C_\alpha\ge 1$ (depending on the same parameters and additionally on $\alpha$) with the property that
		\begin{equation}\label{eqn:corol:Fatou}
		u \in \dot{\mathscr{C}}_L^{\alpha}(\Omega) 
		\Longrightarrow
		\left\{
		\begin{array}{l}
		u\in \dot{\mathscr{C}}^{\alpha}(\overline{\Omega}), \text{ hence } u|_{\pom}\in \dot{\mathscr{C}}^{\alpha}(\pom),
		\\[0.2cm]
		\displaystyle u(X)= \int_{\partial \Omega}\big(u|_{\pom}\big)(y)\,d\omega_L^X(y),\quad X\in\Omega,
		\\[0.4cm]
		\big\|u|_{\pom}\big\|_{\dot{\mathscr{C}}^{\alpha}(\pom)}
		\le 
		\|u\|_{\dot{\mathscr{C}}^{\alpha}(\Omega)}
		\le
		C_\alpha \big\|u|_{\pom}\big\|_{\dot{\mathscr{C}}^{\alpha}(\pom)}.
		\end{array}
		\right.
		\end{equation}
		In addition, 
		\begin{equation}\label{eqn:ident}
		\dot{\mathscr{C}}^{\alpha}(\pom)
		=
		\big\{
		u|_{\pom}: u\in  \dot{\mathscr{C}}_{L}^{\alpha}(\Omega)
		\big\}.
		\end{equation}
		Moreover, $\dot{\mathscr{C}}^{\alpha}_L(\Omega)/\re$ equipped with the norm $\big\|[\cdot] \big\|_{\dot{\mathscr{C}}^{\alpha}(\Omega)/\re}$ is a Banach space and the operator
		\begin{equation}\label{eqn:map}
		\dot{\mathscr{C}}^{\alpha}_L(\Omega)/\re\ni [u] \longmapsto \big[u|_{\pom}\big]\in \dot{\mathscr{C}}^{\alpha}(\pom)/\re
		\end{equation}
		is a well-defined isomorphism between Banach spaces. 
	\end{corollary}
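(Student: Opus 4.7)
The plan is to reduce everything to Theorem \ref{thm:Ca-well} by producing a bona fide boundary trace for each $u\in\dot{\mathscr{C}}_L^{\alpha}(\Omega)$ and then invoking the uniqueness clause of that theorem. Accordingly, I would take $\alpha_0$ and $C_\alpha$ to be exactly those provided by Theorem \ref{thm:Ca-well} for the same open set and operator.

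The first step is the only genuinely analytic piece: showing that every uniformly $\alpha$-H\"older function on $\Omega$ extends uniquely to an $\alpha$-H\"older function on $\overline{\Omega}$ with the same seminorm. Given $u\in\dot{\mathscr{C}}_L^{\alpha}(\Omega)$, $x\in\pom$, and a sequence $X_k\in\Omega$ with $X_k\to x$, the inequality $|u(X_k)-u(X_j)|\le\|u\|_{\dot{\mathscr{C}}^{\alpha}(\Omega)}|X_k-X_j|^{\alpha}$ shows $\{u(X_k)\}$ is Cauchy in $\re$, and an easy comparison argument shows the limit is independent of the chosen sequence. Passing to the limit in the pointwise H\"older inequality yields $u\in\dot{\mathscr{C}}^{\alpha}(\overline{\Omega})$ with unchanged seminorm, and in particular $f:=u|_{\pom}\in\dot{\mathscr{C}}^{\alpha}(\pom)$ together with the lower bound $\|f\|_{\dot{\mathscr{C}}^{\alpha}(\pom)}\le\|u\|_{\dot{\mathscr{C}}^{\alpha}(\Omega)}$. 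This gives the first line of \eqref{eqn:corol:Fatou} as well as the left inequality in the third line.

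Next, $u$ itself is now a solution of \eqref{eq:problem} with boundary datum $f$, since $u\in W^{1,2}_{\loc}(\Omega)$, $Lu=0$ weakly, $u\in\dot{\mathscr{C}}^{\alpha}(\Omega)$, and by construction $u|_{\pom}=f$. In the geometric regime of Theorem \ref{thm:Ca-well} (bounded $\Omega$, or unbounded $\Omega$ with unbounded $\pom$) that theorem asserts that the representation \eqref{eq:well-solution} furnishes the \emph{unique} solution; hence $u(X)=\int_{\pom}f\,d\omega_L^X$ for every $X\in\Omega$, which is the middle line of \eqref{eqn:corol:Fatou}, and the upper bound $\|u\|_{\dot{\mathscr{C}}^{\alpha}(\Omega)}\le C_\alpha\|f\|_{\dot{\mathscr{C}}^{\alpha}(\pom)}$ from the same theorem closes the third line.

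The identity \eqref{eqn:ident} is then immediate: ``$\supseteq$'' has just been proved, while ``$\subseteq$'' is the existence part of Theorem \ref{thm:Ca-well}, which for any $f\in\dot{\mathscr{C}}^{\alpha}(\pom)$ produces $u\in W^{1,2}_{\loc}(\Omega)\cap\dot{\mathscr{C}}^{\alpha}(\overline{\Omega})$ with $Lu=0$ and $u|_{\pom}=f$. Finally, the map \eqref{eqn:map} is plainly well defined and linear (adding a constant to $u$ shifts $u|_{\pom}$ by the same constant), surjective by \eqref{eqn:ident}, and injective because a solution with zero boundary trace must be constant by the uniqueness clause applied to $f\equiv 0$. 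The two-sided estimate in \eqref{eqn:corol:Fatou} is exactly the statement that \eqref{eqn:map} is bi-Lipschitz on the quotient norms, so it is a Banach-space isomorphism onto its image; since $\dot{\mathscr{C}}^{\alpha}(\pom)/\re$ is complete (as already recorded just before the statement), completeness of $\dot{\mathscr{C}}^{\alpha}_L(\Omega)/\re$ is inherited through this isomorphism. The only step that is not an immediate citation is the boundary extension in the second paragraph, but this is standard and presents no real obstacle.
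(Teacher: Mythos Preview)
Your proof is correct and follows essentially the same approach as the paper: extend $u$ to $\overline{\Omega}$, observe it solves \eqref{eq:problem} with datum $f=u|_{\pom}$, and invoke the uniqueness and estimates of Theorem~\ref{thm:Ca-well} to obtain \eqref{eqn:corol:Fatou}, \eqref{eqn:ident}, and the isomorphism \eqref{eqn:map}. The only cosmetic difference is that you write out the H\"older extension argument directly (obtaining the same seminorm), whereas the paper cites its Lemma~\ref{lem:extension}; and for injectivity the paper uses the integral representation together with the fact that $\omega_L^X(\pom)=1$, while you appeal to the uniqueness clause with $f\equiv 0$ (after subtracting the constant)---both are equivalent.
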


	
	As applications of the above results, we will also establish well-posedness results for Dirichlet boundary value problems associated with different functional spaces, both for solutions and boundary data. These include Morrey-Campanato spaces, and spaces of solutions satisfying a fractional Carleson measure condition, which we will show to be closely related to the $\dot{\mathscr{C}}^\alpha$ spaces.

	
	The paper is organized as follows.
	In Section \ref{sec:pre}, we present some preliminaries, definitions, and auxiliary results that will be used throughout the paper. 
	The proofs of the main results, that is, those of well-posedness and ill-posedness of the $\dot{\mathscr{C}}^{\alpha}$-Dirichlet problem (Theorems~\ref{thm:Ca-well}, \ref{thm:Ca-ill}, and also Corollary~\ref{cor:fatou}), are presented in Sections \ref{sec:well} and \ref{sec:ill}, respectively. 
	Next, Section \ref{sec:optimal} is devoted to proving optimality of the capacity density condition (Theorem~\ref{th:necessity_CDC}). 
	Lastly, in Section~\ref{sec:applications} we explore some applications of the previous results to Morrey-Campanato spaces and spaces of functions satisfying a fractional Carleson measure condition.

	We mention that all the results above can be extended to generalized H\"{o}lder and Morrey-Campanato spaces under certain assumptions on the growth function (see Section~\ref{subsec:growth} for more details). Our proofs will be carried out in this general scenario below.


	\section{Preliminaries and auxiliary results}\label{sec:pre}
	
	\subsection{Notation}
	
	\begin{list}{$\bullet$}{\leftmargin=0.4cm  \itemsep=0.2cm}
		
		\item We use the letters $c$, $C$ to denote harmless positive constants, not necessarily the same at each occurrence, which depend only on dimension and the constants appearing in the hypotheses of the theorems (which we refer to as the ``allowable parameters''). We shall also sometimes write $a\lesssim b$ and $a\approx b$ to mean, respectively, that $a\leq C b$ and $0<c\leq a/b\leq C$, where the constants $c$ and $C$ are as above, unless explicitly noted to the contrary. 
		
		\item We shall use lower case letters $x,y,z$, etc., to denote points on $\partial\Omega$, and capital letters $X,Y,Z$, etc., to denote generic points in $\re^{n+1}$ (especially those in $\Omega$).
		
		
		\item The open $(n+1)$-dimensional Euclidean ball of radius $r$ will be denoted $B(x,r)$. A ``surface ball'' is denoted $\Delta(x,r):=B(x,r)\cap \partial\Omega$. Given a Euclidean ball $B=B(X,r)$, we will denote $r_B := r$, and its concentric dilate by a factor of $\kappa>0$ will be denoted by $\kappa B=B(X,\kappa r)$. 
		
		\item For $X\in\re^{n+1}$, we set $\delta(X):=\dist(X,\partial\Omega)$. 
		
		\item We denote by $\abs{\cdot}$ the ($n+1$)-dimensional Lebesgue measure in $\ree$, by $\H^n$ the $n$-dimensional Hausdorff measure, and by $\sigma:=\H^n |_{\partial \Omega}$ the surface measure on $\partial \Omega$. 
		
		\item For a Borel set $A\subset \partial\Omega$ with $0<\sigma(A)<\infty$, we write $\fint_{A}f\,d\sigma := \sigma(A)^{-1} \int_A f \,d\sigma$. Given a function $u \in L^1_{\loc}(\ree)$ and a Euclidean ball $B \subset \ree$ with $|B|>0$, we set $\bariint_B u \, dX := \frac{1}{|B|} \iint_B u \, dX$.
		
	\end{list}

	\subsection{Basic geometric framework}
	\begin{definition}[\bf Ahlfors-David  regular, ADR]\label{def:ADR}
		We say that a closed set $E \subset \ree$ is \textit{$n$-dimensional Ahlfors-David regular} (ADR for short) if there is some uniform constant $C_1>1$ such that
		\begin{equation*}
		C_1^{-1}\, r^n \leq \mathcal{H}^n(E\cap B(x,r)) \leq C_1\, r^n,\qquad\forall x\in E, \quad 0<r<\diam(E).
		\end{equation*}
	\end{definition}
	
	We recall the definition of the capacity of a set.  Given an open set $D\subset \ree$ (where we recall that we always assume that $n\ge 2$) and a compact set $K\subset D$  we define the capacity of $K$ relative to $D$ as
	\begin{align*}
	\Cap(K, D) := \inf\bigg\{\iint_{D} |\nabla v(X)|^2 dX:\, \, 
	v\in \mathscr{C}^{\infty}_{c}(D),\, v(x)\geq  1 \mbox{ in }K\bigg\}.
	\end{align*}
	
	\begin{definition}[\textbf{Capacity density condition, CDC}]\label{def:CDC}
		An open set $\Omega$ is said to satisfy the \textit{capacity density condition} (CDC for short) if there exists a uniform constant $c_1>0$ such that
		\begin{equation*}
		\frac{\Cap(\overline{B(x,r)}\setminus \Omega, B(x,2r))}{\Cap(\overline{B(x,r)}, B(x,2r))} \geq c_1
		\end{equation*}
		for all $x\in \partial\Omega$ and $0<r<\diam(\pom)$.
	\end{definition}

	The CDC is also known as the uniform 2-fatness as studied by Lewis in \cite{Lew}. Using \cite[Example 2.12]{HKM} one has that
	\begin{equation*}
	\Cap(\overline{B(x, r)}, B(x,2r))\approx r^{n-1}, \qquad \mbox{for all $x\in\ree$ and $r>0$},
	\end{equation*}
	and hence the CDC is a quantitative version of the Wiener regularity. In particular, if $\Omega$ satisfies the CDC, every $x\in\pom$ is Wiener regular. It is easy to see that the exterior Corkscrew condition (see Definition~\ref{def:cks}) implies the CDC. Also, it was proved in \cite[Section 3]{Zhao} and \cite[Lemma 3.27]{HLMN} that 
	\begin{equation}\label{eq:ADR-CDC}
	\begin{array}{c}
	\text{an open set with ADR boundary satisfies the capacity density condition} 
	\\ 
	\text{with constant $c_1$ depending only on $n$ and the ADR constant.}
	\end{array}
	\end{equation}

	\subsection{PDE framework} 
	Next, we recall several facts concerning the elliptic measure. To set the stage, let $\Omega\subset\re^{n+1}$ be an open set. Throughout we consider
	elliptic operators $L$ of the form $Lu=-\div(A\nabla u)$ with $A(\cdot)=(a_{i,j}(\cdot))_{i,j=1}^{n+1}$ being a real (non-necessarily symmetric) matrix such that $a_{i,j}\in L^{\infty}(\Omega)$ and there exists $\Lambda\geq 1$ such that the following uniform ellipticity condition holds
	\begin{align*}
	\Lambda^{-1} |\xi|^{2} \leq A(X) \xi \cdot \xi 
	\quad\text{ and }\quad
	|A(X) \xi \cdot\eta|\leq \Lambda |\xi|\,|\eta|
	\end{align*}
	for all $\xi,\eta \in\mathbb{R}^{n+1}$ and for almost every $X\in\Omega$. 

	We say that $u$ is a weak solution to $Lu=0$ in $\Omega$ provided that $u\in W_{\rm loc}^{1,2}(\Omega)$ satisfies
	\[
	\iint_\Omega A(X)\nabla u(X)\cdot \nabla\phi(X) dX=0  \quad\mbox{whenever}\,\, \phi\in \mathscr{C}^{\infty}_{c}(\Omega).
	\]
	Associated with $L$ one can construct an elliptic measure $\{\omega_L^X\}_{X\in\Omega}$ (see \cite{HMT} for full details). If $\Omega$ satisfies the CDC then it follows that all boundary points are Wiener regular and hence for a given $f\in \mathscr{C}_c(\partial\Omega)$ we can define
	\[
	u(X):=\int_{\partial\Omega} f(y) \, d\w^{X}_{L}(y), \quad \mbox{whenever}\, \, X \in \Omega,
	\]
	and $u:=f$ on $\pom$ and obtain that $u\in W^{1,2}_{\loc}(\Omega) \cap \mathscr{C}(\overline{\Omega})$ and  $Lu=0$ in the weak sense in $\Omega$. Moreover, if $f\in \Lip_c(\pom)$ then $u \in W^{1,2}(\Omega)$. When $\Omega$ is bounded or $\Omega$ is unbounded with $\pom$ unbounded, one has that $\w_L$ is a probability, that is, $\w_L^X(\pom)=1$ for every $X \in \Omega$. On the other hand, when $\Omega$ is unbounded with $\pom$ bounded (e.g. the complement of a ball), then $\w_L$ is not a probability, in fact $0<\w_L^X(\pom)<1$ for every $X \in \Omega$.

	We present two basic and useful estimates for solutions below, see for instance \cite{HMT}. 
	\begin{lemma}[De Giorgi/Nash]\label{lem:DGN} 
		Let $L=-\div(A\nabla)$ be a real (non-necessarily symmetric) uniformly elliptic operator. There exist $C_1 \ge 1$ and $\alpha_1 \in (0, 1)$ (depending only on dimension and on the ellipticity constant of $L$) such that for every ball $B=B(X_0, R) \subset \R^{n+1}$ and for every $u \in W^{1,2}(2B)$ so that $Lu=0$ in the weak sense in $2B$, one can redefine $u$ in a set of null measure (abusing the notation, the new function is called again $u$) in such a way that $u \in \dc^{\alpha_1}(\overline{B})$ (hence $u \in \mathscr{C}(\overline{B}) \cap L^{\infty}(B)$) satisfying 
		\begin{align*}
		\sup_{X \in \overline{B}} |u(X)| + \sup_{\substack{X, Y \in \overline{B} \\ X \neq Y}} \frac{|u(X)-u(Y)|}{(|X-Y|/R)^{\alpha_1}} 
		\le C_1 \bigg(\bariint_{2B} |u(Y)|^2 \, dY\bigg)^{\frac12}. 
		\end{align*}
	\end{lemma}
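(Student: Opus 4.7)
The statement is the classical De Giorgi--Nash local boundedness and H\"{o}lder regularity theorem for weak solutions of real divergence form equations, cast in a scale-invariant form. My plan is to reduce first to the unit ball via a rescaling, and then treat the two estimates (sup bound and H\"older seminorm bound) separately, combining a Moser-type iteration with a De Giorgi oscillation decay.

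\textbf{Scaling reduction.} Given $B=B(X_0,R)$ and $u\in W^{1,2}(2B)$ with $Lu=0$ in $2B$, set $v(Y):=u(X_0+RY)$ for $Y\in B(0,2)$. Then $v\in W^{1,2}(B(0,2))$ solves $L_R v=0$ in the weak sense, where $L_R=-\div(A_R\nabla\,\cdot\,)$ and $A_R(Y):=A(X_0+RY)$ has exactly the same ellipticity constant $\Lambda$ as $A$. Thus it suffices to prove the estimate for $R=1$ with a constant $C_1$ and exponent $\alpha_1$ depending only on $n$ and $\Lambda$; undoing the change of variables then restores the factor $R^{-\alpha_1}$ in front of $|X-Y|^{\alpha_1}$ and converts the $L^2$ norm on $B(0,2)$ into the normalized $L^2$ norm $\bariint_{2B}|u|^2$ in the original scale.

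\textbf{Sup bound via Moser iteration.} The first step on the rescaled problem is a Caccioppoli inequality: testing the equation against $\eta^2 (u-k)_+$ for a suitable cutoff $\eta$ and level $k\ge 0$ produces, after Cauchy--Schwarz and absorbing gradient terms using ellipticity,
\begin{equation*}
\iint \eta^2 |\nabla (u-k)_+|^2\,dY \lesssim_{\Lambda} \iint |\nabla\eta|^2 (u-k)_+^2 \,dY.
\end{equation*}
Combining this with the Sobolev embedding $W^{1,2}\hookrightarrow L^{2^*}$ (using $n+1\ge 3$, so $2^*=2(n+1)/(n-1)$), one obtains a reverse H\"older chain: if $\chi:=2^*/2>1$, then for concentric balls $B_r\subset B_\rho\subset B(0,2)$ and levels $k$,
\begin{equation*}
\Big(\bariint_{B_r} (u-k)_+^{2\chi}\Big)^{1/(2\chi)}
\le \frac{C}{(\rho-r)^{\beta}}\Big(\bariint_{B_\rho} (u-k)_+^2\Big)^{1/2}.
\end{equation*}
Iterating this on a geometric sequence of radii between $1$ and $2$ and summing the resulting geometric series yields $\sup_{B(0,1)}|u|\lesssim_{n,\Lambda}(\iint_{B(0,2)}|u|^2\,dY)^{1/2}$, which is exactly the sup portion of the claimed bound.

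\textbf{H\"older continuity via De Giorgi.} The second step is the oscillation decay. After the sup bound is in hand, one applies it to $u-k$ to establish a De Giorgi--type alternative: there exist $\theta\in(0,1)$ and $\lambda\in(0,1)$, depending only on $n$ and $\Lambda$, such that for any ball $B_{2r}\subset B(0,2)$,
\begin{equation*}
\osc_{B_{r}} u \le \theta \, \osc_{B_{2r}} u.
\end{equation*}
The derivation of this alternative is the heart of the argument: one shows that if $u\le M$ on $B_{2r}$ and the superlevel set $\{u>M-\lambda\,\osc\}\cap B_r$ occupies at most half the volume, then the sup bound combined with a measure-shrinking lemma (De Giorgi's $L^2$-to-$L^\infty$ argument, iterated along dyadic levels) forces $u$ to drop by a definite fraction of the oscillation on $B_{r/2}$; the other half of the alternative is handled by applying the same reasoning to $-u$, which also solves an equation of the same class (with transposed matrix, same ellipticity). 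Iterating the oscillation decay on dyadic balls gives
\begin{equation*}
\osc_{B_\rho} u \le C\,\rho^{\alpha_1}\,\osc_{B(0,2)} u, \qquad 0<\rho\le 1,
\end{equation*}
with $\alpha_1=\log_2(1/\theta)\in(0,1)$. Combined with the sup bound, this yields the scale-invariant estimate for $R=1$, and the H\"older continuous representative on $\overline{B}$ is obtained in the standard way by taking Lebesgue points (the oscillation decay on concentric balls shows that every point is Lebesgue and the pointwise values so defined are H\"older continuous).

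The only conceptually delicate piece is the De Giorgi measure-shrinking lemma underpinning the oscillation alternative; the remaining ingredients (Caccioppoli, Sobolev, Moser iteration) are routine once the equation has been tested against appropriate truncations, and no symmetry of $A$ is used at any point since all tests involve only $\iint A\nabla u\cdot\nabla\phi$, which sees only the ellipticity constant $\Lambda$.
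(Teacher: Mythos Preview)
The paper does not prove this lemma; it is stated as a classical result with a reference to \cite{HMT}, so there is no ``paper's own proof'' to compare against. Your outline is a correct sketch of the standard De Giorgi--Nash--Moser argument and would serve as a self-contained proof if fleshed out.

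One small slip: in the oscillation alternative you write that $-u$ ``solves an equation of the same class (with transposed matrix, same ellipticity)''. In fact $-u$ satisfies $L(-u)=0$ with the \emph{same} matrix $A$, not the transpose; the adjoint matrix only enters when one dualizes the operator, which is not what happens here. This does not affect the argument, since all that is needed is that $-u$ is a weak solution of a uniformly elliptic equation with the same ellipticity constant $\Lambda$, and that is immediate.
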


	\begin{lemma}[Boundary H\"{o}lder continuity]\label{lem:Holder} 
		Let $\Omega \subset \ree$, $n \ge 2$, be an open set satisfying the CDC, and let $L=-\div(A\nabla)$ be a real (non-necessarily symmetric) uniformly elliptic operator. There exist $C_2 \ge 1$ and $\alpha_2 \in (0, 1)$ (depending only on dimension, the  CDC constant, and on the ellipticity constant of $L$) such that for every ball $B=B(x, r)$ with $x \in \pom$, $0<r<\diam(\pom)$, and $\Delta=B \cap \pom$; and for every $0 \le u \in W^{1,2}_{\loc}(2B \cap \Omega) \cap \mathscr{C}(\overline{2B \cap \Omega})$ with $u \equiv 0$ in $2\Delta$ satisfying $Lu=0$ in the weak sense in $2B \cap \Omega$, there holds 
		\begin{align*}
		u(X) \le C_2 \bigg(\frac{|X-x|}{r}\bigg)^{\alpha_2} \sup_{\overline{B \cap \Omega}} u, \quad\forall X \in B \cap \Omega. 
		\end{align*}
	\end{lemma}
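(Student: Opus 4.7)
\medskip
\noindent\textit{Proof plan.} The plan is to establish a one-step oscillation decay at the boundary and then iterate dyadically; the decay itself will come from combining a Caccioppoli-type inequality with a capacitary Poincar\'e inequality that exploits the CDC. First, I would extend $u$ by zero to $(2B)\setminus\Omega$. Since $u\in \mathscr{C}(\overline{2B\cap\Omega})$ and $u\equiv 0$ on $2\Delta$, the extended function $\widetilde u$ belongs to $W^{1,2}_{\loc}(2B)$, is nonnegative, and is a weak subsolution of $L$ in $2B$ (test against $\phi\ge 0$ in $\mathscr{C}^{\infty}_{c}(2B)$ and use that $u\phi$ can be approximated by functions supported in $2B\cap\Omega$ via the continuity of $u$ and $u|_{2\Delta}=0$).

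Second, I would prove the core step lemma: for every $y\in\pom$ and $0<\rho<\diam(\pom)/2$ with $B(y,2\rho)\subset 2B$ and $u\equiv 0$ on $2\Delta(y,\rho)$,
\begin{equation*}
\sup_{B(y,\rho/2)\cap\Omega} u \,\le\, \theta \sup_{B(y,\rho)\cap\Omega} u,
\end{equation*}
for some $\theta=1-c_0\in(0,1)$ depending only on $n$, the ellipticity of $L$, and the CDC constant. Setting $M:=\sup_{B(y,\rho)\cap\Omega}u$ and $v:=M-\widetilde u$, the function $v$ is a nonnegative supersolution in $B(y,\rho)$ with $v\equiv M$ on $\overline{B(y,\rho)}\setminus\Omega$, a set of capacity at least $c\,\rho^{n-1}$ in $B(y,\rho)$ by the CDC. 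Applying Moser's local boundedness to the subsolution $(M/2-v)_+$ and using the capacitary Poincar\'e inequality (see, e.g., Maz'ya)
\begin{equation*}
\int_{B(y,\rho)} f^2\,dX \,\le\, \frac{C\,\rho^2}{\Cap(\{f=0\}\cap \overline{B(y,\rho/2)},\, B(y,\rho))}\,\rho^{n-1}\int_{B(y,\rho)}|\nabla f|^2\,dX,
\end{equation*}
applied to $f=(M/2-v)_+$, one obtains a quantitative lower bound $v\ge c_0 M$ on a portion of $B(y,\rho/2)\cap\Omega$ that, via a standard De Giorgi iteration (or equivalently the Moser weak maximum principle for supersolutions), upgrades to $v\ge c_0 M$ on all of $B(y,\rho/2)\cap\Omega$. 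Rewriting yields the step lemma.

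Third, I would iterate the step lemma along the dyadic sequence $r,\,r/2,\,r/4,\ldots$ centered at $x$: for each $k\ge 0$, applying the lemma to the ball $B(x,2^{-k}r)$ (whose concentric double still lies inside $2B$ after a harmless adjustment of constants) gives
\begin{equation*}
\sup_{B(x,2^{-k}r)\cap\Omega}u \,\le\, \theta^{k}\sup_{B(x,r)\cap\Omega}u \,\le\, \theta^{k}\sup_{\overline{B\cap\Omega}}u.
\end{equation*}
Given $X\in B\cap\Omega$, choosing $k$ so that $2^{-k-1}r\le |X-x|<2^{-k}r$ and setting $\alpha_2:=-\log_2\theta$ produces the desired bound
\begin{equation*}
u(X)\,\le\, C_2\left(\frac{|X-x|}{r}\right)^{\alpha_2}\sup_{\overline{B\cap\Omega}}u.
\end{equation*}

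The main obstacle is the second step: producing a uniform quantitative lower bound for $v/M$ on $B(y,\rho/2)\cap\Omega$ from the capacity estimate on $\overline{B(y,\rho/2)}\setminus\Omega$. This requires invoking the capacitary Poincar\'e inequality in the correct form (matching the normalization $\Cap(\overline{B(x,r)},B(x,2r))\approx r^{n-1}$ noted in the CDC discussion) and checking that the zero-extension $\widetilde u$ is genuinely in $W^{1,2}_{\loc}(2B)$ and a subsolution of $L$ despite only $L^{\infty}$ coefficients and no regularity of $\pom$ beyond the CDC. Once these technicalities are handled, the oscillation decay and its iteration are routine.
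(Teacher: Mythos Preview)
The paper does not actually prove this lemma: it is stated together with Lemma~\ref{lem:DGN} under the umbrella ``We present two basic and useful estimates for solutions below, see for instance \cite{HMT}'', so there is no in-paper proof to compare against. Your outline is one of the standard routes to boundary H\"older continuity under CDC (essentially the argument in \cite[Chapter~6]{HKM}): extend by zero to a subsolution, prove a one-step oscillation decay via a capacity-based lower bound for the complementary supersolution $v=M-\widetilde u$, then iterate dyadically. The structure is correct and the constants indeed depend only on $n$, ellipticity, and the CDC constant.

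The only place that deserves more care is your description of Step~2. The sentence ``obtains a quantitative lower bound $v\ge c_0 M$ on a portion of $B(y,\rho/2)\cap\Omega$ that \ldots\ upgrades to $v\ge c_0 M$ on all of $B(y,\rho/2)\cap\Omega$'' is vague as written. A cleaner execution is either: (i) compare $v$ with the $L$-capacitary potential of $F=\overline{B(y,\rho/2)}\setminus\Omega$ in $B(y,\rho)$ and use that this potential is bounded below on $B(y,\rho/2)$ by a constant depending only on $\Cap(F,B(y,\rho))/\rho^{n-1}$ (cf.\ \cite[Theorem~6.18, Lemma~6.21]{HKM}); or (ii) apply the weak Harnack inequality for nonnegative supersolutions directly to $v$, which already encodes the capacitary Poincar\'e ingredient and yields $\inf_{B(y,\rho/2)} v\ge c_0 M$ in one stroke. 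Either of these replaces your Moser/De Giorgi paragraph with a single clean estimate. The zero-extension step is justified exactly as in \cite[Lemma~7.9]{HKM} (which the paper cites elsewhere), so that technicality is under control.
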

	
	
	\subsection{Growth functions and auxiliary lemmas} \label{subsec:growth}
	In the rest of the paper, we will be working with functions that generalize $\varphi(t)=t^{\alpha}$ with $\alpha \in (0, 1)$, hence generalizing the $\dot{\mathscr{C}}^\alpha$ spaces.
	\begin{definition}[{\bf Growth function and $\mathcal{G}_\beta$ class}]
		Call a given function $\varphi : (0, +\infty) \longrightarrow (0, + \infty)$ a \textit{growth function} if $\varphi$ is non-decreasing and $\varphi(t) \to 0$ as $t \to 0^+$. 
		Moreover, we say that the growth function $\varphi$ belongs to the class $\mathcal{G}_\beta$ for some $\beta \in (0, 1)$ if there exists some $C_\varphi$ such that 
		\begin{equation} \label{eq:ass-phi}
		\int_{0}^{t} \varphi(s) \frac{ds}{s} + t^{\beta} \int_t^\infty \frac{\varphi(s)}{s^{\beta}} \frac{ds}{s} 
		\leq C_\varphi \, \varphi(t), \qquad\forall t>0.
		\end{equation}
	\end{definition}
	
	\begin{definition}[{\bf Generalized Hölder spaces}]
		Let $\varphi$ be a growth function. Define the homogeneous $\varphi$-H\"{o}lder space on a set $E \subset \RR^{n+1}$ as 
			\begin{equation*}
			\dcv(E) 
			:=
			\bigg\{ u : E \to \RR : \|u\|_{\dot{\mathscr{C}}^\varphi(E)} := \sup_{\substack{X, Y \in E \\ X \neq Y}} \frac{|u(X) - u(Y)|}{\varphi(|X-Y|)}
			< \infty \bigg\}.
			\end{equation*}
	\end{definition}

	Observe that $\|\cdot\|_{\dcv(E)}$ is only a semi-norm, and as a matter of fact, $\|f\|_{\dcv(E)}=0$ if and only if $f$ is a constant everywhere in $E$. In view of this, given two functions $f, g$ defined in $E$, we say that $f \thicksim g$ provided $f-g$ is a constant in $E$. This induces an equivalence relation and for every function $f: E \to \R$ we introduce the equivalence class of $f$ as 
	$[f] :=\{g: E \to \R: g \thicksim f\}$.
	For any $f \in \dc(E)$ we can now define $\|[f]\|_{\dcv(E) / \R} := \|f\|_{\dcv(E)}$, which defines a genuine norm on the quotient space $\dcv(E)/ \R :=\{[f]: f \in \dcv(E) \}$. It is not hard to show that $\dcv(E)/\R$ equipped with the norm $\|\cdot\|_{\dcv(E)/\R}$ is complete, hence a Banach space.

	Let us denote, for $\alpha \in (0, 1)$ and $\varphi$ a growth function, 
	\begin{equation} \label{eq:Qalpha}
	Q_\alpha \varphi (t) 
	:=
	t^\alpha \int_t^\infty \frac{\varphi(s)}{s^\alpha} \frac{ds}{s}.
	\end{equation}
	Observe that $Q_\alpha \varphi$ is a well-defined function whenever $Q_\alpha \varphi(1)<\infty$, because then $Q_\alpha \varphi(t)<\infty$ for any $t>0$. 
	We would like to observe that $Q_\alpha$ is a generalization of the so-called Hardy operators, see \cite[Chapter 1]{BS}.

\begin{lemma}\label{lem:Qalpha}
Let $\varphi$ be a growth function and $\alpha\in (0,1)$ be such that $Q_\alpha \varphi(1)<\infty$. Then the following hold:
	\begin{list}{{\rm (\theenumi)}}{\usecounter{enumi}\leftmargin=1cm \labelwidth=1cm \itemsep=0.2cm \topsep=.2cm \renewcommand{\theenumi}{\alph{enumi}}}
	
	\item \label{item:Qalpha_growth} $Q_\alpha \varphi$ is a growth function.
	
	\item \label{item:Qalpha_limit} $t\longmapsto t^{-\alpha}\,Q_\alpha \varphi(t)$ is a decreasing function that decays to 0 as $t\to\infty$.

	\item \label{item:Qalpha_doubling} $Q_\alpha \varphi$ is doubling. Concretely, $Q_\alpha \varphi(2\,t)\le 2^\alpha Q_\alpha \varphi(t)$ for every $t>0$.

	\item \label{item:varphi_Qalpha} $\varphi(t) \le \alpha\, Q_\alpha\varphi(t) \leq Q_\alpha \varphi(t)$ for any $t > 0$.
	
	\item \label{item:sum_varphi_Qalpha} $\displaystyle t^\alpha\,\sum_{k=0}^\infty 	\frac{\varphi(2^{k}\,t)}{(2^{k}\,t)^{\alpha}}\lesssim_\alpha Q_\alpha\varphi(t)$ for any $t>0$.
	
	\item \label{item:Qalpha_decreasing} $Q_\alpha \varphi$ is decreasing in $\alpha$, that is, 
	$Q_{\alpha} \varphi
	\le
	Q_{\alpha'} \varphi$ whenever $0<\alpha'<\alpha<1$.
	\end{list}

\end{lemma}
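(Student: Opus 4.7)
The plan is to exploit the change of variables $s = tu$ to rewrite
\[
Q_\alpha \varphi(t) = \int_1^\infty \frac{\varphi(tu)}{u^{\alpha+1}}\,du,
\]
which makes the $t$-dependence sit entirely inside $\varphi$. Since $\varphi$ is a growth function (non-decreasing, tending to $0$ at $0^+$), several properties will then follow directly. The hypothesis $Q_\alpha\varphi(1)<\infty$ is equivalent to $\int_1^\infty \varphi(u)u^{-\alpha-1}du<\infty$, which will serve as the integrable dominant whenever we apply dominated convergence.

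I would start with the short, routine items. For \ref{item:Qalpha_limit}, the identity $t^{-\alpha}Q_\alpha\varphi(t)=\int_t^\infty \varphi(s)s^{-\alpha-1}ds$ makes the function manifestly non-increasing, and it tends to $0$ as $t\to\infty$ because the integrand is integrable on $[1,\infty)$. For \ref{item:Qalpha_doubling}, substitute $s\mapsto 2s$ (or compare the tails $\int_{2t}^\infty \leq \int_t^\infty$) and pull out the factor $2^\alpha$. For \ref{item:varphi_Qalpha}, bound $\varphi(s)\ge \varphi(t)$ for $s\ge t$ inside the integral, giving $\alpha Q_\alpha\varphi(t)\ge \alpha t^\alpha\varphi(t)\int_t^\infty s^{-\alpha-1}ds=\varphi(t)$. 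For \ref{item:Qalpha_decreasing}, use the substituted form: for $u\ge 1$ and $\alpha'<\alpha$ one has $u^{-\alpha-1}\le u^{-\alpha'-1}$, so integrating yields $Q_\alpha\varphi\le Q_{\alpha'}\varphi$.

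Next I would address \ref{item:Qalpha_growth}. Non-decreasing is immediate from $Q_\alpha\varphi(t)=\int_1^\infty \varphi(tu)u^{-\alpha-1}du$ and the monotonicity of $\varphi$. For the limit at $0$, apply the dominated convergence theorem: $\varphi(tu)\to 0$ pointwise as $t\to 0^+$, and for $t\le 1$ one has $\varphi(tu)\le \varphi(u)$, so the dominant $\varphi(u)u^{-\alpha-1}$ is integrable on $[1,\infty)$ by the standing assumption $Q_\alpha\varphi(1)<\infty$.

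Finally, for \ref{item:sum_varphi_Qalpha} I would decompose the interval of integration dyadically: writing $[t,\infty)=\bigcup_{k\ge 0}[2^k t,2^{k+1}t)$ and using $\varphi(s)\ge \varphi(2^k t)$ on each slice gives
\[
Q_\alpha\varphi(t) \ge t^\alpha\sum_{k=0}^\infty \varphi(2^k t)\int_{2^k t}^{2^{k+1}t}\frac{ds}{s^{\alpha+1}} = \frac{1-2^{-\alpha}}{\alpha}\,t^\alpha\sum_{k=0}^\infty \frac{\varphi(2^k t)}{(2^k t)^\alpha},
\]
which is precisely the claimed inequality with implicit constant $\alpha/(1-2^{-\alpha})$. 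None of the steps presents any real obstacle; the only thing to keep track of is making sure the single hypothesis $Q_\alpha\varphi(1)<\infty$ is being used consistently to ensure all integrals are finite and dominated-convergence applies, but no further properties of $\varphi$ beyond being a growth function are needed.
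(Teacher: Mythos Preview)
Your proof is correct and follows essentially the same approach as the paper: both use the change of variables $s=tu$ to write $Q_\alpha\varphi(t)=\int_1^\infty \varphi(tu)\,u^{-\alpha-1}\,du$, derive monotonicity and the limit at $0^+$ from the monotonicity of $\varphi$ together with dominated convergence (the dominant being precisely the hypothesis $Q_\alpha\varphi(1)<\infty$), and handle \eqref{item:Qalpha_doubling}, \eqref{item:varphi_Qalpha}, \eqref{item:Qalpha_decreasing} by the obvious one-line calculations. For \eqref{item:sum_varphi_Qalpha} you bound the integral from below on each dyadic block $[2^kt,2^{k+1}t)$ using $\varphi(s)\ge\varphi(2^kt)$ and evaluate $\int s^{-\alpha-1}ds$ explicitly, while the paper goes in the other direction (bounding the sum by the integral via $\varphi(2^kt)\le\varphi(s)$ and $(2^kt)^{-\alpha}\lesssim s^{-\alpha}$); these are two sides of the same dyadic comparison and yield the same constant up to a harmless factor.
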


\begin{proof}
	After changing variables, we have 
	\begin{equation} \label{eq:Qalpha_change_vars}
	Q_\alpha \varphi(t)
	=
	\int_1^\infty \frac{\varphi(t\tau)}{\tau^\alpha} \frac{d\tau}{\tau}.
	\end{equation}
	Therefore, since $\varphi$ is non-decreasing, $Q_\alpha(t)$ is also non-decreasing on $t$. Moreover, letting $t \to 0^+$ with the Dominated Convergence Theorem (because $Q_\alpha \varphi(1) < \infty$), and using that $\varphi(t) \to 0$ as $t \to 0^+$, it follows that $Q_\alpha \varphi(t) \to 0$ as $t \to 0^+$. This shows \eqref{item:Qalpha_growth}. To show \eqref{item:Qalpha_limit}, invoke the Dominated Convergence Theorem in \eqref{eq:Qalpha}, again using $Q_\alpha \varphi(1) < \infty$. In turn, property \eqref{item:Qalpha_doubling} follows from the fact that $\varphi$ is positive:
	\begin{equation*}
	Q_\alpha \varphi(2t) 
	=
	2^\alpha t^\alpha \int_{2t}^\infty \frac{\varphi(s)}{s^\alpha} \frac{ds}{s}
	\leq 
	2^\alpha t^\alpha \int_t^\infty \frac{\varphi(s)}{s^\alpha} \frac{ds}{s}
	=
	2^\alpha Q_\alpha \varphi(t).
	\end{equation*}
	
	Property \eqref{item:varphi_Qalpha} is an easy consequence of the monotonicity of $\varphi$:
	\begin{equation*}
	Q_\alpha \varphi(t) 
	\geq 
	t^\alpha \varphi(t) \int_t^\infty \frac{1}{s^\alpha} \frac{ds}{s}
	=
	\frac{\varphi(t)}{\alpha},
	\end{equation*}
	and we may also obtain \eqref{item:sum_varphi_Qalpha} as follows:
	\begin{equation*}
	\sum_{k=0}^\infty \frac{\varphi(2^{k}\,t)}{(2^{k}\,t)^{\alpha}}
	\approx 
	\sum_{k=0}^\infty \frac{\varphi(2^{k}\,t)}{(2^{k}\,t)^{\alpha}} \int_{2^kt}^{2^{k+1}t} \frac{ds}{s}
	\lesssim 
	\sum_{k=0}^\infty \int_{2^kt}^{2^{k+1}t} \frac{\varphi(s)}{s^\alpha} \frac{ds}{s}
	=
	t^{-\alpha} Q_\alpha(t).
	\end{equation*}
	Lastly, \eqref{item:Qalpha_decreasing} follows at once from \eqref{eq:Qalpha_change_vars}.
\end{proof}

\begin{lemma} \label{lem:extension}
	Let $\varphi$ be a growth function. Assume that $\varphi$ is doubling, i.e., there exists $C$ such that $\varphi(2t) \leq C \varphi(t)$ for every $t > 0$. Then, for each set $E \subset \RR^{n+1}$ and function $u \in \dot{\mathscr{C}}^\varphi(E)$, there exists an extension ---which we call again $u$--- to the closure of $E$, so that $u \in \dot{\mathscr{C}}^\varphi(\overline{E})$ and the following estimate holds:
	\begin{equation*}
	\|u\|_{\dot{\mathscr{C}}^\varphi(E)}
	\leq 
	\|u\|_{\dot{\mathscr{C}}^\varphi(\overline{E})}
	\leq 
	C \|u\|_{\dot{\mathscr{C}}^\varphi(E)}.
	\end{equation*}
\end{lemma}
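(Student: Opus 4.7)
The plan is to extend $u$ by uniform continuity, and then transfer the seminorm estimate to $\overline{E}$ via the doubling property.

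First, I would define the extension. Given $X \in \overline{E} \setminus E$, pick any sequence $\{X_k\} \subset E$ with $X_k \to X$. The estimate
\begin{equation*}
|u(X_k) - u(X_j)| \le \|u\|_{\dot{\mathscr{C}}^\varphi(E)}\, \varphi(|X_k - X_j|)
\end{equation*}
together with $\varphi(t) \to 0$ as $t \to 0^+$ shows that $\{u(X_k)\}$ is Cauchy in $\mathbb{R}$, hence convergent. The same inequality applied to two approximating sequences $X_k, Y_k \to X$ shows the limit is independent of the sequence, so we may define $u(X) := \lim_{k \to \infty} u(X_k)$, and of course set $u(X) = u(X)$ when $X \in E$ already.

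Next, the left-hand inequality $\|u\|_{\dot{\mathscr{C}}^\varphi(E)} \le \|u\|_{\dot{\mathscr{C}}^\varphi(\overline{E})}$ is immediate since the supremum defining the seminorm on $E$ is taken over a subset of pairs in $\overline{E} \times \overline{E}$ and the extension agrees with $u$ on $E$. For the right-hand inequality, fix $X, Y \in \overline{E}$ with $X \ne Y$ and choose sequences $\{X_k\}, \{Y_k\} \subset E$ with $X_k \to X$ and $Y_k \to Y$ (using constant sequences if $X$ or $Y$ already lies in $E$). For all $k$ large enough so that $|X_k - X|, |Y_k - Y| \le |X-Y|/4$, the triangle inequality gives $|X_k - Y_k| \le 2|X - Y|$, so by the monotonicity of $\varphi$ and the doubling hypothesis,
\begin{equation*}
|u(X_k) - u(Y_k)|
\le \|u\|_{\dot{\mathscr{C}}^\varphi(E)}\, \varphi(|X_k - Y_k|)
\le \|u\|_{\dot{\mathscr{C}}^\varphi(E)}\, \varphi(2|X-Y|)
\le C\, \|u\|_{\dot{\mathscr{C}}^\varphi(E)}\, \varphi(|X-Y|).
\end{equation*}
Letting $k \to \infty$ and taking the supremum over all pairs $X \ne Y$ in $\overline{E}$ yields the desired bound $\|u\|_{\dot{\mathscr{C}}^\varphi(\overline{E})} \le C\, \|u\|_{\dot{\mathscr{C}}^\varphi(E)}$.

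The argument is routine; the only minor technical point (hardly an obstacle) is that $\varphi$ need not be continuous, so one cannot simply pass to the limit inside $\varphi$. This is exactly where the doubling hypothesis enters: it lets us replace $\varphi(|X_k - Y_k|)$ by $\varphi(2|X-Y|) \lesssim \varphi(|X-Y|)$ uniformly in $k$, absorbing the slack produced by the approximation.
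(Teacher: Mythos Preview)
Your proof is correct and follows essentially the same approach as the paper: extend $u$ to $\overline{E}$ via uniform continuity, then verify the seminorm bound by approximating with sequences from $E$, using monotonicity to pass to $\varphi(2|X-Y|)$ and doubling to return to $\varphi(|X-Y|)$. Your write-up is slightly more detailed (you explicitly check well-definedness of the extension and flag why doubling is needed when $\varphi$ may be discontinuous), but the argument is the same.
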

\begin{proof}
	Note first that if $u \in \dot{\mathscr{C}}^\varphi(E)$, then $u$ is uniformly continuous in $E$. Hence, we can extend $u$ continuously to a function $v$ on $\overline{E}$. Let $y \neq z \in \overline{E}$ and pick $(y_k)_k, (z_k)_k \subset E$ with $y_k \to y$ and $z_k \to z$. Compute now
	\begin{multline*}
	|v(y) - v(z)| 
	=
	\lim_{k \to \infty} |u(y_k) - u(z_k)| 
	\leq 
	\|u\|_{\dot{\mathscr{C}}^\varphi(E)} \limsup_{k \to \infty} \varphi(|y_k - z_k|)
	\\ \leq 
	\|u\|_{\dot{\mathscr{C}}^\varphi(E)} \varphi(2|y - z|)
	\leq 
	C \|u\|_{\dot{\mathscr{C}}^\varphi(E)} \varphi(|y - z|)
	\end{multline*}
	using the doubling property of $\varphi$. This finishes the proof.
\end{proof} 

	The following result generalizes \cite[Lemma 2.1]{MMM}, and follows easily from Lemma~\ref{lem:Qalpha}:
	\begin{lemma}\label{lem:pro-phi}
		Let $\varphi$ be a growth function satisfying 
		\begin{equation} \label{eq:assumption}
		Q_\beta \varphi(t)=t^{\beta} \int_t^\infty \frac{\varphi(s)}{s^{\beta}} \frac{ds}{s} 
		\leq 
		C_\varphi \, \varphi(t), \quad \forall t > 0, \quad\text{ for some $\beta \in (0, 1)$}. 
		\end{equation}
		Then the following hold: 
		
		\begin{list}{{\rm (\theenumi)}}{\usecounter{enumi}\leftmargin=1cm \labelwidth=1cm \itemsep=0.2cm \topsep=.2cm \renewcommand{\theenumi}{\alph{enumi}}}
			\item\label{list-1} For every $0 < t_1 \leq t_2 < \infty$ we have 
			$
			t_2^{-\beta} \varphi(t_2)
			\leq
			\beta C_\varphi \, t_1^{-\beta} \varphi(t_1)
			$.
			
			\item\label{list-2} For every $t > 0$ we have 
			$
			\varphi(2t)
			\leq 2^{\beta} \beta C_\varphi \, \varphi(t)
			\leq 2 C_\varphi \, \varphi(t)
			$.
			
			\item\label{list-22} For every $t_1, t_2 > 0$ we have 
			$
			\varphi(t_1+t_2)
			\leq 2 C_\varphi \, (\varphi(t_1)+\varphi(t_2)) 
			$. 
			
			\item\label{list-3} $t^{-\beta}\varphi(t) \to 0$ as $t \to \infty$.
		\end{list}
	\end{lemma}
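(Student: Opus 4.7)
The plan is to derive the four parts in sequence: \eqref{list-1} is the master estimate, obtained by combining the monotonicity of $\varphi$ with the hypothesis \eqref{eq:assumption}; parts \eqref{list-2} and \eqref{list-22} follow by specialization; and \eqref{list-3} is deduced from \eqref{list-1} together with the tail decay of $Q_\beta\varphi$ already proved in Lemma~\ref{lem:Qalpha}.

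For \eqref{list-1}, fix $0<t_1\le t_2<\infty$ and truncate the integral defining $Q_\beta\varphi(t_1)$ to the tail $[t_2,\infty)$. Since $\varphi$ is non-decreasing, $\varphi(s)\ge \varphi(t_2)$ for $s\ge t_2$, so
$$
Q_\beta\varphi(t_1)
\;\ge\;
t_1^\beta\,\varphi(t_2)\int_{t_2}^\infty\frac{ds}{s^{\beta+1}}
\;=\;
\frac{t_1^\beta\,\varphi(t_2)}{\beta\,t_2^\beta}.
$$
Combining this with $Q_\beta\varphi(t_1)\le C_\varphi\,\varphi(t_1)$ yields $t_2^{-\beta}\varphi(t_2)\le \beta C_\varphi\, t_1^{-\beta}\varphi(t_1)$, which is \eqref{list-1}.

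Next, \eqref{list-2} follows by choosing $t_1=t$ and $t_2=2t$ in \eqref{list-1}, giving $\varphi(2t)\le 2^\beta\beta C_\varphi\,\varphi(t)$; since $\beta\in(0,1)$ one has $2^\beta\beta<2$, and the second inequality of \eqref{list-2} is immediate. For \eqref{list-22}, without loss of generality assume $t_2\le t_1$, so that $t_1+t_2\le 2t_1$; monotonicity of $\varphi$ together with \eqref{list-2} then gives $\varphi(t_1+t_2)\le \varphi(2t_1)\le 2C_\varphi\,\varphi(t_1)\le 2C_\varphi\,(\varphi(t_1)+\varphi(t_2))$.

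Finally, for \eqref{list-3}, the hypothesis \eqref{eq:assumption} in particular says $Q_\beta\varphi(1)\le C_\varphi\varphi(1)<\infty$, so Lemma~\ref{lem:Qalpha} is applicable with $\alpha=\beta$. Parts \eqref{item:varphi_Qalpha} and \eqref{item:Qalpha_limit} of that lemma give, respectively, $\varphi(t)\le \beta\,Q_\beta\varphi(t)$ and $t^{-\beta}Q_\beta\varphi(t)\to 0$ as $t\to\infty$, whence $t^{-\beta}\varphi(t)\to 0$. No step is genuinely hard; the main (mild) obstacle is simply recognizing that truncating the defining integral of $Q_\beta\varphi$ to the tail $[t_2,\infty)$ converts the hypothesis \eqref{eq:assumption} into the quantitative quasi-monotonicity of $t^{-\beta}\varphi(t)$ recorded in \eqref{list-1}, from which everything else is routine.
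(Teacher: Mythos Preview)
Your proof is correct and follows essentially the same approach as the paper: for \eqref{list-1} the paper routes through Lemma~\ref{lem:Qalpha} parts \eqref{item:varphi_Qalpha} and \eqref{item:Qalpha_limit} together with \eqref{eq:assumption}, whereas you carry out the equivalent tail-truncation computation directly, and parts \eqref{list-2}--\eqref{list-3} are handled identically in both. There is no substantive difference.
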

	
	\begin{proof}
 
		Using Lemma~\ref{lem:Qalpha} (concretely, \eqref{item:varphi_Qalpha}, \eqref{item:Qalpha_limit}, and \eqref{eq:assumption}), \eqref{list-1} follows.	
		From this, we get \eqref{list-2} just plugging $t_2 = 2t$ and $t_1=t$. Moreover, \eqref{list-2} implies that for all $t_1, t_2>0$, 
		\begin{align*}
		\varphi(t_1 + t_2)
		\le \varphi(2\max\{t_1, t_2\})
		\le 2C_{\varphi} \, \varphi(\max\{t_1, t_2\})
		\le 2C_{\varphi} \, (\varphi(t_1)+\varphi(t_2)), 
		\end{align*}
		which shows \eqref{list-22}. Lastly, \eqref{list-3} follows at once from Lemma~\ref{lem:Qalpha} (concretely, \eqref{item:varphi_Qalpha} and \eqref{item:Qalpha_limit}).
	\end{proof} 
	
	\begin{remark} \label{rem:growth_stronger}
		Let us note that whenever $\varphi$ satisfies \eqref{eq:assumption}, it follows that $Q_\beta \varphi(1) < \infty$ and $\varphi$ is doubling, hence the hypotheses of Lemmas~\ref{lem:Qalpha} and \ref{lem:extension} are also satisfied.
	\end{remark} 
	
	\begin{remark}\label{rem:alpha}
		Given $\alpha \in (0, 1)$, the function $\varphi(t) := t^{\alpha}$ satisfies the assumption \eqref{eq:assumption} for every $\beta>\alpha$, because $Q_\beta \varphi(t) \approx t^\alpha$. This is the particular case we are most interested in. 
	\end{remark}  
	
	
	The following lemma will be key in the forthcoming arguments.
	
	\begin{lemma}\label{lem:8r4r}
		Let $\Omega \subset \ree$, $n \ge 2$, be an open set satisfying the CDC, and let $L=-\div(A\nabla)$ be a real (non-necessarily symmetric) uniformly elliptic operator. Then there exists $C$ (depending only on dimension, the CDC constant, and on the ellipticity constant of $L$) such that, for any $x \in \pom$ and $r > 0$, 
		\begin{equation} \label{eq:elliptic_annuli}
		\omega_L^X (\pom \setminus \Delta(x, 4r))
		\leq C \bigg(\frac{|X - x|}{r} \bigg)^{\alpha_2}, \qquad X \in B(x, r) \cap \Omega,
		\end{equation}
		where $\alpha_2 \in (0, 1)$ is the exponent in Lemma~\ref{lem:Holder}. 		 
		Furthermore, for each growth function $\varphi$, there exists some $C$ (depending only on dimension, the CDC constant and the ellipticity constant of $L$) such that
		\begin{equation} \label{eq:domination_mod_continuity}
		\int_\pom \varphi(|y-x|) d\w_L^X(y)
		\leq 
		C Q_{\alpha_2} \varphi(\delta(X)) 
		, 
		\quad \forall X \in \Omega, x \in \pom \text{ such that } |X-x| \leq 3 \delta(X).
		\end{equation}	
		Therefore, if $\varphi$ satisfies \eqref{eq:assumption} for some $0 < \beta \leq \alpha_2$ there exists some $C$ (depending only on dimension, the CDC constant, the ellipticity constant of $L$, and $C_\varphi$) such that 
		\begin{equation}\label{fwaffcwr}
		\int_\pom \varphi(|y-x|) d\omega_L^X(y) 
		\leq C \varphi(\delta(X)), 
		\quad \forall X \in \Omega, x \in \pom \text{ such that } |X-x| \leq 3 \delta(X).
		\end{equation} 
	\end{lemma}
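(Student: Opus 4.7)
The plan is to establish the three conclusions \eqref{eq:elliptic_annuli}, \eqref{eq:domination_mod_continuity}, and \eqref{fwaffcwr} in that order; the second will follow from the first by dyadic summation around $x$, and the third from the second together with the structural hypothesis on $\varphi$.

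For \eqref{eq:elliptic_annuli}, I fix $x\in\pom$, $r>0$, and consider the function $u(Y):=\omega_L^Y(\pom\setminus\Delta(x,4r))$ for $Y\in\Omega$. This is a non-negative weak solution of $Lu=0$ with $u\le 1$. When $r\ge \diam(\pom)$ (only possible if $\pom$ is bounded) the set $\pom\setminus\Delta(x,4r)$ is empty and the bound is trivial, so I assume $r<\diam(\pom)$. Because $\Omega$ satisfies the CDC, every boundary point is Wiener regular, and the indicator $\mathbf{1}_{\pom\setminus\Delta(x,4r)}$ is continuous and vanishes on the relatively open set $\Delta(x,2r)\subset\pom$. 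Standard elliptic-measure theory then ensures that $u$ extends continuously to $\overline{B(x,2r)\cap \Omega}$ with $u\equiv 0$ on $\Delta(x,2r)$. Lemma~\ref{lem:Holder} applied to $u$ on the ball $B(x,r)$ (with $\sup u\le 1$) yields \eqref{eq:elliptic_annuli}.

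For \eqref{eq:domination_mod_continuity}, set $\rho:=\delta(X)$, so that $\rho\le |X-x|\le 3\rho$ by hypothesis and because $x\in\pom$. Since the estimate is vacuous when $Q_{\alpha_2}\varphi(\rho)=\infty$, I may assume this quantity is finite, in which case Lemma~\ref{lem:Qalpha} applies. Picking $k_0\ge 1$ large enough (e.g.\ $k_0=4$) so that $|X-x|<2^{k_0-2}\rho$, I perform the dyadic decomposition
\[
\int_\pom \varphi(|y-x|)\,d\omega_L^X(y) = \int_{\Delta(x,2^{k_0}\rho)}\!\!\varphi(|y-x|)\,d\omega_L^X + \sum_{k\ge k_0}\int_{\Delta(x,2^{k+1}\rho)\setminus\Delta(x,2^k\rho)}\!\!\varphi(|y-x|)\,d\omega_L^X.
\]
Applying \eqref{eq:elliptic_annuli} with radius $2^{k-2}\rho$ for each $k\ge k_0$ gives $\omega_L^X(\pom\setminus\Delta(x,2^k\rho))\lesssim 2^{-k\alpha_2}$, and bounding $\varphi$ by its value at the outer radius on every piece reduces the right-hand side to
\[
\lesssim \varphi(2^{k_0}\rho)+\rho^{\alpha_2}\sum_{j\ge k_0+1}\frac{\varphi(2^j\rho)}{(2^j\rho)^{\alpha_2}}.
\]
Lemma~\ref{lem:Qalpha}(e) dominates the series by $Q_{\alpha_2}\varphi(\rho)$, and the monotonicity of $\varphi$ in the definition of $Q_{\alpha_2}\varphi$ yields $\varphi(2^{k_0}\rho)\lesssim Q_{\alpha_2}\varphi(\rho)$ directly, completing the proof of \eqref{eq:domination_mod_continuity}.

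Finally, \eqref{fwaffcwr} follows at once from \eqref{eq:domination_mod_continuity}: under the hypothesis $\beta\le\alpha_2$, Lemma~\ref{lem:Qalpha}(f) gives $Q_{\alpha_2}\varphi\le Q_\beta\varphi$, and assumption \eqref{eq:assumption} gives $Q_\beta\varphi\le C_\varphi\varphi$. The main obstacle of the whole argument is verifying the boundary continuity needed to apply Lemma~\ref{lem:Holder} in the proof of \eqref{eq:elliptic_annuli}, which relies on the CDC-based Wiener regularity of every boundary point; once that is in place, the rest is a careful dyadic summation calibrated to Lemma~\ref{lem:Qalpha}.
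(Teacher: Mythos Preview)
Your proof is correct and follows essentially the same strategy as the paper's. The only minor difference is in establishing \eqref{eq:elliptic_annuli}: rather than directly applying Lemma~\ref{lem:Holder} to $\omega_L^X(\pom\setminus\Delta(x,4r))$ (which requires the boundary-continuity fact you invoke as ``standard elliptic-measure theory''), the paper approximates the indicator by smooth compactly supported cutoffs $\phi_j$, applies Lemma~\ref{lem:Holder} to $v_j=\int\phi_j\,d\omega_L^X$, and passes to the limit via monotone convergence, keeping the argument self-contained within the PDE framework set up earlier.
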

	\begin{remark}\label{rmk:bdrHolderdecay2}
		We remark that even though we assume that $\Omega$ satisfies the CDC, we only need the fact that $\Omega$ satisfies the boundary H\"older continuity property stated in Lemma \ref{lem:Holder} and regularity of the boundary (cf. Section~\ref{sec:optimal}).
	\end{remark}
	
	\begin{proof}
		
		Given $j \gg 1$, choose $\phi_j \in \mathscr{C}_c^\infty(\ree)$ such that $\phi_j \equiv 1$ in $B(x, 2^jr) \setminus B(x, 4r)$, $\phi_j \equiv 0$ in $B(x, 2r) \cup (\pom \setminus B(x, 2^{j+1}r))$, and $0 \leq \phi_j \leq 1$ everywhere. Hence
		\begin{equation*}
		\omega_L^X (\Delta(x, 2^jr) \setminus \Delta(x, 4r))
		\leq 
		\int_\pom \phi_j \,d\omega_L^X
		=:
		v_j(X),
		\qquad 
		X \in \Omega.
		\end{equation*}
		By Boundary H\"older continuity (cf. Lemma~\ref{lem:Holder}, which we can use because $\Omega$ satisfies the CDC, whence $\pom$ is Wiener regular), we have, for every $X\in B(x,r) \cap \Omega$,
		\begin{equation*} 
		v_j(X) \lesssim \left(\frac{|X-x| }{r} \right)^{\alpha_2} \sup_{\Omega} v_j \leq \left(\frac{|X-x| }{r} \right)^{\alpha_2}, 
		\end{equation*}
		because $0 \leq v_j \leq 1$ since $0 \leq \phi_j \leq 1$.
		Taking limits as $j \to \infty$ and using the Monotone Convergence Theorem, \eqref{eq:elliptic_annuli} follows.


		To proceed, let $\varphi$ be a growth function. Let $X \in \Omega$ and $x \in \pom$ be such that $|X-x| \leq 3 \delta(X)$. 
		Write $\Delta_k:=\Delta(x, 2^k\delta(X))$ for any $k \in \Z$. Then we may compute 
		\begin{multline*}
		\int_\pom \varphi(|y-x|) d\w_L^X(y) 
		= 
		\int_{\Delta_4} \varphi(|y-x|) d\w_L^X(y)
		+ \sum_{k=4}^{\infty} \int_{\Delta_{k+1} \setminus \Delta_k} \varphi(|y-x|) d\w_L^X(y)
		\\ 
		\lesssim \w^X_L(\Delta_4) \varphi(16\delta(X)) 
		+ \sum_{k=4}^{\infty} \varphi(2^{k+1}\delta(X)) \bigg(\frac{3\delta(X)}{2^{k-2}\delta(X)} \bigg)^{\alpha_2}
		\lesssim 
		Q_{\alpha_2} \varphi(\delta(X)),
		\end{multline*}
		where we have used that $\varphi$ is non-decreasing, $\omega_L^X(\Delta_4) \leq \omega_L^X(\pom) \leq 1$, and Lemma~\ref{lem:Qalpha}.		
		This shows \eqref{eq:domination_mod_continuity}. Lastly, \eqref{fwaffcwr} follows at once from \eqref{eq:domination_mod_continuity}, Lemma~\ref{lem:Qalpha}, $\beta \leq \alpha_2$, and \eqref{eq:assumption}. 
	\end{proof}


	\section{Well-posedness of the Dirichlet problem in Hölder spaces}\label{sec:well}
	In this section, we will prove Theorem \ref{thm:Ca-well}. We will in fact obtain a more general version (see Remark~\ref{rem:alpha}), using growth functions $\varphi \in \mathcal{G}_\beta$.

	\begin{theorem}\label{thm:well}
		Let $\Omega \subset \R^{n+1}$, $n \geq 2$, be an open set satisfying the CDC so that either {\bf $\Omega$ is bounded or $\Omega$ is unbounded with $\pom$ being unbounded}. Let $L = -\div (A\nabla)$ be a real (non-necessarily symmetric) elliptic operator, and let $\omega_L$ be the associated elliptic measure. Then there exists $\beta \in (0, 1)$\textsuperscript{\ref{note1}} (depending only on $n$, the CDC constant, and the ellipticity constant of $L$) such that for every growth function $\varphi \in \mathcal{G}_\beta$, the $\dcv$-Dirichlet problem
		\begin{equation}\label{eq:C-varphi}
		\begin{cases}
		u \in W^{1, 2}_{\loc} (\Omega), \\
		Lu = 0 \text{ in the weak sense in } \Omega, \\
		u \in \dot{\mathscr{C}}^\varphi(\Omega), \\
		\restr{u}{\pom} = f \in \dcv(\pom)
		\end{cases}
		\end{equation}
		is well-posed. More specifically, there is a unique solution which is given by 
		\begin{equation}\label{eq:solution}
		u(X) = \int_\pom f(y) \, d\omega_L^X(y), \qquad X \in \Omega.
		\end{equation}
		Moreover, $u \in W_{\loc}^{1, 2}(\Omega) \cap \dot{\mathscr{C}}^\varphi (\overline{\Omega})$ and satisfies $\restr{u}{\pom} = f$, and there is a constant $C$ (depending only on $n$, the CDC constant, the ellipticity constant of $L$, and $C_\varphi$) such that 
		\begin{equation}\label{eq:fuf}
		\|f\|_{\dot{\mathscr{C}}^\varphi(\pom)}
		\leq \|u\|_{\dot{\mathscr{C}}^\varphi(\Omega)}
		\leq C \|f\|_{\dot{\mathscr{C}}^\varphi(\pom)}.
		\end{equation}
	\end{theorem}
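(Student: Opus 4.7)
The plan is to choose $\beta := \min\{\alpha_1,\alpha_2\}$ with $\alpha_1,\alpha_2$ from Lemmas~\ref{lem:DGN} and \ref{lem:Holder}, and to proceed through existence/boundary-trace, the H\"older estimate in $\Omega$, and finally uniqueness (the delicate part in the unbounded case). Throughout I exploit the standing fact that in this geometric scenario $\omega_L^X(\pom)=1$.

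For \emph{existence}, I first need to check that $u(X):=\int_\pom f\,d\omega_L^X$ is absolutely convergent. Fix any $x_0\in\pom$ and write $|f(y)|\le |f(x_0)|+\|f\|_{\dcv(\pom)}\varphi(|y-x_0|)$. For the $\varphi$-term pick a nearest boundary point $x_X\in\pom$ to $X$ (so $|X-x_X|=\delta(X)\le 3\delta(X)$), and bound $\varphi(|y-x_0|)\lesssim \varphi(|y-x_X|)+\varphi(|x_X-x_0|)$ via the doubling \eqref{list-22} of Lemma~\ref{lem:pro-phi}; Lemma~\ref{lem:8r4r} then controls $\int\varphi(|y-x_X|)\,d\omega_L^X$ by $C\varphi(\delta(X))$. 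To see $u\in W^{1,2}_{\loc}(\Omega)$ with $Lu=0$, I would truncate: set $f_k=(f-f(x_0))\,\eta_k+f(x_0)$ with $\eta_k\in \mathscr C_c(\pom)$, $\eta_k\nearrow 1$. Each $u_k:=\int f_k\,d\omega_L^X$ is, by the standard theory under CDC, a weak solution in $W^{1,2}_{\loc}(\Omega)\cap \mathscr{C}(\overline\Omega)$. Dominated convergence with the majorant $|f|+\|f\|_{\dcv}\varphi(|\cdot-x_X|)$ (integrable against $\omega_L^X$) gives $u_k\to u$ pointwise, hence locally uniformly after applying De Giorgi/Nash; Caccioppoli plus weak compactness then yields $\nabla u_k\rightharpoonup \nabla u$ in $L^2_{\loc}$, so $u$ is a weak solution.

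For the \emph{H\"older estimate} the crucial preliminary bound, immediate from $\omega_L^X(\pom)=1$ and Lemma~\ref{lem:8r4r}, is
\[
|u(X)-f(x_X)|
=\Bigl|\int_\pom (f(y)-f(x_X))\,d\omega_L^X(y)\Bigr|
\le \|f\|_{\dcv(\pom)}\int_\pom \varphi(|y-x_X|)\,d\omega_L^X(y)
\lesssim \|f\|_{\dcv(\pom)}\varphi(\delta(X)).
\]
Given $X,Y\in\Omega$ with $r=|X-Y|$ I split into two cases. If $r\le \delta(X)/4$, the function $u-f(x_X)$ is a weak solution on $B(X,\delta(X)/2)$ whose sup on $B(X,\delta(X)/4)$ is $\lesssim \|f\|_{\dcv(\pom)}\varphi(\delta(X))$ (applying the preliminary bound at any $Z$ in that ball, for which $\delta(Z)\asymp \delta(X)$ and $|Z-x_X|\le 3\delta(Z)$ so Lemma~\ref{lem:8r4r} still applies), and Lemma~\ref{lem:DGN} produces a factor $(r/\delta(X))^{\alpha_1}$; combining with the fact that $t\mapsto t^{-\beta}\varphi(t)$ is essentially decreasing (Lemma~\ref{lem:pro-phi}\eqref{list-1}) and $\beta\le\alpha_1$ yields $|u(X)-u(Y)|\lesssim \|f\|_{\dcv(\pom)}\varphi(r)$. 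If $r>\delta(X)/4$ (WLOG $\delta(X)\le\delta(Y)$, so $\delta(Y)\le\delta(X)+r\lesssim r$ and $|x_X-x_Y|\lesssim r$), the triangle inequality $|u(X)-u(Y)|\le |u(X)-f(x_X)|+|f(x_X)-f(x_Y)|+|f(x_Y)-u(Y)|$ together with the preliminary bound and doubling again gives $\lesssim \|f\|_{\dcv(\pom)}\varphi(r)$. For the boundary trace, at $x\in\pom$ and $X\to x$ one has $x_X\to x$ and $|u(X)-f(x)|\le |u(X)-f(x_X)|+|f(x_X)-f(x)|\to 0$, so $u\in \mathscr C(\overline\Omega)$ with $u|_\pom=f$; Lemma~\ref{lem:extension} then promotes $u$ to $\dcv(\overline\Omega)$ with comparable norm, which also gives the lower estimate $\|f\|_{\dcv(\pom)}\le \|u\|_{\dcv(\Omega)}$.

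The hardest step is \emph{uniqueness}, especially when $\Omega$ is unbounded with $\pom$ unbounded. Given two solutions $u_1,u_2$, set $w=u_1-u_2\in\dcv(\overline\Omega)$; then $Lw=0$ and $w|_\pom=0$, which yields the pointwise bound $|w(X)|\le \|w\|_{\dcv(\overline\Omega)}\,\varphi(\delta(X))$ by comparing with a nearest boundary point. When $\Omega$ is bounded the weak maximum principle finishes the argument at once. When $\Omega$ is unbounded with $\pom$ unbounded, fix $X_0\in\Omega$ and, for $R>2\delta(X_0)$, consider the bounded truncation $\Omega_R:=\Omega\cap B(X_0,R)$ (whose boundary is Wiener regular, since $\pom\cap\overline{B(X_0,R)}$ inherits CDC and the sphere part satisfies an exterior cone condition). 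The representation formula in $\Omega_R$ combined with $w|_\pom=0$ gives
\[
|w(X_0)|=\Bigl|\int_{\Omega\cap\partial B(X_0,R)} w\,d\omega_L^{X_0,\Omega_R}\Bigr|
\le \Bigl(\sup_{\Omega\cap\partial B(X_0,R)}|w|\Bigr)\,\mu_R(X_0),
\]
where $\mu_R(X_0):=\omega_L^{X_0,\Omega_R}(\Omega\cap\partial B(X_0,R))$. The sup is $\lesssim \varphi(R)$ by the growth bound and doubling, while applying Lemma~\ref{lem:Holder} to $\mu_R$ (which vanishes on $\pom\cap B(x_{X_0},R/2)$) at the nearest-boundary point $x_{X_0}$ gives $\mu_R(X_0)\lesssim (\delta(X_0)/R)^{\alpha_2}$. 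Hence $|w(X_0)|\lesssim \delta(X_0)^{\alpha_2}\,R^{-\alpha_2}\varphi(R)\to 0$ as $R\to\infty$, by Lemma~\ref{lem:pro-phi}\eqref{list-3} together with $\beta\le\alpha_2$. Therefore $w\equiv 0$, concluding well-posedness.
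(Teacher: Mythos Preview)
Your argument is correct and matches the paper's strategy in all essential respects: the choice $\beta=\min\{\alpha_1,\alpha_2\}$, existence via truncation, the key bound $|u(X)-f(x_X)|\lesssim\|f\|_{\dcv}\varphi(\delta(X))$ from Lemma~\ref{lem:8r4r}, and the two-case H\"older split (De~Giorgi/Nash near the diagonal, triangle inequality far from it) are all as in the paper. The only noteworthy variations are in the uniqueness step: you obtain $|w(X)|\lesssim\varphi(\delta(X))$ in one line from $w\in\dcv(\overline\Omega)$ with $w|_{\pom}=0$, whereas the paper derives it through a longer telescoping-average argument (equations~\eqref{eq:uuB}--\eqref{eq:average_B_0}), which is unnecessary here but is set up to yield the generalization in Remark~\ref{rem:different_spaces_uniqueness}; and you pass to the truncated domain $\Omega_R=\Omega\cap B(X_0,R)$ and bound the sphere-part elliptic measure $\mu_R(X_0)$ via Lemma~\ref{lem:Holder}, whereas the paper applies Lemma~\ref{lem:Holder} directly to $w$ in $\Omega$ at center $\widehat{x}_0$ and scale $R$ (legitimate since $\pom$ unbounded makes the restriction $r<\diam(\pom)$ vacuous). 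The paper's route is slightly cleaner because it avoids having to check that $\Omega_R$ satisfies the CDC with constants uniform in $R$.
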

	
	
	\subsection{Proof of Theorem~\ref{thm:well}: Existence}\label{sect:existence}
	We first show that $u$ defined in \eqref{eq:solution} solves \eqref{eq:C-varphi}. If $f$ is a constant on $\pom$, then it follows from \eqref{eq:solution} and the fact that in the current case $\w_L$ is a probability that $u \equiv f(y_0)$ for any $y_0 \in \pom$. Then $u$ satisfies \eqref{eq:C-varphi} and \eqref{eq:fuf}. Thus, without loss of generality, by normalizing, we may assume that $\|f\|_{\dcv(\pom)} = 1$.

	\textbf{Step 1: $u$ is a well-defined solution.} 
	When $\pom$ is bounded, $f\in \dcv(\pom)$ implies that $f$ is continuous and bounded defined in the compact set $\pom$, hence the integral in \eqref{eq:solution} is certainly well-defined. When $\pom$ is unbounded, we need to consider smooth cut-offs to show that the integral in \eqref{eq:solution} is absolutely convergent. To that end, cover $\Omega$ with a sequence of balls of the form $\Omega = \bigcup_{i=0}^\infty B(X_i, \delta(X_i) / 4)$. We first work locally inside any of these Whitney balls, say $B(X_0, \delta(X_0)/4)$. Consider $\widehat{x}_0 \in \pom$ such that $\delta(X_0) = |X_0 - \widehat{x}_0|$.
	
	Choose now $\phi \in \mathscr{C}_c^{\infty}(\R)$ so that $\mathbf{1}_{[0, 1]}(t) \le \phi(t) \le \mathbf{1}_{[0, 2]}(t)$ for every $t \ge 0$. For any $j \in \Z$, if we write $\phi_j^{X_0}(X):=\phi(2^{-j}|X-\widehat{x}_0|)$, then $\phi_j^{X_0} \in \mathscr{C}_c^{\infty}(\ree)$ and $\mathbf{1}_{B(\widehat{x}_0, 2^j)}(X) \le \phi_j^{X_0}(X) \le \mathbf{1}_{B(\widehat{x}_0, 2^{j+1})}(X)$ for every $X \in \ree$. Observe that for each $j \in \Z$, 
	\[
	|\phi^{X_0}_j(y) - \phi^{X_0}_{j+1}(y)|  
	\le 
	\mathbf{1}_{\Delta(\widehat{x}_0, 2^{j+2}) \setminus \Delta(\widehat{x}_0, 2^j)}(y),
	\qquad 
	y \in \pom. 
	\]
	
	If we define 
	\[
	u^{X_0}_j(X) 
	:= 
	f(\widehat{x}_0) + \int_\pom (f(y)-f(\widehat{x}_0)) \phi_j^{X_0}(y) \, d\w_L^X(y) ,
	\qquad 
	X \in \Omega, 
	\]
	then for any $X \in B(X_0, \delta(X_0)/2)$ and $j \in \ZZ$ such that $2^j > 4 \delta(X_0)$, 
	\begin{align}\label{eq:ujuj}
	|u^{X_0}_j(X) - u^{X_0}_{j+1}(X)|
	& \leq 
	\int_\pom |f(y) - f(\widehat{x}_0)| \big|\phi^{X_0}_j(y) - \phi^{X_0}_{j+1}(y)\big| d\omega_L^X(y)
	\\\nonumber
	& \leq 
	\int_{\Delta(\widehat{x}_0, 2^{j+2}) \setminus \Delta(\widehat{x}_0, 2^j)} |f(y) - f(\widehat{x}_0)| d\omega_L^X(y)
	\\\nonumber
	& \le 
	\int_{\Delta(\widehat{x}_0, 2^{j+2}) \setminus \Delta(\widehat{x}_0, 2^j)} \varphi(|y-\widehat{x}_0|) \, d\omega_L^X(y)
	\\\nonumber
	& \leq 
	\varphi(2^{j+2}) \, \w_L^X (\Delta(\widehat{x}_0, 2^{j+2}) \setminus \Delta(\widehat{x}_0, 2^j))
	\\\nonumber
	& \approx 
	\varphi(2^{j+2}) \, \w_L^{X_0} (\Delta(\widehat{x}_0, 2^{j+2}) \setminus \Delta(\widehat{x}_0, 2^j))
	\\\nonumber
	& \lesssim 
	\varphi(2^{j+2}) \bigg(\frac{|X_0-\widehat{x}_0|}{2^{j-1}} \bigg)^{\alpha_2}
	\\\nonumber 
	& \lesssim \varphi(2^{j+2}) \bigg(\frac{\delta(X_0)}{2^{j+2}}\bigg)^{\alpha_2}, 
	\end{align}
	where we used that $\varphi$ is non-decreasing, Harnack's inequality (since $X \in B(X_0, \delta(X_0)/2)$), and Lemma~\ref{lem:8r4r}. 
	Telescoping for $k' > k$ with $2^k>4\delta(X_0)$, we obtain from \eqref{eq:ujuj} and Lemma~\ref{lem:Qalpha}  
	\eqref{item:sum_varphi_Qalpha}, \eqref{item:Qalpha_decreasing} and \eqref{item:Qalpha_limit}, assuming that $\beta \leq \alpha_2$: 
	\begin{multline} \label{eq:X_0_telescope}
	\big|u^{X_0}_k(X) - u^{X_0}_{k'}(X) \big|
	\leq 
	\sum_{j=k}^{k'-1} \big|u^{X_0}_j(X)-u^{X_0}_{j+1}(X)\big|
	\lesssim 
	\sum_{j=k}^\infty 
	\varphi(2^{j+2}) \bigg(\frac{\delta(X_0)}{2^{j+2}} \bigg)^{\alpha_2}
	\\
	=
	\delta(X_0)^{\alpha_2} \sum_{j=0}^\infty \frac{\varphi(2^{j} 2^{k+2})}{(2^j 2^{k+2})^{\alpha_2}} 
	\lesssim
	\delta(X_0)^{\alpha_2} (2^{k+2})^{-\alpha_2} Q_{\alpha_2} \varphi(2^{k+2})
	\underset{k \to \infty}{\longrightarrow}
	0,
	\end{multline} 
	hence $\{u^{X_0}_j\}_j$ is a Cauchy sequence uniformly in $B(X_0, \delta(X_0)/2)$. Since $u^{X_0}_j \in \mathscr{C}(B(X_0, \delta(X_0) / 2))$, we deduce that there exists $u^{X_0}_{\infty} \in \mathscr{C}(B(X_0, \delta(X_0)/2))$ such that $u^{X_0}_j \to u^{X_0}_\infty$ uniformly in $B(X_0, \delta(X_0) / 2)$ as $j \to \infty$. On the other hand, Caccioppoli's inequality implies 
	\begin{multline*}
	\iint_{B(X_0, \delta(X_0) / 4)} \big|\nabla u^{X_0}_j - \nabla u^{X_0}_{j'} \big|^2 dX 
	\lesssim 
	(\delta(X_0))^{-2} \iint_{B(X_0, \delta(X_0) / 2)} \big|u^{X_0}_j - v^{X_0}_{j'}\big|^2 dX 
	\\ \lesssim 
	(\delta(X_0))^{n-1} \sup_{B(X_0, \delta(X_0) / 2)} \big|u^{X_0}_j-u^{X_0}_{j'}\big|^2,
	\end{multline*}
	thus $\{\nabla u^{X_0}_j \}_j$ is a Cauchy sequence in $L^2(B(X_0, \delta(X_0) / 4))$. This gives easily that $u^{X_0}_\infty \in W^{1, 2}(B(X_0, \delta(X_0)/4))$ and $u^{X_0}_j \to u^{X_0}_j$ in $W^{1, 2}(B(X_0, \delta(X_0)/4))$ as $j \to \infty$.
	
	We now turn to prove that $u^{X_0}_\infty$ coincides with $u$ (as defined in \eqref{eq:solution}) for $X \in B(X_0, \delta(X_0)/4)$. Recall that 
	\begin{equation*}
	u(X)
	=
	\int_\pom f(y) \,d\omega_L^X(y)
	=
	f(\widehat{x}_0) + \int_\pom (f(y) - f(\widehat{x}_0)) \,d\omega_L^X(y)
	\end{equation*}
	because the elliptic measure is a probability in this case, and the integral in the previous equation is well defined since it is absolutely convergent:
	\begin{align*}
	\int_\pom |f(y) - f(\widehat{x}_0)| \,d\omega^X_L(y) 
	\leq \int_\pom \varphi(|y - \widehat{x}_0|) \,d\omega^{X}_L(y)
	\lesssim 
	\varphi(\delta(X))
	< \infty
	\end{align*}
	by Lemma~\ref{lem:8r4r}, where we also use the fact that $|X-\widehat{x}_0| \leq \frac54 \delta(X_0) \leq \frac53 \delta(X) $. In particular, for any $X \in B(X_0, \delta(X_0)/4)$ and $j \in \Z$ with $2^j > 5\delta(X_0)$ we invoke Lemma \ref{lem:8r4r}, and Harnack's inequality later, to conclude that  
	\begin{align*}
	\big|u(X) - u^{X_0}_j(X)\big|
	&\leq 
	\int_{\pom} |f(y) - f(\widehat{x}_0)| \big|1-\phi^{X_0}_j(y)\big| \, d\w_L^X(y) 
	\\ \nonumber & \le 
	\int_{\pom \setminus \Delta(\widehat{x}_0, 2^j)} |f(y) - f(\widehat{x}_0)| \, d\w_L^X(y) 
	\\ \nonumber & \leq
	\sum_{k=j}^{\infty} \varphi(2^{k+1}) \w_L^X (\Delta(\widehat{x}_0, 2^{k+1}) \setminus \Delta(\widehat{x}_0, 2^k)) 
	\\ \nonumber & \approx 
	\sum_{k=j}^{\infty} \varphi(2^{k+1}) \w_L^{X_0} (\Delta(\widehat{x}_0, 2^{k+1}) \setminus \Delta(\widehat{x}_0, 2^k)) 
	\\ \nonumber
	&\lesssim 
	\sum_{k=j}^{\infty} \varphi(2^{k+1}) \left( \frac{\delta(X_0)}{2^{k-2}} \right)^{\alpha_2},
	\end{align*}	
	which converges to 0 as $j \to \infty$ by Lemma~\ref{lem:Qalpha} (the computation is analogous to that in \eqref{eq:X_0_telescope}), provided $\beta \leq \alpha_2$. Therefore, we conclude that $u(X) = u^{X_0}_\infty(X)$ for all $X \in B(X_0, \delta(X_0)/4)$. 
	
	We can proceed in a similar fashion in $B(X_i, \delta(X_i) / 4)$ for each $i \in \NN$ to see that there exists some $u^{X_i}_\infty \in \mathscr{C}(B(X_i, \delta(X_i) / 4)) \cap W^{1, 2}(B(X_i, \delta(X_i) / 4))$ that coincides with $u$ in $B(X_i, \delta(X_i) / 4)$. This yields easily that $u \in \mathscr{C}(\Omega) \cap W^{1, 2}_{\loc}(\Omega)$  since $\Omega = \bigcup_{i=0}^\infty B(X_i, \delta(X_i) / 4)$.
	
	Now we are going to show that $u$ is a weak solution. For that purpose, pick $\psi \in \mathscr{C}_c^\infty(\Omega)$ and decompose $\psi = \sum_{i = 0}^\infty \psi_i$ for some $\psi_i \in \mathscr{C}_c^\infty (\Omega)$ with $\supp \psi_i \subset B(X_i, \delta(X_i) / 4)$ (this can be easily done with a partition of unity subordinated to the open cover of $\Omega$ we have been using). In fact, since $\psi$ is compactly supported, actually we only need finitely many terms and will say $\psi = \sum_{i = 0}^N \psi_i$ for some large enough $N$, so that $\supp \psi \subset \bigcup_{i=0}^N B(X_i, \delta(X_i)/4)$, to highlight that there are no convergence issues. We can then compute
	\begin{multline*}
	\iint_\Omega A \nabla u \cdot \nabla \psi  
	= 
	\sum_{i = 0}^N \iint_\Omega A \nabla u \cdot \nabla \psi_i
	=
	\sum_{i = 0}^N \iint_{B(X_i, \delta(X_i)/4)} A \nabla u \cdot \nabla \psi_i
	\\ =
	\sum_{i = 0}^N \iint_{B(X_i, \delta(X_i)/4)} A \nabla u^{X_i}_\infty \cdot \nabla \psi_i
	=
	\sum_{i = 0}^N \lim_{j \to \infty} \iint_{B(X_i, \delta(X_i)/4)} A \nabla u^{X_i}_j \cdot \nabla \psi_i
	\\ =
	\sum_{i = 0}^N \lim_{j \to \infty} \iint_\Omega A \nabla u^{X_i}_j \cdot \nabla \psi_i
	= 
	0
	\end{multline*}
	because $Lu^{X_i}_j = 0$ in the weak sense for any $i, j \in \NN$. With all these, it is now clear that $u$ is indeed a well defined weak solution in $W^{1, 2}_\loc(\Omega) \cap \mathscr{C}(\Omega)$. 
	
	\textbf{Step 2: $u \in \dcv(\Omega)$.} 
	Let $X, Y \in \Omega$. Without loss of generality we may assume that $\delta(X) \leq \delta(Y)$. Denote by $\widehat{x} \in \pom$ a point for which $|X-\widehat{x}| = \delta(X)$, and $\widehat{y} \in \pom$ satisfying $|Y-\widehat{y}| = \delta(Y)$. Let us distinguish some cases.
	
	\textbf{Case 1:} $|X-Y| < \delta(Y)/4$. In this case $X \in B(Y, \delta(Y)/4)$, and De Giorgi/Nash (Lemma~\ref{lem:DGN}) estimate for $u-f(\widehat{y})$ yields 
	\begin{align} \label{eq:uX-uY-case1}
	|u(X) - u(Y)|
	& = |(u(X) - f(\widehat{y})) - (u(Y) - f(\widehat{y}))|
	\\ 
	& \lesssim \bigg(\frac{|X-Y|}{\delta(Y)}\bigg)^{\alpha_1} 
	\bigg(\bariint_{B(Y, \delta(Y) / 2)} |u(Z) - f(\widehat{y})|^2 dZ \bigg)^{\frac12}
	\nonumber
	\\ 
	& \leq 	
	\bigg(\frac{|X-Y|}{\delta(Y)} \bigg)^{\alpha_1} 
	\bigg(\bariint_{B(Y, \delta(Y) / 2)} \bigg(\int_\pom |f(z) - f(\widehat{y})| d\w_L^Z(z) \bigg)^2 dZ \bigg)^{\frac12}
	\nonumber
	\\ 
	& \lesssim \bigg( \frac{|X-Y|}{\delta(Y)} \bigg)^{\alpha_1} 
	\bigg(\bariint_{B(Y, \delta(Y) / 2)} \bigg(\int_\pom \varphi(|z - \widehat{y}|) \, d\w_L^Z(z) \bigg)^2 dZ \bigg)^{\frac12}.
	\nonumber
	\end{align}
	Now, for $Z \in B(Y, \delta(Y) / 2)$, we clearly have $\delta(Z) \leq 2\delta(Y)$, hence we can use Lemma~\ref{lem:8r4r} and Lemma~\ref{lem:Qalpha}, to obtain
	\begin{equation*}
	\int_\pom \varphi(|z - \widehat{y}|) \, d\w_L^Z(z)
	\lesssim 
	Q_{\alpha_2} \varphi(\delta(Z))
	\leq 
	Q_{\alpha_2} \varphi(2\delta(Y))
	\lesssim 
	Q_{\alpha_2} \varphi(\delta(Y)).
	\end{equation*}
	This, back in \eqref{eq:uX-uY-case1}, and setting $\alpha_0 := \min \{\alpha_1, \alpha_2\}$,
	\begin{equation*}
	|u(X) - u(Y)|
	\lesssim 
	\bigg(\frac{|X-Y|}{\delta(Y)} \bigg)^{\alpha_1} Q_{\alpha_2} \varphi(\delta(Y))
	\leq 
	\bigg(\frac{|X-Y|}{\delta(Y)} \bigg)^{\alpha_0} Q_{\alpha_0} \varphi(\delta(Y))
	\leq 
	 Q_{\alpha_0} \varphi(|X-Y|),
	\end{equation*}
	where we have used Lemma~\ref{lem:Qalpha} repeatedly.
%
	Therefore, assuming that $\beta \leq \alpha_0$ and that $\varphi$ satisfies \eqref{eq:ass-phi}, we obtain 
	\begin{equation*}
	|u(X) - u(Y)|
	\lesssim 
	\varphi(|X-Y|).
	\end{equation*} 	
	
	\textbf{Case 2:} $\delta(Y) \leq 4 |X-Y|$ and $|\widehat{x} - \widehat{y}| \leq 10 |X-Y|$. 
	In this situation,  
	using Lemmas~\ref{lem:8r4r} and \ref{lem:Qalpha}, it follows that
	\begin{multline*}
	|u(X) - f(\widehat{x})|
	\leq \int_\pom |f(z) - f(\widehat{x})| d\omega_L^X(z)
	\leq \int_\pom \varphi(|z-\widehat{x}|) d\omega_L^X(z) 
	\\ \lesssim 
	Q_{\alpha_2} \varphi (\delta(X)) 
	\leq 
	Q_{\alpha_2} \varphi (4|X-Y|)
	\lesssim 
	Q_{\alpha_2} \varphi(|X-Y|).
	\end{multline*}
	A similar computation shows 
	\begin{equation*}
	|u(Y) - f(\widehat{y})| \lesssim Q_{\alpha_2} \varphi(|X-Y|).
	\end{equation*} 
	Then the triangle inequality and Lemma~\ref{lem:Qalpha} yield
	\begin{multline*}
	|u(X) - u(Y)|
	\leq |u(X) - f(\widehat{x})| + |f(\widehat{x}) - f(\widehat{y})| + |f(\widehat{y}) - u(Y)|
	\\ \lesssim 
	Q_{\alpha_2} \varphi(|X-Y|) + \varphi(10|X-Y|)
	\lesssim 
	Q_{\alpha_2} \varphi(|X-Y|)
	.
	\end{multline*}	
	Therefore, assuming that $\beta \leq \alpha_2$ and by \eqref{eq:ass-phi}, we obtain
	\begin{equation*}
	|u(X) - f(\widehat{x})|
	\lesssim \varphi(|X-Y|).
	\end{equation*} 
	
	\textbf{Case 3:} $\delta(Y) \leq 4|X-Y| $ and $10 |X-Y| < |\widehat{x} - \widehat{y}|$. 
	In this scenario, we have 
	\begin{multline*}
	10 |X-Y|
	<
	|\widehat{x} - \widehat{y}|
	\leq 
	|\widehat{x} - X| + |X-Y| + |Y-\widehat{y}|
	\\ =
	\delta(X) + |X-Y| + \delta(Y)
	\leq 
	2 \delta(Y) + |X-Y|
	\leq 
	9 |X-Y|,
	\end{multline*} 
	which is a contradiction, hence this case cannot occur.

	In either of the situations we have shown that $|u(X) - u(Y)| \lesssim \varphi(|X-Y|)$ with constants independent of the chosen points $X$ and $Y$, hence $u \in \dcv(\Omega)$  
	provided $\beta \leq \alpha_0 = \min \{\alpha_1, \alpha_2\}$. 
	
	\medskip 
	\textbf{Step 3: $u \in \dot{\mathscr{C}}^{\varphi}(\overline{\Omega})$, $\restr{u}{\pom} = f$, and \eqref{eq:fuf} holds.}  
	By Lemma \ref{lem:extension} (along with Remark~\ref{rem:growth_stronger}), 
	$u$ can be extended to $\overline{\Omega}$ (abusing the notation we denote such an extension by $u$), satisfying $u \in \dot{\mathscr{C}}^{\varphi}(\overline{\Omega})$ and \eqref{eq:fuf}.
	We only need to check that $\restr{u}{\pom} = f$.
	
	Let $X \in \Omega$ and $y \in \pom$. Pick $\widehat{x} \in \pom$ such that $|X-\widehat{x}| = \delta(X)$. Then it follows from Lemma \ref{lem:8r4r} 
	(concretely from \eqref{eq:domination_mod_continuity}) that 
	\begin{multline*}
	|u(X) - f(y)|
	\leq |u(X) - f(\widehat{x})| + |f(\widehat{x}) - f(y)| 
	\leq \int_\pom |f(z) - f(\widehat{x})| d\omega_L^X(z) + |f(\widehat{x}) - f(y)|
	\\ 
	\leq \int_\pom \varphi(|z - \widehat{x}|) d\omega_L^X(z) + \varphi(|\widehat{x} - y|)
	\lesssim 
	Q_{\alpha_2} \varphi(\delta(X)) + \varphi(|\widehat{x} - y|),
	\end{multline*} 
	which converges to 0 if we let $X$ approach $y$, recalling that $\varphi(t) \to 0$ as $t \to 0^+$ (for the first term, see Lemma~\ref{lem:Qalpha}, which states that $Q_{\alpha_2}\varphi$ is a growth function).  
	With this we are finally done, showing the existence of solutions in Theorem~\ref{thm:well}.
	 
	\begin{remark} \label{rem:different_spaces_existence}
		The reader may review the previous proof (along with Lemma~\ref{lem:Qalpha}) to observe that we have actually shown that, given $f \in \dot{\mathscr{C}}^\varphi(\pom)$, the associated solution $u$ constructed by \eqref{eq:solution} satisfies $u \in \dot{\mathscr{C}}^{Q_{\alpha_0}\varphi}(\overline{\Omega})$ with $\alpha_0 = \min \{\alpha_1, \alpha_2\}$, provided that $\varphi$ is merely a growth function for which $Q_{\alpha_0}\varphi(1) < \infty$ (this always happens whenever $\varphi \in \mathcal{G}_\beta$, see Remark~\ref{rem:growth_stronger}). 
%
	\end{remark} 
	
	
	\subsection{Proof of Theorem~\ref{thm:well}: Uniqueness}\label{sect:unique}
	Given $f \in \dcv(\pom)$, suppose that $u_1$ and $u_2$ are two solutions of \eqref{eq:C-varphi}. Write $u:=u_1-u_2$, which solves \eqref{eq:C-varphi} with $f \equiv 0$. We are going to show that $u \equiv 0$. When $\Omega$ is bounded, $u \equiv 0$ follows from the fact that $u \in \dcv(\overline{\Omega}) \subset \mathscr{C}(\overline{\Omega})$ and the classical maximum principle. 
	
	We next consider the case $\Omega$ unbounded with $\pom$ unbounded. If $\|u\|_{\dcv(\Omega)} = 0$, then $u$ is a constant and this constant must be zero since $u \in \mathscr{C}(\overline{\Omega})$ with $\restr{u}{\pom}=0$. Therefore, by homogeneity we may assume that $\|u\|_{\dcv(\Omega)} = 1$ and $\restr{u}{\pom} = 0$. 
	
	We present a fundamental estimate which will be used repeatedly afterwards. Given $\tau \in (0, 1)$ and an arbitrary point $Y \in \Omega$, we write $B_Y^{\tau} := B(Y, \tau \delta(Y))$ and compute 
	\begin{multline}\label{eq:average_ball_half}
	\bariint_{B_Y^{\tau}} |u - u_{B_Y^{\tau}}|
	\leq 
	\bariint_{B_Y^{\tau}} \bariint_{B_Y^{\tau}} |u(X) - u(Z)| dZ dX 
	\leq 
	\bariint_{B_Y^{\tau}} \bariint_{B_Y^{\tau}} \varphi(|X-Z|) dZ dX
	\\ \leq 
	\bariint_{B_Y^{\tau}} \bariint_{B_Y^{\tau}} \varphi(2\tau \delta(Y)) dZ dX
	=
	\varphi(2\tau \delta(Y)) 
	\leq
	\varphi(2\delta(Y)).
	\end{multline}

	Now fix $X \in \Omega$ and let $\widehat{x} \in \pom$ be such that $|X - \widehat{x}| = \delta(X)$. Construct a sequence of points $\{X_j\}_j \subset \Omega$ inductively: $X_0 = X$ and $X_{j+1} = (X_j + \widehat{x}) / 2$ for $j \geq 0$. Write $B_j := B(X_j, \delta(X_j)/2)$ for every $j \ge 0$. We have  
	\begin{equation}\label{eq:uuB}
	|u(X)| \leq \bigg|u(X) - \bariint_{B_0} u(Y)\, dY\bigg| + \bigg|\bariint_{B_0} u(Y) \, dY\bigg|.
	\end{equation}
	Considering the first term, in a similar fashion as in \eqref{eq:average_ball_half} we compute
	\begin{align} \label{eq:estimate_u_minus_B_0}
	\bigg|u(X) - \bariint_{B_0} u(Y) dY\bigg|
	\leq 
	\bariint_{B_0} |u(X) - u(Y)| dY 
	\leq 
	\bariint_{B_0} \varphi(\delta(X)) dY
	=
	\varphi(\delta(X)).
	\end{align}
	On the other hand, for each $N \geq 0$, 
	\begin{equation}\label{eq:u-sum}
	\bigg|\bariint_{B_0} u(Y) \, dY\bigg|
	\leq \bariint_{B_0} |u(Y) - u_{B_0}| \, dY + \sum_{j = 0}^N |u_{B_j} - u_{B_{j+1}}| + |u_{B_{N+1}}|.
	\end{equation}
	And by definition of our sequence of balls $B_j$, we have that $B_{N+1} \subset B \Big(\widehat{x}, \dfrac32 \dfrac{\delta(X)}{2^{N+1}}\Big)$ since $\delta(X_{N+1}) = \dfrac{\delta(X)}{2^{N+1}}$. This, jointly with the fact that $u \in \mathscr{C}(\overline{\Omega})$ and $\restr{u}{\pom} \equiv 0$, yields
	\begin{equation*}
	u_{B_{N+1}} = \bariint_{B_{N+1}} u(Y) \, dY \longrightarrow 0, \quad\text{ as } N \to \infty.
	\end{equation*}
	This allows us to take limits in \eqref{eq:u-sum} to achieve
	\begin{equation} \label{eq:telescopic}
	\bigg|\bariint_{B_0} u(Y) \, dY\bigg|
	\leq 
	\bariint_{B_0} |u(Y) - u_{B_0}| \, dY + \sum_{j = 0}^\infty |u_{B_j} - u_{B_{j+1}}|.
	\end{equation}
	If we define the fattened balls $\widetilde{B}_j := B(X_j, \frac34 \delta(X_j))$, one can easily see that $B_j \cup B_{j+1} \subset \widetilde{B}_j$ for any $j \geq 0$. Invoking \eqref{eq:average_ball_half} with $\tau=\frac34$, we can estimate each one of the previous summands: 
	\begin{multline*}
	|u_{B_j} - u_{B_{j+1}}|
	\leq |u_{B_j} - u_{\widetilde{B}_j}| + |u_{B_{j+1}} - u_{\widetilde{B}_j}|
	\leq \bariint_{B_j} |u - u_{\widetilde{B}_j}| + \bariint_{B_{j+1}} |u - u_{\widetilde{B}_j}| 
	\\  
	\lesssim \bariint_{\widetilde{B}_j} |u - u_{\widetilde{B}_j}|
	\leq \varphi(2\delta(X_j))
	= \varphi (2^{-j+1} \delta(X)).
	\end{multline*}
	Inserting this into \eqref{eq:telescopic}, we have 
	\begin{multline} \label{eq:average_B_0}
	\bigg|\bariint_{B_0} u(Y) \, dY\bigg|
	\lesssim 
	\sum_{j = 0}^{\infty} \varphi (2^{-j+1} \delta(X))
	\approx 
	\sum_{j = 0}^{\infty} \varphi (2^{-j+1} \delta(X)) \int_{2^{-j+1}\delta(X)}^{2^{-j+2}\delta(X)} \frac{ds}{s}
	\\ \leq
	\sum_{j = 0}^{\infty} \int_{2^{-j+1} \delta(X)}^{2^{-j+2} \delta(X)} \varphi(s) \frac{ds}{s}
	=
	\int_{0}^{4 \delta(X)} \varphi(s) \frac{ds}{s}.
	\end{multline}		
	On the other hand, again in a similar spirit to Lemma~\ref{lem:Qalpha}, we may compute 
	\begin{equation} \label{eq:domination_varphi_integral_2}
	\varphi(\delta(X)) 
	\approx 
	\varphi(\delta(X)) \int_{\delta(X)}^{2\delta(X)} \frac{ds}{s}
	\leq 
	\int_{\delta(X)}^{2\delta(X)} \varphi(s) \frac{ds}{s}
	\leq 
	\int_0^{4\delta(X)} \varphi(s) \frac{ds}{s}.
	\end{equation}
	Hence, estimating \eqref{eq:uuB} with \eqref{eq:estimate_u_minus_B_0} (and later using \eqref{eq:domination_varphi_integral_2}) and \eqref{eq:average_B_0}, we end up getting 
	\begin{equation*} 
	|u(X)| \lesssim \int_{0}^{4 \delta(X)} \varphi(s) \frac{ds}{s}, \quad\forall X \in \Omega,
	\end{equation*}
	so taking into account that $\varphi \in \mathcal{G}_\beta$, we may use \eqref{eq:ass-phi} to obtain 
	\begin{equation*}
	|u(X)| \lesssim \varphi(4\delta(X)), \quad\forall X \in \Omega,
	\end{equation*} 	
	This in turn implies that 
	\begin{equation}\label{eq:uX-estimate}
	\sup_{B(x, r) \cap \Omega} |u| \lesssim \varphi(4r), \quad\forall x \in \pom, r>0, 
	\end{equation}
	where the implicit constant is independent of $x$ and $r$.
	
	Now fix $X_0 \in \Omega$ and pick $\widehat{x}_0 \in \pom$ such that $|X_0-\widehat{x}_0|=\delta(X_0)$. 
	Invoking \eqref{eq:uX-estimate} and Boundary H\"{o}lder continuity from Lemma~\ref{lem:Holder} applied to $u$ (with $\restr{u}{\pom} \equiv 0$), we get, for $R$ big enough so that $X_0 \in B(\widehat{x}_0, R/2)$: 
	\begin{equation*}
	|u(X_0)| 
	\lesssim 
	\bigg( \frac{|X_0 - \widehat{x}_0|}{R} \bigg)^{\alpha_2} \sup_{B(\widehat{x}_0, R) \cap \overline{\Omega}} |u| 
	\lesssim 
	\delta(X_0)^{\alpha_2} \frac{\varphi(4R)}{R^{\alpha_2}}
	\lesssim 
	\delta(X_0)^{\alpha_2} \frac{Q_{\alpha_2}\varphi(4R)}{(4R)^{\alpha_2}}
	\underset{R \to \infty}{\longrightarrow} 
	0 
	\end{equation*}
	by Lemma~\ref{lem:Qalpha}, since $\beta \leq \alpha_2$ and $\varphi$ satisfies \eqref{eq:ass-phi}.  
	Since $X_0$ was chosen arbitrarily, this shows that $u \equiv 0$, hence finishing the proof of the uniqueness part in Theorem~\ref{thm:well}.
	 
	\begin{remark} \label{rem:different_spaces_uniqueness}
		We may also generalize this uniqueness result in the spirit of Remark~\ref{rem:different_spaces_existence}. In fact, the reader may check from the above proof that we have shown the following: if $\psi$ is a growth function for which
		\begin{equation} \label{eq:ass_uniqueness}
		\int_0^1 \psi(s) \frac{ds}{s} < \infty
		\qquad 
		\text{and}
		\qquad 
		R^{-\alpha_2} \int_0^R \psi(s) \frac{ds}{s} 
		\underset{R \to \infty}{\longrightarrow}
		0,
		\end{equation}
		then there is uniqueness of solutions $u$ to the version of the problem \eqref{eq:C-varphi} with $u \in \dot{\mathscr{C}}^\psi(\Omega)$. Note that the first condition in \eqref{eq:ass_uniqueness} guarantees that the integral in the second condition in \eqref{eq:ass_uniqueness} is well-defined.
		
		Following Remark~\ref{rem:different_spaces_existence}, let us consider the case $\psi := Q_\beta \varphi$ for some $\beta \in (0, 1)$ and $\varphi$ a growth function. Then, it turns out that the conditions in \eqref{eq:ass_uniqueness} (so that one has uniqueness of solutions to \eqref{eq:C-varphi} with $u \in \dot{\mathscr{C}}^{Q_\beta \varphi}(\Omega)$) are fulfilled if $\beta \leq \alpha_2$, $\int_0^1 \varphi(s) \frac{ds}{s} < \infty$, and $Q_\beta \varphi(1) < \infty$ are all satisfied. Indeed, if such properties hold, using Fubini's theorem we obtain:
		\begin{equation*}
		\int_0^1 Q_\beta \varphi(s) \frac{ds}{s}
		\approx 
		\int_0^1 \varphi(s) \frac{ds}{s} + Q_\beta \varphi(1)
		< 
		\infty.
		\end{equation*}
		Next, to verify the second condition in \eqref{eq:ass_uniqueness}, use again Fubini's theorem to obtain that for every $R > 1$:
		\begin{multline*}
		R^{-\alpha_2}{\int_0^R Q_\beta \varphi(s) \frac{ds}{s}}
		\approx 
				R^{-\alpha_2}{\int_0^1 \varphi(s) \frac{ds}{s}}
		+ 
				R^{-\alpha_2} {\int_1^R \varphi(s) \frac{ds}{s}}
		+ 
				R^{\beta -\alpha_2}{\int_R^\infty \frac{\varphi(s)}{s^\beta} \frac{ds}{s}}
				\\
		=:
		I(R) + II(R) + III(R).
		\end{multline*}
		The hypotheses $\beta \leq \alpha_2$, $\int_0^1 \varphi(s) \frac{ds}{s} < \infty$ and $Q_\beta \varphi(1) < \infty$ yield $\lim_{R \to \infty} I(R) + III(R) = 0$. To deal with $II(R)$,  we use Fubini's theorem, that $\beta \leq \alpha_2$ and $Q_\beta \varphi(1) < \infty$ to arrive at
		\begin{multline*}
			\sum_{k=1}^\infty \left( 2^{-k\alpha_2} \int_1^{2^k} \varphi(s) \frac{ds}{s}\right)
			\lesssim
			\sum_{k=1}^\infty \left( 2^{-k\alpha_2} \sum_{j = 1}^k \varphi(2^j) \right)
			\lesssim 
			\sum_{j=1}^\infty 2^{-j\alpha_2} \varphi(2^j)
			\\
			\leq 
			\sum_{j=1}^\infty 2^{-j\beta} \varphi(2^j)
			\lesssim 
			\int_1^\infty \frac{\varphi(s)}{s^\beta} \frac{ds}{s}
			=
			Q_\beta \varphi(1) 
			<
			\infty.
		\end{multline*}
		This readily implies that  $\lim_{k \to \infty} 2^{-k\alpha_2} \int_1^{2^k} \varphi(s) \frac{ds}{s} = 0$, hence  $\lim_{R \to \infty} II(R) = 0$.
	\end{remark} 
	
	\subsection{Proof of Corollary~\ref{cor:fatou}}
	Let $u\in W^{1,2}_{\loc}(\Omega)\cap\dot{\mathscr{C}}^{\alpha}(\Omega)$ satisfy $Lu=0$ in the weak sense in $\Omega$. By Lemma~\ref{lem:extension} (along with Remark~\ref{rem:growth_stronger}), $u$ extends uniquely to a function defined in $\overline{\Omega}$, which is called again $u$, so that $u\in  \dot{\mathscr{C}}^{\alpha}(\overline{\Omega})$ and
	$\|u|_{\pom}\|_{\dot{\mathscr{C}}^{\alpha}(\pom)} \le  \|u\|_{\dot{\mathscr{C}}^{\alpha}(\overline{\Omega})} \lesssim \|u\|_{\dot{\mathscr{C}}^{\alpha}(\Omega)}$. In particular, $u$ solves \eqref{eq:problem} with $f=u|_{\pom}$. Theorem~\ref{thm:Ca-well} readily implies \eqref{eqn:corol:Fatou}. 
	
	To see \eqref{eqn:ident} let $f\in \dot{\mathscr{C}}^{\alpha}(\Omega)$ and invoke  Theorem~\ref{thm:Ca-well} to solve \eqref{eq:problem}. Then, $u\in \dot{\mathscr{C}}_{L}^{\alpha}(\Omega)$ and $f=u|_{\pom}$. Thus, $f=u|_{\pom}$ belongs to the set on the right hand side of \eqref{eqn:ident}. The converse inclusion follows trivially from Lemma~\ref{lem:extension}.
	
	To continue, we note that \eqref{eqn:corol:Fatou} and \eqref{eqn:ident} easily imply that the linear mapping \eqref{eqn:map} is a well-defined bounded operator. Note also that if $[u|_{\pom}]=0$ then $u|_{\pom}$ is constant, say $c_0$. This, \eqref{eqn:corol:Fatou}, and the fact that elliptic measure is a probability in the current scenario imply that $u$ must be identically to $c_0$. Eventually, $[u]=0$ and we have shown that the operator in \eqref{eqn:map} is injective. It is additionally surjective by Theorem~\ref{thm:Ca-well} and invoking again \eqref{eqn:corol:Fatou} the converse of the mapping \eqref{eqn:map} is also bounded. Finally, given that $\big\|[\cdot]\big\|_{\dot{\mathscr{C}}^{\alpha}(\pom)/\re}$ is complete we readily obtain that $\dot{\mathscr{C}}^{\alpha}_L(\Omega)/\re$ equipped with the norm $\big\|[\cdot] \big\|_{\dot{\mathscr{C}}^{\alpha}(\Omega)/\re}$ is a Banach space. This completes the proof of Corollary~\ref{cor:fatou}.


	\section{Ill-posedness of the Dirichlet problem in Hölder spaces}\label{sec:ill}
	The goal of this section is to show Theorem \ref{thm:Ca-ill}. This will be done using Remark \ref{rem:alpha} and the following more general result. 
	
	\begin{theorem}
		Let $\Omega \subset \R^{n+1}$, $n \geq 2$, be an open set satisfying the CDC so that {\bf $\Omega$ is unbounded with $\pom$ being bounded}. Let $L = -\div (A\nabla)$ be a real (non-necessarily symmetric) elliptic operator, and let $\omega_L$ be the associated elliptic measure. Then there exists $\beta \in (0, 1)$ (depending only on $n$, the  CDC constant, and the ellipticity constant of $L$) such that for every growth function $\varphi \in \mathcal{G}_\beta$, for every $f \in \dot{\mathscr{C}}^{\varphi}(\pom)$, and for every $y_0 \in \pom$, if we define 
		\begin{equation} \label{eq:uX_y0}
		u(X) = f(y_0) + \int_{\pom} (f(y)-f(y_0)) \, d\w_L^X(y), \qquad X \in \Omega, 
		\end{equation}
		then $u \in W^{1,2}_{\loc}(\Omega) \cap \dcv(\overline{\Omega})$, satisfies $u|_{\pom}=f$, and solves the $\dcv$-Dirichlet problem for $L$ in $\Omega$ as formulated in \eqref{eq:C-varphi}. Moreover, there exists $C$ (depending on $n$, the CDC constant, the ellipticity constant of $L$, and $C_\varphi$) such that 
		\begin{equation*}
		\|f\|_{\dot{\mathscr{C}}^\varphi(\pom)}
		\leq \|u\|_{\dot{\mathscr{C}}^\varphi(\Omega)}
		\leq C \|f\|_{\dot{\mathscr{C}}^\varphi(\pom)}.
		\end{equation*}
		Nonetheless, the $\dcv$-Dirichlet problem for $L$ in $\Omega$ is ill-posed because \eqref{eq:C-varphi} always has more than one solution if $f$ is non-constant. 
	\end{theorem}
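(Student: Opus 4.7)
The plan is to parallel the existence argument of Theorem \ref{thm:well} (Section \ref{sect:existence}), but with three modifications: since $\pom$ is compact the integral in \eqref{eq:uX_y0} is absolutely convergent with no Whitney-ball telescoping needed; the algebraic identities used in the Hölder estimate must be adjusted to account for the fact that $\omega_L^X(\pom) < 1$ here; and for ill-posedness I will exhibit an explicit second solution through the function $1 - \omega_L^X(\pom)$, whose non-constancy produces non-uniqueness.

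For existence, since $\pom$ is compact and $f \in \dcv(\pom)$, the difference $f - f(y_0)$ belongs to $\mathscr{C}(\pom) = \mathscr{C}_c(\pom)$. By the CDC all boundary points are Wiener regular, so the Perron-Wiener-Brelot construction recalled in Section \ref{sec:pre} gives that $v(X) := \int_\pom (f(y) - f(y_0))\,d\omega_L^X(y)$ lies in $W^{1,2}_\loc(\Omega) \cap \mathscr{C}(\overline{\Omega})$, satisfies $Lv = 0$ weakly, and has $v|_\pom = f - f(y_0)$. Then $u = f(y_0) + v$ solves the PDE and verifies $u|_\pom = f$. For the Hölder bound I would redo the three-case analysis of Section \ref{sect:existence} Step 2 for pairs $X, Y \in \Omega$ with $\delta(X) \le \delta(Y)$. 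The key new algebraic input is the identity
\[
u(Z) - f(\widehat{y}) = (f(y_0) - f(\widehat{y}))\bigl(1 - \omega_L^Z(\pom)\bigr) + \int_\pom \bigl(f(y) - f(\widehat{y})\bigr)\,d\omega_L^Z(y),
\]
valid for every $Z \in \Omega$ and every $\widehat{y} \in \pom$. The integral term is controlled exactly as in Section \ref{sect:existence} via Lemma \ref{lem:8r4r}. For the first summand, I would apply boundary Hölder continuity (Lemma \ref{lem:Holder}) to the non-negative solution $1 - \omega_L^X(\pom)$, which vanishes on $\pom$, whenever $\delta(Z) \lesssim \diam(\pom)$; and use the trivial bound $1 - \omega_L^Z(\pom) \leq 1$ together with $|f(y_0) - f(\widehat{y})| \leq \|f\|_{\dcv(\pom)}\,\varphi(\diam(\pom))$ and doubling of $\varphi$ otherwise. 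Taking $\beta \le \min\{\alpha_1, \alpha_2\}$ and using Lemma \ref{lem:pro-phi} to compare $\varphi(\diam(\pom))$ with $\varphi(\delta(Y))$ or $\varphi(|X-Y|)$ absorbs every auxiliary factor and yields $\|u\|_{\dcv(\Omega)} \lesssim \|f\|_{\dcv(\pom)}$. The extension to $\overline{\Omega}$ follows from Lemma \ref{lem:extension}, and the lower bound $\|f\|_{\dcv(\pom)} \le \|u\|_{\dcv(\Omega)}$ is immediate from $u|_\pom = f$.

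For ill-posedness, set $v_0(X) := 1 - \omega_L^X(\pom)$. Then $v_0$ is a bounded non-negative $L$-solution in $\Omega$ with $v_0|_\pom = 0$ (by Wiener regularity), and it is non-constant because $0 < \omega_L^X(\pom) < 1$ strictly on $\Omega$ in the present geometry, while $v_0|_\pom = 0$. A case analysis analogous to the one above, applied to $v_0$ itself, shows $v_0 \in \dcv(\Omega)$: De Giorgi/Nash combined with boundary Hölder continuity handles the regime $\delta(Y) \lesssim \diam(\pom)$, while the uniform bound $0 \le v_0 \le 1$ (compared to $\varphi(|X-Y|)$ via doubling of $\varphi$) handles the far-from-boundary regime $\delta(Y) \gtrsim \diam(\pom)$ and the long-range case $|X-Y| \gtrsim \diam(\pom)$. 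Once $v_0 \in \dcv(\Omega)$ is established, for every $c \in \R\setminus\{0\}$ the function $u_c := u + c\,v_0$ lies in $\dcv(\Omega)$, solves \eqref{eq:C-varphi} with the same boundary trace $f$, and differs from $u$. This produces infinitely many solutions and proves ill-posedness.

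The main obstacle will be the case analysis that secures $u, v_0 \in \dcv(\Omega)$. Unlike in Theorem \ref{thm:well}, where $\omega_L^X(\pom) = 1$ made several terms vanish, one must carefully interpolate between the boundary Hölder regime (effective when $\delta \lesssim \diam(\pom)$) and the far-from-boundary regime (where only uniform bounds are available), with the transition scale dictated by $\diam(\pom)$. The quantity $\varphi(\diam(\pom))$ that inevitably appears must be compared against $\varphi(|X-Y|)$ or $\varphi(\delta(Y))$ through the monotonicity and doubling encoded in Lemma \ref{lem:pro-phi}; the choice $\beta \le \min\{\alpha_1, \alpha_2\}$ is precisely what makes the comparison succeed in every subcase.
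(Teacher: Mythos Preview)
Your proposal is correct and follows essentially the same route as the paper: the same algebraic identity (splitting off the term $(f(y_0)-f(\widehat{y}))(1-\omega_L^Z(\pom))$), the same four-regime case analysis governed by the transition scale $\diam(\pom)$, and the same toolbox of De Giorgi/Nash, boundary H\"older continuity applied to $1-\omega_L^{\,\cdot}(\pom)$, and Lemma~\ref{lem:8r4r}.

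The one notable difference is in the ill-posedness step. You propose to prove directly that $v_0(X)=1-\omega_L^X(\pom)$ belongs to $\dcv(\Omega)$ via a fresh case analysis, and then form $u+c\,v_0$. The paper instead observes that since the existence argument works for \emph{every} choice of $y_0\in\pom$, one may simply pick $x_0,y_0$ with $f(x_0)\neq f(y_0)$ and compute $u(\cdot;x_0)-u(\cdot;y_0)=(f(x_0)-f(y_0))\,v_0$; both terms on the left are already known to lie in $\dcv(\Omega)$, so $v_0\in\dcv(\Omega)$ comes for free and the two solutions are manifestly distinct. This shortcut spares you the separate case analysis for $v_0$. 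Conversely, your route has the small advantage of exhibiting non-uniqueness even when $f$ is constant (the theorem as stated only asserts it for non-constant $f$), and it makes the one-parameter family of solutions explicit.
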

	
	\begin{proof} 
		As in the proof of Theorem~\ref{thm:well}, we may assume $\|f\|_{\dcv(\pom)} = 1$ because the case $f$ being constant is trivial. 
		As before, let $\beta \leq \alpha_0 := \min \{\alpha_1, \alpha_2\}$, where $\alpha_1\in (0,1)$ is the exponent in Lemma~\ref{lem:DGN} and $\alpha_2\in (0,1)$ is the exponent in Lemma~\ref{lem:Holder}.
		
		Fix some arbitrary $y_0 \in \pom$. We first show that the function $u$ defined in \eqref{eq:uX_y0} is a solution to the problem \eqref{eq:C-varphi}. Much as in the proof of Theorem~\ref{thm:well}, we proceed in a series of steps. Let us denote
		\begin{equation*}
		u(X; z) := f(z) + \int_\pom (f(y) - f(z)) \,d\omega_L^X(y), \quad X \in \Omega, \, \,  z \in \pom.  
		\end{equation*}
		
		\textbf{Step 1: $u$ is a well defined solution}. The reader may observe that in this case we are defining $u$ via an integration of a bounded and continuous function in the compact set $\pom$, hence, the integral is a well defined weak solution with $u \in W_{\loc}^{1,2}(\Omega) \cap \mathscr{C}(\overline{\Omega})$ and $\restr{u}{\pom}=f$. 
		
		\medskip 
		
		\textbf{Step 2: $u \in \dcv(\Omega)$.} 
		Since $\varphi \in \mathcal{G}_\beta$ for $\beta \leq \alpha_0$, by Lemma~\ref{lem:Qalpha} it suffices to show that
		\begin{equation*}
		|u(X; y_0) - u(Y; y_0)|
		\lesssim 
		Q_{\alpha_0} \varphi(|X-Y|),
		\qquad X, Y \in \Omega.
		\end{equation*}  
		For that purpose, fix $X, Y \in \Omega$. Without loss of generality we may assume that $\delta(X) \leq \delta(Y)$. Let $\widehat{x}, \widehat{y} \in \pom$ be points such that $|X-\widehat{x}| = \delta(X)$ and $|Y-\widehat{y}| = \delta(Y)$, respectively. Let us distinguish some cases. 
		
		\textbf{Case 1: $|X-Y| < \delta(Y) / 4$ and $\diam(\pom) < \delta(Y)$.} We first note that for any $Z \in \Omega$, since $\omega_L^Z(\pom) \leq 1$,
		\begin{equation*}
		|u(Z; y_0) - f(y_0)| 
		\le \int_\pom |f(y) - f(y_0)| \,d\omega_L^Z(y)
		\le \int_\pom \varphi(|y- y_0|) \,d\omega_L^Z(y)
		\le \varphi(\diam(\pom)). 
		\end{equation*}
		With this estimate and De Giorgi/Nash (see Lemma~\ref{lem:DGN}) we deduce that 
		\begin{align*}
		|u(X; y_0) - u(Y; y_0)|
		& =|(u(X; y_0) -f(y_0)) - (u(Y; y_0) - f(y_0))|
		\\ 
		& \lesssim 	\bigg(\frac{|X-Y|}{\delta(Y)} \bigg)^{\alpha_1} 
		\bigg(\bariint_{B(Y, \delta(Y) / 2))} |u(Z; y_0) - f(y_0)|^2 dZ \bigg)^{\frac12}
		\\ 
		& \lesssim 	\bigg(\frac{|X-Y|}{\delta(Y)} \bigg)^{\alpha_1} \varphi(\diam(\pom)).
		\end{align*} 
		Then, using the monotonicity of $\varphi$, and Lemma~\ref{lem:Qalpha} repeatedly, we obtain 
		\begin{equation*}
		|u(X; y_0) - u(Y; y_0)| 
		\lesssim 
		\bigg(\frac{|X-Y|}{\delta(Y)} \bigg)^{\alpha_1} \varphi(\delta(Y)) 
		\leq  
		\bigg(\frac{|X-Y|}{\delta(Y)} \bigg)^{\alpha_0} Q_{\alpha_0}\varphi(\delta(Y)) 
		\lesssim 
		Q_{\alpha_0}\varphi(|X-Y|),  
		\end{equation*} 
		
		\textbf{Case 2: $|X-Y| < \delta(Y) / 4$ and $\delta(Y) \le \diam(\pom)$.}  In this case $X \in B(Y, \delta(Y)/4)$, so we will use interior estimates. We can first compute 
		\begin{align}\label{UUXY}
		|u(X; y_0) - u(Y; y_0)| 
		& =
		\bigg| \int_\pom (f(y) - f(y_0)) \,d\omega_L^X(y) - \int_\pom (f(y) - f(y_0)) \,d\omega_L^Y(y) \bigg|
		\\ \nonumber
		& \leq 
		\bigg| \int_\pom (f(y) - f(\widehat{y})) \,d\omega_L^X(y) - \int_\pom (f(y) - f(\widehat{y})) \,d\omega_L^Y(y) \bigg|
		\\ \nonumber
		& \quad + \bigg| \int_\pom (f(\widehat{y}) - f(y_0)) \,d\omega_L^X(y) - \int_\pom (f(\widehat{y}) - f(y_0)) \,d\omega_L^Y(y) \bigg|
		\\ \nonumber
		& \leq
		|u(X; \widehat{y}) - u(Y; \widehat{y})| + \varphi(\diam(\pom)) |\omega_L^X(\pom) - \omega_L^Y(\pom)|.
		\end{align}
		Let us estimate each term separately. For the first term, we proceed as in Case 1 in the proof of Theorem~\ref{thm:well} (using De Giorgi/Nash, Harnack, Lemma~\ref{lem:8r4r}, 
		$\alpha_0 = \min \{ \alpha_1, \alpha_2 \}$ and Lemma~\ref{lem:Qalpha}):
		\begin{align*}
		|u(X; \widehat{y}) - u(Y; \widehat{y})|
		& = 
		\bigg| \int_\pom (f(y) - f(\widehat{y})) \,d\omega_L^X(y) - \int_\pom (f(y) - f(\widehat{y})) \,d\omega_L^Y(y) \bigg|
		\\ & \lesssim 	
		\bigg(\frac{|X-Y|}{\delta(Y)} \bigg)^{\alpha_1} 
		\bigg(\bariint_{B(Y, \delta(Y) / 2))} \bigg(\int_\pom |f(z) - f(\widehat{y})| d\w_L^Z(z) \bigg)^2 dZ \bigg)^{\frac12}
		\\ 
		& \approx \bigg(\frac{|X-Y|}{\delta(Y)} \bigg)^{\alpha_1} 
		\bigg( \bariint_{B(Y, \delta(Y) / 2))} \bigg(\int_\pom |f(z) - f(\widehat{y})| d\w_L^Y(z) \bigg)^2 dZ \bigg)^{\frac12}
		\\ 
		& = \bigg( \frac{|X-Y|}{\delta(Y)} \bigg)^{\alpha_1} \int_\pom |f(z) - f(\widehat{y})| d\w_L^Y(z)
		\\ 
		& \leq \bigg(\frac{|X-Y|}{\delta(Y)} \bigg)^{\alpha_1} \int_\pom \varphi(|z - \widehat{y}|) \, d\w_L^Y(z)
		\\ 
		& \lesssim \bigg( \frac{|X-Y|}{\delta(Y)} \bigg)^{\alpha_1} Q_{\alpha_2}\varphi(\delta(Y))
		\\ 
		& \leq \bigg( \frac{|X-Y|}{\delta(Y)} \bigg)^{\alpha_0} Q_{\alpha_0}\varphi(\delta(Y))
		\\
		& \leq Q_{\alpha_0} \varphi(|X-Y|). 
		\end{align*} 
		Therefore it is only left to estimate the second term in \eqref{UUXY}. We use first De Giorgi/Nash (Lemma~\ref{lem:DGN}), then Harnack's inequality applied to the positive solution $1-\w_L^Z(\pom)$ and Boundary H\"{o}lder continuity (cf. Lemma \ref{lem:Holder}):
		\begin{align*}
		|\omega_L^X(\pom) - \omega_L^Y(\pom)|
		& =
		|(1-\omega_L^X(\pom)) - (1-\omega_L^Y(\pom))| 
		\\ & \lesssim 
		\bigg( \frac{|X-Y|}{\delta(Y)} \bigg)^{\alpha_1} 
		\bigg( \bariint_{B(Y, \delta(Y) / 2)} |1-\omega_L^Z(\pom)|^2 dZ \bigg)^{1/2} 
		\\ & \approx 
		\bigg( \frac{|X-Y|}{\delta(Y)} \bigg)^{\alpha_1} (1-\omega_L^Y(\pom))
		\\ & \lesssim 
		\bigg( \frac{|X-Y|}{\delta(Y)} \bigg)^{\alpha_1} \bigg( \frac{\delta(Y)}{\diam(\pom)} \bigg)^{\alpha_2} 
		\sup_{Z \in B(\widehat{y}, \diam(\pom))} (1 - \omega_L^Z(\pom)) 
		\\ & \leq 
		\bigg( \frac{|X-Y|}{\delta(Y)} \bigg)^{\alpha_1} \bigg( \frac{\delta(Y)}{\diam(\pom)} \bigg)^{\alpha_2}
		\\ & \leq 
		\bigg( \frac{|X-Y|}{\delta(Y)} \bigg)^{\alpha_0} \bigg( \frac{\delta(Y)}{\diam(\pom)} \bigg)^{\alpha_0}
		\\
		& =\bigg( \frac{|X-Y|}{\diam(\pom)} \bigg)^{\alpha_0},
		\end{align*} 
		where we have also used that $\alpha_0 =\min\{\alpha_1, \alpha_2\}$. As a result, by Lemma~\ref{lem:Qalpha},
		\begin{equation*}
		\varphi(\diam(\pom)) |\omega_L^X(\pom) - \omega_L^Y(\pom)| 
		\lesssim 
		Q_{\alpha_0}\varphi(\diam(\pom)) \bigg( \frac{|X-Y|}{\diam(\pom)} \bigg)^{\alpha_0} 	
		\lesssim
		Q_{\alpha_0}\varphi(|X-Y|).
		\end{equation*}  
		
		\textbf{Case 3: $\delta(Y) / 4 \leq |X - Y|$ and $|X-Y| \leq \diam(\pom)$.} As in the proof of Theorem~\ref{thm:well}, we may suppose $|\widehat{x} - \widehat{y}| \leq 10 |X-Y|$  and write 
		\begin{equation} \label{eq:uX-uY_triangle}
		|u(X; y_0) - u(Y; y_0)| 
		\leq 
		|u(X; y_0) - u(X; \widehat{x}) - u(Y; y_0) + u(Y; \widehat{y})| + |u(X; \widehat{x}) - u(Y; \widehat{y})|.
		\end{equation}
		Let us first estimate the second term. Using the triangle inequality we get 
		\begin{equation}\label{UXY}
		|u(X; \widehat{x}) - u(Y; \widehat{y})| 
		\leq 
		|u(X; \widehat{x}) - f(\widehat{x})| + |u(Y; \widehat{y}) - f(\widehat{y})| + |f(\widehat{x}) - f(\widehat{y})|.
		\end{equation} 
		The first term in this sum can be estimated much as before using Lemmas~\ref{lem:8r4r} and \ref{lem:Qalpha}: 
		\begin{multline*}
		|u(X; \widehat{x}) - f(\widehat{x})| 
		\leq
		\int_\pom |f(y) - f(\widehat{x})| \,d\omega_L^X(y)
		\leq 
		\int_\pom \varphi(|y-\widehat{x}|) \,d\omega_L^X(y)
		\\ \lesssim
		Q_{\alpha_2} \varphi(\delta(X))
		\leq 
		Q_{\alpha_2} \varphi(4 |X-Y|)
		\lesssim 
		Q_{\alpha_2} \varphi(|X-Y|)
		\leq 
		Q_{\alpha_0} \varphi(|X-Y|), 
		\end{multline*}
		because $\alpha_0 \leq \alpha_2$. 
		The same ideas apply to the second term in \eqref{UXY}. In turn, the last term is easy to estimate with Lemma~\ref{lem:Qalpha}: 
		\begin{equation*}
		|f(\widehat{x}) - f(\widehat{y})| 
		\leq
		\varphi(|\widehat{x} - \widehat{y}|)
		\leq 
		\varphi(10 |X-Y|)
		\lesssim 
		Q_{\alpha_0} \varphi (10|X-Y|)
		\lesssim 
		Q_{\alpha_0} \varphi(|X-Y|).
		\end{equation*} 
		We turn now our attention to the first term in \eqref{eq:uX-uY_triangle} and use again the triangle inequality to introduce some additional boundary terms (along with the monotonicity of $\varphi$):
		\begin{align} \label{eq:uX-uY_several_triangles}
		|u(X; y_0)  - u(X; \widehat{x}) - u(Y; y_0) + u(Y; \widehat{y})| 
		&
		\\ & \hspace{-6cm} 
		= | (f(\widehat{y}) - f(\widehat{x})) (1-\omega_L^X(\pom)) 
		+ (f(\widehat{y}) - f(y_0)) (\omega_L^X(\pom) - \omega_L^Y(\pom)) | 
		\nonumber \\ & \hspace{-6cm} \leq
		|f(\widehat{x}) - f(\widehat{y}))| + |f(\widehat{y}) - f(y_0)| |\omega_L^X(\pom) - \omega_L^Y(\pom)| 
		\nonumber \\ & \hspace{-6cm} \leq 
		\varphi(|\widehat{x} - \widehat{y}|) + \varphi(|\widehat{y} - y_0|) |\omega_L^X(\pom) - \omega_L^Y(\pom)| 
		\nonumber \\ & \hspace{-6cm} \leq 
		\varphi(10|X-Y|) + \varphi(\diam(\pom)) |\omega_L^X(\pom) - \omega_L^Y(\pom)|. \nonumber  
		\end{align}
		
		Next, the Boundary H\"{o}lder continuity from Lemma~\ref{lem:Holder} yields 
		\begin{align*}
		|\omega_L^X(\pom) - \omega_L^Y(\pom)| 
		& \leq
		|1-\omega_L^X(\pom)| + |1-\omega_L^Y(\pom)| 
		\\ & \lesssim 
		\bigg( \frac{\delta(X)}{\diam(\pom)} \bigg)^{\alpha_2} \sup_{Z \in B(\widehat{x}, \diam(\pom))} (1 - \omega_L^Z(\pom)) 
		\\ & \;\;\;\; 
		\quad+ \bigg( \frac{\delta(Y)}{\diam(\pom)} \bigg)^{\alpha_2} \sup_{Z \in B(\widehat{y}, \diam(\pom))} (1 - \omega_L^Z(\pom)) 
		\\ & \leq
		\bigg( \frac{\delta(X)}{\diam(\pom)} \bigg)^{\alpha_2} + \bigg( \frac{\delta(Y)}{\diam(\pom)} \bigg)^{\alpha_2} 
		\\ & \lesssim 
		\bigg( \frac{|X-Y|}{\diam(\pom)} \bigg)^{\alpha_2},
		\end{align*}
		since $\omega_L^Z(\pom) \in [0, 1]$ for any $Z$. Thus, returning to \eqref{eq:uX-uY_several_triangles} we conclude that  
		\begin{multline*}
		|u(X; y_0) - u(Y; y_0) - u(X; \widehat{x}) + u(Y; \widehat{y})| 
		\\ 
		\lesssim 
		\varphi(10|X-Y|) + \varphi(\diam(\pom)) \bigg( \frac{|X-Y|}{\diam(\pom)} \bigg)^{\alpha_0}
		\lesssim 
		Q_{\alpha_0}\varphi(|X-Y|)
		\end{multline*}
		using $\alpha_0 \leq \alpha_2$, Lemma~\ref{lem:Qalpha}, and  $|X-Y| \leq \diam(\pom)$. Collecting all the estimates, the proof in Case 3 is complete.
		
		\textbf{Case 4: $\delta(Y) / 4 \leq |X - Y|$ and $|X-Y| > \diam(\pom)$}. This case is easy:
		\begin{multline*}
		|u(X; y_0) - u(Y; y_0)| 
		=
		\bigg| \int_\pom (f(y) - f(y_0)) \,d\omega_L^X(y) - \int_\pom (f(y) - f(y_0)) \,d\omega_L^Y(y) \bigg|
		\\ 
		\leq
		\int_\pom \varphi(|y-y_0|) \,d\omega_L^X(y) + \int_\pom \varphi(|y-y_0|) \,d\omega_L^Y(y)
		\\ 
		\lesssim 
		\varphi(\diam(\pom))
		\leq 
		\varphi(|X-Y|)
		\leq 
		Q_{\alpha_0} \varphi(|X-Y|),
		\end{multline*}
		where we have used the monotonicity of $\varphi$, the fact that $\omega_L^Z(\pom) \leq 1$ for any $Z \in \Omega$, and Lemma~\ref{lem:Qalpha}.

		Collecting all the cases above, we conclude that $u \in \dot{\mathscr{C}}^{Q_{\alpha_0}\varphi}(\Omega)$.  By Lemma~\ref{lem:Qalpha} and the fact that $\varphi \in \mathcal{G}_\beta$ with $\beta \leq \alpha_0$ one can easily see that $Q_{\alpha_0}\varphi\approx \varphi$, hence $u \in \dot{\mathscr{C}}^\varphi(\Omega)$. 
		
		\medskip
		
		\textbf{Step 3: $u \in \dcv(\overline{\Omega})$ and \eqref{eq:fuf} holds}. 
		This follows at once from the previous steps and Lemma \ref{lem:extension} (along with Remark~\ref{rem:growth_stronger}).  
		
		\textbf{Step 4: Lack of uniqueness.} As a final remark, let us show that the constructed solutions are different whenever $f$ is not constant. Indeed, if $f$ is not constant, we can pick $x_0, y_0 \in \pom$ for which $f(x_0) \neq f(y_0)$. If we consider the solutions $u(\cdot; x_0)$ and $u(\cdot; y_0)$, it turns out that for any $X \in \Omega$ we have
		\begin{multline*}
		u(X; x_0) - u(X; y_0)
		=
		f(x_0) + \int_\pom (f(y) - f(x_0)) \,d\omega_L^X(y) - f(y_0) - \int_\pom (f(y) - f(y_0)) \,d\omega_L^X(y)
		\\ =
		(f(x_0) - f(y_0)) (1-\omega_L^X(\pom)) 
		\neq 
		0, 
		\end{multline*}
		since the elliptic measure is not a probability in this situation. This shows the ill-posedness of the problem in the case when $f$ is not constant, and the proof is complete.  
	\end{proof}

	\begin{remark}
		As the reader may check from the above proof, generalizations of this result to more general classes of growth functions $\varphi$, in the spirit of Remarks~\ref{rem:different_spaces_existence} and \ref{rem:different_spaces_uniqueness}, are also possible. 
	\end{remark}


	\section{Optimality of the capacity density condition}\label{sec:optimal}
	
	In this section we prove Theorem~\ref{th:necessity_CDC}. Our proof extends \cite{A} and contains several novelties and new technical difficulties because we are working with a general elliptic operator $L$ rather than the Laplacian, and $\Omega$ is not assumed to be connected nor bounded.
	In fact, the case when $\Omega$ is unbounded with $\pom$ being bounded has some additional subtle points to be taken into account.

	Unlike in the other sections, for the sake of clarity and of a cleaner exposition, we are going to show Theorem~\ref{th:necessity_CDC} only for the $\dot{\mathscr{C}}^\alpha$ spaces, not for the more general $\dot{\mathscr{C}}^{\varphi}$, where $\varphi$ is a growth function satisfying \eqref{eq:ass-phi}. It is not hard to check that our proof can be adapted to the $\dot{\mathscr{C}}^{\varphi}$ setting with small modifications, and we give a sketch of such proof in Subsection~\ref{subsec:CDC_growth}.

	Let us start with some auxiliary results. We first introduce a notion of regularity of points in the boundary, which will be used frequently in the subsequent proofs.
	
	\begin{definition}[\textbf{Regular set}]\label{def:regular}
		Let $\Omega \subset \ree$, $n \ge 2$, be an open set, and let $L=-\div(A\nabla)$ be a real (non-necessarily symmetric) elliptic operator.
		We say that $\Omega$ is \textit{(Wiener) regular for $L$} if for any function $f: \pom \longrightarrow \R$ which is uniformly continuous and bounded, the associated solution $u$ defined by \eqref{eq:well-solution} 
		satisfies $u \in \mathscr{C}(\overline{\Omega})$.
	\end{definition}

	\begin{remark}
		Note that in the case $\Omega$ is unbounded with $\pom$ being bounded, if $\Omega$ is regular for $L$, then the solutions defined by \eqref{eq:ill-solution} are also $\mathscr{C}(\overline{\Omega})$. We will use this fact repeatedly from now on. The short proof is as follows: indeed, any solution defined by \eqref{eq:ill-solution} can be rewritten as
		\begin{equation*}
		u(X; y_0) = f(y_0) (1-\omega_L^X(\pom)) + \int_\pom f(y) \, d\omega_L^X(y),
		\end{equation*}
		where $X \mapsto \int_\pom f(y) \, d\omega_L^X(y)$ belongs to $\mathscr{C}(\overline{\Omega})$ by regularity of $\Omega$, and $X \mapsto \omega_L^X(\pom)$ clearly belongs to $\mathscr{C}(\overline{\Omega})$ because it is the solution given by \eqref{eq:well-solution} associated with the boundary datum constantly 1, using again that $\Omega$ is regular. Thus, $u(\cdot\,; y_0) \in \mathscr{C}(\overline{\Omega})$, as desired.

	\end{remark}

	Note that Definition~\ref{def:regular} coincides with the classical one, widely known to be equivalent to the Wiener criterion, which is in fact a qualitative version of the capacity density condition that we want to show in this section. Since the operator $L$ will always be clear by the context, we will often abuse notation simply saying \textit{$\Omega$ is regular}. This abuse of notation is also motivated by the well-known fact that the regularity of $\Omega$ does not depend on the operator $L$ (see \cite{LSW}).

	Before proving Theorem~\ref{th:necessity_CDC}, let us first present a somehow qualitative version of it. 
	
	\begin{lemma} \label{lem:regular}
		Let $\Omega \subset \ree$, $n \ge 2$, be an open set, and let $L=-\div(A\nabla)$ be a real (non-necessarily symmetric) elliptic operator. Let $\alpha \in (0, 1)$. Assume that for every $f \in \dot{\mathscr{C}}^{\alpha}(\pom)$, the function $u$ constructed in  \eqref{eq:well-solution} in the case $\Omega$ is bounded or $\Omega$ is unbounded with $\pom$ being unbounded, or constructed in \eqref{eq:ill-solution} in the case $\Omega$ is unbounded with $\pom$ being bounded, is well-defined via an absolutely convergent integral and moreover is a solution to \eqref{eq:problem} satisfying \eqref{eq:bounds_CDC}. Then $\Omega$ is regular for $L$.
	\end{lemma}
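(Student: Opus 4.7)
The plan is to approximate a uniformly continuous bounded boundary datum uniformly by functions in $\dot{\mathscr{C}}^\alpha(\pom)$, apply the hypothesis to each approximant to obtain a solution that is continuous up to the boundary, and pass to the uniform limit on $\overline{\Omega}$. For the approximation step, given uniformly continuous bounded $f:\pom\to\mathbb{R}$, I would use the inf-convolutions
\[
f_k(x) := \inf_{y \in \pom} \bigl( f(y) + k\,|x-y| \bigr), \qquad x \in \pom,\; k\in\mathbb{N}.
\]
A routine check gives: each $f_k$ is $k$-Lipschitz, $\|f_k\|_{L^\infty(\pom)}\le\|f\|_{L^\infty(\pom)}$, $f_k\to f$ uniformly on $\pom$ (using uniform continuity), and interpolating $|f_k(x)-f_k(y)|\le k|x-y|$ against $|f_k(x)-f_k(y)|\le 2\|f\|_{L^\infty(\pom)}$ yields $\|f_k\|_{\dot{\mathscr{C}}^\alpha(\pom)}\le (2\|f\|_{L^\infty(\pom)})^{1-\alpha}k^\alpha<\infty$. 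Hence $f_k\in\dot{\mathscr{C}}^\alpha(\pom)$.

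In the cases where $\omega_L^X$ is a probability (that is, $\Omega$ bounded, or $\Omega$ unbounded with $\pom$ unbounded) the argument is direct. By hypothesis, each $u_k(X):=\int_{\pom} f_k\,d\omega_L^X$ solves \eqref{eq:problem} with datum $f_k$ and lies in $\dot{\mathscr{C}}^\alpha(\Omega)$; Lemma~\ref{lem:extension} extends it to $\dot{\mathscr{C}}^\alpha(\overline{\Omega})\subset\mathscr{C}(\overline{\Omega})$ with trace $f_k$. Since $\omega_L^X(\pom)=1$, I would use the trivial bound $|u_k(X)-u_j(X)|\le\|f_k-f_j\|_{L^\infty(\pom)}$ to conclude that $\{u_k\}$ is Cauchy in $L^\infty(\overline{\Omega})$, so its uniform limit $v$ lies in $\mathscr{C}(\overline{\Omega})$. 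Dominated convergence pointwise in $X$ identifies $v|_\Omega$ with $u(X)=\int f\,d\omega_L^X$ and $v|_{\pom}$ with $f$; hence $u\in\mathscr{C}(\overline{\Omega})$.

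The main obstacle is the remaining case, $\Omega$ unbounded with $\pom$ bounded, because the hypothesis controls solutions given by \eqref{eq:ill-solution}, while Definition~\ref{def:regular} uses \eqref{eq:well-solution}. I would split this into two substeps. First, the same Cauchy-in-$L^\infty$ argument, now applied to $\tilde{u}_k(X) := f_k(y_0)+\int(f_k-f_k(y_0))\,d\omega_L^X$ and using $\omega_L^X(\pom)\in[0,1]$, yields $\tilde{u}(X;y_0):=f(y_0)+\int(f-f(y_0))\,d\omega_L^X\in\mathscr{C}(\overline{\Omega})$. Second, to decouple the identity $u(X)=\tilde{u}(X;y_0)-f(y_0)\bigl(1-\omega_L^X(\pom)\bigr)$, I would extract continuity of $X\mapsto\omega_L^X(\pom)$ on $\overline{\Omega}$ by applying the hypothesis to the concrete Hölder datum $g(x):=|x-y_0|^\alpha$ with two reference points $y_0\ne y_1\in\pom$: the identity
\[
\tilde{u}_g(X;y_0)-\tilde{u}_g(X;y_1) = \bigl(g(y_0)-g(y_1)\bigr)\bigl(1-\omega_L^X(\pom)\bigr)
\]
transfers the continuity of the left side (with boundary value $0$, since both $\tilde{u}_g(\cdot;y_0)$ and $\tilde{u}_g(\cdot;y_1)$ have trace $g$) to $1-\omega_L^X(\pom)$. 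Combining both substeps yields $u\in\mathscr{C}(\overline{\Omega})$, completing the proof.
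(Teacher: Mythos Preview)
Your proof is correct and follows the same approximation strategy as the paper: replace $f$ by H\"older data, use the hypothesis to get solutions continuous up to the boundary, and pass to the uniform limit. The paper cites \cite[Lemma~6.8]{H} for the approximation and runs a direct $\varepsilon/3$ argument on the extended function $\widehat{u}$, while you give an explicit inf-convolution construction and a Cauchy-sequence argument; these are equivalent.

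The one substantive difference is in the case $\Omega$ unbounded with $\pom$ bounded. The paper's proof, read literally, only shows that the \eqref{eq:ill-solution}-type solution extends continuously to $\overline{\Omega}$, whereas Definition~\ref{def:regular} is phrased for the \eqref{eq:well-solution}-type solution. Your second substep closes this discrepancy cleanly: applying the hypothesis with datum $g(x)=|x-y_0|^\alpha$ at two distinct reference points $y_0\neq y_1$ and subtracting isolates $(g(y_0)-g(y_1))(1-\omega_L^X(\pom))$, from which continuity of $X\mapsto\omega_L^X(\pom)$ on $\overline{\Omega}$ follows. The paper uses essentially the same two-reference-point trick later (in the proof of \eqref{eq:claim_non_degeneracy} in Section~5.2) to control $\omega_L^X(\pom)$, so the underlying idea is shared; your version simply places it inside the proof of Lemma~\ref{lem:regular}, making the lemma self-contained relative to Definition~\ref{def:regular} as written. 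Note that this step implicitly requires $\pom$ to contain at least two points; this is harmless here since otherwise $\diam(\pom)=0$ and the downstream CDC statement is vacuous.
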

	
	\begin{remark}
		We want to point out that our assumption above rules out the possibility that $\pom$ contains a polar set (trivial boundary points). For example, consider $\Omega$ to be the unit ball with the origin removed, i.e., $\Omega = B \setminus \{0\}$. Then it is not true that for any $f\in \dot{\mathscr{C}}^\alpha(\pom) = \dot{\mathscr{C}}^\alpha(\partial B \cup\{0\})$, we can find a solution $u$ to \eqref{eq:problem}, which in particular assumes the boundary datum pointwise, because any such $u$ must also be a classical solution to $L$ in the unit ball, and thus by the uniqueness of classical solutions we would have that $f(0)=u(0) = \int_{\partial B} f \, d\omega^0_{L,B}$. See also \cite[Theorem 1]{A} for comparison.
	\end{remark}
	
	\begin{proof}
		Let $f:\pom \longrightarrow \R$ be uniformly continuous and bounded. By a simple adaptation of \cite[Lemma 6.8]{H}, there exists a sequence $\{f_j\} \subset \dot{\mathscr{C}}^\alpha(\pom)$ such that $\norm{f - f_j}_{L^\infty(\pom)} \longrightarrow 0$ as $j \to \infty$. Denote by $u_j$ and $u$ the functions defined by \eqref{eq:well-solution} in the case $\Omega$ is bounded or $\Omega$ is unbounded with $\pom$ unbounded, or defined by \eqref{eq:ill-solution} with some fixed $y_0\in \pom$ in the case $\Omega$ is unbounded with $\pom$ bounded, associated with $f_j$ and $f$, respectively. We know that $u_j$ is well-defined by hypothesis and the same occurs for $u$ since $f$ is uniformly continuous and bounded. Since $f_j \in \dot{\mathscr{C}}^\alpha(\pom)$, by \eqref{eq:bounds_CDC} we know that $u_j \in \dot{\mathscr{C}}^\alpha(\Omega)$. Therefore it can be extended (denoting again the extension by $u_j$) to $u_j \in \dot{\mathscr{C}}^\alpha(\overline{\Omega}) \subset \mathscr{C}(\overline{\Omega})$, and moreover, since $u_j$ is a solution to \eqref{eq:problem} with boundary datum $f_j$, we also have that $u_j = f_j$ on $\pom$.
		
		Write 
		\begin{equation*}
		\widehat{u}(X) :=
		\begin{cases}
		u(X) \quad \text{ if } X \in \Omega, \\
		f(X) \quad \text{ if } X \in \pom. 
		\end{cases}
		\end{equation*}
		Our goal is to show $\widehat{u} \in \mathscr{C}(\overline{\Omega})$. To this end, fix $\varepsilon > 0$ and choose $N \gg 1$ such that $\norm{f - f_N}_{L^\infty(\pom)} < \varepsilon / 9$. Since $u_N \in \dot{\mathscr{C}}^\alpha(\overline{\Omega})$, take $\delta > 0$ (which depends on $N$ and $\varepsilon$) such that $\abs{X - Y} < \delta$ implies $\abs{u_N(X) - u_N(Y)} < \varepsilon / 3$ for any $X, Y \in \overline{\Omega}$. Then, for any $X, Y \in \overline{\Omega}$ such that $\abs{X - Y} < \delta$, we may compute 
		\begin{equation} \label{eq:split_epsilon3}
		\abs{\widehat{u}(X) - \widehat{u}(Y)}
		\leq 
		\abs{\widehat{u}(X) - u_N(X)} + \abs{u_N(X) - u_N(Y)} + \abs{u_N(Y) - \widehat{u}(Y)}
		=:
		I + II + III.
		\end{equation}
		The second term satisfies $II < \varepsilon / 3$. In turn, for the first term, we can distinguish two cases. If $X \in \Omega$ and $\Omega$ is bounded or $\Omega$ is unbounded with $\pom$ bounded, \eqref{eq:well-solution} allows us to compute 
		\begin{equation*}
		\abs{\widehat{u}(X) - u_N(X)}
		=
		\abs{u(X) - u_N(X)}
		\leq 
		\int_\pom |f - f_N| \, d\omega_L^X
		\leq 
		\norm{f - f_N}_{L^\infty(\pom)}
		< 
		\frac{\varepsilon}{9},
		\end{equation*}
		and if $\Omega$ is unbounded with $\pom$ bounded, a similar computation using \eqref{eq:ill-solution} shows 
		\begin{equation*}
		\abs{\widehat{u}(X) - u_N(X)}
		=
		\abs{u(X) - u_N(X)}
		\leq 
		3\norm{f - f_N}_{L^\infty(\pom)}
		< 
		\frac{\varepsilon}{3}.
		\end{equation*} 
		Whereas if $X \in \pom$ we have 
		\begin{equation*}
		\abs{\widehat{u}(X) - u_N(X)}
		=
		\abs{f(X) - f_N(X)}
		\leq 
		\norm{f - f_N}_{L^\infty(\pom)}
		< 
		\frac{\varepsilon}{9}.
		\end{equation*}
		In any case, there holds $I < \varepsilon / 3$. In the same fashion, $III < \varepsilon / 3$. In view of \eqref{eq:split_epsilon3},  these yield $\abs{\widehat{u}(X) - \widehat{u}(Y)} < \varepsilon$, which finally shows $\widehat{u} \in \mathscr{C}(\overline{\Omega})$ (indeed, we have shown that $\widehat{u}$ is uniformly continuous), as desired. 
	\end{proof}

	The following lemma will help us deal with the case of unbounded open sets (without any connectivity assumption), which is one of the main novelties of this section in comparison to the work of Aikawa \cite{A} in bounded domains. 
	
	\begin{lemma} \label{lem:decay_unbounded}
		Let $\Omega \subset \ree$, $n \geq 2$, be a regular open set, and let $L=-\div(A\nabla)$ be a real (non-necessarily symmetric) elliptic operator. Then there exists some $C > 0$ (depending only on $n$ and the ellipticity of $L$) such that for any $a \in \pom$ and $r > 0$ we have 
		\begin{equation} \label{eq:decay_unbounded}
		\omega_L^X(\pom \cap B(a, r)) \leq C \left( \frac{|X-a|}{r} \right)^{1-n} ,
		\qquad 
		X \in \Omega \setminus B(a, 2r).
		\end{equation} 
		As a consequence, in the case that $\Omega$ is unbounded, for any $a \in \pom$ and $r > 0$, we have 
		\begin{equation*}
		\omega_L^X(\pom \cap B(a, r)) \longrightarrow 0
		\quad 
		\text{ as } 
		\quad 
		X \to \infty.
		\end{equation*}
	\end{lemma}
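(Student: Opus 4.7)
The plan is to represent $\omega_L^X(\partial\Omega\cap B(a,r))$ via Green's identity and bound it using the fundamental solution for $L$ on $\mathbb{R}^{n+1}$, which exists since $n+1\ge 3$ (Littman--Stampacchia--Weinberger) and satisfies $F(X,Y)\approx |X-Y|^{1-n}$ with constants depending only on $n$ and the ellipticity of $L$. The ``As a consequence'' assertion follows immediately from \eqref{eq:decay_unbounded}, since $(|X-a|/r)^{1-n}\to 0$ as $|X|\to\infty$ for $n\ge 2$.

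I would first fix a smooth cut-off $\phi\in C_c^\infty(\mathbb{R}^{n+1})$ with $\phi\equiv 1$ on $\overline{B(a,r)}$, $\supp\phi\subset B(a,3r/2)$, $0\le\phi\le 1$, and $|L\phi|\lesssim r^{-2}$, noting that $L\phi$ is supported in the annulus $A:=B(a,3r/2)\setminus\overline{B(a,r)}$ because $\phi$ is constant on $B(a,r)$. By the regularity of $\Omega$, the function $v(X):=\int_{\partial\Omega}\phi\,d\omega_L^X$ is a bounded $L$-solution in $\Omega$, continuous up to $\overline{\Omega}$ with $v|_{\partial\Omega}=\phi|_{\partial\Omega}\ge \mathbf{1}_{\partial\Omega\cap B(a,r)}$, so $\omega_L^X(\partial\Omega\cap B(a,r))\le v(X)$ for every $X\in\Omega$.

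Next I would invoke the standard Green's representation
\[
v(X)=\phi(X)-\int_{\Omega}L\phi(Y)\,G_L^\Omega(X,Y)\,dY,\qquad X\in\Omega,
\]
where $G_L^\Omega$ is the Green's function for $L$ on $\Omega$ (in the unbounded case, defined as the monotone limit of the Green's functions of $\Omega\cap B(0,R)$). For $X\in\Omega\setminus B(a,2r)$ we have $\phi(X)=0$, and combining the maximum-principle comparison $G_L^\Omega(X,Y)\le G_L^{\mathbb{R}^{n+1}}(X,Y)\lesssim |X-Y|^{1-n}$ with the geometric estimate $|X-Y|\gtrsim \max\{r,|X-a|\}$ on $A$, together with $|L\phi|\lesssim r^{-2}$ and $|A|\lesssim r^{n+1}$, yields
\[
\omega_L^X(\partial\Omega\cap B(a,r))\le \int_{A}|L\phi(Y)|\,G_L^\Omega(X,Y)\,dY\lesssim r^{-2}\cdot r^{n+1}\cdot |X-a|^{1-n}=\left(\frac{|X-a|}{r}\right)^{1-n},
\]
which is \eqref{eq:decay_unbounded}.

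The main obstacle I anticipate is justifying the Green's function machinery in full generality, since $\Omega$ may be unbounded, disconnected, and have bounded or unbounded boundary. One must construct $G_L^\Omega$ via monotone limits of Green's functions on bounded approximations $\Omega\cap B(0,R)$, verify that both Green's identity and the comparison $G_L^\Omega\le G_L^{\mathbb{R}^{n+1}}$ pass to the limit, and use that $G_L^\Omega(X,\cdot)\to 0$ at infinity, which crucially relies on $n+1\ge 3$. Care is also needed to ensure that $v$ decays at infinity (automatic here because $\phi|_{\partial\Omega}$ has bounded support and $\omega_L^X(\partial\Omega\cap B(a,3r/2))$ is itself a quantity of the same form we are estimating), so that the Perron solution $v$ agrees with the function obtained from Green's identity.
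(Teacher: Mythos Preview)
Your argument has a genuine gap: the pointwise bound $|L\phi|\lesssim r^{-2}$ is not available. Since $A$ is merely bounded measurable, $L\phi=-\div(A\nabla\phi)$ is only an element of $H^{-1}_{\loc}$, not an $L^\infty$ function; the formal expansion $\div(A\nabla\phi)=A_{ij}\partial_i\partial_j\phi+(\partial_iA_{ij})\partial_j\phi$ fails because $\partial_iA_{ij}$ need not exist. The gap is repairable: interpret the Green-representation integral weakly as $\int_\Omega A(Y)\nabla\phi(Y)\cdot\nabla_Y G_L^\Omega(X,Y)\,dY$, then control $\|\nabla_Y G_L^\Omega(X,\cdot)\|_{L^2(A)}$ on the annulus $A$ by Caccioppoli together with the pointwise bound $G_L^\Omega(X,\cdot)\lesssim|X-\cdot|^{1-n}$, and finish with Cauchy--Schwarz. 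This yields the same $r^{n-1}|X-a|^{1-n}$ estimate, but the route is noticeably heavier than what you wrote.

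The paper takes a different and much shorter path that sidesteps all of the obstacles you identified. Instead of Green's identity and the domain Green's function, it uses the fundamental solution $\Gamma_L(\cdot,a)$ directly as a barrier. One works on the bounded approximation $(\Omega\cap B(a,N))\setminus\overline{B(a,2r)}$: on $\Omega\cap\partial B(a,2r)$ one has $u_\phi\le 1\le \Gamma_L(\cdot,a)/\big(c_0(2r)^{1-n}\big)$ by the lower bound on $\Gamma_L$, while on the remaining boundary pieces $u_\phi=\phi=0<\Gamma_L(\cdot,a)/\big(c_0(2r)^{1-n}\big)$. The maximum principle then gives $u_\phi(X)\le c_0^{-2}(|X-a|/(2r))^{1-n}$ for $X$ in this region, and letting $N\to\infty$ finishes. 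This avoids constructing $G_L^\Omega$, avoids any form of Green's identity (hence no issue with $L\phi$ being merely a distribution), and reduces the unbounded-domain subtlety to the single limit $\omega_L^X=\lim_N\omega_{L,\Omega\cap B(a,N)}^X$, which is the very definition of elliptic measure in unbounded sets.
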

	\begin{proof}
		The second assertion clearly follows from the first one, so let us focus on that first one.
		Note that we may assume $\Omega\setminus B(a,2r) \neq \emptyset$, for otherwise \eqref{eq:decay_unbounded} is an empty statement. Write $\Gamma_L(\cdot, \cdot)$ for the fundamental solution to $L$. It is well known that, for some $c_0 \in (0, 1)$ depending only on $n$ and the ellipticity of $L$,
		\begin{equation} \label{eq:fund_sol}
		c_0 \abs{X-Y}^{1-n} \leq \Gamma_L(X, Y) \leq c_0^{-1} \abs{X-Y}^{1-n}, 
		\qquad 
		X \neq Y.
		\end{equation}
		
		Using the definition of the elliptic measure in unbounded domains (and in bounded domains this is trivially true), we have, for any $X \in \Omega$,
		\begin{equation}\label{LXB-1} 
		\omega_L^X(B(a, r) \cap \pom)
		=
		\lim_{N \to \infty} \omega_{L, \Omega \cap B(a, N)}^X(B(a, r) \cap \pom),
		\end{equation}	
		where (here and elsewhere) $\omega_{L, D}$ refers to the elliptic measure relative to the set $D$; as opposed to $\omega_L$, which is always constructed relative to $\Omega$.
		
		Now, for $N \gg 2r$, choose $\phi \in \mathscr{C}^\infty_c(\ree)$ so that $\mathbf{1}_{B(a, r)} \leq \phi \leq \mathbf{1}_{B(a, 2r)}$. Denote by
		\begin{equation*}
		u_\phi(X) := \int_{\partial (\Omega \cap B(a, N))} \phi \, d\omega_{L, \Omega \cap B(a, N)}^X
		\end{equation*}
		the solution associated with $\phi$, which is continuous up to the boundary by the regularity of $\Omega$.
		
		For $X \in \Omega \cap \partial B(a, 2r)$, it follows from \eqref{eq:fund_sol} that 
		\begin{equation*}
		u_\phi(X) \leq 1 \leq \frac{\Gamma_L(X, a)}{c_0(2r)^{1-n}}.
		\end{equation*}
		Moreover, for $x \in \pom \cap \overline{B(a, N)} \setminus B(a, 2r)$ or $x \in \Omega \cap \partial B(a, N)$, the continuity of $u_\phi$ up to the boundary gives 
		\begin{equation*}
		u_\phi(x) = \phi(x) = 0 < \frac{\Gamma_L(x, a)}{c_0(2r)^{1-n}}.
		\end{equation*}
		Therefore, by the maximum principle in $(\Omega \cap B(a, N)) \setminus B(a, 2r)$, we obtain
		\begin{equation} \label{LXB-2}
		\omega_{L, \Omega \cap B(a, N)}^X(B(a, r) \cap \pom)
		\leq 
		u_\phi(X)
		\leq 
		\frac{\Gamma_L(X, a)}{c_0(2r)^{1-n}}
		\leq
		\frac{|X-a|^{1-n}}{c_0^2 (2r)^{1-n}}, 
		\end{equation}
		for all $X \in (\Omega \cap B(a, N)) \setminus B(a, 2r)$, where \eqref{eq:fund_sol} was used in the last step. As a consequence, \eqref{eq:decay_unbounded} follows from \eqref{LXB-1} and \eqref{LXB-2}.
	\end{proof}

	As a byproduct of the preceding lemma, we get the absolute continuity of the elliptic measure with respect to codimension-two Hausdorff measure, namely sets of lower dimensions are invisible to the elliptic measure. Although we will not use it in the sequel, we consider it to be interesting on its own. 
	
	\begin{corollary} 
		Let $\Omega \subset \ree$, $n \geq 2$, be a regular open set, and let $L=-\div(A\nabla)$ be a real (non-necessarily symmetric) elliptic operator. Then $\omega_L^X \ll \mathcal{H}^{n-1}$ for any $X \in \Omega$. 
	\end{corollary}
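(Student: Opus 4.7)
The plan is a straightforward application of the decay estimate \eqref{eq:decay_unbounded} from Lemma~\ref{lem:decay_unbounded}, combined with the definition of the Hausdorff measure. Fix $X\in\Omega$ and a set $E\subset\pom$ with $\mathcal{H}^{n-1}(E)=0$; the goal is to show $\omega_L^X(E)=0$. The key observation is that Lemma~\ref{lem:decay_unbounded} rewrites as
\[
\omega_L^X(\pom\cap B(a,r)) \le C\,\frac{r^{n-1}}{|X-a|^{n-1}}
\]
for any $a\in\pom$ and $0<r\le |X-a|/2$, since $n\ge 2$ forces $1-n\le -1$. Thus, for balls centered on $\pom$ with radius small enough relative to $\delta(X)$, the elliptic measure of the intersection with $\pom$ is controlled by $r^{n-1}$ (up to a constant depending on $X$).

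First, I would set $r_0:=\delta(X)/4$ and fix $\varepsilon>0$. Using $\mathcal{H}^{n-1}(E)=0$, one has $\mathcal{H}^{n-1}_{r_0}(E)=0$, so there exists a cover of $E$ by sets $\{U_i\}$ with $\diam(U_i)\le r_0$ and $\sum_i (\diam U_i)^{n-1}<\varepsilon$. After discarding those $U_i$ that do not meet $E$ and picking a point $a_i\in U_i\cap E\subset\pom$, the enlarged balls $B(a_i,r_i)$ with $r_i:=\diam(U_i)$ cover $E$, satisfy $r_i\le r_0=\delta(X)/4$, and have $\sum_i r_i^{n-1}<\varepsilon$. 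Since $a_i\in\pom$, one has $|X-a_i|\ge\delta(X)=4r_0\ge 4r_i$, so the hypothesis $X\in\Omega\setminus B(a_i,2r_i)$ of Lemma~\ref{lem:decay_unbounded} is satisfied.

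Applying the rewritten decay estimate to each ball and using $|X-a_i|\ge\delta(X)$, I would obtain by countable subadditivity
\[
\omega_L^X(E) \le \sum_i \omega_L^X\bigl(\pom\cap B(a_i,r_i)\bigr)
\le \sum_i C\,\frac{r_i^{n-1}}{|X-a_i|^{n-1}}
\le \frac{C}{\delta(X)^{n-1}}\sum_i r_i^{n-1}
< \frac{C\,\varepsilon}{\delta(X)^{n-1}}.
\]
Letting $\varepsilon\to 0^+$ yields $\omega_L^X(E)=0$, as desired. There is no serious obstacle here; the whole proof is essentially bookkeeping once one notices that the exponent $1-n$ in \eqref{eq:decay_unbounded} is exactly what converts a covering bound on $\sum r_i^{n-1}$ into the required null-measure statement, and that the constraint $r_i\le\delta(X)/4$ can always be arranged when $\mathcal{H}^{n-1}(E)=0$.
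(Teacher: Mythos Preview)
Your proposal is correct and takes essentially the same approach as the paper: cover $E$ by balls of radius smaller than a fixed fraction of $\delta(X)$, apply the decay estimate \eqref{eq:decay_unbounded} to bound the elliptic measure of each ball by a multiple of $r_i^{n-1}$, and sum. The only cosmetic difference is that the paper begins with an arbitrary ball cover and then recenters the balls on $\pom$, whereas you pick centers $a_i\in U_i\cap E\subset\pom$ directly.
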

	\begin{proof}
		Fix $X \in \Omega$. Let $E \subset \pom$ such that $\mathcal{H}^{n-1}(E) = 0$. It is well known that  
		\begin{equation*}
		\liminf_{\delta \to 0} \bigg\{\sum_{i=1}^\infty r_i^{n-1} : E \subset \bigcup_{i=1}^\infty B(x_i, r_i), 2r_i < \delta \bigg\} 
		= 
		0.
		\end{equation*}	
		This means that given $\varepsilon > 0$, there exists some $\delta > 0$ small enough (we may assume that $\delta < \delta(X)/2$) such that for some $x_i \in \ree$ and $r_i \in (0, \delta/2)$, we have $E \subset \bigcup_i B(x_i, r_i)$ and $\sum_i r_i^{n-1} < \varepsilon$. Without loss of generality assume that $B(x_i, r_i) \cap \pom \neq \emptyset$. Note that then $E \subset \bigcup_i B(y_i, 2r_i)$ for some $y_i \in \pom$. Therefore, using the fact that $\delta(X) > 2\delta$ and  Lemma~\ref{lem:decay_unbounded}, we obtain  
		\begin{multline*}
		\omega_L^X(E)
		\leq 
		\sum_{i=1}^\infty \omega_L^X(\pom \cap B(y_i, 2r_i))
		\leq 
		C \sum_{i=1}^\infty \left( \frac{\abs{X-y_i}}{2r_i} \right)^{1-n} 
		\leq 
		\frac{2^{n-1} C}{\delta(X)^{n-1}} \sum_{i=1}^\infty r_i^{n-1}
		< 
		\frac{2^{n-1} C}{\delta(X)^{n-1}} \varepsilon. 
		\end{multline*}
		By the arbitrariness of $\varepsilon$, this indeed implies $\omega_L^X(E) = 0$. 
	\end{proof}

		The following estimate establishes the relationship between the capacitary potential and the elliptic measure, which plays an important ingredient to show CDC. 
		\begin{lemma}\label{lem:CP}
		Let $\Omega \subset \ree$, $n \geq 2$, be a regular open set, and let $L=-\div(A\nabla)$ be a real (non-necessarily symmetric) elliptic operator. Fix $a \in \pom$ and $0 < r < \diam(\pom)$. Denote $F := \overline{B(a, r)} \setminus \Omega$, and $v := \mathcal{R}_L(F; B(a, 2r))$ the capacitary potential (cf. \cite[Chapter 6]{HKM}) in $B(a, 2r)$ of the compact set $F$ with respect to the operator $L$. Then 
		\begin{align*}
		v(X) \geq 1 - \omega_{L, \Omega \cap B(a, r)}^X (\partial B(a, r) \cap \overline{\Omega}) 
		\quad\text{for any}\quad X \in \Omega \cap B(a, r).
		\end{align*}
		\end{lemma}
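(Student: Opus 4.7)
The plan is to prove the inequality by applying a maximum principle argument to the $L$-harmonic function $u := 1 - v$ on the bounded set $D := \Omega \cap B(a,r)$. First I would observe that since $F = \overline{B(a,r)} \setminus \Omega$ is disjoint from $D$, and $D \subset B(a,2r) \setminus F$, the capacitary potential $v$ is $L$-harmonic in $D$; consequently $u$ is also $L$-harmonic in $D$. The standard property $0 \leq v \leq 1$ on $B(a,2r)$ (cf.~\cite[Chapter 6]{HKM}) then gives $0 \leq u \leq 1$ in $D$.

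Next, I would analyze the boundary behavior of $u$ on $\partial D$, which decomposes into two pieces $F_1 := \pom \cap \overline{B(a,r)} \subset F$ and $F_2 := \partial B(a,r) \cap \overline{\Omega}$. Since $v \equiv 1$ quasi-everywhere on $F$ by the definition of the capacitary potential, and Wiener-regularity of $\Omega$ for $L$ transfers to Wiener-regularity of $D$ at every point of $F_1$ (as $D \subset \Omega$ only enlarges the complement), the function $u$ vanishes in the limit at all regular boundary points of $F_1$. At points of $F_2 \setminus F_1$, which are smooth boundary points of $B(a,r)$ and hence automatically regular for $L$, we merely use the crude bound $u \leq 1$.

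The comparison function is then $g(X) := \omega_{L,D}^X(F_2)$, which is the bounded $L$-harmonic function in $D$ whose boundary values at regular boundary points are $\mathbf{1}_{F_2}$. The difference $g - u$ is $L$-harmonic and bounded in $D$, with nonnegative boundary values at every regular boundary point (either $g - u \geq 0 - 0 = 0$ on the regular part of $F_1 \setminus F_2$, or $g - u \geq 1 - 1 = 0$ on $F_2$). Invoking the weak maximum principle for bounded Perron--Wiener--Brelot solutions on the bounded open set $D$ yields $u \leq g$ in $D$, that is,
\[
1 - v(X) \leq \omega_{L,D}^X(\partial B(a,r) \cap \overline{\Omega}), \qquad X \in D,
\]
which rearranges to the claimed inequality.

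The main technical obstacle is to justify the boundary comparison rigorously, since the boundary values of $v$ are attained only quasi-everywhere and the set $F_1 \cap F_2 = \pom \cap \partial B(a,r)$ lies in both pieces. This will be handled by the Wiener-regularity hypothesis on $\Omega$ (which ensures $u$ has the correct boundary limit at all points of $F_1$) combined with the standard generalized maximum principle, under which exceptional boundary sets of zero capacity do not affect the inequality.
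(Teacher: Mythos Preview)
Your strategy (compare $1-v$ with $g := \omega_{L,D}^X(F_2)$ inside $D=\Omega\cap B(a,r)$ via a maximum principle) is dual to the paper's, which instead approximates $1-g$ by smooth-data solutions $u_\eta$, extends them by zero to subsolutions on $B(a,2r)\setminus F$, and compares with $v$ there using the weak $W^{1,2}$ maximum principle. Both routes can be made to work, but your boundary analysis has a gap.

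The issue is at $F_1\cap F_2=\pom\cap\partial B(a,r)$. You write that Wiener-regularity of $\Omega$ ``ensures $u$ has the correct boundary limit at all points of $F_1$,'' but what governs $\lim_{X\to x} v(X)$ for $x\in F_1$ is regularity of $x$ for the domain $B(a,2r)\setminus F$, not for $D$ or for $\Omega$. For $x\in\pom\cap B(a,r)$ the two domains coincide locally, so the transfer you state is correct. But for $x\in\pom\cap\partial B(a,r)$ the complement of $B(a,2r)\setminus F$ near $x$ is only $\overline{B(a,r)}\cap\Omega^c$, which may be strictly thinner than $\Omega^c$ (think of $\Omega^c$ consisting near $x$ of a spike pointing in the outward normal direction to $\partial B(a,r)$); regularity of $\Omega$ at $x$ then does \emph{not} force $v(X)\to 1$. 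The paper flags exactly this point: ``$v$ may not be continuous or even defined pointwise near $\partial B(a,r)\cap\pom$.''

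Your proposed fix via ``exceptional boundary sets of zero capacity'' does salvage the argument, but you must invoke the right fact: the Kellogg property for $B(a,2r)\setminus F$ says its irregular boundary points form a polar set, so outside a polar subset of $\pom\cap\partial B(a,r)$ one still gets $1-v\to 0$, and then the generalized maximum principle (for a bounded $L$-harmonic function with $\limsup\le 0$ off a polar set) yields $1-v\le g$ in $D$. The paper's route sidesteps this potential-theoretic layer entirely by working in the $W^{1,2}$ framework on $B(a,2r)\setminus F$ and checking the boundary inequality through explicit approximating sequences; this is heavier to write out but does not require the Kellogg/removability machinery.
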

		
		\begin{proof}
		Write 
		\begin{equation*}
		u(X) := 1 - \omega_{L, \Omega \cap B(a, r)}^X (\partial B(a, r) \cap \overline{\Omega}) = \omega_{L, \Omega \cap B(a, r)}^X (\partial \Omega \cap B(a, r)).
		\end{equation*}
		Given $0<\eta<r$, choose $\phi_\eta \in \mathscr{C}^\infty_c(\ree)$ such that $0 \leq \phi_\eta \leq 1$, $\phi_\eta \equiv 1$ in $B(a, r-\eta)$, and $\phi_\eta \equiv 0$ outside $B(a, r)$. Set  
		\begin{equation*}
		u_\eta (X) := \int_{\partial (\Omega \cap B(a, r))} \phi_\eta \, d\omega_{L, \Omega \cap B(a, r)}^X
		\end{equation*}
		and the Dominated Convergence Theorem implies $u_\eta(X) \longrightarrow u(X)$ as $\eta \to 0$ for any $X \in \Omega \cap B(a, r)$. Hence, to conclude the proof it suffices to prove  
		\begin{align}\label{vueta}
		\text{$v \geq u_\eta$ in $\Omega \cap B(a, r)$ for any $\eta > 0$}.
		\end{align}
		Since $\Omega \cap B(a, r) \subset \Omega$ and $\Omega$ is regular, by the Wiener criterion $\Omega \cap B(a, r)$ is also regular. Then, the smoothness of $\phi_\eta$ implies that $u_\eta \in \mathscr{C}(\overline{\Omega \cap B(a, r)})$. 
		However $v$ may not be continuous or even defined pointwise near $\partial B(a,r) \cap \pom $, so we can not directly apply the classical maximum principle to prove \eqref{vueta}.
		
		By \cite[Lemma 7.9]{HKM}, we may extend $u_\eta$ by 0 to $B(a, 2r) \setminus F$, and if we denote by $\hat{u}_\eta$ the extension, $\hat{u}_\eta \in \mathscr{C}(\overline{B(a, 2r)\setminus F})$ is a weak subsolution. 
		By the weak maximum principle for subsolutions in $B(a, 2r) \setminus F$ (see \cite[Lemma 1.1.17]{K}), to prove \eqref{vueta} it suffices to show that $\hat{u}_\eta - v \leq 0$  
		in the $W^{1, 2}$-sense on $\partial (B(a, 2r) \setminus F)$. In fact, an easy modification of \cite[Definition 1.1.13]{K} lets us reduce to finding a sequence $\{f_j\} \subset \mathscr{C}^\infty (B(a, 2r) \setminus F) \cap W^{1, 2}(B(a, 2r) \setminus F)$ such that $f_j \to \hat{u}_\eta - v$ in $W^{1,2}(B(a,2r) \setminus F)$ and $f_j\leq 0$ in an open neighborhood $U_j$ of $\partial (B(a,2r) \setminus F)$. Let us next prove this.
		
		Pick $\Phi \in \mathscr{C}^\infty_c(\ree)$ such that $\mathbf{1}_{B(a, \frac32 r)} \le \Phi \le \mathbf{1}_{B(a, 2r)}$. Since $v$ is the solution to $Lv = 0$ in $B(a, 2r) \setminus F$ with data $\mathbf{1}_{\partial F}$, and $\Phi$ is a smooth extension of $\mathbf{1}_{\partial F}$, we have $v - \Phi \in W_0^{1, 2}(B(a, 2r) \setminus F)$. This means that there exists a sequence $\{\varphi_j\} \subset \mathscr{C}^\infty_c(B(a, 2r) \setminus F)$ such that $\varphi_j \to v-\Phi$ in $W^{1, 2}(B(a, 2r) \setminus F)$ as $j \to \infty$. 
		
		On the other hand, since $u_\eta - \phi_\eta \in W_0^{1, 2}(\Omega \cap B(a, r))$ because $u_\eta$ is a solution with smooth boundary data $\phi_\eta$, we can find $\psi_j \in \mathscr{C}^\infty_c(\Omega \cap B(a, r))$ so that $\psi_j \to u_\eta - \phi_\eta$ in $W^{1, 2}(\Omega \cap B(a, r))$. Denoting by $\hat{\psi}_j$ the extension of $\psi_j$ by 0, we get $\hat{\psi}_j \in \mathscr{C}^\infty_c(\ree)$ and $\hat{\psi}_j \to \hat{u}_\eta - \phi_\eta$ in $W^{1, 2}(B(a, 2r) \setminus F)$.   
		
		As a consequence, $\hat{\psi}_j + \phi_\eta - \varphi_j - \Phi \to \hat{u}_\eta - v$ in $W^{1, 2}(B(a, 2r) \setminus F)$ and $\hat{\psi}_j + \phi_\eta - \varphi_j - \Phi \in W^{1, 2}(B(a, 2r) \setminus F) \cap \mathscr{C}^\infty(B(a, 2r) \setminus F)$. Hence, to complete the proof, it remains to check that 
		\begin{align}\label{neigh}
		\text{$\hat{\psi}_j + \phi_\eta - \varphi_j - \Phi \leq 0$ in a neighborhood $U_j$ of $\partial (B(a, 2r) \setminus F)$.}
		\end{align} 
		Indeed, choose $U_j := \ree \setminus (\supp \varphi_j \cup \supp \hat{\psi}_j)$ so that $\varphi_j \equiv 0 \equiv \hat{\psi}_j$ in $U_j$. In $B(a, r) \cap \Omega$, we see that $\Phi \equiv 1$ and $\phi_\eta \leq 1$, so overall $\hat{\psi}_j + \phi_\eta - \varphi_j - \Phi \leq 0$ in $U_j \cap (B(a, r) \cap \Omega)$. Additionally, in $\R^{n+1} \setminus B(a, r)$, there holds $\phi_\eta \equiv 0$ and $\Phi \geq 0$, so $\hat{\psi}_j + \phi_\eta - \varphi_j - \Phi \leq 0$ in $U_j \setminus B(a, r)$. This justifies  \eqref{neigh} and the proof is complete.  
		\end{proof}

	Before showing Theorem~\ref{th:necessity_CDC} with the tools developed above, let us introduce some extra definitions to make the proof more understandable. The definitions below are indeed a generalization of those in \cite{A}. 
	
	\begin{definition}\label{def:HMD}
		Let $\Omega \subset \ree$, $n \ge 2$, be an open set. Let $L=-\div(A\nabla)$ be a real (non-necessarily symmetric) elliptic operator, and let $\w_L$ be the associated elliptic measure. 
		\begin{enumerate}
			\item Given $\alpha \in (0, 1)$, we say that $\Omega$ satisfies the \textit{global elliptic measure decay property} with exponent $\alpha$ (abbreviated to $\mathrm{GEMD}(\alpha)$) if there exists some $M > 0$ such that 
			\begin{equation}\label{cond:GEMD}
			\omega_L^X(\pom \setminus B(x, r)) 
			\leq 
			M \bigg( \frac{|X-x|}{r} \bigg)^\alpha,
			\end{equation}
			for all $x \in \pom$, $0 < r < \diam(\pom)$, and $X \in \Omega$.
			
			\item 
			Given $\alpha \in (0, 1)$, we say that $\Omega$ satisfies the \textit{local elliptic measure decay property} with exponent $\alpha$ (abbreviated to $\mathrm{LEMD}(\alpha)$) at $x \in \pom$ with scale $0 < r < \diam(\pom)$ if there exists some $M>0$ such that for every $X \in \Omega \cap B(x, r)$
			\begin{equation}\label{cond:LEMD}
			\omega_{L, \Omega \cap B(x, r)}^X(\overline{\Omega} \cap \partial B(x, r)) 
			\leq 
			M \bigg( \frac{|X-x|}{r} \bigg)^\alpha.
			\end{equation}
		\end{enumerate}
	\end{definition}
	
	{
	\begin{remark}
		We are going to show that the solvability of \eqref{eq:problem} along with \eqref{eq:bounds_CDC} implies $\mathrm{GEMD}(\alpha)$, as defined above. But in the proof of $\mathrm{GEMD}(\alpha) \implies \text{CDC}$, the only relevant case is the estimate \eqref{cond:GEMD} for $X$ close enough to $x$, say $X\in \Omega \cap B(x,\frac14 r)$  (in fact, upon changing the value of $M$, the $\mathrm{GEMD}(\alpha)$ condition is vacuous for $X\notin \Omega \cap B(x,\frac14 r)$).
		Then \eqref{cond:GEMD} is the same as \eqref{eq:elliptic_annuli} in Lemma \ref{lem:8r4r}, which is the key property allowing us to obtain the $\alpha$-H\"older-solvability in Sections \ref{sec:well} and \ref{sec:ill}. 		
		We also remark that, whenever $\Omega$ is regular, by the maximum principle, \eqref{cond:LEMD} implies \eqref{cond:GEMD} for $X \in \Omega \cap B(x, r)$. 
	\end{remark}

	\begin{remark}\label{rmk:LEMDbdrHolder}
		$\mathrm{LEMD}(\alpha)$ implies boundary H\"older continuity as in Lemma \ref{lem:Holder} with the same H\"older exponent $\alpha$ and constant. Indeed, fix any ball $B= B(x,r)$ with $x\in \pom$ and $0<r< \diam(\pom)$, and suppose that $0 \le u \in W^{1,2}_{\loc}(2B \cap \Omega) \cap \mathscr{C}(\overline{2B \cap \Omega})$ satisfies $Lu=0$ in the weak sense in $2B \cap \Omega$, with $u \equiv 0$ in $2\Delta= 2B \cap \pom$. Then, for every $X\in B\cap \Omega$, we have  
		\begin{equation*}
			u(X) = \int_{\partial(\Omega \cap B)} u|_{\partial(\Omega \cap B)} \, d\omega^X_{L, \Omega \cap B } \leq  \omega^X_{L, \Omega \cap B} (\overline{\Omega} \cap \partial B) \sup_{\overline{\Omega \cap B}} u
			\leq M \left( \frac{|X-x| }{r} \right)^{\alpha} \sup_{\overline{\Omega \cap B}} u.
		\end{equation*} 	 	
		We are going to prove in this section that $\dot{\mathscr{C}}^\alpha(\pom)$-solvability of \eqref{eq:problem} along with \eqref{eq:bounds_CDC} imply $\mathrm{LEMD}(\alpha)$.\footnote{To be more precise, our argument below shows $\mathrm{LEMD}(\alpha)$ for the pair $a$ and $r$ satisfying the first condition in \eqref{eq:geom_cond}. However, when the second condition in \eqref{eq:geom_cond} holds, we have $\overline{\Omega} \cap \partial B(a,r) = \emptyset$ and thus \eqref{cond:LEMD} holds trivially.} Therefore, these two conditions together yield the boundary H\"older continuity property with the same H\"older exponent $\alpha$. 
	\end{remark}
	}
	
	We can now proceed to prove the main result of this section. 
	It will be convenient to split the proof in two cases, depending whether or not the elliptic measure is a probability.
	
	\subsection{Proof of Theorem~\ref{th:necessity_CDC} in the case $\Omega$ is bounded or $\Omega$ is unbounded with $\pom$ being unbounded}\label{section:CDC-proof}
	
	Fix $a \in \pom$ and $0 < r < \diam(\pom)$. Our goal is to prove the CDC estimate 
	\begin{equation} \label{eq:CDC}
	\frac{\Cap(\overline{B(a,r)}\setminus \Omega, B(a,2r))}{\Cap(\overline{B(a,r)}, B(a,2r))} \geq c,
	\end{equation}
	with $c$ independent on $a$ and $r$.
	We proceed in a series of steps for clarity of the exposition. Recall that by Lemma~\ref{lem:regular} we may assume that $\Omega$ is regular.
	
	\textbf{Step 1: Solvability of \eqref{eq:problem} with bounds \eqref{eq:bounds_CDC} implies $\mathrm{GEMD}(\alpha)$.}
	
	Fix any $x \in \pom$ and $0 < \rho < \diam(\pom)$. Set, for $\xi \in \pom$ and $X \in \Omega$,
	\begin{equation*}
	\phi(\xi) := \min \bigg\{ \bigg( \frac{\abs{\xi - x}}{\rho} \bigg)^\alpha, 1 \bigg\} 
	\quad\text{ and }\quad 
	u_\phi(X) := \int_{\pom} \phi \, d\omega_L^X.
	\end{equation*} 
	The upper bound in \eqref{eq:bounds_CDC} gives 
	\begin{equation*}
	\frac{|u_\phi(X) - u_\phi(Y)|}{|X-Y|^\alpha} 
	\leq 
	C_\alpha \|\phi\|_{\dot{\mathscr{C}}^\alpha(\pom)}
	\leq 
	C_\alpha \rho^{-\alpha},
	\qquad 
	X, Y \in \Omega.
	\end{equation*}
	Letting $Y \to x$ and using the regularity of $\Omega$ (because $\phi$ is clearly bounded and uniformly continuous for it belongs to $\dot{\mathscr{C}}^\alpha(\pom)$), we obtain 
	\begin{equation} \label{eq:u_varphi}
	0\leq u_\phi(X) \leq C_\alpha \left( \frac{\abs{X-x}}{\rho} \right)^\alpha.
	\end{equation}
	Note that $\mathbf{1}_{\pom \setminus B(x, \rho)} (\xi) \leq \phi(\xi)$ for every $\xi \in \pom$. Hence, \eqref{eq:u_varphi} implies 
	\begin{equation*}
	\omega_L^X (\pom \setminus B(x, \rho)) 
	=
	\int_{\pom} \mathbf{1}_{\pom \setminus B(x, \rho)} \, d\omega_L^X 
	\leq 
	\int_{\pom} \phi \, d\omega_L^X
	=
	u_\phi(X)
	\leq
	C_\alpha \left( \frac{|X-x|}{\rho} \right)^\alpha.
	\end{equation*}
	This shows the $\mathrm{GEMD}(\alpha)$ as desired.

	
	\textbf{Step 2: $\mathrm{GEMD}(\alpha)$ implies the geometrical condition:}
	\begin{align} \label{eq:geom_cond}
	\text{ either } \pom \cap S(a; cr, r) \neq \emptyset \quad\text{ or }\quad S(a; cr, r) \subset \ree \setminus \overline{\Omega},
	\end{align}	
	for some constant $c \in (0, 1/2)$ independent of $a$ and $r$, where $S(a; cr, r)$ denotes the closed annulus
	\begin{equation*}
	S(a; cr, r) := \{ X \in \ree : cr \leq \abs{X-a} { \leq } r \}. 
	\end{equation*}

	We extend the proof of \cite[Lemma 1]{A} for the Laplacian to the case of general elliptic operators (we remark that the second case of \eqref{eq:geom_cond} does not occur in \cite{A} due to the connectedness assumption on $\Omega$). Fix $\kappa \in (0, 1/2)$ small enough so that the $\mathrm{GEMD}(\alpha)$ condition gives us 
	\begin{equation}\label{LXB}
	\omega_L^X(\pom \setminus B(a, r)) \leq 1/3,
	\qquad X \in \Omega \cap \overline{B(a, \kappa r)}.
	\end{equation} 
	Note that $\kappa$ is independent of $a$ and $r$.
	
	Let us prove \eqref{eq:geom_cond} with $c := \kappa (c_0^2/3)^{1/(n-1)} \in (0, \kappa)$, by contradiction. Indeed, assume $\pom \cap S(a; cr, r) = \emptyset$ and $S(a; cr, r) \not\subset \ree \setminus \overline{\Omega}$. The former implies (simply because $S(a; cr, r)$ is connected) that either $S(a; cr, r) \subset \Omega$ or $S(a; cr, r) \subset \ree \setminus \overline{\Omega}$. Since the latter case has already been ruled out, we are left with $S(a; cr, r) \subset \Omega$. Let us obtain a contradiction in this case.
	
	Now, let $v \in \mathscr{C}(\Omega) \cap W^{1, 2}(\Omega)$ be the solution to $Lv = 0$ in $\Omega$ with boundary data $\mathbf{1}_{B(a, r) \cap \pom}$, that is, $v(X) = \omega_L^X(B(a, r) \cap \pom)$ for $X \in \Omega$. Note that since $S(a; cr, r) \subset \Omega$, the connected components of $\pom$ either lie inside $B(a, cr)$ or outside of $\overline{B(a,r)}$. Thus we have that $\mathbf{1}_{B(a, r) \cap \pom}$ is uniformly continuous on $\pom$ and hence $v \in \mathscr{C}(\overline{\Omega})$ by the regularity of $\Omega$.

	For $X \in \Omega$ such that $\abs{X-a} = cr$, \eqref{eq:fund_sol} yields 
	\begin{equation*}
	\frac{\Gamma_L(X, a)}{c_0(cr)^{1-n}} \geq 1 \geq \omega_L^X(B(a, r) \cap \pom) = v(X).
	\end{equation*}	
	Moreover, in view of $S(a; cr, r) \subset \Omega$, any $x \in \pom \setminus B(a, cr)$ also satisfies $x \in \pom \setminus \overline{B(a, r)}$, and this yields
	\begin{equation*}
	\frac{\Gamma_L(x, a)}{c_0(cr)^{1-n}} > 0 = v(x).
	\end{equation*}		
	Moreover, in the case that $\Omega$ is unbounded, \eqref{eq:fund_sol} and Lemma~\ref{lem:decay_unbounded} give
	\begin{equation*}
	\dfrac{\Gamma_L(X, a)}{c_0(cr)^{1-n}} \longrightarrow 0 
	\quad\text{ and }\quad 
	\omega_L^X(B(a, r) \cap \pom) \longrightarrow 0
	\quad \text{as } X \to \infty.
	\end{equation*}
	With all the above estimates, the maximum principle in $\Omega \setminus \overline{B(a, cr)}$ yields
	\begin{equation}\label{GLX}
	\frac{\Gamma_L(X, a)}{c_0(cr)^{1-n}}
	\geq 
	\omega_L^X(B(a, r) \cap \pom),
	\qquad 
	X \in \Omega \setminus \overline{B(a, cr)}.
	\end{equation}	
	Now take $Y \in \ree$ such that $|Y-a|=\kappa r$. Then, since $0<c<\kappa<\frac12$, we have $Y \in S(a; cr, r)$, hence $Y \in \Omega \cap \overline{B(a, \kappa r)}$. Therefore, invoking \eqref{LXB},  \eqref{GLX}, and \eqref{eq:fund_sol}, we obtain 
	\begin{equation} \label{eq:contradiction}
	\frac13 
	\geq 
	\omega_L^Y(\pom \setminus B(a, r))
	\geq 
	1 - \frac{\Gamma_L(Y, a)}{c_0(cr)^{1-n}}
	\geq 
	1 - c_0^{-2}\left( \frac{\kappa}{c} \right)^{1-n}
	=
	\frac23,
	\end{equation}
	which is a contradiction. This finally shows that the dichotomy \eqref{eq:geom_cond} holds. 
	
	
	\textbf{Step 3: The second condition in \eqref{eq:geom_cond} implies CDC.}
	
	Let us briefly show how the condition $S(a; c r, r) \subset \ree \setminus \overline{\Omega}$ in \eqref{eq:geom_cond} implies \eqref{eq:CDC} for this pair of $a$ and $r$. Indeed, if $S(a; cr, r) \subset \ree \setminus \overline{\Omega}$, then we may find a ball 
	\[ B(X_0, cr/4) \subset S(a; cr, r) \subset \ree \setminus \overline{\Omega} \subset \ree \setminus \Omega. \] This implies, by the monotonicity of capacity, that 
	\begin{equation*}
	\frac{\Cap(\overline{B(a,r)}\setminus \Omega, B(a,2r))}{\Cap(\overline{B(a,r)}, B(a,2r))}
	\geq 
	\frac{\Cap(B(X_0, cr/4), B(X_0, 3r))}{\Cap(\overline{B(a,r)}, B(a,2r))}
	\approx_c
	1,
	\end{equation*}
	where in the last estimate we have used the well known fact that $\Cap(B(Y, R); B(Y, CR)) \approx_C R^{1-n}$ for any $Y \in \ree$, $R > 0$ and $C > 1$ (see for instance \cite[Chapter 2]{HKM}).	Since by the previous steps, $c$ does not depend on $a$ nor $r$, we have shown \eqref{eq:CDC} in the case that $S(a; c r, r) \subset \ree \setminus \overline{\Omega}$ in \eqref{eq:geom_cond}. We are therefore left with the case $\pom \cap S(a; cr, r) \neq \emptyset$, which we handle in the next steps.
	
	
	\textbf{Step 4: $\mathrm{GEMD}(\alpha)$ and the first condition in \eqref{eq:geom_cond} imply $\mathrm{LEMD}(\alpha)$ at $a$ with scale $r$}.
	Assuming that the $\mathrm{GEMD}(\alpha)$ condition is satisfied and that $\pom \cap S(a; cr, r) \neq \emptyset$, we claim that there exists $\varepsilon > 0$, independent on $a$ and $r$, such that 
	\begin{equation} \label{eq:claim_bourgain_like}
	\omega_L^X (\pom \setminus B(a, cr/2)) \geq \varepsilon
	\qquad 
	\text{for every } X \in \Omega \cap \partial B(a, r). 
	\end{equation}
	Assuming this momentarily, let us see how \eqref{eq:claim_bourgain_like} implies $\mathrm{LEMD}(\alpha)$ at $a$ and scale $r$, with uniform constants on $a$ and $r$. Choose $\phi \in \mathscr{C}^\infty(\ree)$ such that $\mathbf{1}_{\ree \setminus B(a, cr/2)} \leq \phi \leq \mathbf{1}_{\ree \setminus B(a, cr/4)}$. Let 
	\[
	u_\phi(X) := \int_{\pom} \phi \, d\omega_L^X
	\] 
	be the associated solution in $\Omega$. By regularity of $\Omega$, $u_\phi \in \mathscr{C}(\overline{\Omega})$. Moreover, the choice of $\phi$ along with \eqref{eq:claim_bourgain_like} yield  
	\begin{equation*}
	u_\phi(Y) 
	\geq 
	\omega_L^Y (\pom \setminus B(a, cr/2)) \geq \varepsilon, 
	\qquad 
	\text{for every } Y \in \Omega \cap \partial B(a, r). 
	\end{equation*} 
	Moreover, for $y \in \pom \cap \partial B(a, r)$, we have, by continuity of $u_\phi$ up to the boundary, $u_\phi (y) = \phi(y) = 1 \geq \varepsilon$. Thus we have shown 
	\begin{equation*}
	u_\phi(Y) 
	\geq 
	\varepsilon
	\qquad 
	\text{for every } Y \in \overline{\Omega} \cap \partial B(a, r). 
	\end{equation*}
	Hence, for any $X \in \Omega \cap B(a, r)$,
	\begin{multline} \label{eq:comp_claim_1}
	\omega_{L, \Omega \cap B(a, r)}^X (\overline{\Omega} \cap \partial B(a, r)) 
	=
	\int_{\partial(\Omega \cap B(a, r))} \mathbf{1}_{\overline{\Omega} \cap \partial B(a, r)}(y) \,d\omega_{L, \Omega \cap B(a, r)}^X (y)
	\\ 
	\leq 
	\int_{\partial(\Omega \cap B(a, r))} \varepsilon^{-1} u_\phi(y) \,d\omega_{L, \Omega \cap B(a, r)}^X (y)
	=
	\varepsilon^{-1} u_\phi(X),
	\end{multline}
	where in the last step we have used the uniqueness of classical solutions in $\Omega \cap B(a, r)$ and the fact that $u_\phi$ is a solution in $\Omega \cap B(a, r)$, continuous in the whole $\overline{\Omega}$. 
	
	On the other hand, the $\mathrm{GEMD}(\alpha)$ condition gives that for any $X \in \Omega \cap B(a, r)$,
	\begin{multline} \label{eq:comp_claim_2}
	u_\phi(X)
	=
	\int_{\pom} \phi \,d\omega_L^X
	\leq 
	\int_{\pom} \mathbf{1}_{\pom \setminus B(a, cr/4)} \, d\omega_L^X
	\\ =
	\omega_L^X (\pom \setminus B(a, cr/4))
	\leq 
	M \bigg( \frac{|X-a|}{cr/4} \bigg)^\alpha
	=
	\frac{4^\alpha M}{c^\alpha} \bigg( \frac{|X-a|}{r} \bigg)^\alpha.
	\end{multline}
	Therefore, \eqref{eq:comp_claim_1} and \eqref{eq:comp_claim_2} imply the desired $\mathrm{LEMD}(\alpha)$ at $a$ and scale $r$ with constant $4^\alpha M \varepsilon^{-1} c^{-\alpha}$, which is indeed independent on $a$ and $r$ by the previous steps. 
	
	It remains to prove our claim \eqref{eq:claim_bourgain_like}. The proof is similar to that of \cite[Lemma 2]{A}. Since $\pom \cap S(a; cr, r) \neq \emptyset$, there exist $\rho \in [cr, r)$ and $x^* \in \pom \cap \partial B(a, \rho)$. Morally speaking, the existence of such boundary point $x^*$ away from $B(a, cr/2)$ attracts the Brownian traveller and this is what gives us a non-zero lower bound in \eqref{eq:claim_bourgain_like}. The $\mathrm{GEMD}(\alpha)$ condition gives 
	\begin{equation} \label{eq:complement_GEMD}
	\omega_L^X (\pom \setminus B(x^*, \rho/4)) 
	\leq 
	M \bigg( \frac{|X - x^*|}{\rho/4} \bigg)^\alpha,
	\qquad 
	X \in \Omega \cap B(x^*, \rho/4).
	\end{equation}
	To proceed, choose $\psi \in \mathscr{C}^\infty(\ree)$ such that $\mathbf{1}_{\ree \setminus B(a, 3\rho/4)} \leq \psi \leq \mathbf{1}_{\ree \setminus B(a, \rho/2)}$, and let  
	\[
	u_\psi(X) := \int_{\pom} \psi \, d\omega_L^X, \qquad X \in \Omega, 
	\] 
	be the associated solution in $\Omega$, so that $u_\psi \in \mathscr{C}(\overline{\Omega})$ by regularity of $\Omega$. Then, noting that $\abs{x^* - a} = \rho$ implies $B(a, 3\rho/4) \subset \ree \setminus B(x^*, \rho/4)$, we use \eqref{eq:complement_GEMD} to obtain that for all $X \in \Omega \cap B(x^*, \rho/4)$, 
	\begin{equation*}
	\omega_L^X(\pom \cap B(a, 3\rho/4))
	\leq 
	\omega_L^X (\pom \setminus B(x^*, \rho/4)) 
	\leq 
	M \bigg( \frac{|X - x^*|}{\rho/4} \bigg)^\alpha,
	\end{equation*}	
	whence for some $0 < d < 1/12$ small enough, independent on $r, \rho$ and $x^*$, there holds 
	(recalling that $\psi \equiv 1$ on $\pom \setminus B(a, 3\rho/4)$)
	\begin{equation} \label{eq:measure_12}
	u_\psi(X) 
	\geq 
	\omega_L^X(\pom \setminus B(a, 3\rho/4))
	\geq
	1/2,
	\qquad 
	X \in \Omega \cap B(x^*, d\rho/4).
	\end{equation}
	
	Define  
	\begin{equation*}
	u(X) =
	\begin{cases}
	u_\psi(X) \quad & \text{ if } X \in \Omega, \\
	1 & \text{ if } X \notin \Omega. 
	\end{cases}
	\end{equation*}
	It is well known that $u$ is a supersolution to $Lu = 0$ in $\ree \setminus B(a, 3\rho/4)$ (see \cite[Lemma 7.9]{HKM}), and it is clear that $0 \leq u \leq 1$ in $\ree$. Also, \eqref{eq:measure_12} implies that $u \geq 1/2$ in $B(x^*, d\rho/4)$. By regularity of $\Omega$, it is not hard to see that $u \in \mathscr{C}(\ree \setminus B(a, 3\rho/4))$.  
	
	Take now any $x_1^* \in \partial B(a, \rho)$ with $\abs{x_1^* - x^*} < \rho/12$,  
	and denote $B := B(x_1^*, \frac{\rho}{12} + \frac{d\rho}{4})$. These clearly imply that $B(x^*, d\rho/4) \subset B$, and so $|\{ X \in B : u(X) \geq 1/2 \}| \geq |B(x^*, d\rho/4)| \approx |B|$, with constants depending only on $n$ and $d$. Moreover, using the facts that $|x_1^* - a| = \rho$ and $d < 1/12$, we easily get that $2B \subset \ree \setminus B(a, 3\rho/4)$. These allow us to apply (a trivial modification of) \cite[Theorem 4.9]{HanLin} to deduce
	\begin{equation*}
	u(X) \geq \delta,
	\qquad 
	X \in B(x_1^*, d\rho/4),
	\end{equation*}
	for some $\delta > 0$ independent on $x^*, x_1^*$ and $\rho$, because $B(x_1^*, d\rho/4) \subset \frac12 B$ since $d < 1/12$.
	Then repeat the argument using $x_1^*$ in place of $x^*$, and repeat until we have covered $\partial B(a, \rho)$ by a finite amount (depending only on $d$ and $n$) of balls $B(x^*, d\rho/4), B(x_1^*, d\rho/4), \ldots , B(x_k^*, d\rho/4)$, ultimately obtaining 
	\begin{equation*}
	u(X) \geq \varepsilon,
	\qquad 
	\text{ for all } X \in \partial B(a, \rho),
	\end{equation*}
	for some $\varepsilon > 0$ independent on $x^*$, $a$ and $\rho$. This implies, by definition of $u$,
	\begin{equation*}
	u_\psi(X) \geq \varepsilon,
	\qquad 
	X \in \Omega \cap \partial B(a, \rho).
	\end{equation*}
	Therefore, recalling that $cr \leq \rho$ and that $\psi \equiv 0$ on $\pom \cap B(a, \rho/2)$, 
	we obtain that for all $X \in  \Omega \cap \partial B(a, \rho)$, 
	\begin{equation} \label{eq:greater_eps}
	\omega_L^X (\pom \setminus B(a, cr/2)))
	\geq
	\omega_L^X(\pom \setminus B(a, \rho/2))
	\geq
	u_\psi(X) 
	\geq 
	\varepsilon.
	\end{equation}
	
{ 
Let $\Omega':=\Omega  \setminus \overline{B(a,\rho)}$. Note that $v(Y):= 	\omega^Y_L(B(a, cr/2))$, $Y\in\Omega$, clearly satisfies $v\in\mathscr{C}(\overline{\Omega'})$ since $B(a, cr/2)\subset B(a, \rho/2)$. Therefore, if we let 
\begin{equation} \label{eq:v_hat}
\widehat{v}(Y):=\int_{\pom'} v(z)\, d\omega^Y_{L, \Omega'} (z),
\qquad
Y\in\Omega',
\end{equation}
it follows that  $\widehat{v}\in\mathscr{C}(\overline{\Omega'})$ with $\widehat{v}\equiv v$ in $\pom'$. Since both $v$ and $\widehat{v}$ are $L$-solutions in $\Omega'$, the maximum principle yields that $\widehat{v}\equiv v$ in $\Omega'$ ---here we note that when $\Omega$ is unbounded both solutions decay at infinity by Lemma~\ref{lem:decay_unbounded}, hence we can legitimately invoke the maximum principle.
As a result, since $\rho<r$, for any $Y\in\Omega\cap \partial B(a,r)\subset\Omega'$ we obtain 
	\[ 
	\omega^Y_L(B(a, cr/2)) 
	= 
	\int_{\Omega \cap \partial B(a,\rho)} \omega_L^X(B(a, cr/2)) \, d\omega^Y_{L, \Omega \setminus B(a,\rho)} (X) \leq \sup_{X\in \Omega\cap \partial B(a,\rho)} \omega^X_L(B(a, cr/2)). \]
	This implies that for all $Y\in \Omega \cap \partial B(a,r)$, using \eqref{eq:greater_eps}:
	\[ \omega_L^Y (\pom \setminus B(a, cr/2)))  \geq \inf_{X\in \Omega\cap \partial B(a,\rho)} \omega_L^X(\pom\setminus B(a, cr/2)) \geq \varepsilon.  \] }
	This finishes the proof of the claim \eqref{eq:claim_bourgain_like}.	
	
	
	\textbf{Step 5: $\mathrm{LEMD}(\alpha)$ implies CDC.}
	This is an generalization of \cite[Lemma 3]{Ancona}. For the sake of convenience, set $F := \overline{B(a, r)} \setminus \Omega$ and $v := \mathcal{R}_L(F; B(a, 2r))$ the capacitary potential in $B(a, 2r)$ of the compact set $F$ with respect to the operator $L$. Then, Lemma \ref{lem:CP} along with the $\mathrm{LEMD}(\alpha)$ condition at $a$ and scale $r$ (obtained in Step 3) gives that for any $X \in \Omega \cap B(a, r/N)$, for $N \gg 1$, 
	\begin{equation*}
	v(X)
	\geq 
	1 - \omega_{L, \Omega \cap B(a, r)}^X (\partial B(a, r) \cap \overline{\Omega})
	\geq 
	1 - M \left( \frac{\abs{X-a}}{r} \right)^\alpha 
	\ge 
	1 - \frac{M}{N^\alpha}
	=: \eta > 0.
	\end{equation*}
	where $\eta$ and $N$ are independent of $a$ and $r$ by the previous steps. Moreover, since $B(a, r/N) \setminus \Omega \subset F$, we actually obtain  
	\begin{equation*}
	\eta^{-1} v(X) \geq 1, 
	\qquad 
	X \in B(a, r/N). 
	\end{equation*}
	By the definition of capacity and the ellipticity of $A$, we deduce 
	\begin{equation} \label{eq:cap1}
	\text{Cap}(B(a, r/N); B(a, 2r))
	\leq 
	\int_{B(a, 2r)} \abs{\nabla (\eta^{-1}v)}^2 dX
	\lesssim 
	\eta^{-2} \int_{B(a, 2r)} A \nabla v \cdot \nabla v \, dX.
	\end{equation}
	Observe that the results in \cite[Section 4]{LSW} (or \cite[Chapter 8]{HKM}) assert that $v =  \mathcal{R}_L(F; B(a, 2r))$ is precisely the function minimizing the energy $\int_{B(a, 2r)} A \nabla v \cdot \nabla v \, dX$ among those which are at least 1 on $F$. This means 
	\begin{multline} \label{eq:cap2}
	\int_{B(a, 2r)} A \nabla v \cdot \nabla v \, dX
	=
	\inf_{\substack{w \in W^{1, 2}_0(B(a, 2r)) \\ w \geq 1 \text{ on } F}} \int_{B(a, 2r)} A \nabla w \cdot \nabla w \, dX
	\\ \lesssim 
	\inf_{\substack{w \in W^{1, 2}_0(B(a, 2r)) \\ w \geq 1 \text{ on } F}} \int_{B(a, 2r)} \abs{\nabla w}^2 \, dX
	=
	\text{Cap}(F; B(a, 2r)).
	\end{multline}
	From \eqref{eq:cap1}, \eqref{eq:cap2}, and the fact that 
	\begin{equation*}
	\text{Cap}(B(a, r/N), B(a, 2r)) \approx_N r^{1-n} \approx \text{Cap}(\overline{B(a, r)}, B(a, 2r)),
	\end{equation*}
	we immediately get 
	\begin{equation*}
	\text{Cap}(F, B(a, 2r)) \gtrsim_N \varepsilon^2 \, \text{Cap}(\overline{B(a, r)}, B(a, 2r)),
	\end{equation*}
	which shows \eqref{eq:CDC} with constants depending on $N$ and $\varepsilon$, so independent on $a$ and $r$, as desired. This completes the proof of Theorem~\ref{th:necessity_CDC} in the case $\Omega$ is bounded or $\Omega$ is unbounded with $\pom$ unbounded.  	
	
	\subsection{Proof of Theorem~\ref{th:necessity_CDC} in the case $\Omega$ unbounded with $\pom$ being bounded.}
	Let $\Omega$ be an unbounded open set such that $\pom$ is bounded. As in the previous proof, for some given $a \in \pom$ and $0 < r < \diam(\pom)$, we want to establish the CDC estimate \eqref{eq:CDC} with constants uniform on $a$ and $r$. By Lemma~\ref{lem:regular}, we may assume that $\Omega$ is regular. Let us sketch how a slight modification of the argument in the previous proof gives the result also in this case. The main difference is that in the current scenario, the elliptic measure is not a probability. But in fact, in our points of interest, it resembles a probability, so the proof will run similarly. Let us give some more details.
	
	By translation, we may assume that $0 \in \pom$ and so $\pom \subset B(0, 3 \diam(\pom))$. We claim that there exists some $c^* > 0$ (depending only on $n$, the ellipticity of $L$, $\alpha$, and $C_\alpha$ in \eqref{eq:bounds_CDC}) such that 
	\begin{equation} \label{eq:claim_non_degeneracy}
	\omega_L^X(\pom) \geq c^*,
	\qquad 
	\text{ for any } X \in \Omega \cap B(0, 100\diam(\pom)).
	\end{equation}
	The choice of $\Omega \cap B(0, 100\diam(\pom))$ is easily motivated by the previous subsection: the only points in the proof where we pass to the complement when using the elliptic measure are \eqref{eq:contradiction} and \eqref{eq:measure_12}, and there the points involved clearly lie in $\Omega \cap B(0, 100\diam(\pom))$.
	
	Assuming this momentarily, let us see which steps in the previous proof need a change. Step 1 works without any change. In Step 2 we just need to change the choice of $\kappa$ so that \eqref{LXB} reads 
	\begin{equation*}
	\omega_L^X(\pom \setminus B(a, r)) \leq c^*/3,
	\qquad X \in \Omega \cap B(a, \kappa r).
	\end{equation*} 
	Then choose $c := \kappa (c_0^2 c^* / 3)^{1/(n-1)}$ so that \eqref{eq:contradiction} is replaced by the following
	\begin{multline*}
	\frac{c^*}{3} 
	\geq 
	\omega_L^Y(\pom \setminus B(a, r))
	=
	\omega_L^Y(\pom) - \omega_L^Y(\pom \cap B(a, r))
	\\
	\geq 
	c^* - \frac{\Gamma_L(Y, a)}{c_0(cr)^{1-n}}
	\geq 
	c^* - c_0^{-2}\left( \frac{\kappa}{c} \right)^{1-n}
	=
	\frac{2c^*}{3},
	\end{multline*}
	with which we get a contradiction in any case. Step 3 works also without further changes. 	
	In Step 4, we first need to change the choice of $d$ so that \eqref{eq:measure_12} reads $u_\psi(X) \geq c^*/2$. The rest of the proof 
	{up to \eqref{eq:greater_eps} proceeds in the same way, with constants depending on the harmless constant $c^*$. After that, we need to change our choice of $v$ so that $v(Y) := \omega_L^Y (\pom \setminus B(a, cr/2))$. We also define $\widehat{v}$ much as in \eqref{eq:v_hat} and observe that both $v$ and $\hat{v}$ are continuous in $\overline{\Omega'}$, decay at infinity by Lemma~\ref{lem:decay_unbounded} (because $\pom$ is bounded), and coincide in $\pom'$, where we recall that $\Omega':=\Omega  \setminus \overline{B(a,\rho)}$. Thus the maximum principle asserts that $v \equiv \widehat{v}$ in $\Omega'$. This means that, if $Y \in \partial B(a, r) \cap \Omega \subset \Omega'$, we have
	\begin{multline*}
	\omega^Y_L(\pom \setminus B(a, cr/2)) 
	= 
	\int_{\partial \Omega' } \omega_L^X(\pom \setminus B(a, cr/2)) \, d\omega^Y_{L, \Omega'} (X) 	
	\\ \geq 
	\omega^Y_{L, \Omega'} (\partial\Omega' ) \inf_{X\in \partial \Omega'} \omega^X_L(\pom \setminus B(a, cr/2)). 
	\end{multline*}
	Note that $\omega^X_L(\pom \setminus B(a, cr/2)) = 1$ for $X \in \partial \Omega \setminus B(a, \rho)$, and, by \eqref{eq:greater_eps},  $ \omega^X_L(\pom \setminus B(a, cr/2))\geq \varepsilon$ for $X \in \partial B(a, \rho) \cap \Omega$. This clearly implies that 
	\begin{equation*}
		\inf_{X\in \partial \Omega' } \omega^X_L(\pom \setminus B(a, cr/2)) 
		\geq 
		\varepsilon.
	\end{equation*}
	On the other hand, since $\Omega'=\Omega \setminus\overline{ B(a, \rho) }\subset \ree \setminus \overline{ B(a, \rho) }=:\Omega''$, by the maximum principle (its use is justified similarly as above) in $\Omega' $ we obtain, for $Y \in \partial B(a, r) \cap \Omega \subset \Omega'$,
	\begin{equation*}
	\omega^Y_{L, \Omega' } (\partial\Omega') 
	\geq 
	\omega^Y_{L, \Omega''} (\partial\Omega'').
	\end{equation*}	
	To continue, we next observe that $\Omega''=\ree \setminus \overline{ B(a, \rho) }$ satisfies the CDC with a constant independent on $\rho$, whence  there exists $\widehat{\alpha}$ (depending only on $n$ and ellipticity) such that $\dot{\mathscr{C}}^{\widehat{\alpha}}$-solvability holds for $L$ in $\Omega''$, see Theorem \ref{thm:Ca-ill}  and Section~\ref{sec:ill}. This means that we can then repeat \textit{mutatis mutandis} the argument leading to \eqref{eq:claim_non_degeneracy} in $\Omega''$ to conclude that for every $N\ge 100$ there exists $c_N > 0$ depending on ellipticity and dimension such that
	\begin{equation*} 
	\omega_{L,\Omega''}^X(\pom'') \geq c_N,
	\qquad 
	\text{ for any } X \in \Omega'' \cap B(a, N\diam(\pom'')).
	\end{equation*}
	As a consequence, recalling that $cr\le \rho < r$, we obtain	$\omega_{L,\Omega''}^Y(\pom'') \gtrsim 1$ for every $Y \in\partial B(a,r)$.
	Collecting all these observations we have shown that 
	\begin{equation*}
		\omega^Y_L(\pom \setminus B(a, cr/2)) \gtrsim \varepsilon, \qquad \text{ for } Y \in \partial B(a, r) \cap \Omega,
	\end{equation*}
	with harmless implicit constants. With this in hand we can then complete Step 4.} Finally, the argument in Step 5 can be carried out without any modification because we always work in bounded domains, where the elliptic measure is a probability. Therefore, the proof will be complete once we obtain  \eqref{eq:claim_non_degeneracy}.
	
	Let us next prove the claim \eqref{eq:claim_non_degeneracy}. Take $x_0, y_0 \in \pom$ so that $\abs{x_0 - y_0} = \diam(\pom)$ (these points must be different because $\diam(\pom) > 0$). Define $\psi(x) := \abs{x-y_0}^\alpha / \diam(\pom)^\alpha$ so that 
	\begin{align*}
	\psi(x_0) = 1, \quad 
	\psi(y_0) = 0, \quad \text{and}\quad 
	\psi \in \dot{\mathscr{C}}^\alpha(\pom) \quad \text{with}\quad 
	\|\psi\|_{\dot{\mathscr{C}}^\alpha(\pom)} \leq \diam(\pom)^{-\alpha}.  
	\end{align*} 
	Associated with this boundary datum $\psi$, we construct the solution as in \eqref{eq:ill-solution}: 
	\begin{equation*}
	u_1(X) = u_1(X; y_0, \psi) 
	= 
	\psi(y_0) + \int_\pom (\psi(y) - \psi(y_0)) d\omega_L^X(y),
	\end{equation*}
	which, by \eqref{eq:bounds_CDC}, satisfies
	\begin{equation}\label{4a-1}
	\|u_1\|_{\dot{\mathscr{C}}^\alpha(\Omega)} 
	\le C_{\alpha} 
	\|\psi\|_{\dot{\mathscr{C}}^\alpha(\pom)}
	\le C_{\alpha} 
	\diam(\pom)^{-\alpha}.
	\end{equation} 
	In a similar fashion, we can also define another solution
	\begin{equation*}
	u_2(X) = u_2(X; x_0, 1-\psi) 
	= 
	(1-\psi)(x_0) + \int_\pom [(1-\psi)(y) - (1-\psi)(x_0)] d\omega_L^X(y),
	\end{equation*}
	which satisfies 
	\begin{align}\label{4a-2}
	\|u_2\|_{\dot{\mathscr{C}}^\alpha(\Omega)}
	\le C_{\alpha} \|1-\psi\|_{\dot{\mathscr{C}}^\alpha(\pom)} 
	= C_{\alpha} \|\psi\|_{\dot{\mathscr{C}}^\alpha(\pom)}
	\le C_{\alpha} \diam(\pom)^{-\alpha}.
	\end{align} 
	Gathering \eqref{4a-1} and \eqref{4a-2}, we obtain that, if we define $u := u_1 + u_2$, then
	\begin{align}\label{wL-Ca}
	u \in \dot{\mathscr{C}}^\alpha(\Omega) 
	\quad\text{with}\quad 
	\|u\|_{\dot{\mathscr{C}}^\alpha(\Omega)} 
	\leq M \diam(\pom)^{-\alpha}, 
	\end{align}
	where the constant $M$ is simply $2C_{\alpha}$. Note that, indeed, $u(X) = \omega_L^X(\pom)$ for any $X \in \Omega$.
	
	Given $X \in \Omega$ with $\delta(X) \leq (2M)^{-1/\alpha} \diam(\pom) =: r_0$, we pick $\widehat{x} \in \pom$ so that $|X-\widehat{x}| = \delta(X)$. Then by \eqref{wL-Ca} and the regularity of $\Omega$, 
	\begin{equation*}
	|\omega_L^X(\pom) - 1|
	=
	|u(X) - u(\widehat{x})|
	\leq 
	M \diam(\pom)^{-\alpha} |X-\widehat{x}|^\alpha 
	\leq 
	\frac12,
	\end{equation*}
	and as a consequence,  
	\begin{equation} \label{eq:non_degenerate_close}
	\omega_L^X(\pom) \geq \frac12 
	\qquad 
	\text{ for any } X \in \Omega \text{ with } \delta(X) \leq r_0.
	\end{equation}	 
	Hence, an application of the maximum principle in $\Omega' := \{X \in \Omega : \delta(X) > r_0 \}$, much as in Step 2 above (using Lemma~\ref{lem:decay_unbounded} to show that $\omega_L^X(\pom) = \omega_L^X(\pom \cap B(0, 3\diam(\pom))) \longrightarrow 0$ as $X \to \infty$), shows that 
	\begin{equation*}
	\omega_L^X (\pom) \geq \frac{\Gamma_L(X, 0)}{2c_0^{-1}r_0^{1-n}},
	\qquad X \in \Omega', 
	\end{equation*}
	which together with \eqref{eq:fund_sol} gives that for $X \in \Omega' \cap B(0, 100\diam(\pom))$, 
	\begin{equation}\label{eq:far}
	\omega_L^X (\pom) 
	\geq 
	c_0^2\frac{(100\diam(\pom))^{1-n}}{2r_0^{1-n}} 
	=
	c_0^2 \frac{[100(2M)^{1/\alpha}]^{1-n}}{2}
	=:
	c^* > 0.
	\end{equation}
	Therefore, \eqref{eq:claim_non_degeneracy} is a consequence of \eqref{eq:non_degenerate_close} and \eqref{eq:far}. 
	
	\begin{remark}\label{remark:equivalences}
	{
	From our proofs we can see that for any regular open set $\Omega$ and for any elliptic operator $L$, solutions of the $\dot{\mathscr{C}}^\alpha$-Dirichlet problem defined by \eqref{eq:well-solution} or \eqref{eq:ill-solution} satisfy the estimate \eqref{eq:bounds_CDC}, if and only if, non-negative solutions with vanishing boundary data enjoy boundary H\"older continuity (as in Lemma~\ref{lem:Holder}) with \emph{almost} the same exponent $\alpha$. 
	
	To see this, take $\alpha \leq \alpha_1$, where $\alpha_1$ is the exponent in the De Giorgi/Nash estimates (cf.~Lemma~\ref{lem:DGN}). First note that $\dot{\mathscr{C}}^\alpha$-solvability along with \eqref{eq:bounds_CDC} implies $\mathrm{LEMD}(\alpha)$ by Steps 1-4 in Section \ref{section:CDC-proof} (noting again that some cases are trivial as explained in the footnote in Remark~\ref{rmk:LEMDbdrHolder}), so by Remark~\ref{rmk:LEMDbdrHolder} we obtain boundary H\"older continuity (as in Lemma~\ref{lem:Holder}) with exponent $\alpha$. 
	
	Conversely, if boundary H\"older continuity (as in Lemma~\ref{lem:Holder}) with exponent $\alpha$ holds, then Lemma~\ref{lem:8r4r} is available (see also Remark~\ref{rmk:bdrHolderdecay2}), and then the machinery in Sections 3 and 4 runs to obtain $\dot{\mathscr{C}}^{\alpha'}$-solvability with bounds \eqref{eq:bounds_CDC}, for any $\alpha'< \alpha$. To conclude, we remark that it is in general not possible to take $\alpha'=\alpha$ by the example in \cite[p. 3]{Mazya}.
	
}

	\end{remark}

	\subsection{Adaptation of the proof for growth functions} \label{subsec:CDC_growth}
	
	Let us provide a brief sketch of how the preceding proof would work with an arbitrary growth function $\varphi \in \mathcal{G}_\beta$.
	
	First, we should replace the quotients $\big(\frac{|X-x|}{r} \big)^\alpha$ in Definition~\ref{def:HMD} by $\frac{\varphi(|X-x|)}{\varphi(r)}$, and in Step 1 we will use accordingly $\phi(\xi) = \min \big\{ \frac{\varphi(|\xi - x|)}{\varphi(\rho)}, 1  \big\}$, which belongs to $\dot{\mathscr{C}}^{\varphi}$ be Lemma \ref{lem:pro-phi}. Then, to finish Step 1, one should also use the fact that the first integral condition in \eqref{eq:ass-phi} implies 
	\begin{align*}
	\varphi(\lambda^{-1} t) \le \frac{1}{\ln \lambda} \int_{\lambda^{-1} t}^t \varphi(s) \, \frac{ds}{s} \leq \frac{C_\varphi}{\ln \lambda} \varphi(t) \quad\text{ for all $t>0$ and $\lambda>1$},
	\end{align*} 
	and the second integral condition in \eqref{eq:ass-phi} gives the subadditivity of $\varphi$ (see Lemma \ref{lem:pro-phi} part \eqref{list-22}): 
	\begin{equation*}
	\varphi(t_1+t_2) \leq 2 C_\varphi \, (\varphi(t_1)+\varphi(t_2)), \quad\text{ for all } \, t_1, t_2 > 0.
	\end{equation*}
	These estimates and the adaptation of Definition~\ref{def:HMD} enable our previous proof to be adapted to the $\dot{\mathscr{C}}^{\varphi}$ setting. Further details are left to the reader.

	
	\section{Applications to other boundary value problems} \label{sec:applications}
	
	In this section, we will explore some other Dirichlet boundary value problems whose well-posedness, under suitable underlying geometrical conditions, can be shown as a consequence of that of the $\dot{\mathscr{C}}^\alpha$-Dirichlet problem that we have extensively studied along the previous sections.
	
	We first introduce the following definition: 
	
	\begin{definition}[{\bf Non-tangential limit}]\label{def:nt} 
		Given $\theta > 1$ and $y \in \pom$, we say that $\restr{u}{\pom}^{{\rm nt. \, lim}, \theta} (y) = f(y)$ if 
		\begin{equation*}
		\lim_{\substack{X \in \Gamma^{\theta} (y) \\ X \to y}} u(X) = f(y).
		\end{equation*}
		Here and elsewhere, $\Gamma^\theta(y) := \{ X \in \Omega : |X-y| < \theta \delta(X) \}$ denotes the cone with vertex $y \in \pom$ and aperture $\theta>1$.
	\end{definition}
	{In this section, we shall impose the boundary value condition
	\[ \restr{u}{\pom}^{{\rm nt. \, lim}, \theta} = f, \, \quad   \sigma\text{-a.e. on } \pom, \]
	for function $f$ in suitable spaces. Compared to \eqref{eq:problem}, we impose a weaker boundary value condition here because the functions $u$ and/or $f$ are only defined in Sobolev or $L^p$ spaces a priori, and it does not make sense to impose a pointwise boundary condition. A posteriori we shall indeed prove that $u=f$ on $\pom$.
	}

	\subsection{The Dirichlet problem with solutions satisfying a fractional Carleson measure condition}\label{sec:Car}
	
	For the first application of the results in the previous sections, we introduce some spaces: 
	
	\begin{definition} 
		Let $\varphi$ be a growth function. Define a (generalized) Carleson type $\varphi$-H\"{o}lder space on an open set $\Omega \subset \R^{n+1}$ as 
		\[
		\mathscr{C}^{*,\varphi}(\Omega) :=\big\{u \in W^{1,2}_{\loc}(\Omega): \|u\|_{\mathscr{C}^{*,\varphi}(\Omega)}
		<\infty \big\},
		\]
		where $\| \cdot \|_{\mathscr{C}^{*,\varphi}(\Omega)}$ stands for
		\begin{equation*}
		\|u\|_{\mathscr{C}^{*,\varphi}(\Omega)}
		:=
		\sup_{\substack{x \in \pom \\ 0<r<\infty}} 
		\frac{1}{\varphi(r)} \bigg(\frac{1}{r^n} \iint_{B(x, r) \cap \Omega} |\nabla u(X)|^2 \delta(X) \, dX \bigg)^{\frac12},
		\end{equation*}
		where here and elsewhere $\delta(X) := \dist(X, \pom)$.
	\end{definition}
	\begin{remark} When $\varphi(r) \equiv 1$, the quantity above is usually referred to as a ``Carleson measure condition'', and it is used to study the absolute continuity of harmonic/elliptic measure (cf. \cite{CHMT, HKMP, HLM, HMMTZ}). 	
	\end{remark}
	
	In very general frameworks, this space is related to $\dot{\mathscr{C}}^\varphi(\Omega)$, as the following result shows.
	
	\begin{lemma}\label{lem:CC}
		Let $\Omega \subset \ree$, $n \ge 2$, be an open set with ADR boundary. Let $L=-\div(A\nabla)$ be a real (non-necessarily symmetric) uniformly elliptic operator. Let $\varphi \in \mathcal{G}_\beta$ for some $\beta \in (0, 1)$.
		Then there exists $C$ (depending only on dimension, the ADR constant, the ellipticity constant of $L$, and $C_\varphi$) such that for any weak solution $u$ of $Lu=0$ in $\Omega$, 
		\begin{equation*}
		\|u\|_{\mathscr{C}^{*,\varphi}(\Omega)} \le C \|u\|_{\dcv(\Omega)}. 
		\end{equation*}
	\end{lemma}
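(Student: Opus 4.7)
The plan is to control $\|u\|_{\mathscr{C}^{*,\varphi}(\Omega)}$ by discretizing $\Omega$ via a Whitney decomposition, combining a local interior gradient bound with a counting argument driven by the ADR condition on $\pom$, and then summing across scales using the growth hypothesis $\varphi\in \mathcal{G}_\beta$.

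First, I would establish a pointwise scale-by-scale bound. For $Z\in \Omega$, Caccioppoli's inequality applied to the solution $u-u(Z)$ on the Whitney ball $B_Z:=B(Z,\delta(Z)/4)$ gives
\[
\iint_{B_Z}|\nabla u|^2\,dX \lesssim \delta(Z)^{-2}\iint_{2B_Z}|u-u(Z)|^2\,dX.
\]
Since $2B_Z\subset \Omega$ and $u\in \dcv(\Omega)$, the monotonicity of $\varphi$ bounds the integrand by $\|u\|_{\dcv(\Omega)}^2\,\varphi(\delta(Z))^2$, and using $\delta(X)\approx \delta(Z)$ on $B_Z$ I obtain
\[
\iint_{B_Z}|\nabla u|^2\,\delta(X)\,dX \lesssim \delta(Z)^n\varphi(\delta(Z))^2\,\|u\|_{\dcv(\Omega)}^2.
\]

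Next, fix $x\in\pom$ and $r>0$, and take a Whitney decomposition $\{Q_i\}$ of $\Omega$ with $\ell(Q_i)\approx \delta(X_i)$ and associated boundedly overlapping balls. Since any cube $Q_i$ meeting $B(x,r)$ forces $\delta(X_i)<r$ (because $x\in\pom$), I group them by scale: $\mathcal{W}_k:=\{i:\ell(Q_i)\approx 2^{-k}r\}$ for $k\ge 0$. The ADR upper bound on $\pom$ yields the Minkowski-type estimate $\big|\{X\in B(x,2r)\cap\Omega:\delta(X)\le t\}\big|\lesssim \min(r,\diam(\pom))^n\, t$, which in turn gives $\#\mathcal{W}_k\lesssim 2^{kn}$ in the main case $r\le \diam(\pom)$. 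Summing the local estimate over cubes and scales produces
\[
\iint_{B(x,r)\cap\Omega}|\nabla u|^2\,\delta\,dX
\lesssim \sum_{k\ge 0}\#\mathcal{W}_k\,(2^{-k}r)^n\varphi(2^{-k}r)^2\,\|u\|_{\dcv(\Omega)}^2
\lesssim r^n\sum_{k\ge 0}\varphi(2^{-k}r)^2\,\|u\|_{\dcv(\Omega)}^2.
\]

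To close the sum, I would exploit the Dini-type integral condition encoded in $\varphi\in \mathcal{G}_\beta$: since $\varphi$ is non-decreasing, the first half of \eqref{eq:ass-phi} yields $\sum_{k\ge 0}\varphi(2^{-k}r)\lesssim \varphi(r)$, and therefore $\sum_k\varphi(2^{-k}r)^2\le \varphi(r)\sum_k\varphi(2^{-k}r)\lesssim \varphi(r)^2$. Dividing by $r^n\varphi(r)^2$ and taking square roots yields the desired inequality $\|u\|_{\mathscr{C}^{*,\varphi}(\Omega)}\lesssim \|u\|_{\dcv(\Omega)}$. The main obstacle is the counting step in the regime $r>\diam(\pom)$, where the ADR bound saturates at $\sigma(\pom)\approx \diam(\pom)^n$ and Whitney cubes at scales exceeding $\diam(\pom)$ occupy annular shells around the bounded boundary; these must be treated separately, but the arithmetic still closes since the gain from \eqref{eq:ass-phi} dominates the counts in that regime.
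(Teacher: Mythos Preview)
Your proposal is correct and follows essentially the same strategy as the paper: Whitney decomposition, Caccioppoli on each Whitney piece to obtain the local bound $\ell(I)^n\varphi(\ell(I))^2\|u\|_{\dcv}^2$, a scale-by-scale counting argument driven by the ADR property (with separate treatment of scales exceeding $\diam(\pom)$), and closure of the resulting sum via the Dini half of \eqref{eq:ass-phi}. The only cosmetic difference is in the counting step: the paper associates to each Whitney cube $I$ a surface ball $\Delta_I\subset 3\Delta$ of radius $\ell(I)$, proves bounded overlap of the $\Delta_I$ at a fixed scale, and uses the ADR lower bound on $\sigma(\Delta_I)$ to extract $\#\{I:\ell(I)=2^{-k}\}\lesssim (r/2^{-k})^n$; you instead invoke the equivalent Minkowski tube estimate $|\{X\in B(x,2r):\delta(X)\le t\}|\lesssim r^n t$ coming from the ADR upper bound. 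Both routes yield the same count and the same final sum.
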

	
	\begin{proof}
		Let $\W=\W(\Omega)$ denote the collection of (closed) dyadic Whitney cubes of $\Omega$, so that the cubes in $\W$ form a pairwise non-overlapping covering of $\Omega$, which satisfy 
		\begin{equation}\label{eq:W1}
		4 \diam(I) \le \dist(4I, \pom) \le \dist(I, \pom) \le 40 \diam(I), \quad\forall I \in \W, 
		\end{equation}
		and 
		\begin{equation*}
		\diam(I_1) \approx \diam(I_2), \quad\text{ whenever $I_1$ and $I_2$ touch}. 
		\end{equation*}
		Take an arbitrary ball $B=B(x, r)$ with $x \in \pom$ and $0<r<\infty$. Set $\W_B:=\{I \in \W: I \cap B \neq \emptyset\}$. For any $I \in \W_B$, we choose $Y_I \in I \cap B$. Then Caccioppoli's inequality applied to $u-u(Y_I)$ gives 
		\begin{multline}\label{eq:BOm}
		\iint_{B \cap \Omega} |\nabla u(Y)|^2 \delta(Y)\, dY 
		\lesssim \sum_{I \in \W_B} \ell(I) \iint_{I} |\nabla (u-u(Y_I))(Y)|^2 \, dY
		\\ 
		\lesssim \sum_{I \in \W_B} \ell(I)^{-1} \iint_{\frac32 I} |u(Y)-u(Y_I)|^2 \, dY
		\lesssim \|u\|_{\dcv(\Omega)}^2 \sum_{I \in \W_B} \ell(I)^n \varphi(\ell(I))^2,
		\end{multline}
		where we have used the doubling property of $\varphi$ (cf. Lemma~\ref{lem:pro-phi} part \eqref{list-2}).
		
		Pick $y_I \in \pom$ so that $\delta(Y_I)=|Y_I-y_I|$. Set $B_I:=B(y_I, \ell(I))$ and $\Delta_I:=B_I \cap \pom$. For each $I \in \W_B$, it follows from \eqref{eq:W1} that 
		\begin{align}\label{eq:ineqs_box_1} 
		\diam(I) \le \dist(I, \pom) \le\delta(Y_I) \le \diam(I) + \dist(I, \pom) \le 41 \diam(I), 
		\end{align}
		and 
		\begin{align}\label{eq:ineqs_box_2} 
		\ell(I) \leq \diam(I) \leq \dist(I, \pom) \leq \delta(Y_I) \le |Y_I-x|<r. 
		\end{align}
		Note that 
		\begin{align*}
		|y_I-x| \le |y_I-Y_I| + |Y_I-x| = \delta(Y_I) + |Y_I-x| \leq 2|Y_I-x|<2r,  
		\end{align*}
		which implies $B_I \subset 3B$. Moreover, we claim that we even have some bounded overlap property, in the sense that for any $k \in \Z$ with $2^{-k}<r$, there holds 
		\begin{align}\label{eq:IWB-overlap}
		\sum_{I \in \W_B: \ell(I)=2^{-k}} \mathbf{1}_{\Delta_I} \le C_n \, \mathbf{1}_{3\Delta}. 
		\end{align}
		Indeed, if $\Delta_I \cap \Delta_J \neq \emptyset$ with $I, J \in \W_B$ and $\ell(I)=\ell(J)$, then we pick $z_{I,J} \in \Delta_I \cap \Delta_J$ and obtain 
		\begin{multline*}
		\dist(I, J) \le |Y_I-Y_J| \le |Y_I-y_I| + |y_I-z_{I,J}| + |z_{I,J}-y_J| + |y_J-Y_J|
		\\
		\le \delta(Y_I) + \ell(I) + \ell(J) + \delta(Y_J) 
		\le 84\sqrt{n+1} \, \ell(I). 
		\end{multline*}
		Thus, given $I \in \W_B$ we clearly have that 
		\[
		\#\{J \in \W_B: \Delta_J \cap \Delta_I \neq \emptyset, \, \ell(J)=\ell(I) \} \le C_n. 
		\]
		This shows \eqref{eq:IWB-overlap}.

		On the other hand, if $\diam(\pom) \le 2^{-k}<r$, we have for every $I \in \W_B$ with $\ell(I)=2^{-k}$, 
		\[
		|Y_I-x| \le |Y_I-y_I| + |y_I-x| \le \delta(Y_I) + \diam(\pom) \le 41 \diam(I) + \diam(\pom)<42 \sqrt{n+1} \, 2^{-k}, 
		\]
		using \eqref{eq:ineqs_box_1} and therefore $I \subset B(x, 43\sqrt{n+1}\, 2^{-k})$. Thus, 
		\begin{multline}\label{eq:IWB-number}
		\#\{I \in \W_B: \ell(I)=2^{-k}\} 
		=2^{k(n+1)} \sum_{I \in \W_B: \ell(I)=2^{-k}} |I| 
		\\
		\qquad=2^{k(n+1)}\bigg|\bigcup_{I \in \W_B: \ell(I)=2^{-k}} I \bigg| 
		\le 2^{k(n+1)} |B(x, 43\sqrt{n+1} \, 2^{-k})| 
		= C_n, 
		\end{multline}
		whenever $\diam(\pom) \le 2^{-k}<r$. Then, recalling \eqref{eq:ineqs_box_2}, we use \eqref{eq:IWB-overlap} and \eqref{eq:IWB-number} to obtain that 
		\begin{align}\label{eq:IWB}
		\sum_{I \in \W_B} \ell(I)^n \varphi(\ell(I))^2  
		&= \sum_{\substack{I \in \W_B \\ \ell(I) < \diam(\pom)}} \ell(I)^n \varphi(\ell(I))^2 + \sum_{\substack{I \in \W_B \\ \ell(I) \geq \diam(\pom)}} \ell(I)^n \varphi(\ell(I))^2  
		\\
		&= \sum_{k: \, 2^{-k}<\min\{r, \diam(\pom)\}} \sum_{I \in \W_B:\ell(I)=2^{-k}} \ell(I)^n \varphi(\ell(I))^2 
		\nonumber\\
		&\qquad\qquad\quad+ \sum_{k: \, \diam(\pom) \le 2^{-k}<r} \sum_{I \in \W_B:\ell(I)=2^{-k}} \ell(I)^n \varphi(\ell(I))^2 
		\nonumber\\
		&\lesssim \sum_{k: \, 2^{-k}<r} \varphi(2^{-k})^2 \sum_{I \in \W_B:\ell(I)=2^{-k}} \sigma(\Delta_I) 
		\nonumber\\
		&\qquad\qquad\quad+ \sum_{k:\, \diam(\pom) \le 2^{-k}<r} r^n \varphi(2^{-k})^2 \# \{I \in \W_B:\ell(I)=2^{-k}\} 
		\nonumber\\
		&\lesssim (\sigma(3\Delta) + r^n) \bigg(\sum_{k:\, 2^{-k}<r} \varphi(2^{-k}) \bigg)^2 
		\nonumber\\
		&\lesssim r^n \bigg(\int_{0}^r \varphi(s) \frac{ds}{s}\bigg)^2  
		\nonumber\\
		&\lesssim r^n \varphi(r)^2, \nonumber
		\end{align}
		where in the last steps we have used the ADR property of $\pom$, and 
		\begin{multline} \label{eq:ssr}
		\sum_{k:\, 2^{-k}<r} \varphi(2^{-k}) 
		\approx
		\sum_{k:\, 2^{-k}<r} \varphi(2^{-k}) \int_{2^{-k}}^{2^{-k+1}} \frac{ds}{s} 
		\\
		\le  
		\sum_{k:\, 2^{-k}<r} \int_{2^{-k}}^{2^{-k+1}} \varphi(s) \frac{ds}{s} 
		= 
		\int_0^{2r} \varphi(s) \frac{ds}{s} 
		\lesssim \varphi(2r) 
		\lesssim \varphi(r), 
		\end{multline}
		since $\varphi$ is non-decreasing and doubling, and we have also used \eqref{eq:ass-phi}. As a consequence, the desired estimate immediately follows from \eqref{eq:BOm} and \eqref{eq:IWB}.  
	\end{proof}

	\subsubsection{Equivalence of spaces in 1-sided CAD}
	
	Before exploring the boundary value problem associated with the space $\mathscr{C}^{*, \varphi}(\Omega)$, let us first show that this space can in fact be identified with $\dot{\mathscr{C}}^{\varphi}(\Omega)$ under some geometrical assumptions on $\Omega$. To this end, we need some definitions and lemmas. 
	
	\begin{definition}[\bf Corkscrew condition]\label{def:cks}
		We say that an open set $\Omega\subset \ree$ satisfies the \textit{corkscrew condition} if for some uniform constant $0<c_0<1$ and for every $x\in \partial\Omega$ and $0<r<\diam(\partial\Omega)$, if we write $\Delta:=\Delta(x,r)$, there exists a ball $B(X_\Delta,c_0r)\subset B(x,r)\cap\Omega$.  The point $X_\Delta\subset \Omega$ is called a \textit{corkscrew point relative to} $\Delta$ (or relative to $B$). We note that  we may allow $r<C\diam(\pom)$ for any fixed $C$, simply by adjusting the constant $c_0$.
	\end{definition}
	
	\begin{definition}[\bf Harnack Chain condition] \label{def:hc}
		We say that an open set $\Omega \subset \ree$ satisfies the \textit{Harnack chain condition} if there are uniform constants $C_1,C_2>1$ so that for every pair of points $X, X'\in \Omega$
		there is a chain of balls $B_1, B_2, \dots, B_N\subset \Omega$ with $N \leq  C_1(2+\log_2^+ \Pi(X,X'))$,
		where
		\begin{equation}\label{cond:Lambda}
		\Pi(X,X'):=\frac{|X-X'|}{\min\{\delta(X), \delta(X')\}},
		\end{equation}
		such that $X\in B_1$, $X'\in B_N$, $B_k\cap B_{k+1}\neq\emptyset$ for each $1\le k\le N-1$, and 
		\begin{equation}\label{preHarnackball}
		C_2^{-1} \diam(B_k) \leq \dist(B_k,\partial\Omega) \leq C_2 \diam(B_k),\qquad
		1\le k\le N.
		\end{equation}
		The chain of balls is called a \textit{Harnack chain}.
	\end{definition}
	
	We note that in the context of the previous definition if $\Pi(X,X')\le 1$ we can trivially form the Harnack chain $B_1=B(X,3\delta(X)/5)$ and $B_2=B(X', 3\delta(X')/5)$ so that \eqref{preHarnackball} holds with $C_2=3$. Hence the Harnack chain condition is non-trivial only when $\Pi(X,X')> 1$.
	
	\begin{remark}\label{rem:HC-balls}
		If $B_1,\dots B_N$ is a Harnack chain connecting $X$ and $Y$ in $\Omega$, we clearly have that for every $1\le k\le N-1$, using that $B_k\cap B_{k+1}\neq\emptyset$,
		\[
		C_2^{-1} \diam(B_k) 
		\leq 
		\dist(B_k,\partial\Omega) 
		\le
		\diam(B_{k+1})+\dist(B_{k+1},\partial\Omega) 
		\le
		(1+C_2) \diam(B_{k+1})
		\]
		and
		\[
		C_2^{-1} \diam(B_{k+1}) 
		\leq 
		\dist(B_{k+1},\partial\Omega) 
		\le
		\diam(B_{k})+\dist(B_{k},\partial\Omega) 
		\le
		(1+C_2) \diam(B_{k}).
		\]
		Besides, there hold 
		\[
		C_2^{-1} \diam(B_1) 
		\leq 
		\dist(B_1,\partial\Omega) 
		\le
		\delta(X)
		\le
		\diam(B_1)+\dist(B_1,\partial\Omega) 
		\le
		(1+C_2) \diam(B_{1})
		\]
		and
		\[
		C_2^{-1} \diam(B_N) 
		\leq 
		\dist(B_N,\partial\Omega) 
		\le
		\delta(Y)
		\le
		\diam(B_N)+\dist(B_N,\partial\Omega) 
		\le
		(1+C_2) \diam(B_{N}).
		\]
		All these easily imply that for every $1\le j\le N$, 
		\begin{equation}\label{diam-compar:HC}
		(1+C_2)^{-N}C_2^{N-1}\max\{\delta(X),\delta(Y)\}
		\le
		\diam(B_j)\le (1+C_2)^{N-1}C_2^N\min\{\delta(X),\delta(Y)\}.
		\end{equation}
	\end{remark}
	
	\begin{definition}[\textbf{1-sided CAD}]  \label{def:NTA}
		We say that an open set $\Omega\subset \ree$ is a \textit{1-sided CAD} (chord-arc domain) if it satisfies the corkscrew and Harnack chain conditions, and has an ADR boundary.
	\end{definition}
	
	Let us state an auxiliary lemma that will be used later.
	
	\begin{lemma}\label{lemma:unbounded}
		Let $\Omega\subset\ree$, $n\ge1$, be an unbounded open set with bounded boundary and assume that it satisfies the corkscrew condition with constant $c_0\in(0,1)$. Then, for every $x\in\pom$ and $r\ge \diam(\pom)$ there exists a ball $B(X_{x,r}, c_0\,r/4)\subset B(x,r)\cap\Omega$.
	\end{lemma}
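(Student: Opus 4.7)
\textbf{Proof plan for Lemma~\ref{lemma:unbounded}.}
Set $D:=\diam(\pom)<\infty$ and fix $x\in\pom$. The first key step is to show that the entire exterior of a fixed ball around $x$ already lies in $\Omega$, namely
\[
U:=\ree\setminus\overline{B(x,D)}\subset\Omega.
\]
Since $\diam(\pom)=D$ we have $\pom\subset\overline{B(x,D)}$, hence $U\cap\pom=\emptyset$, so $U\subset(\ree\setminus\overline{\Omega})\cup\Omega$, a disjoint union of two open sets. Because $n\ge 1$ gives $\dim\ree\ge 2$, the set $U$ is connected, so it is entirely contained in one of these two pieces. The unboundedness of $\Omega$ together with the boundedness of $\pom$ forces $\Omega$ to contain points arbitrarily far from $x$, so $U$ must meet $\Omega$, and therefore $U\subset\Omega$.

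With this observation in hand I split into two cases depending on the size of $r\ge D$.

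\textbf{Case 1: $D\le r\le 2D$.}  Since $D/2<\diam(\pom)$ (assuming $D>0$; the case $D=0$ is subsumed in Case~2), the assumed corkscrew condition at scale $D/2$ produces a point $X_{x,r}\in\Omega$ with
\[
B\bigl(X_{x,r},\,c_{0}\,D/2\bigr)\subset B(x,D/2)\cap\Omega\subset B(x,r)\cap\Omega.
\]
The radius satisfies $c_{0}D/2\ge c_{0}r/4$ precisely because $r\le 2D$, which yields the desired inclusion.

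\textbf{Case 2: $r\ge 2D$.}  Fix any unit vector $e\in\R^{n+1}$ and set
\[
X_{x,r}:=x+\bigl(r-c_{0}r/4\bigr)e.
\]
For any $z\in B(X_{x,r},c_{0}r/4)$, the triangle inequality gives $|z-x|<(r-c_{0}r/4)+c_{0}r/4=r$, so $B(X_{x,r},c_{0}r/4)\subset B(x,r)$. In the opposite direction,
\[
|z-x|>\bigl(r-c_{0}r/4\bigr)-c_{0}r/4=r\bigl(1-c_{0}/2\bigr)\ge 2D\bigl(1-c_{0}/2\bigr)=D(2-c_{0})>D,
\]
using $c_{0}<1$. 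Hence $B(X_{x,r},c_{0}r/4)\subset U\subset\Omega$, and together with the previous inclusion we conclude $B(X_{x,r},c_{0}r/4)\subset B(x,r)\cap\Omega$. There is essentially no obstacle in the proof; the only mildly delicate point is the connectivity argument establishing $U\subset\Omega$, which genuinely requires the unboundedness of $\Omega$, the boundedness of $\pom$, and $n\ge 1$.
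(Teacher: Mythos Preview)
Your proof is correct and follows essentially the same approach as the paper: both split into the ranges $\diam(\pom)\le r\le 2\diam(\pom)$ (handled by the corkscrew condition at scale $\diam(\pom)/2$) and $r>2\diam(\pom)$ (handled by the connectivity argument showing $\ree\setminus\overline{B(x,\diam(\pom))}\subset\Omega$). The only cosmetic differences are that the paper places the connectivity argument inside Case~2 and chooses $X_{x,r}\in\partial B(x,3r/4)$ (yielding the slightly larger radius $r/4$) rather than your $X_{x,r}=x+(r-c_0 r/4)e$.
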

	
	\begin{proof}
		Fix $x\in\pom$ and let us first consider the case $ \diam(\pom)\le r\le 2 \diam(\pom)$. If we write $\Delta'=\Delta(x,\diam(\pom)/2)$, by the corkscrew condition we can find $X_{\Delta'}$ such that $B(X_{\Delta'},c_0\,\diam(\pom)/2)\subset B(x,\diam(\pom)/2)\cap\Omega$. If we set $X_{x,r}:=X_{\Delta'}$ then we get as desired
		\[
		B(X_{x,r}, c_0\,r/4)
		\subset
		B(X_{\Delta'},c_0\,\diam(\pom)/2)
		\subset 
		B(x,\diam(\pom)/2)\cap\Omega
		\subset
		B(x,r)\cap\Omega.
		\]
		Consider next the case $r>2 \diam(\pom)$ and pick an arbitrary $X_{x,r}\in\partial B(x,3r/4)$  so that $B(X_{x,r},r/4)\subset B(x,r)$. We claim that  $B(X_{x,r},r/4)\subset \Omega$. To see this, note that $x\in\pom$ implies that $\pom\subset \overline{B(x,\diam(\pom))}$ and hence $\pom\setminus \overline{B(x,\diam(\pom))}=\emptyset$. This means that either $\ree\setminus  \overline{B(x,\diam(\pom))}\subset \Omega$ or  $\ree\setminus  \overline{B(x,\diam(\pom))}\subset \ree\setminus\overline{\Omega}$. In the latter case we have $\overline{\Omega}\subset \overline{B(x,\diam(\pom))}$ which implies that $\Omega$ is bounded and this contradicts our hypothesis. Consequently, $\ree\setminus  \overline{B(x,\diam(\pom))}\subset \Omega$ and as a result $B(X_{x,r},r/4)\subset \ree\setminus  \overline{B(x,\diam(\pom))}\subset  \Omega$. All these, together with the fact that $c_0\in (0,1)$, imply that $B(X_{x,r},c_0r/4)\subset B(x,r)\cap\Omega$. 
	\end{proof}

	The main result in this subsection is formulated as follows.
	
	\begin{theorem}\label{thm:CC}
		Let $\Omega \subset \ree$, $n\ge 2$, be a 1-sided CAD, and let $L=-\div(A \nabla)$ be a real (non-necessarily symmetric) elliptic operator. Then for every $\beta \in (0, \alpha_1]$ (where $\alpha_1 \in (0,1)$ is the exponent in the De Giorgi/Nash estimates, see Lemma~\ref{lem:DGN}), and for every growth function $\varphi \in \mathcal{G}_\beta$, we have
		\begin{align}\label{CC-1}
		\|u\|_{\dot{\mathscr{C}}^{\varphi}(\Omega)} 
		\approx \|u\|_{\mathscr{C}^{*,\varphi}(\Omega)},
		\qquad
		\forall\, u\in W^{1,2}_\loc(\Omega),\ \text{$Lu=0$ in the weak sense in $\Omega$},
		\end{align}
		where the implicit constants depend only on $n$, the 1-sided CAD constants, the ellipticity constant of $L$, and $C_{\varphi}$. In particular, if we define 
		\[
		\mathscr{C}_L^{*,\varphi}(\Omega)
		:= \big\{u\in W^{1,2}_{\loc}(\Omega) \cap \mathscr{C}^{*,\varphi}(\Omega): Lu=0 \text{ in the weak sense in $\Omega$} \big\},
		\]
		then 
		\begin{align*}
		\dot{\mathscr{C}}_{L}^{\varphi}(\Omega)
		=\mathscr{C}_{L}^{*,\varphi}(\Omega)
		\quad\text{ and }\quad 
		\dot{\mathscr{C}}_L^{\varphi}(\Omega)/\re 
		=\mathscr{C}_L^{*,\varphi}(\Omega)/\re,
		\quad\text{with equivalent norms}.
		\end{align*}
	\end{theorem}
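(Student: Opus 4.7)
The inequality $\|u\|_{\mathscr{C}^{*,\varphi}(\Omega)}\lesssim \|u\|_{\dot{\mathscr{C}}^{\varphi}(\Omega)}$ is already given by Lemma~\ref{lem:CC}, which only uses the ADR property of $\pom$. So the bulk of the work will be to show the converse $\|u\|_{\dot{\mathscr{C}}^{\varphi}(\Omega)}\lesssim \|u\|_{\mathscr{C}^{*,\varphi}(\Omega)}$ for weak solutions $u$ of $Lu=0$, where the corkscrew and Harnack chain conditions become essential. The strategy is to combine the interior Hölder estimate from De~Giorgi/Nash with (interior) Poincaré, and then propagate oscillation estimates along Harnack chains running at geometrically increasing scales.

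The first step is a Whitney-scale estimate. For $X \in \Omega$, set $B_X := B(X,\delta(X)/2)$ and pick $\widehat{x} \in \pom$ with $|X-\widehat{x}| = \delta(X)$. Using $\delta(Z)\approx\delta(X)$ on $B_X$, the inclusion $B_X \subset B(\widehat{x},2\delta(X)) \cap \Omega$, and the doubling of $\varphi$ from Lemma~\ref{lem:pro-phi}, the definition of $\|u\|_{\mathscr{C}^{*,\varphi}}$ yields
\[
\iint_{B_X}|\nabla u|^2\,dZ
\lesssim
\delta(X)^{-1}\iint_{B(\widehat{x},2\delta(X))\cap\Omega}|\nabla u|^2\delta(Z)\,dZ
\lesssim
\delta(X)^{n-1}\varphi(\delta(X))^2\|u\|_{\mathscr{C}^{*,\varphi}}^{2}.
\]
By the Poincaré inequality on the Euclidean ball $B_X$, this gives $\bigl(\bariint_{B_X}|u-u_{B_X}|^2\,dZ\bigr)^{1/2}\lesssim\varphi(\delta(X))\|u\|_{\mathscr{C}^{*,\varphi}}$, and applying Lemma~\ref{lem:DGN} to $u-u_{B_X}$ on $B(X,\delta(X)/4)$ (whose double is $B_X$) yields both the pointwise bound
\[
|u(Z)-u_{B_X}|\lesssim\varphi(\delta(X))\|u\|_{\mathscr{C}^{*,\varphi}},\quad Z\in B(X,\delta(X)/4),
\]
and the interior Hölder estimate of exponent $\alpha_1$ for $u$ on $B(X,\delta(X)/4)$ with the same right-hand side.

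For $X, Y \in \Omega$, assume $\delta(X)\le\delta(Y)$, and let $\widehat{x},\widehat{y}\in\pom$ be closest boundary points. In the close case $|X-Y|\le\delta(Y)/4$, the interior Hölder estimate for $u$ on $B(Y,\delta(Y)/4)$ gives
\[
|u(X)-u(Y)|\lesssim\Bigl(\frac{|X-Y|}{\delta(Y)}\Bigr)^{\alpha_1}\varphi(\delta(Y))\|u\|_{\mathscr{C}^{*,\varphi}},
\]
and since $\beta\le\alpha_1$, Lemma~\ref{lem:pro-phi}\eqref{list-1} (which says $t\mapsto t^{-\beta}\varphi(t)$ is essentially decreasing) converts this to $\lesssim\varphi(|X-Y|)\|u\|_{\mathscr{C}^{*,\varphi}}$ as desired. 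In the far case $|X-Y|>\delta(Y)/4$, set $r:=|X-Y|+\delta(X)+\delta(Y)\approx|X-Y|$. Using the corkscrew condition (or Lemma~\ref{lemma:unbounded} when $r\ge\diam(\pom)$), select corkscrew points $X_j$ for $\Delta(\widehat{x},2^{j+1}\delta(X))$ for $j=0,1,\dots,J$ with $2^J\delta(X)\approx r$. Consecutive $X_j,X_{j+1}$ satisfy $\Pi(X_j,X_{j+1})\lesssim 1$, so the Harnack chain condition supplies an $O(1)$-length chain of balls at scale $\approx 2^j\delta(X)$ between them; telescoping averages through such chains and applying the Whitney-scale bound on each ball yields $|u(X_j)-u(X_{j+1})|\lesssim\varphi(2^j\delta(X))\|u\|_{\mathscr{C}^{*,\varphi}}$. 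Summing and dominating the dyadic sum by an integral as in \eqref{eq:ssr} together with \eqref{eq:ass-phi},
\[
|u(X)-u(X_J)|\lesssim\sum_{j=0}^{J}\varphi(2^j\delta(X))\|u\|_{\mathscr{C}^{*,\varphi}}\lesssim\int_{0}^{2r}\varphi(s)\,\frac{ds}{s}\,\|u\|_{\mathscr{C}^{*,\varphi}}\lesssim\varphi(r)\|u\|_{\mathscr{C}^{*,\varphi}}.
\]
The symmetric construction produces an analogous $Y_J$ at scale $\approx r$ near $\widehat{y}$, and $|u(X_J)-u(Y_J)|\lesssim\varphi(r)\|u\|_{\mathscr{C}^{*,\varphi}}$ follows from an $O(1)$-length Harnack chain between $X_J$ and $Y_J$ (since $\Pi(X_J,Y_J)\lesssim 1$) followed by one more telescoping step at scale $r$.

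The main obstacle will be the bookkeeping for the multi-scale telescoping: one must realize the intermediate points $X_j$ so that consecutive Harnack chains stay at a common scale $2^j\delta(X)$ and of uniformly bounded length (this is where the corkscrew and Harnack chain hypotheses combine), and then verify that the resulting dyadic sum $\sum_j\varphi(2^j\delta(X))$ from scale $\delta(X)$ up to $\approx r$ really collapses to a multiple of $\varphi(r)$ via the $\mathcal{G}_\beta$ integral condition. Once \eqref{CC-1} is established, the identifications $\dot{\mathscr{C}}_{L}^{\varphi}(\Omega)=\mathscr{C}_{L}^{*,\varphi}(\Omega)$ and $\dot{\mathscr{C}}_L^{\varphi}(\Omega)/\re=\mathscr{C}_L^{*,\varphi}(\Omega)/\re$ with equivalent norms follow immediately from the definitions of the spaces.
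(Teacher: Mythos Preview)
Your proposal is correct and follows essentially the same approach as the paper: the paper packages the Whitney-scale De~Giorgi/Nash + Poincar\'e estimate as Lemma~\ref{lemma:Calpha-HC-balls} and the multi-scale telescoping along corkscrew points connected by bounded-length Harnack chains as Lemma~\ref{lemma:Calpha-boundary}, then combines them exactly as you describe (close case via interior H\"older, far case via the dyadic chain from $X$ up to a corkscrew point $\widetilde{X}$ at scale $\approx|X-Y|$, the symmetric chain from $Y$ to $\widetilde{Y}$, and a final $O(1)$-length Harnack chain between $\widetilde{X}$ and $\widetilde{Y}$). The only cosmetic difference is that the paper states the two auxiliary estimates as separate lemmas rather than inlining them.
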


	To show Theorem \ref{thm:CC}, we establish two auxiliary results, the second one depending heavily on the Harnack chain condition. 
	
	\begin{lemma}\label{lemma:Calpha-HC-balls}
		Let $\Omega\subset\ree$, $n\ge 2$, be an open set, and let $L=-\div(A\nabla )$ be  a real (non-necessarily symmetric) uniformly elliptic operator. Let  $\Upsilon\ge 1$ and assume that the ball $B=B(X,R)\subset\Omega$ satisfies $\Upsilon^{-1}R\le \dist(B,\pom)\le \Upsilon R$. Then for every $\beta \in (0, \alpha_1]$ (where $\alpha_1$ is the exponent in the De Giorgi/Nash estimates, see Lemma \ref{lem:DGN}), for every growth function $\varphi \in \mathcal{G}_\beta$, and for every $u\in  W^{1,2}_{\loc}(\Omega)$ satisfying $Lu=0$ in the weak sense in $\Omega$ there holds 
		\begin{align*}
		\sup_{\substack{X,Y\in \overline{B}\\	X\neq Y}}\frac{|u(X)-u(Y)|}{\varphi(|X-Y|)}
		\lesssim \|u\|_{\mathscr{C}^{*, \varphi}(\Omega)}, 
		\end{align*}
		where the implicit constant depends only on $n$, the ellipticity of $L$, $\Upsilon$, and $C_{\varphi}$.  
	\end{lemma}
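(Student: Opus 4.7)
Relabel the ball as $B = B(Z_0, R)$. The hypothesis $\Upsilon^{-1} R \le \dist(B, \partial\Omega) \le \Upsilon R$ gives $\delta(Z) \approx R$ for every $Z \in \overline{B}$, with implicit constants depending only on $\Upsilon$. The approach is to combine the interior De~Giorgi/Nash H\"older estimate (Lemma~\ref{lem:DGN}) with the classical Poincar\'e inequality so as to bound the oscillation of $u$ between two points of $\overline{B}$ by an $L^2$-average of $\nabla u$ on an interior ball, and then to estimate this average by the Carleson norm $\|u\|_{\mathscr{C}^{*,\varphi}(\Omega)}$ evaluated on a boundary ball of radius comparable to $R$.

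Fix distinct $X, Y \in \overline{B}$, set $\rho := |X-Y|$ and $W := (X+Y)/2 \in \overline{B}$, and pick $c = c(\Upsilon) > 0$ so small that $r_0 := cR$ satisfies $2 B(W, r_0) \subset \Omega$. Consider first the main case $\rho \le r_0$. With $B' := B(W, r_0)$, applying Lemma~\ref{lem:DGN} to $u - u_{2B'}$ (which is again an $L$-solution) together with Poincar\'e's inequality yields
\[
|u(X) - u(Y)| \lesssim \left(\frac{\rho}{r_0}\right)^{\alpha_1} r_0 \left(\bariint_{2B'} |\nabla u|^2 \, dZ\right)^{1/2}.
\]
Pick $\hat{x} \in \partial\Omega$ with $|\hat{x} - W| = \delta(W) \approx R$, so that $2B' \subset B(\hat{x}, CR)$ for some $C = C(\Upsilon)$ and $\delta \approx R$ throughout $2B'$. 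Consequently,
\[
\iint_{2B'} |\nabla u|^2 \, dZ \lesssim R^{-1} \iint_{B(\hat{x}, CR) \cap \Omega} |\nabla u|^2 \, \delta \, dZ \lesssim R^{n-1}\varphi(R)^2 \|u\|_{\mathscr{C}^{*,\varphi}(\Omega)}^2,
\]
where the last inequality uses the definition of the Carleson norm and the doubling of $\varphi$ (Lemma~\ref{lem:pro-phi}~\eqref{list-2}). Substituting and recalling $r_0 \approx R$ produces $|u(X) - u(Y)| \lesssim (\rho/R)^{\alpha_1} \varphi(R) \|u\|_{\mathscr{C}^{*,\varphi}(\Omega)}$. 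Since $\beta \le \alpha_1$ and $\rho/R \le 1$, this is bounded by $(\rho/R)^{\beta} \varphi(R) \|u\|_{\mathscr{C}^{*,\varphi}(\Omega)}$, which Lemma~\ref{lem:pro-phi}~\eqref{list-1} controls by $\varphi(\rho) \|u\|_{\mathscr{C}^{*,\varphi}(\Omega)}$, as desired.

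The remaining case $\rho > r_0$ is handled by a short chaining argument: here $\rho \approx R$, so $\varphi(\rho) \approx \varphi(R)$ by doubling, and $X$ and $Y$ can be joined inside $\overline{B}$ by a chain of $N = N(\Upsilon)$ overlapping balls of radius $r_0$; applying the main case to consecutive intermediate points then yields the bound.

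\textbf{Main obstacle.} The delicate point is scale matching: the interior estimate must be applied on an interior ball $B'$ with $2B' \subset \Omega$, while the Carleson bound requires a boundary ball whose radius is comparable to $R$ rather than to $\rho$, so that the ensuing prefactor is precisely $(\rho/R)^{\alpha_1}\varphi(R)$. The final absorption of this prefactor into $\varphi(\rho)$ crucially exploits the assumption $\beta \leq \alpha_1$ together with the defining growth condition \eqref{eq:ass-phi} of $\mathcal{G}_\beta$.
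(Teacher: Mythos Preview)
Your proof is correct and follows essentially the same approach as the paper's: apply De~Giorgi/Nash combined with Poincar\'e on a small interior ball of radius $\approx R$ containing both points, feed the resulting gradient integral into the Carleson norm via a boundary ball of comparable radius, and use $\beta\le\alpha_1$ together with Lemma~\ref{lem:pro-phi} to convert $(\rho/R)^{\alpha_1}\varphi(R)$ into $\varphi(\rho)$; then chain for the far case. The only cosmetic difference is that the paper centers the interior ball at $X$ (taking $B_X=B(X,R/(8\Upsilon))$) while you center it at the midpoint $W=(X+Y)/2$.
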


	\begin{proof}
		Assume first that $X,Y\in \overline{B}$ with $|X-Y|<R/(8\Upsilon)$. Write $B_X=B(X,R/(8\Upsilon))$ and note that
		\[
		\frac{R}{\Upsilon}
		\le
		\dist(B,\pom)
		\le
		\delta(X)
		\le
		\dist(4B_X,\pom)+\frac{R}{2\Upsilon},
		\]
		hence, $\dist(4B_X,\pom)\ge R/(2\Upsilon)$ and $4B_X\subset\Omega$. Also, if $\widehat{x}\in\pom$ is chosen so that $\dist(B,\pom)=\dist(B,\widehat{x})$, for every $Z \in 2 B_X$, we have
		\[
		|Z-\widehat{x}|
		\le
		|Z-X|+\diam(B)+\dist(B,\widehat{x})
		<
		(3+\Upsilon)R,
		\]
		thus $2B_X\subset B(\widehat{x},(3+\Upsilon)R)$. Using all these, Lemma~\ref{lem:DGN} applied in $B_X$ to the solution $u(\cdot)-u_{2B_X}$ (where $u_{2B_X}$ is the average of $u$ in $2B_X$), and Poincaré's inequality we conclude that (we allow constants depend on $\Upsilon$)
		\begin{align*}
		|u(X)-u(Y)|
		&\lesssim
		\bigg(\frac{|X-Y|}{R}\bigg)^{\alpha_1}
		\bigg(\frac{1}{|2B_X|}\iint_{2B_X}|u(Y)-u_{2B_X}|^2\,dY\bigg)^{\frac{1}{2}}
		\\
		&\lesssim 
		\bigg(\frac{|X-Y|}{R}\bigg)^{\alpha_1} 
		\bigg(\frac{1}{R^{n-1}} \iint_{2B_X}|\nabla u(Y)|^2\,dY\bigg)^{\frac{1}{2}}
		\\
		&\lesssim
		\bigg(\frac{|X-Y|}{R}\bigg)^{\alpha_1} 
		\bigg(\frac{1}{R^n} \iint_{B(\widehat{x},(3+\Upsilon)R)}|\nabla u(Y)|^2\,\delta(Y)dY\bigg)^{\frac{1}{2}}
		\\
		&\lesssim
		\bigg(\frac{|X-Y|}{R}\bigg)^{\alpha_1} \varphi((3+\Upsilon) R) 
		\|u\|_{\mathscr{C}^{*, \varphi}(\Omega)}
		\\
		&\lesssim
		\bigg(\frac{|X-Y|}{R}\bigg)^{\beta} \varphi(R) 
		\|u\|_{\mathscr{C}^{*, \varphi}(\Omega)}
		\\
		&\lesssim
		\varphi(|X-Y|) \|u\|_{\mathscr{C}^{*, \varphi}(\Omega)}, 
		\end{align*}
		where we have used that $|X-Y|<R$, $\beta \le \alpha_1$, and Lemma \ref{lem:pro-phi}.
		
		To consider the case $X,Y\in \overline{B}$ with $|X-Y| \ge R/(8\Upsilon)$, we can easily find a sequence $X_0=X, X_1, X_2, \dots, X_N=Y$ with  $|X_j-X_{j+1}|<R/(8\Upsilon)$ for $0\le j\le N-1$ and with $N$ depending just on $\Upsilon$. We can then apply what we have just obtained to $X_j$ and $X_{j+1}$ and eventually conclude the desired estimate.
	\end{proof}

	\begin{lemma}\label{lemma:Calpha-boundary}
		Let $\Omega\subset\mathbb{R}^{n+1}$, $n\ge 2$, be a 1-sided CAD and let $L=-\div(A\nabla )$ be  a real (non-necessarily symmetric) uniformly elliptic operator. Given $c_1\in (0,1)$, $\beta \in (0, \alpha_1]$ (where $\alpha_1$ is the exponent in the De Giorgi/Nash estimates, see Lemma~\ref{lem:DGN}), and a growth function $\varphi \in \mathcal{G}_\beta$, there exists a constant $C$ (depending only on $n$, the 1-sided CAD constants, the ellipticity constants of $L$, $c_1$, and $C_{\varphi}$) such that for every $x\in\pom$, $0<r\le s<\infty$, and $X$, $Y\in\Omega$ so that $B(X,c_1 r)\subset B(x,r)\cap\Omega$ and $B(Y,c_1 s)\subset B(x,s)\cap\Omega$, and for every $u\in  W^{1,2}_{\loc}(\Omega)$ satisfying $Lu=0$ in the weak sense in $\Omega$  there holds
		\begin{equation*}
		|u(X)-u(Y)| \le C \, \varphi(s) \, \|u\|_{\mathscr{C}^{*, \varphi}(\Omega)}. 
		\end{equation*} 
	\end{lemma}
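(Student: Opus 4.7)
The strategy is to connect $X$ to $Y$ by a chain of ``corkscrew-type'' points at dyadically increasing scales relative to $x$, estimate the jump across each scale using a Harnack chain together with Lemma~\ref{lemma:Calpha-HC-balls}, and finally telescope and use the integral condition \eqref{eq:ass-phi}.

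Fix $K \in \mathbb{Z}_{\ge 0}$ with $2^K r \le s < 2^{K+1} r$. For $k = 1, \ldots, K$, invoke the corkscrew condition (combined with Lemma~\ref{lemma:unbounded} if $\Omega$ is unbounded with bounded boundary and $2^k r \ge \diam(\pom)$) to select a point $X_k \in \Omega$ with $B(X_k, c_0' \, 2^k r) \subset B(x, 2^k r) \cap \Omega$ for a uniform constant $c_0'$. Set $X_0 := X$ and $X_{K+1} := Y$; the hypotheses then give $\delta(X_k) \gtrsim 2^k r$ and $|X_k - X_{k+1}| \le 2^k r + 2^{k+1} r \lesssim 2^k r$ for every $0 \le k \le K$, so that the quantity $\Pi(X_k, X_{k+1})$ from \eqref{cond:Lambda} is bounded by a uniform constant. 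The Harnack chain condition therefore produces a chain $B_1^{(k)}, \ldots, B_{N_k}^{(k)}$ of balls in $\Omega$ connecting $X_k$ to $X_{k+1}$, with $N_k$ uniformly bounded and, by Remark~\ref{rem:HC-balls}, with $\diam(B_j^{(k)}) \approx \dist(B_j^{(k)}, \pom) \approx 2^k r$ (constants depending only on the 1-sided CAD constants and $c_1$).

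For each Harnack ball $B_j^{(k)}$, Lemma~\ref{lemma:Calpha-HC-balls} applies with a uniform $\Upsilon$, yielding the $\dcv$-seminorm bound
\begin{equation*}
\sup_{\substack{P,Q \in \overline{B_j^{(k)}}\\ P\ne Q}} \frac{|u(P)-u(Q)|}{\varphi(|P-Q|)} \;\lesssim\; \|u\|_{\mathscr{C}^{*,\varphi}(\Omega)}.
\end{equation*}
Choosing an intermediate point in each consecutive intersection $B_j^{(k)} \cap B_{j+1}^{(k)}$ and summing, using the doubling of $\varphi$ (Lemma~\ref{lem:pro-phi}\eqref{list-2}) and the fact that $N_k$ is bounded, we obtain
\begin{equation*}
|u(X_k) - u(X_{k+1})| \;\lesssim\; \varphi\bigl(2^k r\bigr) \, \|u\|_{\mathscr{C}^{*,\varphi}(\Omega)}, \qquad 0 \le k \le K.
\end{equation*}

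Telescoping and comparing the resulting dyadic sum with an integral as in \eqref{eq:ssr}, we finally obtain, using the hypothesis $\varphi \in \mathcal{G}_\beta$ and Lemma~\ref{lem:pro-phi}\eqref{list-2},
\begin{equation*}
|u(X) - u(Y)| \;\le\; \sum_{k=0}^{K} |u(X_k) - u(X_{k+1})| \;\lesssim\; \|u\|_{\mathscr{C}^{*,\varphi}(\Omega)} \sum_{k=0}^{K} \varphi(2^k r) \;\lesssim\; \|u\|_{\mathscr{C}^{*,\varphi}(\Omega)} \int_0^{2^{K+1} r} \varphi(t) \, \frac{dt}{t} \;\lesssim\; \varphi(2s) \, \|u\|_{\mathscr{C}^{*,\varphi}(\Omega)} \;\lesssim\; \varphi(s) \, \|u\|_{\mathscr{C}^{*,\varphi}(\Omega)}.
\end{equation*}

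The main technical care is in verifying that corkscrew points exist at \emph{every} dyadic scale up to $s$, which is where the unbounded-$\Omega$/bounded-$\pom$ case forces us to invoke Lemma~\ref{lemma:unbounded}; and in checking that the Harnack chain diameters and the number of chain balls are controlled uniformly in $k$, so that the application of Lemma~\ref{lemma:Calpha-HC-balls} (with a scale-independent $\Upsilon$) together with the doubling of $\varphi$ contracts each telescopic jump to a single factor of $\varphi(2^k r)$. Once that is arranged, the dyadic sum is absorbed by \eqref{eq:ass-phi} exactly as in \eqref{eq:ssr}.
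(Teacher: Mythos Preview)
Your proposal is correct and follows essentially the same approach as the paper's proof: both build a chain of corkscrew points at dyadic scales between $r$ and $s$ (you index upward as $2^k r$, the paper downward as $2^{-k}s$), connect consecutive points by Harnack chains of uniformly bounded length, apply Lemma~\ref{lemma:Calpha-HC-balls} on each ball with a uniform $\Upsilon$ coming from \eqref{preHarnackball}, and then telescope and absorb the dyadic sum via \eqref{eq:ass-phi}. The only cosmetic difference is that the paper splits off the case $s \ge 2\diam(\pom)/c_1$ explicitly before invoking Lemma~\ref{lemma:unbounded}, whereas you fold that case into the choice of corkscrew points at each scale; both treatments are equivalent.
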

	
	\begin{proof}
		Note that if $\|u\|_{\mathscr{C}^{*, \varphi}(\Omega)}=\infty$ there is nothing to prove. Besides, if $\|u\|_{\mathscr{C}^{*, \varphi}(\Omega)}=0$ then $u$ is constant and the desired inequality is also trivial. Thus, we may assume that $0<\|u\|_{\mathscr{C}^{*, \varphi}(\Omega)}<\infty$, and in that case we can replace $u$ by $u/\|u\|_{\mathscr{C}^{*, \varphi}(\Omega)}$ and eventually assume that $\|u\|_{\mathscr{C}^{*, \varphi}(\Omega)}=1$. Let $k_0\ge 0$ be such that $2^{-k_0-1}s< r \le 2^{-k_0}s$. 
		
		We first work under the restriction $s<2\diam(\pom)/c_1$ (if $\pom$ is unbounded, this imposes no restriction). Using that $\Omega$ satisfies the corkscrew condition, for every $0\le k\le k_0$ we pick $X_k := X_{\Delta(x,2^{-k}s)}$ (cf.~Definition~\ref{def:cks}) so that 
		\begin{align}\label{BX}
		B(X_k, c_0 2^{-k}s) \subset B(x, 2^{-k}s) \cap \Omega.
		\end{align}
		Note that for every $0\le k\le k_0$ we have $c_0 2^{-k}s\le \delta(X_k)<  2^{-k}s$. Also, 
		\[
		2^{-1}c_1\delta(X_{k_0})<
		2^{-k_0-1}c_1s
		<
		c_1r
		\le
		\delta(X)
		<
		r 
		\le 
		2^{-k_0}s
		\le
		c_0^{-1}\delta(X_{k_0})
		\]
		and
		\[
		c_1\delta(X_0)<
		c_1s
		\le 
		\delta(Y)
		<
		s
		\le
		c_0^{-1}\delta(X_{0}). 
		\]
		Moreover, 
		\begin{align*}
		|Y-X_{0}| &\le |Y-x|+|x-X_0| < 2s, 
		\\
		|X-X_{k_0}| &\le |X-x|+|x-X_{k_0}|< r+ 2^{-{k_0}}s < 3r,
		\end{align*}
		and
		\[
		|X_{k-1}-X_{k}|
		\le
		|X_{k-1}-x|+|x-X_{k}|
		<
		2^{-k+1}s+ 2^{-k}s
		<
		2^{-k+2}s,
		\qquad 
		1\le k\le k_0.
		\]
		All this show that with the notation introduced in \eqref{cond:Lambda}
		\[
		\Pi(X, X_{k_0}),\ \Pi(Y, X_0),\ \Pi(X_{k-1}, X_{k})\lesssim 1, \qquad 
		1\le k\le k_0.
		\]
		Set $X_{k_0+1}:=X$ and $X_{-1}:=Y$ and fix an arbitrary $0\le k\le k_0+1$. Use that $\Omega$ satisfies the Harnack chain condition to construct a chain of balls $B_1^k,\dots, B_{N_k}^k$ joining $X_{k-1}$ and $X_{k}$, satisfying \eqref{preHarnackball}, and with $N_k\lesssim (2+\log_2^+\Pi(X_k, X_{k-1}))\lesssim 1$. Pick $Y_j^k\in  B_j^k\cap B_{j+1}^{k}$ for $1\le j\le N_k-1$ and note that by Lemma~\ref{lemma:Calpha-HC-balls} and \eqref{diam-compar:HC} in Remark~\ref{rem:HC-balls} we obtain
		\begin{align*}
		|u(X_{k-1})-u(X_{k})|
		&\le
		|u(X_{k-1})-u(Y_{1}^k)|
		+
		\sum_{j=1}^{N_k-1} |u(Y_{j}^k)-u(Y_{j+1}^k)|
		+
		|u(Y_{N_k}^k)-u(X_{k})|
		\\
		&\lesssim 
		\varphi(|X_{k-1}-Y_{1}^k|) 
		+
		\sum_{j=1}^{N_k-1} \varphi(|Y_{j}^k-Y_{j+1}^k|) 
		+
		\varphi(|Y_{N_k}^k-X_{k}|) 
		\\
		&\le
		(N_{k}+1)\sup_{0\le j\le N_k} \varphi(\diam(B_{j}^k)) 
		\\
		&\lesssim
		(N_{k}+1) \varphi\big((1+C_2)^{N_k-1} C_2^{N_k}\min\{\delta(X_{k-1}),\delta(X_k)\}\big)
		\\
		&\lesssim \varphi(2^{-k}s),
		\end{align*}
		where we have used that $N_k\lesssim 1$ and Lemma \ref{lem:pro-phi}. Then by telescoping, \eqref{eq:ass-phi}, and invoking again Lemma \ref{lem:pro-phi}, we conclude that
		\begin{multline*}
		|u(Y)-u(X)|
		= |u(X_{-1})-u(X_{k_0+1})|
		\le \sum_{k=0}^{k_0+1} |u(X_{k-1})-u(X_{k})|
		\\
		\lesssim \sum_{k=0}^{k_0+1} \varphi(2^{-k}s)
		\lesssim \sum_{k=0}^{\infty} \int_{2^{-k} s}^{2^{-k+1} s} \varphi(t) \frac{dt}{t}  
		\lesssim \int_{0}^{2s} \varphi(t) \frac{dt}{t}  
		\lesssim \varphi(2s)
		\lesssim \varphi(s). 
		\end{multline*}
		This is the desired estimate under the assumption that $s<2\diam(\pom)/c_1$.

		Let us next consider the case $s\ge 2\diam(\pom)/c_1$, which is meaningful only when $\pom$ is bounded. Note that 
		$B(Y,c_1 s)\subset B(x,s)\cap\Omega$ implies that $2\diam(\pom)\le c_1 s\le\diam(\Omega)$, hence $\Omega$ must be unbounded (otherwise, $\diam(\pom)=\diam(\Omega)$). This means that we are in the situation $\Omega$ unbounded with $\pom$ bounded. For any $0 \le k \le k_0$ so that $2^{-k}s<\diam(\pom)$, we use that $\Omega$ satisfies the corkscrew condition to find $X_k:=X_{\Delta(x,2^{-k}s)}$. If $0 \le k \le k_0$ is so that $2^{-k}s\ge\diam(\pom)$, we invoke Lemma~\ref{lemma:unbounded} and let $X_k:=X_{x,2^{-k}s}$. These choices guarantee that $B(X_k,c_0 2^{-k}s/4)\subset B(x,2^{-k}s)\cap\Omega$, which is analogous  to \eqref{BX}. We can then repeat the previous argument replacing $c_0$ by $c_0/4$ and conclude the desired estimate. Further details are left to the reader. 
	\end{proof}

	\begin{proof}[\textbf{Proof of Theorem~\ref{thm:CC}}]
		
		In view of Lemma \ref{lem:CC}, it suffices to show the upper bound in \eqref{CC-1}. First note that if $\|u\|_{\mathscr{C}^{*, \varphi}(\Omega)}=\infty$ there is nothing to prove. Besides,  if $\|u\|_{\mathscr{C}^{*, \varphi}(\Omega)}=0$ then $u$ is constant and the desired inequality is also trivial. Thus, as before, we may assume that $\|u\|_{\mathscr{C}^{*, \varphi}(\Omega)}=1$.
		
		Fix $X, Y \in \Omega$ and $0< \beta \le \alpha_1$. We may assume that $\delta(X)\le \delta(Y)$. We first consider the case $|X-Y|<\max\{\delta(X),\delta(Y)\}/2=\delta(Y)/2$. Let $B_Y= B(Y,\delta(Y)/2)$ so that $X\in B_Y$ and $\dist(B_Y,\pom) = \delta(Y)/2$.   We can then apply Lemma~\ref{lemma:Calpha-HC-balls} with $B=B_Y$ and $\Upsilon=1$ to see that
		\[
		|u(X)-u(Y)| \lesssim \varphi(|X-Y|). 
		\]
		
		To continue, we next assume that $|X-Y|\ge \max\{\delta(X),\delta(Y)\}/2=\delta(Y)/2$. Pick $\widehat{x}$, $\widehat{y}\in\pom$ so that $\delta(X)=|X-\widehat{x}|$ and $\delta(Y)=|Y-\widehat{y}|$. If $|X-Y|<2\diam(\pom)$ we choose $\widetilde{X}:=X_{\Delta(\widehat{x},4|X-Y|)}$ and $\widetilde{Y}:=X_{\Delta(\widehat{y},4|X-Y|)}$ (here we use the corkscrew condition, see Definition~\ref{def:cks}). On the other hand, if  $|X-Y|\ge 2\diam(\pom)$ we then have that $\pom$ is necessarily bounded and $2\diam(\pom) \le |X-Y|\le \diam(\Omega)$, hence $\Omega$ must be unbounded (otherwise, $\diam(\Omega)=\diam(\pom)$). We can then use Lemma~\ref{lemma:unbounded} to find $\widetilde{X}:=X_{\widehat{x},4|X-Y|}$ and $\widetilde{Y} := X_{\widehat{y}, 4|X-Y|}$. In either scenario and by construction the following hold:
		\[
		B(X, \delta(X))\subset B(\widehat{x}, 2\delta(X))\cap\Omega, \quad 
		B(\widetilde{X}, c_0|X-Y|)\subset B(\widehat{x}, 4|X-Y|)\cap\Omega,
		\]
		and
		\[
		B(Y, \delta(Y))\subset B(\widehat{y}, 2\delta(Y))\cap\Omega, 
		\quad\text{ and }\quad 
		B(\widetilde{Y}, c_0|X-Y|)\subset B(\widehat{y}, 4|X-Y|)\cap\Omega.
		\]
		Observing that $2\delta(X)\le 2\delta(Y)\le 4|X-Y|$, we can then invoke Lemma~\ref{lemma:Calpha-boundary} with $c_1=c_0/4$ to  obtain 
		\begin{equation}\label{34qf3f3af}
		|u(X)-u(\widetilde{X})|
		\lesssim \varphi(|X-Y|) 
		\qquad\text{and}\qquad
		|u(Y)-u(\widetilde{Y})|
		\lesssim \varphi(|X-Y|).
		\end{equation}
		On the other hand we note that $\delta(\widetilde{X})\ge c_0|X-Y|$, $\delta(\widetilde{Y})\ge c_0|X-Y|$, and 
		\begin{multline*}
		|\widetilde{X}-\widetilde{Y}|
		\le
		|X-\widehat{x}|+|\widehat{x}-\widetilde{X}|+|X-Y|+|Y-\widehat{y}|+|\widehat{y}-\widetilde{Y}|
		\\ 
		\le \delta(X)+9|X-Y|+\delta(Y)
		\le 13|X-Y|. 
		\end{multline*}
		Thus, $\Pi(\widetilde{X}, \widetilde{Y})\le 13 c_0^{-1}$. We can then form a Harnack chain $B_1, \ldots, B_N$ joining $\widetilde{X}$ and $\widetilde{Y}$ with $N\lesssim (2+\log_2^+ \Pi(\widetilde{X}, \widetilde{Y}))\lesssim 1$. Pick $Z_j \in  B_j\cap B_{j+1}$ for $1\le j\le N$ and note that by Lemma~\ref{lemma:Calpha-HC-balls} and \eqref{diam-compar:HC} in Remark~\ref{rem:HC-balls} we obtain
		\begin{align*}
		|u(\widetilde{X})-u(\widetilde{Y})|
		&\le
		|u(\widetilde{X})-u(Z_1)|
		+
		\sum_{j=1}^{N-1} |u(Z_{j})-u(Z_{j+1})|
		+
		|u(Z_{N})-u(\widetilde{Y})|
		\\
		&\lesssim
		\varphi(|\widetilde{X}-Z_{1}|)
		+
		\sum_{j=1}^{N-1} \varphi(|Z_{j}-Z_{j+1}|)
		+ \varphi(|Z_{N}-\widetilde{Y}|) 
		\\
		& \le
		(N+1)\sup_{1 \le j \le N} \varphi(\diam(B_{j})) 
		\\
		&\lesssim
		(N+1) \varphi\big((1+C_2)^{N-1} C_2^{N}\min\{\delta(\widetilde{X}),\delta(\widetilde{Y})\}\big)
		\\
		&\lesssim
		\varphi \big(\min\big\{|\widetilde{X}-\widehat{x}|,|\widetilde{Y}-\widehat{x}|\big\} \big)
		\\
		&\le 
		\varphi(4|X-Y|)
		\\
		&\lesssim
		\varphi(|X-Y|),
		\end{align*}
		where we have used that $N\lesssim 1$ and Lemma \ref{lem:pro-phi}. This and \eqref{34qf3f3af} readily imply the desired estimate:
		\begin{equation*}
		|u(X)-u(Y)|
		\le |u(X)-u(\widetilde{X})|+|u(\widetilde{X})-u(\widetilde{Y})|+|u(\widetilde{Y})-u(Y)|
		\lesssim \varphi(|X-Y|).
		\end{equation*}
		This shows the upper bound in \eqref{CC-1} and completes the proof of Theorem \ref{thm:CC}.
	\end{proof}


	\subsubsection{Existence of solutions without connectivity}
	
	After working in 1-sided CAD to obtain Theorem~\ref{thm:CC}, let us go back to a more general setting, where we assume no connectivity on $\Omega$, and consider the Dirichlet boundary value problem associated with the $\mathscr{C}^{*, \varphi}$ space. 	
	
	We may now state an existence result for the Dirichlet boundary value problem associated with the $\mathscr{C}^{*, \varphi}$ space. Again, we remark the fact that no connectivity is assumed on $\Omega$.	
	
	\begin{theorem} \label{thm:CaCa_existence_general}
		Let $\Omega \subset \R^{n+1}$, $n \geq 2$, be an open set with ADR boundary so that either {\bf $\Omega$ is bounded or $\Omega$ is unbounded with $\pom$ being unbounded}.  Let $L = -\div (A\nabla)$ be a real (non-necessarily symmetric) elliptic operator, and let $\omega_L$ be the associated elliptic measure. Then there exists $\beta \in (0, 1)$ (depending only on $n$, the ADR constant, and the ellipticity constant of $L$) such that for any $\theta > 1$ and for any growth function $\varphi \in \mathcal{G}_\beta$, the function
		\begin{equation}\label{eq:solution_carleson}
		u(X) = \int_\pom f(y) \, d\omega_L^X(y), \qquad X \in \Omega
		\end{equation}
		is a solution to the $\mathscr{C}^{*, \varphi}$-Dirichlet problem 
		\begin{equation} \label{CaCa-problem_general}
		\begin{cases}
		u \in W^{1, 2}_{\loc} (\Omega), \\
		Lu = 0 \text{ in the weak sense in } \Omega, \\
		u \in \mathscr{C}^{*, \varphi}(\Omega), \\
		\restr{u}{\pom}^{{\rm nt. \, lim}, \theta} = f \in \dcv(\pom), \,  \sigma\text{-a.e. on } \pom. 
		\end{cases}
		\end{equation}
		Moreover, $u \in \mathscr{C} (\overline{\Omega})$ and satisfies $\restr{u}{\pom} = f$, and there is a constant $C$ (depending on $n$, the ADR constant, the ellipticity constant of $L$, and $C_\varphi$) such that 
		\begin{equation*}
		\|u\|_{\mathscr{C}^{*, \varphi}(\Omega)}
		\leq C \|f\|_{\dcv(\pom)}.
		\end{equation*}
	\end{theorem}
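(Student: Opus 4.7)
The plan is to combine Theorem~\ref{thm:well} with Lemma~\ref{lem:CC} in a direct way, taking advantage of the fact that ADR implies CDC (see~\eqref{eq:ADR-CDC}), which bridges the hypotheses between the two results. First, I would choose $\beta$ to be exactly the exponent $\beta\in(0,1)$ coming from Theorem~\ref{thm:well} (depending only on $n$, the CDC constant associated with the ADR constant, and the ellipticity of $L$). This is necessary so that, for any $\varphi\in\mathcal{G}_\beta$, Theorem~\ref{thm:well} provides a well-defined solution $u$ given by \eqref{eq:solution_carleson} which belongs to $W^{1,2}_{\loc}(\Omega)\cap \dot{\mathscr{C}}^\varphi(\overline{\Omega})$, satisfies $Lu=0$ weakly in $\Omega$, attains the boundary data in the classical sense with $u|_{\pom}=f$, and obeys the quantitative bound
\[
\|u\|_{\dot{\mathscr{C}}^\varphi(\Omega)}\le C\,\|f\|_{\dot{\mathscr{C}}^\varphi(\pom)}.
\]

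Next, since $\pom$ is ADR, I would invoke Lemma~\ref{lem:CC} applied to this very solution $u$. That lemma produces a constant $C$, depending only on dimension, the ADR constant, the ellipticity of $L$, and $C_\varphi$, such that
\[
\|u\|_{\mathscr{C}^{*,\varphi}(\Omega)}\le C\,\|u\|_{\dot{\mathscr{C}}^\varphi(\Omega)}.
\]
Chaining these two estimates yields the claimed Carleson-type bound $\|u\|_{\mathscr{C}^{*,\varphi}(\Omega)}\lesssim \|f\|_{\dot{\mathscr{C}}^\varphi(\pom)}$, and in particular $u\in \mathscr{C}^{*,\varphi}(\Omega)$.

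It remains to address the non-tangential convergence condition in \eqref{CaCa-problem_general}. Since Theorem~\ref{thm:well} (combined with Lemma~\ref{lem:extension}) gives $u\in \dot{\mathscr{C}}^\varphi(\overline{\Omega})\subset \mathscr{C}(\overline{\Omega})$ with $u|_{\pom}=f$ pointwise on $\pom$, for any $\theta>1$ and \emph{every} $y\in\pom$ (not merely $\sigma$-a.e.) one obtains immediately
\[
\lim_{\substack{X\in\Gamma^\theta(y)\\ X\to y}} u(X)=u(y)=f(y),
\]
so $\restr{u}{\pom}^{{\rm nt.\,lim},\theta}=f$ holds in the strong pointwise sense. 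This completes the proof proposal.

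There is no genuine obstacle here: the theorem is essentially a corollary, and the content of the argument is already distilled into Theorem~\ref{thm:well} and Lemma~\ref{lem:CC}. The only minor subtlety is ensuring consistency of the parameters $\beta$ and verifying that the CDC constant arising from ADR depends only on the ADR constant and $n$, which is precisely what~\eqref{eq:ADR-CDC} records; this is what allows us to state the final constants in terms of the ADR data rather than an independent CDC constant.
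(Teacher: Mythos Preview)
Your proposal is correct and follows essentially the same approach as the paper: invoke \eqref{eq:ADR-CDC} to reduce to the CDC setting, apply Theorem~\ref{thm:well} to obtain $u\in W^{1,2}_{\loc}(\Omega)\cap\dot{\mathscr{C}}^\varphi(\overline{\Omega})$ with $u|_{\pom}=f$ and the Hölder bound, then use Lemma~\ref{lem:CC} to pass from $\dot{\mathscr{C}}^\varphi$ to $\mathscr{C}^{*,\varphi}$. Your explicit verification of the non-tangential limit via continuity up to the boundary is exactly what the paper records in the remark immediately following the theorem.
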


	\begin{proof} 
		Since $\pom$ is ADR, it follows from \eqref{eq:ADR-CDC} that $\Omega$ satisfies the CDC. Then, from Theorem~\ref{thm:well} we see that, given $f \in \dcv(\pom)$, the function $u$ defined in \eqref{eq:solution_carleson} solves the problem~\eqref{eq:C-varphi}. Moreover, $u \in \dcv(\Omega) \cap \mathscr{C} (\overline{\Omega})$ satisfies $\restr{u}{\pom} = f$, and 
		\begin{equation*}
		\|u\|_{\dot{\mathscr{C}}^\varphi(\Omega)}
		\lesssim \|f\|_{\dot{\mathscr{C}}^\varphi(\pom)}.
		\end{equation*}
		On the other hand, invoking Lemma~\ref{lem:CC} we infer that $u \in \mathscr{C}^{*, \varphi}(\Omega)$ and 
		\begin{equation*}
		\|u\|_{\mathscr{C}^{*, \varphi}(\Omega)} \lesssim \|u\|_{\dcv(\Omega)} \lesssim \|f\|_{\dcv(\pom)}. 
		\end{equation*}
		This completes the proof. 
	\end{proof}

	\begin{remark}
	We observe that from the previous result one obtains $u \in \mathscr{C}(\overline{\Omega})$. This implies that $\restr{u}{\pom}^{{\rm nt. \, lim}, \theta} = f$ everywhere on $\pom$ in the sense of Definition of \ref{def:nt}, for any $\theta > 1$. This and the fact that none of the constants involved in the result depend on $\theta$ show that the precise value of $\theta$ does not play any relevant role in the theorem.
	\end{remark}


	\subsubsection{A sufficient condition for uniqueness} \label{subsec:carleson_uniqueness}
	
	In this subsection we will show that, under a very mild connectivity assumption, the solutions to the $\mathscr{C}^{*, \varphi}$ problem \eqref{CaCa-problem_general} are in fact unique. Before stating the result, let us recall a definition.
	
	\begin{definition}[{\bf Carrot path}]\label{def:Carrot}
		Let $\Omega \subset \ree$ be an open set. Given a point $X \in \Omega$, and a point $y \in \pom$, we say that a connected rectifiable path $\gamma=\gamma(y, X)$, with endpoints $y$ and $X$, is a \textit{carrot path} (more precisely, a $\lambda$-carrot path) connecting $y$ to $X$, if $\gamma \setminus \{y\} \subset \Omega$ and for some $\lambda \in (0,1)$ and for all $Z \in \gamma$, there holds $\lambda \ell(\gamma(y, Z)) \le \delta(Z)$. 
	\end{definition}
	
	\begin{proposition}\label{prop:CaCa_uniqueness_general}
		Let $\Omega \subset \R^{n+1}$, $n \geq 2$, be an open set with ADR boundary so that either {\bf $\Omega$ is bounded or $\Omega$ is unbounded with $\pom$ being unbounded}. Let $L = -\div (A\nabla)$ be a real (non-necessarily symmetric) elliptic operator, and let $\omega_L$ be the associated elliptic measure. Given $\theta > 1$, assume that
		{ 
		\begin{equation}\label{eq:path}
		\begin{array}{c}
		\text{there is an open neighborhood $U$ of $\pom$ in $\Omega$ so that } 
		\\
		\text{ for every $X\in U$ there exists some $\lambda = \lambda(X) \in (0, 1)$ satisfying $\lambda^{-1} < \theta$ and }  
		\\
		\text{$\sigma \Big{(} \big{\{} y \in \pom \cap B(X, 2\delta(X)) : \text{there exists a $\lambda$-carrot path joining $y$ and $X$} \big{\}} \Big{)} > 0$}.
		\end{array}
		\end{equation}
		}
		Then there exists $\beta \in (0, 1)$ (depending only on $n$, the ADR constant, and on the ellipticity constant of $L$) such that for any growth function $\varphi \in \mathcal{G}_\beta$, the solution given by \eqref{eq:solution_carleson} is the unique solution to the problem~\eqref{CaCa-problem_general}, and hence the problem is well-posed.	
	\end{proposition}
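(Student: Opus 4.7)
The plan is to show uniqueness (existence having been proved in Theorem~\ref{thm:CaCa_existence_general}): by linearity, I need only prove that any solution $u$ of \eqref{CaCa-problem_general} with $\restr{u}{\pom}^{{\rm nt.\,lim.}, \theta} = 0$ $\sigma$-a.e.\ must vanish. I would closely follow Section~\ref{sect:unique}, whose key output was the boundary decay \eqref{eq:uX-estimate}; the analogous target here is $|u(X)| \lesssim \varphi(\delta(X)) \|u\|_{\mathscr{C}^{*,\varphi}(\Omega)}$ for every $X \in \Omega$. The $\dcv$-control used in Section~\ref{sect:unique} is replaced by interior $\varphi$-H\"older continuity provided by Lemma~\ref{lemma:Calpha-HC-balls}, which converts the Carleson-type bound $\|u\|_{\mathscr{C}^{*,\varphi}}$ into pointwise oscillation estimates on any interior Whitney-like ball.

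First I would establish the decay for $X \in U$. Using the positive-measure clause in \eqref{eq:path} together with the $\sigma$-a.e.\ vanishing of the non-tangential limit, choose $y \in \pom \cap B(X, 2\delta(X))$ such that both $\restr{u}{\pom}^{{\rm nt.\,lim.}, \theta}(y) = 0$ and a $\lambda$-carrot path $\gamma$ joins $y$ to $X$ with $\lambda^{-1} < \theta$ (so $\gamma \subset \Gamma^\theta(y) \cup \{y\}$). Selecting $Z_0 = X, Z_1, Z_2, \ldots$ on $\gamma$ with $\ell(\gamma(y, Z_k)) = 2^{-k}\ell(\gamma)$, the carrot condition forces $\delta(Z_k) \approx 2^{-k}\delta(X)$ and $|Z_k - Z_{k+1}| \lesssim 2^{-k}\delta(X)$ with constants depending only on $\theta$. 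Covering the arc of $\gamma$ between $Z_{k+1}$ and $Z_k$ by $O_\theta(1)$ interior balls each satisfying the hypothesis of Lemma~\ref{lemma:Calpha-HC-balls}, the lemma and the subadditivity of $\varphi$ (Lemma~\ref{lem:pro-phi}) give $|u(Z_k) - u(Z_{k+1})| \lesssim \varphi(2^{-k}\delta(X)) \|u\|_{\mathscr{C}^{*,\varphi}(\Omega)}$. Telescoping, using $\sum_k \varphi(2^{-k}r) \lesssim \varphi(r)$ (from \eqref{eq:ass-phi}, cf.\ the computation \eqref{eq:ssr}), and letting $N \to \infty$ so that $Z_N \to y$ inside $\Gamma^\theta(y)$ (whence $u(Z_N) \to 0$ by the non-tangential vanishing), one obtains $|u(X)| \lesssim \varphi(\delta(X))$ for all $X \in U$.

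To propagate this estimate to arbitrary $X \in \Omega$, fix $\hat{x} \in \pom$ with $|X - \hat{x}| = \delta(X)$ and consider the segment points $Z_j := (1-2^{-j})\hat{x} + 2^{-j}X$, which lie in $\Omega$ with $\delta(Z_j) = 2^{-j}\delta(X)$ (since $\hat{x}$ realizes the boundary distance along the segment) and $|Z_j - Z_{j+1}| = \delta(Z_j)/2$. For $j$ large enough, $Z_j \in U$ (interpreting $U$ so that it contains a one-sided neighborhood $\{X \in \Omega : \delta(X) < \varepsilon\}$ of $\pom$, the natural reading of the hypothesis), so the previous step gives $|u(Z_j)| \lesssim \varphi(2^{-j}\delta(X)) \to 0$. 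Lemma~\ref{lemma:Calpha-HC-balls} applied to the interior balls $B(Z_j, \delta(Z_j)/2) \subset \Omega$, combined with telescoping and \eqref{eq:ass-phi}, yields $|u(X) - u(Z_J)| \lesssim \varphi(\delta(X))$ uniformly in $J$; letting $J \to \infty$ gives the desired global bound $|u(X)| \lesssim \varphi(\delta(X))$ for every $X \in \Omega$.

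With this in hand, the conclusion $u \equiv 0$ proceeds as in the last paragraph of Section~\ref{sect:unique}. Since $\varphi(t) \to 0$ as $t \to 0^+$, $u$ extends continuously to $\overline{\Omega}$ with $u|_\pom \equiv 0$. In the bounded case, the boundary of each connected component of $\Omega$ is contained in $\pom$, and the maximum principle on each component forces $u \equiv 0$. In the unbounded case with unbounded $\pom$, for any $X_0 \in \Omega$, closest $\hat{x}_0 \in \pom$, and $R > 2\delta(X_0)$, boundary H\"older continuity (Lemma~\ref{lem:Holder}, in its standard extension to sign-changing solutions) combined with the global bound $\sup_{B(\hat{x}_0, R) \cap \Omega}|u| \lesssim \varphi(R)$ yields
\[
|u(X_0)| \lesssim \left(\frac{\delta(X_0)}{R}\right)^{\alpha_2} \varphi(R).
\]
Choosing $\beta \leq \min\{\alpha_1, \alpha_2\}$ so that Lemmas~\ref{lemma:Calpha-HC-balls} and \ref{lem:Holder} both apply and $\varphi(R)/R^{\alpha_2} \to 0$ as $R \to \infty$ (Lemma~\ref{lem:pro-phi}\eqref{list-3}), letting $R \to \infty$ forces $u(X_0) = 0$. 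The hardest step I anticipate is the propagation from $U$ to arbitrary $X \in \Omega$: since \eqref{eq:path} is assumed only in a neighborhood of $\pom$, the interior H\"older-chaining of Lemma~\ref{lemma:Calpha-HC-balls} must carry the boundary decay inward, without any structural connectivity assumption on $\Omega$ away from the boundary.
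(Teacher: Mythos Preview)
Your proposal is correct and follows essentially the same two-stage strategy as the paper: first obtain $|u(X)|\lesssim\varphi(\delta(X))$ for $X\in U$ via a carrot-path telescoping argument (the paper's Lemma~\ref{lem:bound_U}), then propagate this bound to all of $\Omega$ by telescoping along the straight segment to a nearest boundary point (the paper's Lemma~\ref{lem:bound_Omega}), and conclude exactly as you describe. Your invocation of Lemma~\ref{lemma:Calpha-HC-balls} for the interior oscillation estimate is a clean substitute for the De Giorgi/Nash plus Poincar\'e computation that the paper carries out inline; the content is the same.

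One imprecision worth flagging: your parenthetical reading of $U$ as containing a uniform collar $\{X\in\Omega:\delta(X)<\varepsilon\}$ is \emph{not} the hypothesis and need not hold (for unbounded $\pom$ there is no reason for a uniform $\varepsilon$). The paper uses only that for each fixed $\widehat{x}\in\pom$ there is some $\varepsilon(\widehat{x})>0$ with $B(\widehat{x},\varepsilon(\widehat{x}))\cap\Omega\subset U$, which suffices to place $Z_j$ in $U$ for $j$ large depending on $X$; your telescoping bound $|u(X)-u(Z_J)|\lesssim\varphi(\delta(X))$ is already uniform in $J$, so this dependence is harmless. A second minor point: with dyadic spacing along the carrot path one has $|Z_k-Z_{k+1}|\le\lambda^{-1}\delta(Z_{k+1})$, which may exceed $\delta(Z_{k+1})/2$, so a short Harnack-type chain of $O_\theta(1)$ balls is indeed needed between consecutive $Z_k$ (as you indicate). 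The paper sidesteps this by choosing the geometric ratio $\rho$ so close to $1$ that $X_{k+1}\in B_k$ directly; either route works.
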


	\begin{remark}
		{
		The assumption $\lambda^{-1}< \theta$ in \eqref{eq:path} is to guarantee that the $\lambda$-carrot path joining $y$ and $X$ is contained in the non-tangential cone $\Gamma^\theta(y)$, a set where we have information on $u$. 
		We also remark that the condition \eqref{eq:path} is much weaker than the usual weak local John condition, since we only require the collection of boundary points which are non-tangentially accessible to have positive measure, rather than imposing that such a set takes a fixed portion of the entire set $\pom \cap B(X, 2\delta(X))$. See \cite{AHMMT} where the weak local John condition plays a crucial role.
		}
	\end{remark}
	
	\begin{proof}
		Suppose that $u_1$ and $u_2$ are two solutions of \eqref{CaCa-problem_general}. Write $u:=u_1-u_2$ which solves \eqref{CaCa-problem_general} with $f \equiv 0$. We are going to show that $u \equiv 0$. If $\|u\|_{\mathscr{C}^{*, \varphi}(\Omega)} = 0$, then $u$ is a constant on $\pom$. This together with the fact that $\restr{u}{\pom}^{{\rm nt. \, lim}, \theta}=0$ implies that $u \equiv 0$ in $\Omega$. On the other hand, if $\|u\|_{\mathscr{C}^{*, \varphi}(\Omega)} \neq 0$, by homogeneity we may assume that $\|u\|_{\mathscr{C}^{*, \varphi}(\Omega)} = 1$. We want to show that $u = 0$, and we proceed in a series of steps, which we state as lemmas.
		
		\begin{lemma} \label{lem:bound_U}
			In the current scenario, there exists a constant $C$ (depending on $n$, the ADR constant, the ellipticity constant of $L$, $C_{\varphi}$ in \eqref{eq:ass-phi}, and $\theta$) such that 
			\begin{equation*}
			u(X) \leq C \, \varphi(\delta(X)) \, \text{ for all } X \in U. 
			\end{equation*}
		\end{lemma}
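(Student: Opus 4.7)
Fix $X \in U$, and let $E_X \subset \pom \cap B(X,2\delta(X))$ be the positive-$\sigma$-measure set from hypothesis \eqref{eq:path} consisting of endpoints of $\lambda$-carrot paths to $X$, with $\lambda=\lambda(X)\in(0,1)$ and $\lambda^{-1}<\theta$. The non-tangential limit $\restr{u}{\pom}^{{\rm nt. \, lim}, \theta}$ vanishes $\sigma$-a.e.\ on $\pom$, hence on $\sigma$-a.e.\ point of $E_X$; since $\sigma(E_X)>0$, pick $y^*\in E_X$ where the non-tangential limit (with aperture $\theta$) of $u$ is $0$. Let $\gamma:[0,L]\to\overline{\Omega}$ be an arclength parametrization of the corresponding $\lambda$-carrot path, with $\gamma(0)=y^*$, $\gamma(L)=X$, and $\gamma((0,L])\subset\Omega$. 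The carrot condition yields $\lambda t\le \delta(\gamma(t))\le t$ for every $t\in[0,L]$; in particular $L\le \lambda^{-1}\delta(X)\le \theta\,\delta(X)$, and every point on $\gamma$ belongs to $\Gamma^{\lambda^{-1}}(y^*)\subset\Gamma^{\theta}(y^*)$.

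Next, choose $\rho\in(1-\lambda/2,\,1)$ and set $t_k:=\rho^k L$, $X_k:=\gamma(t_k)$, so that $X_0=X$ and $X_k\to y^*$. The carrot property gives $\delta(X_k)\in[\lambda t_k, t_k]$ and $|X_k-X_{k+1}|\le t_k-t_{k+1}=(1-\rho)t_k<\tfrac{\lambda}{2}t_k\le \tfrac12\delta(X_k)$. Consequently, each ball $B_k:=B(X_k,\tfrac34\delta(X_k))$ lies in $\Omega$, contains both $X_k$ and $X_{k+1}$, and satisfies $\tfrac13 r_k\le \dist(B_k,\pom)\le 4 r_k$ with $r_k=\tfrac34\delta(X_k)$. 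Lemma~\ref{lemma:Calpha-HC-balls} applied to $B_k$ (with $\Upsilon=4$) therefore yields
\[
|u(X_k)-u(X_{k+1})|\,\lesssim\, \varphi(|X_k-X_{k+1}|)\,\|u\|_{\mathscr{C}^{*,\varphi}(\Omega)}\,\lesssim\,\varphi(\rho^k L),
\]
recalling that we have normalized $\|u\|_{\mathscr{C}^{*,\varphi}(\Omega)}=1$.

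Telescoping and comparing the resulting series with an integral (as in \eqref{eq:ssr}) gives, for every $N\ge 1$,
\[
|u(X)-u(X_N)|
\,\le\,\sum_{k=0}^{N-1}|u(X_k)-u(X_{k+1})|
\,\lesssim\,\sum_{k=0}^{\infty}\varphi(\rho^k L)
\,\lesssim\, \int_0^{L/\rho}\varphi(s)\,\frac{ds}{s}
\,\lesssim\, \varphi(L),
\]
by \eqref{eq:ass-phi}, where the implicit constants depend on $\rho$, hence on $\theta$. Since $X_N\in\Gamma^{\theta}(y^*)$ and $X_N\to y^*$, De Giorgi/Nash continuity together with the non-tangential limit at $y^*$ yield $u(X_N)\to 0$. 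Letting $N\to\infty$ and invoking the doubling property of $\varphi$ (Lemma~\ref{lem:pro-phi}\eqref{list-2}) together with $L\le \theta\,\delta(X)$ produces $|u(X)|\lesssim \varphi(\theta\,\delta(X))\lesssim \varphi(\delta(X))$, with constants independent of $X\in U$. The only delicate point is the choice of $y^*$ matching both a carrot path and the $\sigma$-a.e.\ boundary vanishing of $u$; once this is secured, the rest is a Whitney-chain telescoping argument driven by Lemma~\ref{lemma:Calpha-HC-balls}.
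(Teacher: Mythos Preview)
Your proof is correct and follows essentially the same strategy as the paper: pick a boundary point $y^*$ admitting both a $\lambda$-carrot path to $X$ and a vanishing non-tangential limit, discretize the path geometrically, and telescope the oscillations of $u$ along the resulting Whitney chain. The only difference is cosmetic: you invoke Lemma~\ref{lemma:Calpha-HC-balls} to bound $|u(X_k)-u(X_{k+1})|$, whereas the paper writes out the De Giorgi/Nash + Poincar\'e + Carleson estimate explicitly at that step. One minor point worth tightening: since $\lambda=\lambda(X)$ may vary with $X$, you should fix $\rho$ depending only on $\theta$ (e.g.\ $\rho=1-1/(4\theta)$, which lies in $(1-\lambda/2,1)$ because $\lambda>\theta^{-1}$) so that the constant $1/\log(1/\rho)$ in the telescoping bound is uniform in $X$; your phrase ``depend on $\rho$, hence on $\theta$'' glosses over this, but the fix is immediate.
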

		
		\begin{proof}
			Fix $X \in U$. Using \eqref{eq:path} and the fact that $\restr{u}{\pom}^{{\rm nt. \, lim}, \theta} = 0$, $\sigma\text{-a.e. on } \pom$, there exist $y \in B(X, 2\delta(X)) \cap \pom$ such that $\restr{u}{\pom}^{{\rm nt. \, lim}, \theta}(y)=0$ and a $\lambda$-carrot path $\gamma(y, X)$ connecting $y$ and $X$.
			{
			The assumption $\lambda^{-1}<\theta$ guarantees that $\gamma(y,X) \subset \Gamma^\theta(y)$. Indeed, for any $Z\in \gamma(y,X)$, by Definition \ref{def:Carrot} we have
			\[ \delta(Z) \geq \lambda \ell(\gamma(y,Z)) \geq \lambda |Z-y |. \]
			Thus $|Z-y| \leq \lambda^{-1} \delta(Z) < \theta \delta(Z)$, and $Z\in \Gamma^\theta(y)$. 
			
			Let $\rho>1$ be fixed such that $1-1/\rho < \lambda/3$. We define the sequence $\{X_k\}_{k \geq 0} \subset \gamma(y, X)$ as the points which satisfy $\ell_k:= \ell(\gamma(y, X_k)) = \rho^{-k} \ell(\gamma(y, X))=:\rho^{-k} \ell$, so concretely $X_0=X$. Note that $\delta(X_k) \ge \lambda \ell_k $. We also define $B_k := B(X_k, \frac{\lambda}{3} \ell_k)$ and $\widetilde{B}_k := 2 B_k$, which are clearly Whitney regions inside $\Omega$. Observe that
			\[ |X_{k+1}-X_k| \leq \ell(\gamma(X_{k+1}, X_k)) = \ell_k - \ell_{k+1} = (1-1/\rho) \ell_k < \frac{\lambda}{3} \ell_k, \]
			by our assumption on $\rho$. Thus $X_{k+1} \in B_k$. 
			
			Using Lemma \ref{lem:DGN} (applied to the function $u-u_{\widetilde{B}_k}$), Poincar\'e's inequality, the fact that $\delta(X_k) \geq \lambda \ell_k$ and $\widetilde{B}_k \subset B(y, 2\ell_k)$, $\|u\|_{\dcstar(\pom)} = 1$, and $\lambda^{-1} < \theta$, we have that
			\begin{multline*}
				\sup_{B_k} |u-u_{\widetilde{B}_k}| \lesssim \left(\bariint_{\widetilde{B}_k} |u - u_{\widetilde{B}_k}|^2 \, dZ \right)^{\frac12} 
				\\ \lesssim \lambda \ell_k \, \left(\bariint_{\widetilde{B}_k}|\nabla u|^2 \, dZ \right)^{\frac12}
				\lesssim \left(\frac{1}{(\lambda\ell_k)^n } \iint_{\widetilde{B}_k} |\nabla u|^2 \delta(Z) \, dZ \right)^{\frac12} 
				\\ \leq \left(\frac{\theta^n}{(\ell_k)^n } \iint_{B(y, 2 \ell_k)} |\nabla u|^2 \delta(Z) \, dZ \right)^{\frac12}
				\lesssim_\theta \varphi(2\ell_k) \lesssim \varphi(\ell_k).
			\end{multline*} 
			In the last inequality we have used the doubling property of $\varphi$ (see Lemma~\ref{lem:pro-phi}). Since both $X_k, X_{k+1} \in B_k$, the previous estimate implies
			\[ |u(X_k) - u(X_{k+1})| \leq |u(X_k) - u_{\widetilde{B}_k}| + |u(X_{k+1}) - u_{\widetilde{B}_k}| \leq 2 \sup_{B_k} |u - u_{\widetilde{B}_k}| \lesssim \varphi(\ell_k), \]
			for every $k\geq 0$. Thus,
			\begin{equation}\label{eq:telescope}
				|u(X)| \leq \sum_{k=0}^N |u(X_k) - u(X_{k+1})| + |u(X_{N+1})| \lesssim \sum_{k=0}^N \varphi(\ell_k) + |u(X_{N+1})|. 
			\end{equation} 
			For the first term, using \eqref{eq:ass-phi}, we obtain:
			\begin{equation}\label{tmp:telescopevarphi}
			\sum_{k=0}^\infty \varphi(\ell_k)	\lesssim 
			\varphi(\ell) + \sum_{k=1}^\infty \int_{\rho^{-k} \ell}^{\rho^{-k+1}\ell} \varphi(s) \frac{ds}{s} =
			\varphi(\ell) + \int_0^{\ell} \varphi(s) \frac{ds}{s}
			\lesssim 
			\varphi(\ell).
			\end{equation}
			For the second term, since $\{X_k\} \subset \Gamma^\theta(y)$ and $X_k \to y$, the assumption $\restr{u}{\pom}^{{\rm nt. \, lim}, \theta}(y)=0$ gives $|u(X_{N+1})| \to 0$ as $N \to \infty$. 
			On the other hand, since $\delta(X) = \delta(X_0) \geq \lambda \ell$, by the monotonicity of $\varphi$ and its doubling property (see Lemma \ref{lem:pro-phi}), we also have
			\[ \varphi(\ell) = \varphi(\lambda^{-1} \cdot \lambda \ell)
			\leq \varphi(\theta \lambda \ell) \lesssim_\theta \varphi(\lambda \ell) \leq \varphi(\delta(X)).\]
			Combined with \eqref{eq:telescope} and \eqref{tmp:telescopevarphi}, we conclude that
			\[ |u(X)| \lesssim_\theta \varphi(\delta(X)), \] 
			where the implicit constant only depends on $\theta$, so in particular it is independent of $X$.	
		}
		\end{proof}	
		{
		Clearly Lemma \ref{lem:bound_U} implies that $u \in \mathscr{C}(\overline{\Omega})$ and $u=0$ on $\pom$. When $\Omega$ is bounded, the maximum principle guarantees that $u=0$ in $\Omega$, thus the proof of the uniqueness of solutions is complete when $\Omega$ is bounded. When $\Omega$ is unbounded with $\pom$ unbounded, we need the following global estimate.
		}	
		
		
		\begin{lemma} \label{lem:bound_Omega}
			In the current scenario, there exists a constant $C$ (depending on $n$, the ADR constant, the ellipticity constant of $L$, $C_{\varphi}$ in \eqref{eq:ass-phi}, and $\theta$) such that 
			\begin{equation}\label{eq:u-bound}
			u(X) \leq C \varphi(\delta(X)) \, \text{ for all } X \in \Omega.
			\end{equation}
		\end{lemma}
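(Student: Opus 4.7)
The plan is to extend Lemma~\ref{lem:bound_U}, which controls $|u|$ on the near-boundary neighborhood $U$, to all of $\Omega$ by a telescoping Whitney-ball argument along the lines of the uniqueness proof in Section~\ref{sect:unique}. The two ingredients are: (i) a local $\varphi$-oscillation estimate for $u$ on every Whitney ball, extracted from the Carleson-type norm $\|u\|_{\mathscr{C}^{*,\varphi}(\Omega)}=1$; and (ii) the near-boundary decay $|u(Z)|\lesssim \varphi(\delta(Z))\to 0$ supplied by Lemma~\ref{lem:bound_U} once $Z\in U$.

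For ingredient (i), I would fix $Y\in\Omega$, set $B_Y:=B(Y,\delta(Y)/4)$ so that $2B_Y\subset\Omega$ and $\delta(Z)\approx \delta(Y)$ on $2B_Y$, and pick $\widehat{y}\in\pom$ with $|Y-\widehat{y}|=\delta(Y)$, so $2B_Y\subset B(\widehat{y},2\delta(Y))$. Applying De~Giorgi/Nash (Lemma~\ref{lem:DGN}) to $u-u_{2B_Y}$, then Poincar\'e's inequality, and finally the $\mathscr{C}^{*,\varphi}(\Omega)$ bound on $B(\widehat{y},2\delta(Y))$, I expect
\begin{equation*}
\sup_{B_Y}|u-u_{2B_Y}|
\lesssim \Big(\bariint_{2B_Y}|u-u_{2B_Y}|^2\Big)^{\frac12}
\lesssim \delta(Y)\Big(\bariint_{2B_Y}|\nabla u|^2\Big)^{\frac12}
\lesssim \varphi(\delta(Y)),
\end{equation*}
using $\delta\approx\delta(Y)$ on $2B_Y$ to absorb a factor of $\delta(Y)$ and match the Carleson weight. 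In particular, $|u(Y_1)-u(Y_2)|\lesssim \varphi(\delta(Y))$ whenever $Y_1,Y_2\in B_Y$.

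For the main estimate, given an arbitrary $X\in\Omega$ I would pick $\widehat{x}\in\pom$ with $|X-\widehat{x}|=\delta(X)$ and build the sequence $X_0:=X$, $X_{j+1}:=(X_j+\widehat{x})/2$, so that $\delta(X_j)=2^{-j}\delta(X)$ and $X_{j+1}\in B_{X_j}$. By ingredient (i),
\begin{equation*}
|u(X_j)-u(X_{j+1})|\lesssim \varphi(\delta(X_j))=\varphi(2^{-j}\delta(X)).
\end{equation*}
Since $U$ is an open neighborhood of $\pom$ in $\Omega$ and $X_j\to \widehat{x}\in\pom$, eventually $X_{N+1}\in U$, so Lemma~\ref{lem:bound_U} yields $|u(X_{N+1})|\lesssim \varphi(\delta(X_{N+1}))\to 0$ as $N\to\infty$. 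Telescoping and invoking \eqref{eq:ass-phi} (together with Lemma~\ref{lem:pro-phi}) to dominate the geometric sum by an integral,
\begin{equation*}
|u(X)|
\le |u(X_{N+1})|+\sum_{j=0}^{N}|u(X_j)-u(X_{j+1})|
\lesssim |u(X_{N+1})|+\sum_{j=0}^{\infty}\varphi(2^{-j}\delta(X))
\lesssim \int_0^{2\delta(X)}\varphi(s)\,\frac{ds}{s}
\lesssim \varphi(\delta(X)),
\end{equation*}
and letting $N\to\infty$ gives \eqref{eq:u-bound}.

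I do not expect a serious obstacle here: the scheme mirrors the uniqueness argument in Section~\ref{sect:unique}, with the only novelty being that the pointwise $\dot{\mathscr{C}}^\varphi$ control on oscillations is replaced by the averaged Carleson-type control, which still produces the same $\varphi(\delta(Y))$ upper bound per Whitney ball. The mildest subtlety is verifying that the telescoping sequence enters $U$, but this is immediate from $X_j\to\widehat{x}\in\pom$ and $U$ being an open neighborhood of $\pom$ in $\Omega$.
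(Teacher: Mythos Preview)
Your proposal is correct and follows essentially the same route as the paper: build the dyadic sequence $X_{j+1}=(X_j+\widehat{x})/2$ along the segment to a nearest boundary point, use the De~Giorgi/Nash--Poincar\'e--Carleson estimate to control $|u(X_j)-u(X_{j+1})|\lesssim\varphi(2^{-j}\delta(X))$, telescope, and invoke Lemma~\ref{lem:bound_U} once the sequence enters $U$. The only cosmetic difference is that the paper stops at a finite index $k_0$ with $X_{k_0}\in U$ and bounds $|u(X_{k_0})|\le C\varphi(\delta(X))$ directly, whereas you let $N\to\infty$; both variants work.
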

		
		\begin{proof}
			{
			The proof is very similar to that of Lemma \ref{lem:bound_U}, but since we have already proven that $u\in \mathscr{C}(\overline{\Omega})$ and $u=0$ on $\pom$, we can choose a more straightforward \emph{carrot path}, that is the line segment to the closest point at the boundary.
			
			Fix $X \in \Omega$. Take $\widehat{x} \in \pom$ such that $\delta(X) = |X-\widehat{x}|$. Clearly the line segment connecting $X$ to $\widehat{x}$, denoted as $\gamma$, is contained in $\Omega$, and every point $Z\in \gamma$ satisfies that $|Z-\widehat{x}| = \delta(Z)$. 
			Define the sequence of points $X_0 = X$ and $X_{k+1} = (X_k + \widehat{x}) /2$ for each $k \ge 0$.
			Since $U$ is an open neighborhood of $\pom$ in $\Omega$, there exists some $\varepsilon = \varepsilon(\widehat{x})$ such that $B(\widehat{x}, \varepsilon) \cap \Omega \subset U$. Pick $k_0 \in \NN$ so that $\delta(X_{k_0}) = \delta(X) / 2^{k_0} \leq \varepsilon_0 / 2$, which yields $X_{k_0} \subset B(\widehat{x}, \varepsilon) \cap \Omega \subset U$. Hence by Lemma \ref{lem:bound_U}, we have that 
			\[ |u(X_{k_0})| \leq C \varphi(\delta(X_{k_0})) \leq C\varphi(\delta(X)). \]
			Similarly as \eqref{eq:telescope} and \eqref{tmp:telescopevarphi}, we have that
			\begin{align*}
				|u(X)| \leq \sum_{k=0 }^{k_0-1} |u(X_k) - u(X_{k+1})| + |u(X_{k_0})| \lesssim \varphi(\delta(X)) + |u(X_{k_0})| \lesssim \varphi(\delta(X)),
			\end{align*}
			with a constant bigger than $C$ in Lemma \ref{lem:bound_U}.}
		\end{proof}

		To conclude the proof of Proposition \ref{prop:CaCa_uniqueness_general} for the case when $\Omega$ is unbounded with $\pom$ unbounded, we can proceed much as in the proof of Theorem~\ref{thm:well}. Fix $X_0 \in \Omega$ and pick $\widehat{x}_0 \in \pom$ such that $|X_0-\widehat{x}_0| = \delta(X_0)$. Since  \eqref{eq:u-bound}  works for any point in $\Omega$, the monotonicity of $\varphi$ yields 
		\begin{equation*}
		\sup_{B(\widehat{x}_0, R) \cap \overline{\Omega}} |u|
		\lesssim 
		\varphi(R), \qquad\forall \, R > 0.
		\end{equation*}
		With this and Boundary Hölder continuity from Lemma~\ref{lem:Holder} (since $u$ is a weak solution and $\restr{u}{\pom} \equiv 0$ by Lemma~\ref{lem:bound_Omega}) we get, for $R$ big enough so that $X_0 \in B(\widehat{x}_0, R/2)$:
		\begin{equation*}
		|u(X_0)| 
		\lesssim 
		\bigg( \frac{|X_0 - \widehat{x}_0|}{R} \bigg)^{\alpha_2} \sup_{B(\widehat{x}_0, R) \cap \overline{\Omega}} |u| 
		\lesssim 
		\delta(X_0)^{\alpha_2} \frac{\varphi(R)}{R^{\alpha_2}}
		\lesssim 
		\delta(X_0)^{\beta} \frac{\varphi(R)}{R^{\beta}}
		\underset{R \to \infty}{\longrightarrow} 
		0 
		\end{equation*}
		by Lemma~\ref{lem:pro-phi} and recalling that $\beta \leq \alpha_2$. Since $X_0$ was chosen arbitrarily, this shows that $u \equiv 0$ in $\Omega$, and the proof is complete. 
	\end{proof}


	\subsection{The Morrey-Campanato Dirichlet problem}\label{sec:Morrey}
	
	For our last application, we introduce the Morrey-Campanato spaces. The point of introducing these spaces is their connection with the $\dot{\mathscr{C}}^{\alpha}$ spaces, from which we will also be able to reduce their associated Dirichlet problems to the study of a $\mathscr{C}^{*, \alpha}$ problem.
	
		Throughout the section we assume the surface measure $\sigma := \restr{\mathcal{H}^n}{\pom}$ is doubling. 
	
	\begin{definition}[{\bf Generalized Morrey-Campanato spaces}] 
		Let $\Omega \subset \ree$ be an open set, and assume that $\sigma$ is doubling. Given a growth function $\varphi$ along with some integrability exponent $p \in [1, \infty)$, the associated \textit{generalized Morrey-Campanato space} $\E^{\varphi, p}(\pom)$ on $\pom$ is defined as the collection of all functions $f \in L_{\loc}^1(\pom, d\sigma)$ satisfying 
		\begin{align*}
		 \|f\|_{\E^{\varphi, p}(\pom)}
		:=
		\sup_{\substack{x \in \pom \\ r > 0}}  
		\frac{1}{\varphi(r)} \bigg(\fint_{\Delta(x, r)} |f-f_{\Delta(x, r)}|^p \, d\sigma \bigg)^{\frac1p}
		< \infty,
		\end{align*}
		where we recall that $\Delta(x, r) := B(x, r) \cap \pom$ and $f_{\Delta(x, r)} := \fint_{\Delta(x, r)} f \, d\sigma$.
	\end{definition}
	
	For each $p \in [1, \infty)$, $r \in (0, \infty)$, and $f \in L^1_{\loc}(\pom)$, we define the $L^p$-based mean oscillations of $f$ at a given scale $r$ as 
	\begin{align*}
	\osc_p(f; r) := \sup_{\substack{x \in \pom \\ 0 < s \leq r}} \bigg(\fint_{\Delta(x, s)} |f-f_{\Delta(x, s)}|^p \, d\sigma \bigg)^{\frac1p}. 
	\end{align*}
	
	\begin{lemma}\label{lem:osc}
		Let $\Omega \subset \ree$ be an open set and assume that $\sigma$ is doubling measure. Let $\varphi \in \mathcal{G}_\beta$ for some $\beta \in (0, 1)$. For each $f \in L^1_{\loc}(\pom, d\sigma)$, the following properties hold: 
		\begin{list}{{\rm (\theenumi)}}{\usecounter{enumi}\leftmargin=1cm \labelwidth=1cm \itemsep=0.2cm \topsep=.2cm \renewcommand{\theenumi}{\alph{enumi}}} 
			
			\item\label{list:osc-1} Fix $p \in [1, \infty)$. Then, as a function of $r$, the quantity $\osc_p(f; r)$ is non-decreasing in $r$. 
			
			\item\label{list:osc-3} Given $p \in [1, \infty)$, the function $f$ belongs to $\Ephip(\pom)$ if and only if $\osc_p(f; r)/\varphi(r)$, as a function in $r$, is bounded on $(0, \infty)$. More specifically, 
			\[
			\|f\|_{\Ephip(\pom)} = \sup_{r>0} \frac{\osc_p(f; r)}{\varphi(r)}. 
			\]
			
			\item\label{list:osc-5} If $\Delta(x, r) \subset \Delta(y, s)$ for some $x, y \in \pom$ and $r,s>0$, we have 
			\begin{equation*}
			|f_{\Delta(x, r)} - f_{\Delta(y, s)}| 
			\leq 
			\frac{\sigma(\Delta(y, s))}{\sigma(\Delta(x, r))} \osc_p(f; s),\quad 1\le p < \infty.
			\end{equation*}
			If in addition $f \in \Ephip(\pom)$, we deduce
			\begin{equation*}
			|f_{\Delta(x, r)} - f_{\Delta(y, s)}| 
			\lesssim  \varphi(s) \|f\|_{\Ephip(\pom)}, \quad 0<s \le 2r.
			\end{equation*}

			\item\label{list:osc-6} For any $0 < r < s$ and $x \in \pom$, 
			\begin{equation*}
			|f_{\Delta(x, r)} - f_{\Delta(x, s)}| 
			\lesssim \int_0^{4s} \osc_p(f; t) \frac{dt}{t}, \quad 1\le p < \infty.
			\end{equation*} 
			If in addition $f \in \Ephip(\pom)$, we have 
			\begin{equation*}
			|f_{\Delta(x, r)} - f_{\Delta(x, s)}| 
			\lesssim \varphi(s) \|f\|_{\Ephip(\pom)}, \quad 1\le p < \infty.
			\end{equation*}
		\end{list}
	\end{lemma}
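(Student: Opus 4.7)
The plan is to dispose of parts (a) and (b) directly from the definitions, and then use them as the core tools for parts (c) and (d), which reduce to straightforward telescoping and a Hardy-type integral comparison.

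For \eqref{list:osc-1}, I would simply observe that if $0 < r_1 \le r_2$, then the set $\{(x,s): x \in \pom,\, 0 < s \le r_1\}$ is contained in $\{(x,s): x \in \pom,\, 0 < s \le r_2\}$, so the supremum defining $\osc_p(f; r_1)$ is taken over a smaller family, giving $\osc_p(f; r_1) \le \osc_p(f; r_2)$. For \eqref{list:osc-3}, the key observation is that $\varphi$ is non-decreasing, hence $1/\varphi(r) \le 1/\varphi(s)$ for $0 < s \le r$. Thus, by definition,
\begin{equation*}
\frac{\osc_p(f; r)}{\varphi(r)}
= \sup_{x \in \pom,\, 0 < s \le r} \frac{1}{\varphi(r)}\bigg(\fint_{\Delta(x,s)} |f - f_{\Delta(x,s)}|^p\, d\sigma\bigg)^{1/p}
\le \|f\|_{\Ephip(\pom)},
\end{equation*}
while choosing $s = r$ in the supremum yields the reverse inequality, so taking the supremum over $r>0$ gives equality.

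For \eqref{list:osc-5}, I would write
\begin{equation*}
|f_{\Delta(x,r)} - f_{\Delta(y,s)}|
=\bigg|\fint_{\Delta(x,r)} (f - f_{\Delta(y,s)})\,d\sigma\bigg|
\le \frac{\sigma(\Delta(y,s))}{\sigma(\Delta(x,r))}\fint_{\Delta(y,s)}|f - f_{\Delta(y,s)}|\,d\sigma,
\end{equation*}
and bound the last integral by $\osc_p(f; s)$ via Jensen's inequality. For the additional claim under $0 < s \le 2r$ and $f \in \Ephip(\pom)$: if $\Delta(x,r) \subset \Delta(y,s)$ then in particular $x \in B(y,s)$, so $\Delta(y,s) \subset \Delta(x, 2s) \subset \Delta(x, 4r)$; the doubling property of $\sigma$ then gives $\sigma(\Delta(y,s))/\sigma(\Delta(x,r)) \lesssim 1$, and the bound $\osc_p(f;s) \le \|f\|_{\Ephip(\pom)}\varphi(s)$ from \eqref{list:osc-3} completes it.

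The main work is in \eqref{list:osc-6}. Given $0 < r < s$ and $x \in \pom$, let $k_0 \ge 1$ be the smallest integer with $2^{k_0}r \ge s$. Telescoping across dyadic scales,
\begin{equation*}
|f_{\Delta(x,r)} - f_{\Delta(x,s)}|
\le \sum_{k=0}^{k_0-1} |f_{\Delta(x,2^k r)} - f_{\Delta(x,2^{k+1}r)}| + |f_{\Delta(x,2^{k_0}r)} - f_{\Delta(x,s)}|.
\end{equation*}
Since each consecutive pair of balls is nested and satisfies the hypotheses of \eqref{list:osc-5} (with ratio of measures controlled by the doubling of $\sigma$), each summand is bounded by $C\,\osc_p(f;2^{k+1}r)$; the last term similarly is $\lesssim \osc_p(f;2s)$ because $\Delta(x,s) \subset \Delta(x,2^{k_0}r)$ with $2^{k_0}r \le 2s$. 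Using that $\osc_p(f;\cdot)$ is non-decreasing (from \eqref{list:osc-1}), I would pass from sum to integral via
\begin{equation*}
\osc_p(f;2^{k+1}r) \approx \osc_p(f;2^{k+1}r)\int_{2^{k+1}r}^{2^{k+2}r}\frac{dt}{t}
\le \int_{2^{k+1}r}^{2^{k+2}r}\osc_p(f;t)\,\frac{dt}{t},
\end{equation*}
and the telescoping sum then collapses into $\int_0^{4s}\osc_p(f;t)\,dt/t$, proving the first estimate. For the second, I would invoke \eqref{list:osc-3} to replace $\osc_p(f;t)$ by $\varphi(t)\|f\|_{\Ephip(\pom)}$, and use the first integral condition in \eqref{eq:ass-phi} plus the doubling of $\varphi$ (Lemma~\ref{lem:pro-phi}) to conclude
\begin{equation*}
\int_0^{4s}\varphi(t)\,\frac{dt}{t}\lesssim \varphi(4s)\lesssim \varphi(s).
\end{equation*}
No single step looks genuinely difficult; the only subtle point is keeping careful track of the doubling constants and the dyadic-to-integral comparison in \eqref{list:osc-6}, but both are entirely routine.
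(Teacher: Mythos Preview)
Your proof is correct and follows essentially the same approach as the paper: parts \eqref{list:osc-1} and \eqref{list:osc-3} are handled directly from the definitions, part \eqref{list:osc-5} via Jensen's inequality and doubling, and part \eqref{list:osc-6} via the same dyadic telescoping and sum-to-integral conversion (the paper refers to \eqref{eq:ssr} for this step). The only cosmetic difference is your choice of $k_0$ as the smallest integer with $2^{k_0}r\ge s$ versus the paper's $2^k r < s \le 2^{k+1}r$, which amounts to the same thing.
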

	
	\begin{proof}
		Parts \eqref{list:osc-1} and \eqref{list:osc-3} are trivial. To show \eqref{list:osc-5}, we use Jensen's inequality: 
		\begin{multline*}
		|f_{\Delta(x, r)} - f_{\Delta(y, s)}| 
		\leq 
		\fint_{\Delta(x, r)} |f - f_{\Delta(y, s)}| d\sigma 
		=
		\frac{\sigma(\Delta(y, s))}{\sigma(\Delta(x, r))} \fint_{\Delta(y, s)} |f - f_{\Delta(y, s)}| d\sigma 
		\\ 
		\le 
		\frac{\sigma(\Delta(y, s))}{\sigma(\Delta(x, r))} \bigg(\fint_{\Delta(y, s)} |f - f_{\Delta(y, s)}|^p d\sigma \bigg)^{\frac1p}
		\leq 
		\frac{\sigma(\Delta(y, s))}{\sigma(\Delta(x, r))} \osc_p(f; s). 
		\end{multline*}
		And if $f \in \Ephip(\pom)$ and $0<s \le 2r$, noting that $\Delta(x, r) \subset \Delta(y, s) \subset \Delta(x, 2s) \subset \Delta(x, 4r)$, we use the doubling property of $\sigma$ and \eqref{list:osc-3} to get 
		\begin{equation*}
		|f_{\Delta(x, r)} - f_{\Delta(y, s)}| 
		\leq \frac{\sigma(\Delta(x, 4r))}{\sigma(\Delta(x, r))} \osc_p(f; s) 
		\lesssim \varphi(s)  \|f\|_{\Ephip(\pom)}. 
		\end{equation*}
		In turn, the first part of \eqref{list:osc-6} follows by iterating of \eqref{list:osc-5}. Indeed, let $k \geq 0$ be such that $2^k r < s \leq 2^{k+1} r$. Denoting $\Delta_\tau := \Delta(x, \tau)$ for any $\tau > 0$, since $\sigma$ is doubling and $\osc_p(f; \cdot)$ is non-decreasing from \eqref{list:osc-1}, a similar computation to the one in \eqref{eq:ssr} leads to
		\begin{multline*}
		|f_{\Delta_r} - f_{\Delta_s}| 
		\leq 
		\sum_{j=0}^k \big|f_{\Delta_{2^jr}} - f_{\Delta_{2^{j+1}r}}| + \big|f_{\Delta_{2^{k+1} r}} - f_{\Delta_s} \big| 
		\\ 
		\leq 
		\sum_{j=0}^k \frac{\sigma(\Delta_{2^{j+1}r})}{\sigma(\Delta_{2^jr})} \osc_p(f; 2^{j+1}r) 
		+ \frac{\sigma(\Delta_{2^{k+1}r})}{\sigma(\Delta_s)} \osc_p(f; 2^{k+1}r)
		\\ 
		\lesssim 
		\sum_{j=0}^k \osc_p(f; 2^{j+1}r)
		\lesssim  
		\int_{2r}^{2^{k+2}r} \osc_p(f; t) \frac{dt}{t} 
		\leq 
		\int_0^{4s} \osc_p(f; t) \frac{dt}{t}.  
		\end{multline*}
		If in addition $f \in \Ephip(\pom)$, we further obtain, using \eqref{eq:ass-phi} and the doubling property of $\varphi$:
		\begin{align*}
		|f_{\Delta_r} - f_{\Delta_s}| 
		\leq 
		\int_0^{4s} \varphi(t) \frac{dt}{t} \|f\|_{\Ephip(\pom)} 
		\lesssim 
		\varphi(4s) \|f\|_{\Ephip(\pom)}
		\lesssim 
		\varphi(s) \|f\|_{\Ephip(\pom)},
		\end{align*}
		which completes the proof. 
	\end{proof}

	The following proposition establishes the equivalences of the spaces we have been dealing with. This has been shown in \cite[Theorem 4]{MS}, but our proof is self-contained and simpler. 
	
	\begin{proposition}\label{prop:holder_campanato}
		Let $\Omega \subset \ree$ be an open set and assume that $\sigma$ is a doubling measure. Let $\varphi \in \mathcal{G}_\beta$ for some $\beta \in (0, 1)$. Then the spaces $\dcv(\pom)$ and $\Ephip(\pom)$ are the same, with equivalent norms. (In the inclusion $\Ephip(\pom) \hookrightarrow \dcv(\pom)$ we understand that for any $f \in \Ephip(\pom)$, we can modify $f$ in a set of zero $\sigma$-measure so that $f \in \dcv(\pom)$).
	\end{proposition}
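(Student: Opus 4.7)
The plan is to establish the two inclusions separately. The easy direction $\dcv(\pom)\hookrightarrow \Ephip(\pom)$ will be handled by a direct calculation, while the reverse inclusion follows by a classical Campanato-style argument: one shows that averages of an $\Ephip$ function over shrinking surface balls converge at every Lebesgue point, that their limit inherits a $\varphi$-H\"older bound, and finally that this limit extends continuously to all of $\pom$.

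For the direction $\dcv(\pom)\hookrightarrow \Ephip(\pom)$: given $f\in \dcv(\pom)$, for any $\Delta=\Delta(x,r)$ I would estimate
\[
\bigg(\fint_\Delta |f-f_\Delta|^p\,d\sigma\bigg)^{\!\frac1p}
\le \bigg(\fint_\Delta\!\fint_\Delta |f(y)-f(z)|^p\,d\sigma(y)\,d\sigma(z)\bigg)^{\!\frac1p}
\le \|f\|_{\dcv(\pom)}\,\varphi(2r)
\lesssim \|f\|_{\dcv(\pom)}\,\varphi(r),
\]
using the doubling of $\varphi$ from Lemma~\ref{lem:pro-phi}\eqref{list-2}. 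This yields $\|f\|_{\Ephip(\pom)}\lesssim \|f\|_{\dcv(\pom)}$.

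The main work goes into $\Ephip(\pom)\hookrightarrow \dcv(\pom)$. Since $\sigma$ is doubling, the Lebesgue differentiation theorem for doubling measures applies: the set $E$ of Lebesgue points of $f$ satisfies $\sigma(\pom\setminus E)=0$, and $\widetilde f(x):=\lim_{r\to 0^+}f_{\Delta(x,r)}$ exists and equals $f(x)$ for every $x\in E$. For $x,y\in E$, setting $r:=|x-y|$, I would split
\[
|\widetilde f(x)-\widetilde f(y)|
\le |\widetilde f(x)-f_{\Delta(x,2r)}|+|f_{\Delta(x,2r)}-f_{\Delta(y,2r)}|+|f_{\Delta(y,2r)}-\widetilde f(y)|.
\]
The first and third terms are handled by dyadic telescoping, writing $\widetilde f(x)-f_{\Delta(x,2r)}=\sum_{k\ge 0}(f_{\Delta(x,2^{-k-1}\cdot 2r)}-f_{\Delta(x,2^{-k}\cdot 2r)})$ and bounding each summand via Lemma~\ref{lem:osc}\eqref{list:osc-5} and \eqref{list:osc-3} by $\varphi(2^{-k+1}r)\|f\|_{\Ephip(\pom)}$; the resulting sum is controlled by $\int_0^{4r}\varphi(s)\,\tfrac{ds}{s}\lesssim \varphi(r)$ thanks to the first condition in \eqref{eq:ass-phi}. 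For the middle term, I would insert the average over $\Delta(x,4r)$, which contains both $\Delta(x,2r)$ and $\Delta(y,2r)$ (since $|x-y|=r$), and apply Lemma~\ref{lem:osc}\eqref{list:osc-5} together with doubling of $\sigma$ and of $\varphi$ to obtain a bound $\lesssim \varphi(r)\|f\|_{\Ephip(\pom)}$.

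Combining these estimates gives $|\widetilde f(x)-\widetilde f(y)|\lesssim \varphi(|x-y|)\|f\|_{\Ephip(\pom)}$ for all $x,y\in E$. Since $\varphi(t)\to 0$ as $t\to 0^+$, $\widetilde f$ is uniformly continuous on $E$; because $\sigma$ is doubling with $\supp\sigma=\pom$, the set $E$ is dense in $\pom$, so $\widetilde f$ extends uniquely to a continuous function on $\pom$ satisfying the same bound, i.e.\ $\widetilde f\in\dcv(\pom)$ with $\|\widetilde f\|_{\dcv(\pom)}\lesssim \|f\|_{\Ephip(\pom)}$, and $\widetilde f=f$ $\sigma$-a.e. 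The main delicate point is the telescoping argument for the middle term together with the careful use of the integral condition \eqref{eq:ass-phi} to sum the dyadic tails; everything else is routine once Lemma~\ref{lem:osc} is in hand.
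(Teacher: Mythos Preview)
Your proposal is correct and follows essentially the same Campanato-style argument as the paper. The only organizational difference is that the paper defines $g(x):=\lim_{r\to 0^+}f_{\Delta(x,r)}$ at \emph{every} point of $\pom$ directly---using Lemma~\ref{lem:osc}\eqref{list:osc-6} to see that the averages form a Cauchy sequence---and then invokes Lebesgue differentiation only to check $g=f$ $\sigma$-a.e.; this avoids your final density/extension step, and it packages your dyadic telescoping into a single citation of Lemma~\ref{lem:osc}\eqref{list:osc-6} rather than redoing it inline.
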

	
	\begin{proof}
		The fact that $\|f\|_{\Ephip(\pom)} \lesssim \|f\|_{\dcv(\pom)}$ follows easily from the definitions, hence we  only prove the converse inequality. First, Lemma \ref{lem:osc} part \eqref{list:osc-6} implies that for each fixed $x \in \pom$, the sequence $\{f_{\Delta(x, r)}\}_r$ is Cauchy for each fixed $x \in \pom$. Hence, the limit exists and we can define $g(x) := \lim_{r \to 0^+} f_{\Delta(x, r)}$. By Lebesgue Differentiation Theorem (which we may apply because $\sigma$ is a doubling measure and $f \in L^1_\loc(\pom, d\sigma)$), we obtain that $f = g$ $\sigma$-a.e. in $\pom$.
		
		Fix $x, y \in \pom$ and call $s := |x-y|$. By definition of $g$, we have
		\begin{equation} \label{eq:difference_averages}
		|g(x) - g(y)| 
		=
		\lim_{r \to 0} |f_{\Delta(x, r)} - f_{\Delta(y, r)}|.
		\end{equation}		
		Now compute, for $r$ small enough, say $2r < s$,
		\begin{multline} \label{eq:comparison_averages}
		|f_{\Delta(x, r)} - f_{\Delta(y, r)}| 
		\leq 
		|f_{\Delta(x, r)} - f_{\Delta(x, s)}| + |f_{\Delta(x, s)} - f_{\Delta(y, 2s)}| + |f_{\Delta(y, 2s)} - f_{\Delta(y, r)}| 
		\\ 
		\lesssim 
		\varphi(2s) \|f\|_{\Ephip(\pom)}
		\lesssim 
		\varphi(s) \|f\|_{\Ephip(\pom)}, 
		\end{multline}
		where we have applied Lemma \ref{lem:osc} part \eqref{list:osc-6} to the first and last terms, and part \eqref{list:osc-5} to the second one since $\Delta(x, s) \subset \Delta(y, 2s)$ by definition of $s$. Plugging \eqref{eq:comparison_averages} into \eqref{eq:difference_averages} yields $\|g\|_{\dcv(\pom)} \lesssim \|f\|_{\Ephip(\pom)}$, as desired.
	\end{proof}

	With all these tools in hand, we finally consider the Dirichlet problem in generalized Morrey-Campanato spaces.
	
	\begin{theorem} \label{thm:CE_existence_general}
		Let $\Omega \subset \R^{n+1}$, $n \geq 2$, be an open set with ADR boundary so that either {\bf $\Omega$ is bounded or $\Omega$ is unbounded with $\pom$ being unbounded}. Let $L = -\div (A\nabla)$ be a real (non-necessarily symmetric) elliptic operator, and let $\omega_L$ be the associated elliptic measure. Then there exists $\beta \in (0, 1)$ (depending only on $n$, the ADR constant, and the ellipticity constant of $L$) such that for any $\theta>1$ and for any growth function $\varphi \in \mathcal{G}_\beta$, the Morrey-Campanato Dirichlet problem 		
		\begin{equation} \label{eq:Morrey_general}
		\begin{cases}
		u \in W^{1, 2}_{\loc} (\Omega), \\
		Lu = 0 \text{ in the weak sense in } \Omega, \\
		u \in \mathscr{C}^{*,\varphi}(\Omega), \\
		\restr{u}{\pom}^{{\rm nt. \, lim}, \theta} = f \in \Ephip(\pom), \, \sigma \text{-a.e. on } \pom, 
		\end{cases}
		\end{equation}
		has a solution. Moreover, $u \in \mathscr{C} (\overline{\Omega})$ satisfies $\restr{u}{\pom} = f$ $\sigma$-a.e. in $\pom$, and there is a constant $C$ (depending on $n$, the ADR constant, the ellipticity constant of $L$, and $C_\varphi$) such that 
		\begin{align*}
		\|u\|_{\mathscr{C}^{*, \varphi}(\Omega)} \le C \|f\|_{\mathscr{E}^{\varphi, p}(\pom)}. 
		\end{align*}		
		Furthermore, if $\omega_L \ll \sigma$ (in the sense that $\omega_L^X \ll \sigma$ for all $X \in \Omega$), such a solution can be represented as 
		\begin{equation}\label{CE-solution_general}
		u(X) = \int_\pom f(y) \, d\omega_L^X(y), \qquad X \in \Omega.
		\end{equation}
	\end{theorem}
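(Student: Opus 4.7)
The plan is to reduce this Morrey--Campanato result to the $\dcv$-version already established in Theorem~\ref{thm:CaCa_existence_general}, using the equivalence between the two scales of spaces on $\pom$ given by Proposition~\ref{prop:holder_campanato}. Let $\beta \in (0,1)$ be the exponent provided by Theorem~\ref{thm:CaCa_existence_general}, which depends only on $n$, the ADR constant, and the ellipticity of $L$. Fix $\theta>1$ and $\varphi\in \mathcal{G}_\beta$. Recall that since $\pom$ is ADR, the measure $\sigma$ is doubling, so Proposition~\ref{prop:holder_campanato} applies.

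Given $f\in \Ephip(\pom)$, Proposition~\ref{prop:holder_campanato} yields a function $\widetilde{f}\in \dcv(\pom)$ with $\widetilde{f}=f$ $\sigma$-a.e.\ in $\pom$ and
\[
\|\widetilde{f}\|_{\dcv(\pom)}\lesssim \|f\|_{\Ephip(\pom)}.
\]
Apply Theorem~\ref{thm:CaCa_existence_general} with boundary datum $\widetilde{f}$ and the given $\theta$ to produce
\[
u(X):=\int_\pom \widetilde{f}(y)\, d\omega_L^X(y),\qquad X\in\Omega,
\]
which satisfies $u\in W^{1,2}_\loc(\Omega)\cap \mathscr{C}^{*,\varphi}(\Omega)\cap \mathscr{C}(\overline{\Omega})$, $Lu=0$ in the weak sense in $\Omega$, $\restr{u}{\pom}=\widetilde{f}$ pointwise on $\pom$ (in particular non-tangentially with aperture $\theta$), and
\[
\|u\|_{\mathscr{C}^{*,\varphi}(\Omega)}\lesssim \|\widetilde{f}\|_{\dcv(\pom)}\lesssim \|f\|_{\Ephip(\pom)}.
\]

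Translating back, since $\widetilde{f}=f$ $\sigma$-a.e., we have $\restr{u}{\pom}=f$ $\sigma$-a.e.\ on $\pom$, and the non-tangential limit of $u$ at $\theta$ coincides with $f$ at $\sigma$-a.e.\ point of $\pom$. Thus $u$ solves \eqref{eq:Morrey_general} with the required norm bound. Finally, to obtain the representation formula \eqref{CE-solution_general} in the case $\omega_L\ll \sigma$, note that $\{\widetilde{f}\neq f\}$ is a $\sigma$-null set, hence for every $X\in\Omega$ the assumption $\omega_L^X\ll \sigma$ gives $\omega_L^X(\{\widetilde{f}\neq f\})=0$, so
\[
u(X)=\int_\pom \widetilde{f}(y)\,d\omega_L^X(y)=\int_\pom f(y)\,d\omega_L^X(y),\qquad X\in\Omega.
\]

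No truly new technical difficulty arises here; the argument is essentially a packaging of Proposition~\ref{prop:holder_campanato} together with Theorem~\ref{thm:CaCa_existence_general}. The only point that deserves a bit of care is the replacement of $\widetilde{f}$ by $f$ under the integral in the representation formula, for which the absolute continuity hypothesis $\omega_L\ll \sigma$ is used in an essential (and sharp) way.
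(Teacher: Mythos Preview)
Your proposal is correct and follows essentially the same approach as the paper's proof: reduce to Theorem~\ref{thm:CaCa_existence_general} via the identification $\Ephip(\pom)=\dcv(\pom)$ from Proposition~\ref{prop:holder_campanato}, and use $\omega_L\ll\sigma$ to pass from the H\"older representative back to $f$ in the integral formula. The paper's argument is identical up to notation (it writes $g$ for your $\widetilde{f}$).
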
	
	\begin{proof}
		Since $\pom$ is ADR, the surface measure $\sigma$ is doubling. Therefore, Proposition~\ref{prop:holder_campanato} implies that there exists $g \in \dcv(\pom)$ such that $f = g$ $\sigma$-a.e. in $\pom$, with $\|f\|_{\Ephip(\pom)} \approx \|g\|_{\dcv(\pom)}$. By Theorem~\ref{thm:CaCa_existence_general}, the function 
		\begin{equation*}
		\widetilde{u}(X) = \int_\pom g(y) \, d\omega_L^X(y), \qquad X \in \Omega
		\end{equation*}
		is also a solution to the problem \eqref{CaCa-problem_general}. Since $f = g$ $\sigma$-a.e. in $\pom$, this means that $\widetilde{u}$ is a solution to the problem \eqref{eq:Morrey_general}. Moreover, Theorem~\ref{thm:CaCa_existence_general} also yields $u \in \mathscr{C}(\overline{\Omega})$ with $\restr{u}{\pom} = g$ in $\pom$, and 
		\begin{equation*}
		\|u\|_{\mathscr{C}^{*, \varphi}(\Omega)}
		\lesssim 
		\|g\|_{\mathscr{E}^{\varphi, p}(\pom)}
		\approx
		\|f\|_{\mathscr{E}^{\varphi, p}(\pom)}
		\end{equation*}
		
		For the second part, assume that $\omega_L \ll \sigma$ (in the sense that $\omega_L^X \ll \sigma$ for all $X \in \Omega$). Since $f = g$ $\sigma$-a.e. on $\pom$, we have $f = g$ $\omega_L^X$-a.e. on $\pom$ for all $X \in \Omega$. This easily implies that
		\begin{equation*}
		\int_\pom f \,d\omega_L^X
		=
		\int_\pom g \,d\omega_L^X,
		\end{equation*}
		so $u(X)$, as defined by \eqref{CE-solution_general}, coincides with $\widetilde{u}(X)$ for every $X \in \Omega$. This finishes the proof.
	\end{proof}

	\begin{proposition}
		Let $\Omega \subset \R^{n+1}$, $n \geq 2$, be an open set with ADR boundary so that either {\bf $\Omega$ is bounded or $\Omega$ is unbounded with $\pom$ being unbounded}. Let $L = -\div (A\nabla)$ be a real (non-necessarily symmetric) elliptic operator, and let $\omega_L$ be the associated elliptic measure. Given $\theta > 1$, assume that \eqref{eq:path} holds. Then there exists $\beta \in (0, 1)$ (depending only on $n$, the ADR constant, and the ellipticity constant of $L$) such that for any growth function $\varphi \in \mathcal{G}_\beta$, the problem~\eqref{eq:Morrey_general} has a unique solution, and hence it is well-posed.	
		
	\end{proposition}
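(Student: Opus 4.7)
The plan is to reduce the uniqueness statement for the Morrey–Campanato Dirichlet problem to the uniqueness statement already established for the Carleson-type $\mathscr{C}^{*,\varphi}$–Dirichlet problem in Proposition~\ref{prop:CaCa_uniqueness_general}. Existence and the representation formula \eqref{CE-solution_general} (when $\omega_L\ll\sigma$) have already been obtained in Theorem~\ref{thm:CE_existence_general}, so we only need to address uniqueness.

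The key tool is Proposition~\ref{prop:holder_campanato}, which identifies $\Ephip(\pom)$ with $\dcv(\pom)$ (up to modification on a $\sigma$-null set) with equivalent (semi)norms, using that $\pom$ is ADR and hence $\sigma$ is doubling. Given a boundary datum $f\in\Ephip(\pom)$, we fix any representative and use Proposition~\ref{prop:holder_campanato} to produce a $g\in\dcv(\pom)$ such that $f=g$ $\sigma$-a.e. in $\pom$. If $u_1, u_2$ are two solutions to the Morrey–Campanato problem \eqref{eq:Morrey_general} associated with $f$, then by definition both satisfy $\restr{u_i}{\pom}^{{\rm nt.\,lim},\theta}=f$ $\sigma$-a.e. on $\pom$, so both satisfy $\restr{u_i}{\pom}^{{\rm nt.\,lim},\theta}=g$ $\sigma$-a.e. on $\pom$ as well. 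Since the remaining conditions in \eqref{eq:Morrey_general} coincide verbatim with those in \eqref{CaCa-problem_general}, each $u_i$ is a solution to \eqref{CaCa-problem_general} with boundary datum $g\in\dcv(\pom)$.

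At this point the hypothesis \eqref{eq:path} allows us to invoke Proposition~\ref{prop:CaCa_uniqueness_general} (for the same exponent $\beta\in (0,1)$ furnished by that proposition, which depends only on $n$, the ADR constant and the ellipticity of $L$), yielding $u_1\equiv u_2$ in $\Omega$. Combined with existence from Theorem~\ref{thm:CE_existence_general}, this gives the well-posedness of \eqref{eq:Morrey_general}.

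No genuine obstacle is expected in this argument; the only point requiring mild care is the verification that the passage $f\rightsquigarrow g$ preserves the boundary condition in \eqref{CaCa-problem_general}. This is immediate because that condition is imposed $\sigma$-a.e., and by construction the two functions differ only on a $\sigma$-null set. The equivalence of norms in Proposition~\ref{prop:holder_campanato} also guarantees that the a priori bound transfers consistently, although it is not strictly needed for uniqueness once existence is in hand.
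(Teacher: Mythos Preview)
Your proposal is correct and follows essentially the same approach as the paper: reduce via Proposition~\ref{prop:holder_campanato} to the $\mathscr{C}^{*,\varphi}$-problem and then invoke Proposition~\ref{prop:CaCa_uniqueness_general}. The paper adds the remark that the proof of Lemma~\ref{lem:bound_U} only uses that the boundary datum vanishes $\sigma$-a.e., which amounts to the same observation you make about the $\sigma$-a.e.\ boundary condition transferring from $f$ to $g$.
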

	\begin{proof}
		As in the proof of Theorem~\ref{thm:CE_existence_general}, matters can be reduced to the proof of Proposition~\ref{prop:CaCa_uniqueness_general} by means of Proposition~\ref{prop:holder_campanato}. {In particular, note that the proof of Lemma \ref{lem:bound_U} only uses the fact that the boundary datum vanishes $\sigma$-a.e. on $\pom$.} Details are left to the reader.
	\end{proof}



\begin{thebibliography}{00}
		
		\bibitem{ADN59}S. Agmon, A. Douglis, and L. Nirenberg, 
		\emph{Estimates near the boundary for solutions of elliptic partial differential equations satisfying general boundary conditions, I},  
		Comm. Pure Appl. Math. \textbf{12} (1959), 623--727.
		
		
		\bibitem{ADN64}S. Agmon, A. Douglis, and L. Nirenberg,  
		\emph{Estimates near the boundary for solutions of elliptic partial differential equations satisfying general boundary conditions, II},  
		Comm. Pure Appl. Math. \textbf{17} (1964), 35--92. 
		
		\bibitem{A} H. Aikawa,
		\emph{Hölder continuity of the Dirichlet solution for a general domain},
		Bull. London Math. Soc \textbf{34} (2002), 691--702.
		
		
		\bibitem{AAAHK}M.A. Alfonseca, P. Auscher, A. Axelsson, S. Hofmann, and S. Kim,  
		\emph{Analyticity of layer potentials and $L^2$ solvability of boundary value problems 
			for divergence form elliptic equations with complex $L^{\infty}$ coefficients},  
		Adv. Math. \textbf{226} (2011), 4533--4606.
		
		\bibitem{Ancona} A. Ancona,
		\emph{On strong barriers and an inequality of Hardy for domains in $\mathbb{R}^n$},
		J. London Math. Soc. \textbf{34} (1986), 274--290.
		
		
		\bibitem{ARR}P. Auscher, A. Ros\'{e}n, and D. Rule, 
		\emph{Boundary value problems for degenerate elliptic equations and systems}, 
		Ann. Sci. \'{E}c. Norm. Sup\'{e}r. (4) \textbf{48} (2015), 951--1000. 
		
		
		\bibitem{AHMMT} J. Azzam, S. Hofmann, J.M. Martell, M. Mourgoglou, and X. Tolsa,
		\emph{Harmonic measure and quantitative connectivity: 
			geometric characterization of the $L^p$-solvability of the Dirichlet problem}, 
		Invent. Math. \textbf{222} (2020), 881--993.
		
		
		\bibitem{AHMNT}J. Azzam, S. Hofmann, J.M. Martell, K. Nystr\"{o}m, and T. Toro, 
		\emph{A new characterization of chord-arc domains},  
		J. Eur. Math. Soc. \textbf{19} (2017), 967--981.
		
		\bibitem{BS} C. Bennett and R. Sharpley, 
		\emph{Interpolation of operators}, 
		volume 129 of Pure and Applied Mathematics. Academic Press Inc., Boston, MA, 1988.
		
		
		\bibitem{CMM}M. Cao, J.J. Mar\'{i}n, and J.M. Martell, 
		\emph{Extrapolation on function and modular spaces, and applications}, 
		Adv. Math. \textbf{406} (2022), 108520. 	
	
		\bibitem{CHMT}J. Cavero, S. Hofmann, J.M. Martell, and T. Toro,
		\emph{Perturbations of elliptic operators in 1-sided chord-arc domains. 
			{Part II}: Non-symmetric operators and Carleson measure estimates},
		Trans. Amer. Math. Soc. \textbf{373} (2020), 7901--7935.
		
		
		\bibitem{D77}B.E.J. Dahlberg, 
		\emph{Estimates of harmonic measure}, 
		Arch. Rational Mech. Anal. \textbf{65} (1977), 275--288.
		
		
		\bibitem{DK}B.E.J. Dahlberg and C.E. Kenig, 
		\emph{Hardy spaces and the Neumann problem in $L^p$ for Laplace's equation in Lipschitz domains},  
		Ann. of Math. (2) \textbf{125} (1987), 437--465. 
				
		
		\bibitem{DFM}G. David, J. Feneuil, and S. Mayboroda,
		\emph{Dahlberg's theorem in higher co-dimension}, 
		J. Funct. Anal. \textbf{276} (2019), 2731--2820.
		
		
		\bibitem{DM}G. David and S. Mayboroda,
		\emph{Harmonic measure is absolutely continuous with respect to the Hausdorff measure on all low-dimensional uniformly rectifiable sets},
		Int. Math. Res. Not. \textbf{11} (2023), 9319--9426.	
		
		\bibitem{DJ}G. David and D. Jerison, 
		\emph{Lipschitz approximation to hypersurfaces, harmonic measure, and singular integrals}, 
		Indiana Univ. Math. J. \textbf{39} (1990), 831--845. 
		
		\bibitem{FJN}E.B. Fabes, R.L. Johnson, and U. Neri, 
		\emph{Spaces of harmonic functions representable by Poisson integrals of functions in $\BMO$ and $L_{p,\lambda}$},  
		Indiana Univ. Math. J. \textbf{25} (1976), 159--170. 
		
		
		\bibitem{FN}E.B. Fabes and U. Neri, 
		\emph{Dirichlet problem in Lipschitz domains with $\BMO$ data},  
		Proc. Amer. Math. Soc. \textbf{78} (1980), 33--39.
		
		
		\bibitem{FKP}R. Fefferman, C.E. Kenig, and J. Pipher, 
		\emph{The theory of weights and the Dirichlet problem for elliptic equations},
		Ann. of Math. (2), \textbf{134} (1991), 65--124.
		
		\bibitem{HanLin} Q. Han, and F. Lin, 
		\emph{Elliptic partial differential equations},
		American Mathematical Soc. 1, 2011.
		
		\bibitem{H} J. Heinonen, 
		\emph{Lectures on analysis on metric spaces},
		Universitext, Springer-Verlag, New York, 2001.
		
		
		\bibitem{HKM}J. Heinonen, T. Kilpel\"{a}inen, and O. Martio,
		\emph{Nonlinear potential theory of degenerate elliptic equations},
		Dover Publications, Inc., Mineola, NY, 2006, Unabridged republication of the 1993 original.
		
		
		\bibitem{Helms}L.L. Helms,  
		\emph{Introduction to potential theory},  
		Pure and Applied Mathematics 22, Wiley-Interscience, New York-London-Sidney, 1969.
		
		
		\bibitem{HKMP}S. Hofmann, C. Kenig, S. Mayboroda, and J. Pipher, 
		\emph{Square function/non-tangential maximal function estimates 
			and the Dirichlet problem for non-symmetric elliptic operators},  
		J. Amer. Math. Soc. \textbf{28} (2015), 483--529. 
		
		
		\bibitem{HL}S. Hofmann and P. Le, 
		\emph{$\BMO$ solvability and absolute continuity of harmonic measure}, 
		J. Geom. Anal. \textbf{28} (2018), 3278--3299. 
	
		
		
		
		\bibitem{HLMN}S. Hofmann, P. Le, J.M. Martell, and K. Nystr\"{o}m,
		\emph{The weak-$A_{\infty}$ property of harmonic and $p$-harmonic measures implies uniform rectifiability},
		Anal. PDE \textbf{10} (2017), 513--558.
		
		
		\bibitem{HLM}S. Hofmann, P. Le, and A.J. Morris,  
		\emph{Carleson measure estimates and the Dirichlet problem for degenerate elliptic equations},  
		Anal. PDE \textbf{12} (2019), 2095--2146. 
		
		
		\bibitem{HM}S. Hofmann and J.M. Martell, 
		\emph{Uniform rectifiability and harmonic measure I: Uniform rectifiability implies Poisson kernels in $L^p$}, 
		Ann. Sci. \'{E}c. Norm. Sup\'{e}r. \textbf{47} (2014), 577--654.
		
		
		\bibitem{HMMTZ}S. Hofmann, J.M. Martell, S. Mayboroda, T. Toro, and Z. Zhao,  
		\emph{Uniform rectifiability and elliptic operators satisfying a Carleson measure condition}, 
		Geom. Funct. Anal. \textbf{31} (2021), 325--401. 
		
		
		\bibitem{HMT}S. Hofmann, J.M. Martell, and T. Toro, 
		\emph{Elliptic operators on non-smooth domains},  
		Book in preparation. 
		
		
		\bibitem{HMU}S. Hofmann, J.M. Martell, and I. Uriarte-Tuero, 
		\emph{Uniform rectifiability and harmonic measure, II: Poisson kernels in $L^p$ imply uniform rectifiability}, 
		Duke Math. J. \textbf{163} (2014), 1601--1654. 
		
		
		\bibitem{HMaM}S. Hofmann, S. Mayboroda, and M. Mourgoglou,  
		\emph{Layer potentials and boundary value problems for elliptic equations 
			with complex $L^{\infty}$ coefficients satisfying the small Carleson measure norm condition}, 
		Adv. Math. \textbf{270} (2015), 480--564.
		
		
		\bibitem{HMiM}S. Hofmann, M. Mitrea, and A.J. Morris,  
		\emph{The method of layer potentials in $L^p$ and endpoint spaces for elliptic operators with $L^{\infty}$ coefficients},  
		Proc. Lond. Math. Soc. (3) \textbf{111} (2015), 681--716. 
		
		
		\bibitem{JK}D.S. Jerison and C.E. Kenig, 
		\emph{Boundary behavior of harmonic functions in nontangentially accessible domains}, 
		Adv. Math. \textbf{46} (1982), 80--147.
		
		\bibitem{K}C. Kenig,
		\emph{Harmonic analysis techniques for second order elliptic boundary value problems}, 
		Regional Conference Series in Mathematics 83, American Mathematical Society, 1994.
		
		
		\bibitem{KKPT}C. Kenig, H. Koch, J. Pipher, and T. Toro,  
		\emph{A new approach to absolute continuity of elliptic measure, with applications to non-symmetric equations},  
		Adv. Math. \textbf{153} (2000), 231--298.
		
		
		\bibitem{Lew}J.L. Lewis,
		\emph{Uniformly fat sets},
		Trans. Amer. Math. Soc. \textbf{308} (1988), 177--196.
		
		
		\bibitem{LSW} W. Littman, G. Stampacchia, and H.F. Weinberger, 
		\emph{Regular points for elliptic equations with discontinuous coefficients},
		Ann. Scuola Norm.-Sci. \textbf{17} (1963), 43--77.		
		
		\bibitem{MS} R. Mac{\'\i}as and C. Segovia, 
		\emph{Lipschitz functions on spaces of homogeneous type},
		Adv. Math. \textbf{33} (1979), 257--270.		
		
		\bibitem{MMM} J.J. Mar\'{i}n, J.M. Martell, and M. Mitrea,  
		\emph{The generalized H\"{o}lder and Morrey-Campanato Dirichlet problems for elliptic systems in the upper-half space}, 
		Potential Anal. \textbf{53} (2020), 947--976. 
		
		
		\bibitem{M+5}J.J. Mar\'{i}n, J.M. Martell, D. Mitrea, I. Mitrea, and M. Mitrea, 
		\emph{Singular integral operators, quantitative flatness, and boundary problems},  
		Progress in Mathematics, vol. 344, Birkh\"{a}user/Springer, Cham, 2022.
		
		
		\bibitem{MMMM16}J.M. Martell, D. Mitrea, I. Mitrea, and M. Mitrea,  
		\emph{The Dirichlet problem for elliptic systems with data in K\"{o}the function spaces},  
		Rev. Mat. Iberoam. \textbf{32} (2016), 913--970.
		
		
		\bibitem{MMMM19}J.M. Martell, D. Mitrea, I. Mitrea, and M. Mitrea,  
		\emph{The $\BMO$-Dirichlet problem for elliptic systems in the upper half-space and quantitative characterization of $\VMO$},  
		Anal. PDE. \textbf{12} (2019), 605--720.
		
		
		\bibitem{MZ}S. Mayboroda and Z. Zhao,
		\emph{Square function estimates, the $\mathrm{BMO}$ Dirichlet problem, 
			and absolute continuity of harmonic measure on lower-dimensional sets},
		Anal. PDE \textbf{12} (2019), 1843--1890.
		
		\bibitem{Mazya} V.G. Maz’ya,
		\emph{Notes on Hölder regularity of a boundary point with respect to an elliptic operator of second order},
		J. Math. Sci. \textbf{196} (2014), 572--577.
		

		\bibitem{Shen}Z. Shen,  
		\emph{Boundary value problems in Morrey spaces for elliptic systems on Lipschitz domains},  
		Amer. J. Math. \textbf{125} (2003), 1079--1115.
		
		
		\bibitem{Sie}D. Siegel, 
		\emph{The Dirichlet problem in a half-space and a new Phragm\'{e}n-Lindel\"{o}f principle},  
		In Maximum principles and eigenvalue problems in partial differential equations (Knoxville, TN, 1987), 208--217.  
		Pitman Res. Notes Math. Ser. 175, Longman Sci. Tech., Harlow, 1988.
		
		
		\bibitem{ST}D. Siegel and E.O. Talvila, 
		\emph{Uniqueness for the $n$-dimensional half space Dirichlet problem},  
		Pacific J. Math. \textbf{175} (1996), 571--587.
		
		
		\bibitem{Yos}H. Yoshida, 
		\emph{Harmonic majorization of a subharmonic function on a cone or on a cylinder},  
		Pacific J. Math. \textbf{148} (1991), 369--395. 
		
		
		\bibitem{YM}H. Yoshida and I. Miyamoto,  
		\emph{Solutions of the Dirichlet problem on a cone with continuous data},  
		J. Math. Soc. Japan \textbf{50} (1998), 71--93.
		
		
		\bibitem{Zhao}Z.~Zhao, 
		\emph{$\mathrm{BMO}$ solvability and $A_{\infty}$ condition of the elliptic measures in uniform domains},  
		J. Geom. Anal. \textbf{28} (2018), 866--908.
		
	\end{thebibliography}
\end{document}